\documentclass{article}

\usepackage[
left=1.25in, right=1.25in]{geometry}
\usepackage[all]{xy}
\usepackage{graphicx}
\usepackage{indentfirst}
\usepackage{bm}
\usepackage{amsthm}
\usepackage{mathrsfs}
\usepackage{latexsym}
\usepackage{amsmath}
\usepackage{amssymb}

\newcommand{\grs}[1]{\raisebox{-.16cm}{\includegraphics[height=.5cm]{FD#1.eps}}}
\newcommand{\gra}[1]{\raisebox{-.4cm}{\includegraphics[height=1cm]{FD#1.eps}}}
\newcommand{\graa}[1]{\raisebox{-.6cm}{\includegraphics[height=1.5cm]{FD#1.eps}}}
\newcommand{\grb}[1]{\raisebox{-.8cm}{\includegraphics[height=2cm]{FD#1.eps}}}
\newcommand{\grc}[1]{\raisebox{-1.3cm}{\includegraphics[height=3cm]{FD#1.eps}}}

\newcommand{\gre}[1]{\raisebox{-2.3cm}{\includegraphics[height=5cm]{FD#1.eps}}}

\begin{document}

\title{Exchange relation planar algebras of small rank}
\author{Zhengwei Liu\\ zhengwei.liu@vanderbilt.edu\\ \small{Department of Mathematics, Vanderbilt University, Nashville, TN 37240, USA}}
\date{\today}
\maketitle

\theoremstyle{plain}
\newtheorem{theorem}{Theorem~}[section]
\newtheorem*{main}{Main Theorem~}
\newtheorem{lemma}[theorem]{Lemma~}
\newtheorem{proposition}[theorem]{Proposition~}
\newtheorem{corollary}[theorem]{Corollary~}
\newtheorem{definition}{Definition~}[section]
\newtheorem{notation}{Notation~}[section]
\newtheorem{example}{Example~}[section]
\newtheorem*{remark}{Remark~}
\newtheorem*{question}{Question}
\newtheorem*{claim}{Claim}
\newtheorem*{ac}{Acknowledgement}
\newtheorem*{conjecture}{Conjecture~}
\renewcommand{\proofname}{\bf Proof}

\begin{abstract}
The main purpose of this paper is to classify exchange relation planar algebras with 4 dimensional 2-boxes.
Besides its skein theory, we emphasize the positivity of subfactor planar algebras based on the Schur product theorem.
We will discuss the lattice of projections of 2-boxes, specifically the rank of the projections. From this point, several results about biprojections are obtained.
The key break of the classification is to show the existence of a biprojection.
By this method, we also classify another two families of subfactor planar algebras, subfactor planar algebras generated by 2-boxes with 4 dimensional 2-boxes and at most 23 dimensional 3-boxes; subfactor planar algebras generated by 2-boxes, such that the quotient of 3-boxes by the basic construction ideal is abelian. They extend the classification of singly generated planar algebras obtained by Bisch, Jones and the author.
\end{abstract}

\section{Introduction}
In \cite{Jon83}, Jones classified the indices of subfactors of type II$_1$ as follows,
$$\{4\cos^2(\frac{\pi}{n}), n=3,4,\cdots \}\cup [4,\infty].$$

One approach to the classification of subfactors is to treat the index. Thus the simplest subfactors are those of index less than 4 and then those of index between 4 and 5. An early result is the classification of subfactors of index at most 4, see \cite{Ocn88,GHJ,Pop94,Izu91}. This approach has been extremely successful in the hands of Haagerup \cite{Haa94} and others \cite{AsaHaa,Bis98,Izu91,SunVij,BMPS}. Recently the classification has been extended upto index 5, see \cite{ind50,ind51,ind52,ind53,ind54}.

Below index 4 a deep theorem of Popa's \cite{Pop90} showed that the $standard$ $invariant$
is a complete invariant of subfactors of the hyperfinite factor of type $II{_1}$.
%That is one motivation to study the standard invariants of subfactors.
Subfactor planar algebras were introduced by Jones as a diagrammatic axiomatization of the standard invariant \cite{JonPA}.
Other axiomatizations are known as Ocneanu's paragroups \cite{Ocn88} and Popa's $\lambda$-lattices \cite{Pop95}.

From the planar algebra perspective it seems far more
natural to say that the simplest subfactors are those whose standard invariants are
generated by the fewest elements satisfying the simplest relations.
The simplest subfactor planar algebra is the one generated by the sequence of Jones projections, also well-known as the Temperley-Lieb algebra, denoted by $TL(\delta)$, and $TL$ for short, where $\delta$ is the square root of the index.
The next most complicated planar algebras after Temperley-Lieb should be those generated
by a single element. See \cite{Wen,MPSD2n,Pet10,BMPS} for examples.

For a planar algebra $\mathscr{S}=\{\mathscr{S}_{n,\pm}\}_{n\in \mathbb{N}_0}$, an element in $\mathscr{S}_{n,\pm}$ is called an $n$-box.
Planar algebras generated by 1-boxes were completely analyzed by Jones in \cite{JonPA}. Subfactor planar algebras generated by a non-trivial 2-box were considered by Bisch and Jones, and
classified by them for $\dim(\mathscr{S}_{3,\pm})\leq 12$ in \cite{BisJon97}; for $\dim(\mathscr{S}_{3,\pm})=13$ in \cite{BisJon02}; by Bisch, Jones and the author for $\dim(\mathscr{S}_{3,\pm})=14$ in \cite{BJL}.
They are given by the crossed product group subfactor planar algebra $\mathscr{S}^{\mathbb{Z}_3}$, the $free$ $product$ of two $TL$'s, well known as Fuss-Catalan \cite{BisJonFC}; the crossed product subgroup subfactor planar algebra $\mathscr{S}^{\mathbb{Z}_2\subset \mathbb{Z}_5\rtimes\mathbb{Z}_2}$; BMW \cite{BirWen,Mur87,Wen}, precisely one family from quantum $Sp(4,\mathbb{R})$ and one from quantum $O(3,\mathbb{R})$, respectively.
%see Theorem \ref{clas ex 1};
The classification for $\dim(\mathscr{S}_{3,\pm})=15$ is still unclear. In these cases, we always have $\dim(\mathscr{S}_{2,\pm})=3$, since $\dim(\mathscr{S}_{2,\pm})^2\leq \dim(\mathscr{S}_{3,\pm})$.

In this paper, we hope to classify subfactor planar algebras generated by 4 dimensional 2-boxes.
Observe that the free product of the index 2 subfactor planar algebra and a subfactor planar algebra generated by a 2-box with 15 dimensional 3-boxes has 4 dimensional 2-boxes and 24 dimensional 3-boxes. So we can only expect a classification for at most 23 dimensional 3-boxes.

\begin{theorem}
Suppose $\mathscr{S}$ is a subfactor planar algebra generated by 2-boxes, with $\dim(\mathscr{S}_{2,\pm})=4$ and $\dim(\mathscr{S}_{3,\pm})\leq 23$, then
then $\mathscr{S}$ is one of the follows

(1)$\mathscr{S}^{\mathbb{Z}_4}$ or $\mathscr{S}^{\mathbb{Z}_2\oplus \mathbb{Z}_2}$;

(2a)$\mathscr{A}*TL$ or $TL*\mathscr{A}$, where $\mathscr{A}$ is generated by a non-trivial 2-box with $\dim(\mathscr{A}_{3,\pm})\leq 13$;

(2b)$\mathscr{B}*\mathscr{S}^{\mathbb{Z}_2}$ or $\mathscr{S}^{\mathbb{Z}_2}*\mathscr{B}$, where $\mathscr{B}$ is generated by a non-trivial 2-box with $\dim(\mathscr{A}_{3,\pm})\leq 14$;

(3)$\mathscr{S}^{\mathbb{Z}_2}\otimes TL$.
\end{theorem}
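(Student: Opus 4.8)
The plan is to convert the two numerical hypotheses into rigid structural data on the 2-box algebra and then to exhibit a biprojection, which decomposes $\mathscr{S}$ along an intermediate subfactor. First I would determine the C*-algebra structure of $\mathscr{S}_{2,+}$. Because $\mathscr{S}$ is a subfactor planar algebra we have $\dim\mathscr{S}_{1,\pm}=1$, so the Jones projection $e_1$ is minimal; together with $\dim\mathscr{S}_{2,+}=4$ the only abstract options are $\mathbb{C}^4$ and $M_2(\mathbb{C})$. I would rule out $M_2(\mathbb{C})$ using the Markov trace and the self-duality of the box spaces, reducing to the abelian case in which $\mathscr{S}_{2,+}$ has four minimal projections, one of them $e_1$, with $1=\sum_i p_i$.

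Next I would read off the fusion-type data carried by the coproduct (Fourier-dual) multiplication on these minimal projections, writing $p_i\ast p_j=\sum_k N_{ij}^k\, p_k$. These coefficients are exactly what governs $\dim\mathscr{S}_{3,\pm}$: each nonzero coproduct contributes to the 3-box space, so the hypothesis $\dim\mathscr{S}_{3,\pm}\le 23$ bounds the number and size of the nonzero $N_{ij}^k$, and in parallel bounds the admissible traces and ranks of the $p_i$. This is where the lattice-of-projections and rank analysis enters: I would list the possible rank vectors of the minimal projections compatible with both the trace normalization and the 3-box bound.

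The decisive and hardest step is to produce a nontrivial biprojection. Here I would use the Schur product theorem: the coproduct of two positive 2-boxes is positive, which forces the structure constants $N_{ij}^k$ to be nonnegative and severely restricts their possible values once the 3-box dimension is capped. Combining this positivity with the rank bounds, I would argue that in every configuration except the group-like one some projection of intermediate rank (typically the range of a coproduct $p_i\ast p_j$) satisfies Bisch's criterion that both it and its Fourier transform are multiples of projections, hence is a biprojection. The obstacle is precisely to show that the positivity and dimension constraints leave no configuration that is simultaneously nonabelian-in-the-coproduct and biprojection-free; I expect this to require a careful case analysis of the $4\times 4$ coproduct table.

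When no nontrivial biprojection exists, the minimal projections must close under the coproduct into a group, and a trace computation identifies $\mathscr{S}$ as $\mathscr{S}^{\mathbb{Z}_4}$ or $\mathscr{S}^{\mathbb{Z}_2\oplus\mathbb{Z}_2}$, giving case (1). Otherwise, a nontrivial biprojection yields an intermediate subfactor, and I would invoke the Bisch--Jones free-product (Fuss--Catalan) structure to write $\mathscr{S}$ as a free product of two smaller subfactor planar algebras, or as a tensor product in the degenerate case. The bound $\dim\mathscr{S}_{3,\pm}\le 23$ then caps the 3-box dimensions of the factors, so one factor is $TL$ or $\mathscr{S}^{\mathbb{Z}_2}$ while the other is singly generated with at most $13$- or $14$-dimensional 3-boxes; applying the classifications of Bisch--Jones and Bisch--Jones--Liu to that factor produces exactly the families (2a), (2b), and the tensor product (3).
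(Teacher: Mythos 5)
Your skeleton --- abelianness of $\mathscr{S}_2$, Schur-product positivity of the coproduct table, production of a biprojection, then dimension counting plus the known singly-generated classifications --- is the same skeleton the paper uses, and your endgame (one factor is $TL$ or $\mathscr{S}^{\mathbb{Z}_2}$, the other has at most $13$- or $14$-dimensional 3-boxes) matches the paper's counting. But there is a genuine gap at the step you yourself call decisive, and it is not where you place the difficulty. The hard point is not to \emph{produce} a nontrivial biprojection: every planar algebra on the classification list has one ($\mathscr{S}^{\mathbb{Z}_4}$ and $\mathscr{S}^{\mathbb{Z}_2\oplus\mathbb{Z}_2}$ have the biprojections coming from their $\mathbb{Z}_2$ subgroups, the free products and the tensor product have their canonical ones), so your dichotomy ``no biprojection $\Rightarrow$ group-like'' never actually separates case (1) from the others. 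The hard point is that a nontrivial biprojection $Q$ does \emph{not} imply $\mathscr{S}=\mathscr{S}_Q*\mathscr{S}^Q$. The Bisch--Jones theorem you invoke only says that the planar \emph{subalgebra} $\mathscr{S}_Q\vee\mathscr{S}^Q$ generated by the two cut-downs is naturally the free product; it says nothing about this subalgebra being all of $\mathscr{S}$. Indeed $\mathscr{S}^{\mathbb{Z}_4}$ contains a nontrivial biprojection, yet it is not $\mathscr{S}^{\mathbb{Z}_2}*\mathscr{S}^{\mathbb{Z}_2}$ (already $\dim(\mathscr{S}^{\mathbb{Z}_4})_2=4$ while the free product has 3-dimensional 2-boxes). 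So your step ``a nontrivial biprojection yields \dots a free product of two smaller subfactor planar algebras'' fails as stated.

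What closes this gap in the paper is precisely the machinery you skipped. Theorem \ref{freedecom} separates $\mathscr{S}$ as a free product only under the extra hypothesis that $\mathscr{S}$ is generated by 2-boxes which either lie under $Q$ or interact with $Q$ correctly under coproduct, and verifying that hypothesis is the content of Theorem \ref{virtual normalizer}, whose proof (a long case analysis on ranks of iterated coproducts $P^{*n}*R$ and centrality of $(P')^{*l}$) is the technical heart of the whole classification. The route into that theorem is Lemma \ref{copro connect}: the bound $\dim(\mathscr{S}_3)\leq 23$ first gives $\dim(\mathscr{S}_3/\mathscr{I}_3)\leq 23-\dim(\mathscr{S}_2)^2=7$, which eliminates cases (c) and (d) of the proof of Theorem \ref{main1} outright, and in the surviving cases (a) and (b) forces $r(P_i'*P_j)=1$ for the relevant minimal projections, i.e.\ produces a \emph{virtual normalizer} rather than merely a biprojection; the tensor-product case (3) likewise requires Theorem \ref{tensordecom}, not just the existence of two commuting biprojections. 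Your fusion-table analysis can plausibly recover the rank-one statements (it is essentially Lemma \ref{copro connect} in coordinates), but without an analogue of Theorems \ref{freedecom} and \ref{virtual normalizer} the passage from that table to a decomposition of all of $\mathscr{S}$ --- not just of a proper subalgebra --- is unproved, and that passage is the main content of the theorem.
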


Another approach to the classification of planar algebras is to consider the relations of the generators, instead of the boundary of dimensions.
Several kinds of relations of 2-boxes appeared naturally in planar algebras generated by a non-trivial 2-box with at most 15 dimensional 3-boxes.
If $\mathscr{S}$ is a subfactor planar algebra generated by a non-trivial 2-box with $\dim(\mathscr{S}_{3,\pm})\leq 12$, then $\mathscr{S}_{3,+}/\mathscr{I}_{3,+}$ is abelian, where $\mathscr{I}_{3,+}$ is the basic construction ideal of $\mathscr{S}_{3,+}$, i.e., the two sided ideal of $\mathscr{S}_{3,+}$ generated by the Jones projection.
Motivated by this condition, we have the following classification.

\begin{theorem}
Suppose $\mathscr{S}$ is a subfactor planar algebra generated by 2-boxes, and $\mathscr{S}_{3,+}/\mathscr{I}_{3,+}$ is abelian,
then $\mathscr{S}$ is either depth 2 or the free product $\mathscr{A}_1*\mathscr{A}_2*\cdots *\mathscr{A}_n$, such that
$\mathscr{A}_1$ is Temperley-Lieb or the dual of $\mathscr{S}^{G_1}$, for a group $G_1$;
$\mathscr{A}_n$ is Temperley-Lieb or $\mathscr{S}^{G_n}$, for a group $G_n$;
$\mathscr{A}_m$, for $1<m<n$, is Temperley-Lieb or $\mathscr{S}^{G_m}$, for an abelian group $G_m$.
Verse Visa.
\end{theorem}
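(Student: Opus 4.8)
The plan is to convert the hypothesis on $\mathscr{S}_{3,+}/\mathscr{I}_{3,+}$ into the existence of enough \emph{biprojections} in $\mathscr{S}_{2,+}$, since biprojections record exactly the intermediate subfactors and since the Bisch--Jones free product is detected planar-algebraically by a biprojection whose associated intermediate planar algebra splits off as a free factor. Thus the theorem should reduce to two ingredients: (i) a production mechanism that manufactures a non-trivial biprojection out of the abelian quotient, and (ii) an induction that peels off one free factor at a time. I would dispatch the converse (``vice versa'') first and separately, by direct computation: for a depth-$2$ planar algebra the quotient $\mathscr{S}_{3,+}/\mathscr{I}_{3,+}$ is a commutative algebra of class-function type, and for a free product the $3$-box quotient decomposes as a direct sum of the corresponding quotients of the factors, so abelianness of each factor (which holds for $TL$ and for the listed group cases) forces abelianness of the whole.

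The heart of the argument --- and the step I expect to be the main obstacle --- is (i): showing that abelianness of $\mathscr{S}_{3,+}/\mathscr{I}_{3,+}$ forces a non-trivial biprojection whenever $\mathscr{S}$ is not depth $2$. My plan here leans on positivity via the Schur product theorem. Because $\mathscr{S}_{3,+}/\mathscr{I}_{3,+}$ is commutative it is a direct sum of copies of $\mathbb{C}$; I would lift a minimal idempotent of this quotient to a suitable positive element of $\mathscr{S}_{3,+}$ and transport it to $\mathscr{S}_{2,+}$ through the one-click rotation (Fourier transform). The commutativity of the quotient says precisely that a certain family of products of $2$-box generators lies in the basic-construction ideal, i.e.\ is ``Temperley--Lieb modulo lower terms''; combined with the Schur product theorem, which keeps the relevant convolution products positive, this should pin the range projection $P$ of such an element to satisfy the biprojection relation, namely that its Fourier transform $\mathcal{F}(P)$ is again a multiple of a projection. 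Making this extraction rigorous, and in particular ruling out the degenerate possibility that every candidate projection collapses to $e_1$ or $1$ --- which is exactly the depth-$2$ alternative --- is the delicate part.

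Granting a non-trivial biprojection $P$, the intermediate subfactor it defines yields, by the free product decomposition for intermediate planar algebras, a splitting in which the piece ``below'' $P$ and the piece ``above'' $P$ appear as free factors. I would then check that the hypothesis is hereditary: the $3$-box quotient of each factor embeds as a sub-quotient of $\mathscr{S}_{3,+}/\mathscr{I}_{3,+}$, hence is again abelian. Inducting on the index (or on $\dim \mathscr{S}_{2,+}$, which strictly drops for the proper factors), each factor is either depth $2$ or splits further, and collecting the pieces gives a single free product $\mathscr{A}_1 * \cdots * \mathscr{A}_n$ whose factors admit no proper biprojection. The remaining task is to identify such biprojection-free factors with abelian $3$-box quotient.

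A factor with no proper biprojection is either Temperley--Lieb --- in which case its $3$-box quotient is trivial --- or is depth $2$. A depth-$2$ subfactor planar algebra is the planar algebra of a finite-dimensional Kac algebra, and the requirement that $\mathscr{S}_{3,+}/\mathscr{I}_{3,+}$ be abelian forces this Kac algebra to be commutative or cocommutative, hence a group algebra; thus the factor is a group planar algebra $\mathscr{S}^{G}$ or its dual. The orientation asymmetry in the statement (dual at the first position, direct at the last) reflects the shading convention, the two ends of the chain being exposed to the unbounded region on opposite sides. Finally, the abelian-versus-arbitrary distinction for $G_m$ arises from how much of the group structure survives in the $3$-box quotient: an end factor is visible from only one side, so it contributes only the always-abelian center $Z(\mathbb{C}[G])$ and $G_1$, $G_n$ are unconstrained, whereas a middle factor, flanked by biprojections on both sides, contributes the full group algebra $\mathbb{C}[G_m]$, which is abelian precisely when $G_m$ is abelian. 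Assembling these identifications yields the list in the statement.
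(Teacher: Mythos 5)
Your skeleton (manufacture a biprojection from the abelianness hypothesis, split off a free factor, induct, identify the indecomposable pieces) superficially matches the paper, but it rests on a step that is false: the claim that ``granting a non-trivial biprojection $P$, the intermediate subfactor it defines yields, by the free product decomposition for intermediate planar algebras, a splitting.'' The Bisch--Jones theorem quoted in the paper only says that the planar \emph{subalgebra} $\mathscr{S}_Q\vee\mathscr{S}^Q$ generated by the two cut-downs is naturally isomorphic to $\mathscr{S}_Q*\mathscr{S}^Q$; it need not equal $\mathscr{S}$. Indeed any depth-$2$ planar algebra $\mathscr{S}^G$ with $G$ admitting a proper non-trivial subgroup has non-trivial biprojections but never splits as a free product, and even inside the class at hand, $\mathscr{S}^{\mathbb{Z}_4}*TL$ is a commute relation planar algebra containing the biprojection coming from $\mathbb{Z}_2\subset\mathbb{Z}_4$, which does not separate it. This is precisely why the paper cannot stop at a biprojection: it needs the much stronger notion of a \emph{virtual normalizer} (a central minimal projection $P$ with $tr(P)>1$ and $r(P*Q)=1$ for every minimal $Q\neq P'$), and the hard content is Lemma \ref{113} (every minimal projection of $\mathscr{S}_2/\mathscr{I}_3$ is a virtual normalizer when $\mathscr{S}$ is not depth $2$, proved via Lemmas \ref{copro connect}, \ref{111}, \ref{112}, \ref{Tijk} and the principal-graph bound on ranks of coproducts) together with Theorem \ref{virtual normalizer}, whose long argument converts the virtual normalizer into the hypothesis of Theorem \ref{freedecom} and hence into an actual equality $\mathscr{S}=\mathscr{S}_Q*\mathscr{S}^Q$. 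Your Schur-product/Fourier-transform mechanism, even if made rigorous, produces at best a biprojection --- essentially the paper's Theorem \ref{P bipro}, whose biprojection captures the depth-$2$ part of the subfactor, not a free product splitting --- so the induction never gets off the ground.

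There is a second, independent error in your treatment of the converse. It is not true that ``abelianness of each factor forces abelianness of the whole'': the $3$-box quotient of a free product is not the direct sum of the factors' quotients, and $\mathscr{S}^G*TL$ with $G$ non-abelian has non-abelian quotient even though both factors have abelian (indeed, for $\mathscr{S}^G$, zero) quotients. The obstruction is exactly the paper's Lemma \ref{comfreedecom}: a system of matrix units in the coproduct algebra $\mathscr{A}_{1,3}$ of the first factor (respectively in the product algebra $\mathscr{B}_2$ of the last factor), multiplied by a projection orthogonal to the Jones projection in the other factor, survives orthogonally to $\mathscr{I}_3$. So the correct hereditary conditions are the type conditions of Lemmas \ref{comfreepro} and \ref{comfreedecom}: the first factor must have abelian $\mathscr{S}_{1,3}$ (type NA, i.e.\ $TL$ or a dual group planar algebra), the last must have abelian $\mathscr{S}_2$ (type AN, i.e.\ $TL$ or $\mathscr{S}^{G}$), and the middle factors need both (type AA, i.e.\ abelian groups). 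Your heuristic that an end factor ``contributes only the center $Z(\mathbb{C}[G])$'' misidentifies the mechanism; what is position-dependent is \emph{which} of the two algebra structures (product versus coproduct) of the factor injects into the quotient, not how much of one of them does.
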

%Where $\mathscr{S}^{G}$ is the planar algebra of the group subfactor $\mathcal{R}\subset \mathcal{R}\rtimes G$, for an outer action of a finite group $G$ on the hyperfinite factor $\mathcal{R}$ of type II$_1$.

If $\mathscr{S}$ is a subfactor planar algebra generated by a non-trivial 2-box with $\dim(\mathscr{S}_{3,\pm})\leq 13$, then $\mathscr{S}$ is an $exchange$ $relation$ $planar$ $algebra$ \cite{Lan02}.
Motivated by the exchange relation, we have the following classification.

\begin{theorem}
Suppose $\mathscr{S}$ is an exchange relation planar algebra with $\dim(\mathscr{S}_{2,\pm})=4$, then $\mathscr{S}$ is one of the follows

(1)$\mathscr{S}^{\mathbb{Z}_4}$ or $\mathscr{S}^{\mathbb{Z}_2\oplus \mathbb{Z}_2}$;

(2)$\mathscr{A}*TL$ or $TL*\mathscr{A}$, where $\mathscr{A}$ is generated by a non-trivial 2-box with $\dim(\mathscr{A}_{3,\pm})\leq 13$;

(3)$\mathscr{S}^{\mathbb{Z}_2}\otimes TL$;

(4)$\mathscr{S}^{\mathbb{Z}_2\subset \mathbb{Z}_7\rtimes\mathbb{Z}_2}$.
\end{theorem}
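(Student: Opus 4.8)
My plan is to turn the exchange relation into explicit data on the low-dimensional boxes, use positivity to locate the projections of $\mathscr{S}_{2,+}$, and then make everything hinge on whether a non-trivial biprojection exists. By Landau's analysis an exchange relation planar algebra is determined by its $2$-box algebra together with the coefficients expressing how a single through-string passes a generator, and this forces $\dim \mathscr{S}_{3,\pm}$ to be bounded by an explicit function of $\dim \mathscr{S}_{2,\pm} = 4$. I would record the resulting structure constants for the two products on $\mathscr{S}_{2,+}$, namely the vertical multiplication making it a $4$-dimensional $C^\ast$-algebra and the coproduct (Fourier-dual product). Viewed as a $C^\ast$-algebra, $\mathscr{S}_{2,+}$ is either $\mathbb{C}^4$ or $M_2(\mathbb{C})$; since it always contains the two Temperley-Lieb idempotents, I would list the minimal projections together with their traces and their ranks (the partial-trace and Fourier data), using the Markov trace normalization to cut down the numerical possibilities.

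The crux is to decide the existence of a biprojection, and here I would invoke the Schur product theorem: for positive $2$-boxes $x,y$ the coproduct $x * y$ is again positive. Writing the coproduct in the basis of minimal projections, positivity forces its structure constants to be non-negative, and combining this with the associativity of both products and the rank/trace bookkeeping I would argue that, apart from one rigid configuration, some projection (or a canonical sum of minimal projections) must satisfy Bisch's biprojection identity, i.e. be a projection whose Fourier transform is again a multiple of a projection. Converting the inequalities coming from Schur positivity into the exact statement that a biprojection exists is the main obstacle: several competing configurations of projections and their Fourier transforms have to be eliminated, and for each one must show that simultaneous compatibility with Schur positivity, the trace identities, and the exchange coefficients is impossible unless a biprojection appears.

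With a non-trivial biprojection $b$ in hand I would apply Bisch's theory: $b$ produces an intermediate subfactor $N \subset P \subset M$, and reading off the two pieces of the decomposition and invoking the known classifications of singly generated planar algebras (Bisch-Jones, and the author's prior work) identifies $\mathscr{S}$ as a free product $\mathscr{A} * TL$ or $TL * \mathscr{A}$ of case (2), the tensor product $\mathscr{S}^{\mathbb{Z}_2} \otimes TL$ of case (3), or, in the depth-$2$ situation where both pieces are group-like of order $2$, one of the group planar algebras $\mathscr{S}^{\mathbb{Z}_4}$ or $\mathscr{S}^{\mathbb{Z}_2 \oplus \mathbb{Z}_2}$ of case (1). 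In the remaining rigid configuration, where positivity does not produce a biprojection, I would show that the exchange coefficients are completely determined and match exactly the subgroup planar algebra $\mathscr{S}^{\mathbb{Z}_2 \subset \mathbb{Z}_7 \rtimes \mathbb{Z}_2}$ of case (4), which indeed has no intermediate subfactor since $\mathbb{Z}_7 \rtimes \mathbb{Z}_2$ has no subgroup strictly between the chosen $\mathbb{Z}_2$ and the whole group. Finally I would verify that each listed example is genuinely an exchange relation planar algebra with $4$-dimensional $2$-boxes, confirming the list is exhaustive and non-redundant.
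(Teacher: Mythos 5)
Your outline shares the paper's overall skeleton: bound $\dim(\mathscr{S}_3)$ via the exchange relation, exploit positivity (the Schur product theorem), make the classification hinge on a biprojection, treat $\mathscr{S}^{\mathbb{Z}_2\subset\mathbb{Z}_7\rtimes\mathbb{Z}_2}$ as a rigid exceptional case pinned down by its coproduct structure constants, and identify planar algebras from their $2$-box structure (Theorem \ref{p2ex}). But the step you yourself call ``the main obstacle'' is precisely the content of the paper, and your proposed substitute --- non-negativity of the coproduct structure constants plus associativity and trace bookkeeping --- does not reconstruct it. The paper's mechanism is more specific: the exchange relation forces $\dim\bigl(s_3(1\boxdot Q_j)\mathscr{S}_3P_i\bigr)\leq 1$; Lemma \ref{copro connect} converts such dimension counts into rank-one statements $r(P_i'*P_j)=1$ for coproducts of minimal projections; this feeds the notion of a \emph{virtual normalizer}, and Theorem \ref{virtual normalizer} (a long, delicate induction, and the real engine of the proof) concludes that $\mathscr{S}$ is Temperley--Lieb or is \emph{separated} by a non-trivial biprojection as a free product. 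Moreover the case analysis is organized by the C*-structure of $\mathscr{S}_3/\mathscr{I}_3$ (abelian; a sum of $M_{2\times2}$ and $\mathbb{C}$'s; containing $M_{2\times2}\otimes M_{2\times2}$, which is excluded by a $4$-partite principal graph argument; and $M_{3\times3}$, the $\mathbb{Z}_7\rtimes\mathbb{Z}_2$ case), not by ``biprojection versus no biprojection''. In particular class (3) does not fall out of a generic ``biprojection found'' branch: it emerges only inside the $M_{2\times2}\oplus\mathbb{C}$'s case, from \emph{two} biprojections satisfying $(e+P_1)*(e+P_2)=\frac{1}{\delta}id$, via Theorem \ref{tensordecom} and a passage to the dual planar algebra. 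None of this is visible in your plan. (A small additional point: $\mathscr{S}_2\cong M_2(\mathbb{C})$ cannot occur, since $\mathbb{C}e$ is a two-sided ideal of $\mathscr{S}_2$, so $\mathscr{S}_2$ is automatically abelian.)

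The second gap is a genuine logical flaw rather than an omission: even once a non-trivial biprojection $Q$ exists, your step ``read off the two pieces of the decomposition and invoke the known classifications'' is not valid. By the Bisch--Jones theorem quoted in the paper, $Q$ only gives a planar \emph{subalgebra} $\mathscr{S}_Q\vee\mathscr{S}^Q\cong\mathscr{S}_Q*\mathscr{S}^Q$ of $\mathscr{S}$, and this inclusion can be proper. Concretely, $\mathscr{S}^{\mathbb{Z}_4}$ and the Fuss--Catalan algebra $TL(\sqrt{2})*TL(\sqrt{2})$ both contain a non-trivial biprojection whose two cut-downs are $\mathscr{S}^{\mathbb{Z}_2}=TL(\sqrt{2})$, yet the former is depth $2$ and the latter has infinite depth; similarly $\mathscr{S}^{\mathbb{Z}_2}\otimes TL$ and $\mathscr{S}^{\mathbb{Z}_2}*TL$ contain biprojections with identical pieces but lie in different classes of your list. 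So ``pieces plus classification tables'' cannot determine $\mathscr{S}$; one must prove that $\mathscr{S}$ is actually \emph{separated} by $Q$, i.e.\ verify the generation hypothesis of Theorem \ref{freedecom} for the free product case, or the commuting/co-commuting hypotheses of Theorem \ref{tensordecom} for the tensor case, or show $\mathscr{S}_3=\mathscr{I}_3$ in the depth $2$ case. In the paper this verification is exactly what the virtual normalizer theorem and the case analysis deliver; in your proposal it is assumed where it needs to be proved.
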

%Where $\mathscr{S}^{H\subset G}$ is the planar algebra of the subgroup subfactor $\mathcal{R}\rtimes H\subset \mathcal{R}\rtimes G$.

The three classification results rely on a new approach to the complexity of subfactors, the rank of 2-boxes.
%It is given by counting the number of length 2 paths in the principal graph.
We will show that the rank of the $coproduct$ of two 2-box minimal projections is bounded by the number of length 2 paths between the two corresponding points in the principal graph, see Lemma \ref{copro connect}.
We can show that a subfactor planar algebra is a free product by looking at its principal graph, see Theorem \ref{virtual normalizer}.

In section \ref{back}, we recall some facts and notations about planar algebras. Some new results are obtained, especially a general form of Wenzl's formula \cite{Wen87}.
In section \ref{tensor product}, a diagrammatic interpretation of the tensor product is discovered based on the construction of an inner braid. It is related to the $flatness$ of a planar algebra with respect to two $biprojections$.
If a subfactor planar algebra contains two commuting and co-commuting biprojections, then the planar subalgebra generated by the flat parts with respect to the two biprojections forms a tensor product, see Theorem \ref{tensorembed}.
In section \ref{biprojections} , we prove the Schur product theorem for subfactor planar algebras, see Theorem \ref{P*P>0}. Based on it, a new equivalent definition of biprojections in given, see Theorem \ref{P*P=P}.
Consequently the support of the $pure$ $depth$ 2 $parts$ of an irreducible subfactor planar algebra is a biprojection, see Theorem \ref{P bipro}. From a von Neumann algbera perspective, that tells the existence of an intermediate subfactor which is the crossed product of the smaller factor by a Kac algebra. By the new definition of biprojections, we can talk about the biprojection generated by a 2-box, see Definition \ref{biprogen}. By the Schur product theorem, we show that the norm of the Fourier transform of a positive 2-box is achieved on the Jones projection, see Lemma \ref{norm1a}. Then we prove that the Fourier transform of the biprojection generated by a positive 2-box is the spectrum projection of the Fourier transform of the 2-box at its maximal spectrum, see Theorem \ref{maxcoeff}. This result generalises a well known result in representation theory, see Remark \ref{maxcoeffremark}.
In section \ref{code}, we discuss the construction and the decomposition of exchange relation planar algebras under the free product and the tensor product. We obtain two general constructions of exchange relation planar algebras, see Proposition \ref{exfreeprod},\ref{extensorprod}, and one family of exchange relation planar algebras, see Theorem \ref{ex2p2};
In section \ref{cla expa}, we prove the three main classification results.
\begin{ac}
The author would like to thank Vaughan F. R. Jones for suggesting and discussing this project, and Emily Peters for helpful conversations. The author was supported by DOD-DARPA grant HR0011-12-1-0009.
\end{ac}

\section{Preliminaries}\label{back}
We refer the reader to \cite{Jon12} for the definition of a planar algebra.
\begin{definition}
A subfactor planar algebra $\mathscr{S}=(\mathscr{S}_{n,\pm}, n\geq0)$ will be a spherical planar *-algebra over $\mathbb{C}$ with $\dim(\mathscr{S}_{n,\pm})<\infty$ for all $n$, $\dim(\mathscr{S}_{0,\pm})=1$, such that the Markov trace induces a positive definite inner product of $\mathscr{S}_{n,\pm}$.
\end{definition}
The dual of a planar algebra is given by switching its shading.
\subsection{Notations}
In a planar tangle, we use a thick string with a number $k$ to indicate $k$ parallel strings.
The distinguished intervals of a planar tangle are marked by $\$$'s (corresponding to $*$'s in \cite{Jon12}).

In this paper the planar algebra $\mathscr{S}=(\mathscr{S}_{n,\pm}, n\geq0)$ is always the standard invariant of an irreducible subfactor, which is automatically spherical. Equivalently we assume that $\dim(\mathscr{S}_{1,\pm})=1$.
Since we only work with $\mathscr{S}_{n,+}$, we write $\mathscr{S}_{n}$ for $\mathscr{S}_{n,+}$, and the dollar sign $\$ $ of a planar tangle is always in an unshaded region.
An element in $\mathscr{S}_{n}$ is written as a rectangle with the dollar sign on the left, called an $n$-box. The dollar sign and the boundary are omitted, if there is no confusion. For example, we may use $\gra{notation11}$ instead of $\gra{notation12}$, $\gra{notation6}$ instead of $\gra{notation66}$.

The value of a closed circle is $\delta$;
$id$ means the identity of $\mathscr{S}_2$;
$e_n=\frac{1}{\delta}\gra{notation1}$ is the Jones Projection in $\mathscr{S}_{n+1}$;
$e=e_1$;
The (unnormalized) Markov trace on  $\mathscr{S}_n$ is denoted by $tr_n(x)=\gra{notation2}$, $\forall ~x\in\mathscr{S}_n$.
When $n=2$, we write $tr(x)$ for short.
For $a, b\in \mathscr{S}_n$, the product of $a$ and $b$ is defined as $ab=\graa{notation4}$.
If $a, b\in \mathscr{S}_2$,
then we define $a'=\gra{notation3}$ to be the contragredient of $a$,
$a*b=\gra{notation5}$ to be the ($1$-string) coproduct of $a$ and $b$;
Furthermore for $a_i\in\mathscr{S}_2,i=1,2,\cdots,k$,
We write $\prod*_{i=1}^k a_i$ for $a_1*a_2*\cdots *a_k$, and $a^{*k}$ for $\prod*_{i=1}^k a$.

Note that $\mathscr{S}_2$ is embedded in $\mathscr{S}_3$ by adding one string to the right.
Thus a 2-box $a$ can be viewed as an element in $\mathscr{S}_3$, still written as $a$.

The Fourier transform, i.e., the one click rotation, is an isomorphism from $\mathscr{S}_{2}$ to $\mathscr{S}_{2,-}$, and $\mathscr{S}_{2,-}$ is identified as a subspace of $\mathscr{S}_3$ by adding one string to the left.
Let us define $1\boxdot a$ to be the element $\gra{notation6}$ of $\mathscr{S}_3$, and $\mathscr{S}_{1,3}$ to be $\{1\boxdot z |z\in \mathscr{S}_2\}$. Then $\mathscr{S}_{1,3}$ is isomorphic to $\mathscr{S}_{2,-}$.
It is easy to check that $(1\boxdot a)(1\boxdot b)=1\boxdot (a*b)$, $e_2=\frac{1}{\delta}(1\boxdot id)$ and $1\boxdot a'$ is the adjoint of $1\boxdot a^*$, where $a^*$ is the adjoint of $a$.

\begin{definition}
For two self-adjoint operators $x$ and $y$, we say $x$ is weaker (resp. stronger) than $y$ if the support of $x$ (resp.$y$) is a subprojection of the support of $y$ (resp. $x$), written as $x\preceq y$ (resp. $y\succeq x$). If $x\preceq y$ and $y\preceq x$, then they have the same support, written as $x\sim y$.

\end{definition}
For a self-adjoint operator $x$ and a projection $p$, $x\preceq p$ is equivalent to $x=pxp$.

\begin{notation}
The support of a two sided ideal of a finite dimensional C*-algebra is the maximal projection in the ideal.
\end{notation}

\subsection{Principle graphs, Depth 2 Subfactors and Subgroup Subfactors}
We refer the reader to \cite{JS} for the definition of the (dual) principal graph of a subfactor.
It is also defined for a subfactor planar algebra, since it does not depend on the presumed subfactor \cite{Bis97}.

The principal graph and the dual principal graph are parts of the Ocneanu 4-partite principal graph \cite{Ocn88,JMN}.

Suppose $\mathscr{S}$ is the planar algebra of $\mathcal{N}\subset\mathcal{M}$.
Let us define $\mathscr{I}_{n+1}$ to be the two sided ideal of $\mathscr{S}_{n+1}$ generated by the Jones projection $e_n$, then $\mathscr{I}_{n+1}=\mathscr{S}_{n+1} e_n \mathscr{S}_{n+1}= \mathscr{S}_n e_n \mathscr{S}_n$. Let us define $\mathscr{S}_{n}/\mathscr{I}_{n}$ to be the orthogonal complement of $\mathscr{I}_n$ in $\mathscr{S}_n$,
there is a bijection between the equivalent classes of minimal projections of $\mathscr{S}_{n}/\mathscr{I}_{n}$ and points in the principal graph whose distance from the marked point is $n$.

\begin{definition}
In the principal graph, a point is said to be depth $n$, if its distance from the marked point is $n$.
Its multiplicity is the number of length $n$ paths from the marked point to it.
The depth of a principal graph is defined to be the maximal depth of its points.
\end{definition}

\begin{notation}
If the principal graph of a subfactor planar algebra is depth 2, equivalently $\mathscr{S}_3=\mathscr{I}_3$, then we call it a depth 2 subfactor planar algebra.
\end{notation}

There is a one to one correspondence between depth 2 subfactor planar algebras and finite dimensional Kac algebras, or finite dimensional C* Hopf algebras \cite{Sat97}\cite{KLS03}.
Precisely for any depth 2 subfactor planar algebra $\mathscr{S}$, $\mathscr{S}_2$ forms a Kac algebra.
On the other hand for any finite dimensional Kac algebra $K$, there is an
outer action of $K$ on the hyperfinite factor $\mathcal{R}$ of type $II_1$. Then $\mathcal{R}' \cap \mathcal{R}\rtimes K=\mathbb{C}$. Thus we obtain an irreducible subfactor planar algebra as the standard invariant of $\mathcal{R}\subset \mathcal{R}\rtimes K$, denoted by $\mathscr{S}^K$. Then $\mathscr{S}^K$ is depth 2, and $(\mathscr{S}^K)_{2}$ is isomorphic to the dual of $K$ as a Kac algebra.
Specially when $G$ is a finite group,
we obtain a subfactor planar algebra $\mathscr{S}^G$ of $\mathcal{R}\subset\mathcal{R}\rtimes G$.
If $H$ is a subgroup of $G$, then $\mathcal{R}\rtimes H$ is subfactor of $\mathcal{R}\rtimes G$. Thus we obtain a subfactor planar algebra of $\mathcal{R}\rtimes H\subset \mathcal{R}\rtimes G$.

\begin{definition}
Let us define $\mathscr{S}^{G}$ to be the planar algebra of the crossed product group subfactor $\mathcal{R}\subset \mathcal{R}\rtimes G$,
$\mathscr{S}^{H\subset G}$ to be the planar algebra of the crossed product subgroup subfactor $\mathcal{R}\rtimes H\subset \mathcal{R}\rtimes G$.
\end{definition}

The principal graph of a subgroup subfactor is described in \cite{JS}.
\subsection{Wenzl's formula}
Suppose $\mathscr{S}$ is a subfactor planar algebra,
$\mathscr{I}_{n+1}$ is the two sided ideal of $\mathscr{S}_{n+1}$ generated by the Jones projection $e_n$, and $\mathscr{S}_{n+1}/\mathscr{I}_{n+1}$ is its orthogonal complement in $\mathscr{S}_{n+1}$, then $\mathscr{S}_{n+1}=\mathscr{I}_{n+1} \oplus \mathscr{S}_{n+1}/\mathscr{I}_{n+1}$.
Let $s_{n+1}$ be the support of $\mathscr{S}_{n+1}/\mathscr{I}_{n+1}$.

If $\mathscr{S}$ is Temperley-Lieb (with $\delta^2\geq 4$), then $s_n$ is the $n_{th}$ Jones-Wenzl projection.
The following relation is called Wenzl's formula \cite{Wen87},
$$\grb{wen1}=\frac{tr_{n-1}(s_{n-1})}{tr_n(s_n)}\grb{wen2}+\grb{wen3}.$$
It tells how a minimal projection is decomposed after adding one string to the right.

In general, suppose $P$ is a minimal projection in $\mathscr{S}_n/\mathscr{I}_n$.
Note that $\mathscr{S}_{n+1}=\mathscr{I}_{n+1} \oplus \mathscr{S}_{n+1}/\mathscr{I}_{n+1}$. When $P$ is included in $\mathscr{S}_{n+1}$, it is decomposed as two projections $P=P_{old}+P_{new}$, such that
$P_{old}\in\mathscr{I}_{n+1}$ and $P_{new}\in\mathscr{S}_{n+1}/\mathscr{I}_{n+1}$. By the definition of $s_{n+1}$, we have $P_{new}=s_{n+1}P$. Now let us construct $P_{old}$.
Let $v$ be the depth $n$ point in the principal graph corresponding to P, $V$ be the central support of $P$.
Suppose $v_i$, $1\leq i \leq m$, are the depth $(n-1)$ points adjacent to $v$, the multiplicity of the edge between $v_i$ and $v$ is $m(i)$,
and $Q_i$ is a minimal projection in $\mathscr{S}_{n-1}$ corresponding to $v_i$.
For each $i$, take partial isometries $\{U_{ij}\}_{j=1}^{m(i)}$ in $\mathscr{S}_n$, such that $U_{ij}^*U_{ij}=P$, $\forall 1\leq j\leq m(i)$, and $\sum_{j=1}^{m(i)}U_{ij}U_{ij}^*=Q_iV$.
It is easy to check that $\frac{tr_{n-1}(Q_i)}{tr_n(P)}\grb{wen5}$ is a subprojection of $P$, and they are mutually orthogonal for all $i,j$.
By Frobenius reciprocity, their sum is $P_{old}$.
Then the general Wenzl's formula $P_{new}=P-P_{odd}$ is given as
$$s_{n+1}\grb{wen4}=\grb{wen4}-(\sum_{i=1}^m\sum_{j=1}^{n(i)} \frac{tr_{n-1}(Q_i)}{tr_n(P)}\grb{wen5}).$$

Now we give an alternative proof of the general Wenzl's formula without applying Frobenius reciprocity. This proof is very useful, since sometimes the planar algebra is constructed by generators and relations.
Based on this proof, we may derive the Bratteli diagram inductively without assuming it is a subfactor planar algebra.

\begin{proposition}
Take $P_+=\grb{wen4}-(\sum_{i=1}^m\sum_{j=1}^{n(i)} \frac{tr_{n-1}(Q_i)}{tr_n(P)}\grb{wen5})$. For any $x\in \mathscr{I}_{n+1}$, we have
$xP_+=P_+x=0$.
\end{proposition}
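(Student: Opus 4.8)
The plan is to show that $P_+$ annihilates $\mathscr{I}_{n+1}$ from both sides. Since $P$ is self-adjoint and each term $U_{ij}e_nU_{ij}^*$ is self-adjoint with a real coefficient, $P_+$ is self-adjoint; hence $xP_+=(P_+x^*)^*$, and because $\mathscr{I}_{n+1}$ is $*$-closed, $x^*$ again lies in $\mathscr{I}_{n+1}$. It therefore suffices to prove $P_+x=0$ for every $x\in\mathscr{I}_{n+1}$. Using $\mathscr{I}_{n+1}=\mathscr{S}_ne_n\mathscr{S}_n$, every such $x$ is a finite sum of terms $ae_nb$ with $a,b\in\mathscr{S}_n$, so the whole statement reduces to
\[
P_+\,a\,e_n=0\qquad\text{for all }a\in\mathscr{S}_n.
\]

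First I would record two standard facts about the Jones projection. The basic construction relation $e_n c e_n=E_{n-1}(c)\,e_n$ holds for $c\in\mathscr{S}_n$, where $E_{n-1}\colon\mathscr{S}_n\to\mathscr{S}_{n-1}$ is the trace-preserving conditional expectation (diagrammatically, capping off the last string). Moreover the map $\mathscr{S}_n\ni y\mapsto ye_n$ is injective, since $ye_n=0$ forces $tr_{n+1}(ye_ny^*)=0$, and $tr_{n+1}(ye_ny^*)$ is a positive scalar multiple of $tr_n(yy^*)$, whence $y=0$ by faithfulness of the Markov trace. Expanding $P_+\,a\,e_n$ and collapsing the inner pair of Jones projections through $e_n(U_{ij}^*a)e_n=E_{n-1}(U_{ij}^*a)\,e_n$ gives
\[
P_+\,a\,e_n=\Bigl(Pa-\sum_{i=1}^m\sum_{j=1}^{m(i)}\frac{tr_{n-1}(Q_i)}{tr_n(P)}\,U_{ij}\,E_{n-1}(U_{ij}^*a)\Bigr)e_n.
\]
By the injectivity just noted, the required vanishing is equivalent to the operator identity in $\mathscr{S}_n$
\[
Pa=\sum_{i=1}^m\sum_{j=1}^{m(i)}\frac{tr_{n-1}(Q_i)}{tr_n(P)}\,U_{ij}\,E_{n-1}(U_{ij}^*a),\qquad\forall\,a\in\mathscr{S}_n,
\]
which I shall call $(\star)$.

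The heart of the matter, and the step I expect to be the main obstacle, is $(\star)$. It is exactly a Pimsner--Popa-type reconstruction of left multiplication by $P$ out of the partial isometries $U_{ij}$, and it is where the three ingredients $U_{ij}^*U_{ij}=P$, $\sum_{j}U_{ij}U_{ij}^*=Q_iV$, and the precise normalization $tr_{n-1}(Q_i)/tr_n(P)$ must all conspire. Observe that both sides of $(\star)$ are right $\mathscr{S}_{n-1}$-module maps of $a$ (for the right-hand side this uses $E_{n-1}(xs)=E_{n-1}(x)s$ when $s\in\mathscr{S}_{n-1}$), and that both take values with left support under $\sum_iQ_iV=V\ge P$, the last equality being precisely the statement that restricting the $V$-block of $\mathscr{S}_n$ to $\mathscr{S}_{n-1}$ decomposes it into the neighbours $Q_i$ of $v$. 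I would therefore verify $(\star)$ by checking it on a set of right-module generators adapted to the $U_{ij}$, where it collapses by the relations $U_{ij}^*U_{ij}=P$ and $\sum_jU_{ij}U_{ij}^*=Q_iV$, the normalizing ratio being forced so that each reconstructed block carries the correct trace. Equivalently, one may run the entire argument diagrammatically: substitute $P_+$ into the picture for $ae_nb$, pull the cap coming from $e_n$ through the boxes, and cancel using the defining relations of the $U_{ij}$. This computation is what replaces the appeal to Frobenius reciprocity in the first derivation, and it is exactly the feature that makes the proof usable when $\mathscr{S}$ is presented only through generators and relations. Once $(\star)$ holds, the reductions above give $P_+\,a\,e_n=0$ for all $a$, hence $P_+x=0$, and by self-adjointness $xP_+=0$, for every $x\in\mathscr{I}_{n+1}$.
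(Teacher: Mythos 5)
Your reduction scheme (self-adjointness of $P_+$, the factorization $\mathscr{I}_{n+1}=\mathscr{S}_ne_n\mathscr{S}_n$, injectivity of $y\mapsto ye_n$, and then checking an identity $(\star)$ in $\mathscr{S}_n$ on right $\mathscr{S}_{n-1}$-module generators) is sound in outline, and it is essentially the paper's own argument read from the transposed side. But there are two genuine problems. First, the identity $(\star)$ you reduce to is false as stated, because you have the subtracted terms the wrong way around: they are multiples of $U_{ij}^*e_nU_{ij}$, not of $U_{ij}e_nU_{ij}^*$. This is forced by the sentence immediately preceding the proposition, which says each term is a subprojection of $P$: since $U_{ij}^*=PU_{ij}^*$ and $U_{ij}=U_{ij}P$, the element $U_{ij}^*e_nU_{ij}$ has both supports under $P$, whereas $U_{ij}e_nU_{ij}^*$ has supports under $Q_iV$. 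With your orientation, $\sum_{ij}c_{ij}U_{ij}E_{n-1}(U_{ij}^*\,\cdot\,)$ reconstructs \emph{right} multiplication by $P$ rather than left multiplication, and your $(\star)$ fails outright. Concretely, take the basic construction of $\mathbb{C}\subset M_2(\mathbb{C})$ with $P=e_{11}$, $Q_1=1$, $V=1$, $U_{11}=e_{11}$, $U_{12}=e_{21}$; these satisfy exactly the relations you invoke, yet for $a=e_{12}$ one has $Pa=e_{12}$ while $U_{11}E(U_{11}^*a)=e_{11}E(e_{12})=0$ and $U_{12}E(U_{12}^*a)=e_{21}E(0)=0$. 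The same example shows your auxiliary claim $\sum_iQ_iV=V\geq P$ is not something one may use: in general all one knows is $\sum_{ij}U_{ij}U_{ij}^*=\sum_iQ_iV$, and $P$ need not lie under this projection.

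Second, and independently of the orientation, you never actually prove $(\star)$: you assert it will "collapse by the relations $U_{ij}^*U_{ij}=P$ and $\sum_jU_{ij}U_{ij}^*=Q_iV$", but the counterexample above shows those relations alone cannot force any such identity, because they say nothing about how the conditional expectation acts on the $U$'s. The missing ingredient is precisely the paper's key computation: letting $\epsilon:\mathscr{S}_n\to\mathscr{S}_{n-1}$ denote the partial trace (so $\epsilon=\delta E_{n-1}$ and $tr_{n-1}\circ\epsilon=tr_n$), one has $\epsilon(U_{ij}U_{kl}^*)=\delta_{ik}\delta_{jl}\frac{tr_n(P)}{tr_{n-1}(Q_i)}Q_i$. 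This is where the hypotheses on the $Q_i$ enter: $\epsilon(U_{ij}U_{kl}^*)$ lies in $Q_i\mathscr{S}_{n-1}Q_k$, which equals $\delta_{ik}\mathbb{C}Q_i$ because the $Q_i$ are mutually inequivalent \emph{minimal} projections, and the scalar is then fixed by a trace count. Granting this lemma, the correctly oriented identity $Pa=\sum_{ij}\frac{tr_{n-1}(Q_i)}{tr_n(P)}U_{ij}^*\,\epsilon(U_{ij}a)$ does collapse on the generators $a=U_{kl}^*$; and to know these generate, you need $P\mathscr{S}_n=\sum_{ij}U_{ij}^*\mathscr{S}_{n-1}$ (the statement that the $v_i$, with multiplicities $m(i)$, exhaust the edges at $v$), which must replace your false support claim. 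With those two facts supplied your outline becomes a correct proof, and it is then the same proof as the paper's, which establishes the partial-trace lemma, deduces $e_nU_{ij}P_+=0$, and writes $x\iota(P)=\sum_{ij}x_{ij}e_nU_{ij}$ for $x\in\mathscr{I}_{n+1}$.
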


\begin{proof}
For $U_{ij}$, $1\leq i\leq m$, $1\leq j \leq n(i)$ ,$U_{kl}$, $1\leq k\leq m$, $1\leq l \leq n(k)$, we have
$$ \grb{wen6} = Q_k \grb{wen6} Q_i = \delta_{i,k} Q_i \grb{wen6} Q_i=\delta_{i,k}\delta_{j,l}\frac{tr_n(P)}{tr_{n-1}(Q_i)}Q_i,$$
since $\{Q_i\}$ are mutually inequivalent minimal projections in $\mathscr{P}_{n-1}$, and the last equality follows from computing the trace.
So $e_{n}U_{i,j}p_+=e_{n}U_{i,j}-e_{n}U_{i,j}=0$.

For any $x\in \mathscr{I}_{n+1}$, we have $xP=\sum_{i=1}^m\sum_{j=1}^{n(i)} x_{i,j}e_{n}U_{i,j}$, for some $x_{i,j}\in\mathscr{P}_{n}$. So $xP_+=(xP)P_+=0$.
Similarly $P_+x=0$.
\end{proof}

\subsection{Tensor Products , Free Products and Biprojections}
Let us recall some facts about tensor products, free products and biprojections.

\begin{definition}
Suppose $\mathscr{A}$ and $\mathscr{B}$ are planar algebras, let us define $\mathscr{A}\otimes\mathscr{B}$ to be their tensor product \cite{JonPA}.
\end{definition}
The tensor product $\mathscr{A}\otimes\mathscr{B}=((\mathscr{A}\otimes\mathscr{B})_n,n\geq 0)$ is a planar algebra for which $(\mathscr{A}\otimes\mathscr{B})_n=\mathscr{A}_n\otimes\mathscr{B}_n$ and the action of an unlabeled tangle $T$ from $\otimes_{i=1}^k (\mathscr{A}\otimes\mathscr{B})_{n_i}$ to $(\mathscr{A}\otimes\mathscr{B})_m$ is defined as $T(\otimes_{i=1}^k (x_i\otimes y_i))=T(\otimes_{i=1}^k x_i)\otimes T(\otimes_{i=1}^k y_i)$, for any $x_i\otimes y_i\in \mathscr{A}_{n_i}\otimes\mathscr{B}_{n_i}, 1\leq i\leq m$.
If both $\mathscr{A}$ and $\mathscr{B}$ admit the adjoint operation $*$, then we define $(x\otimes y)^*$ to be $x^*\otimes y^*$.

\begin{proposition}
Suppose $\mathscr{A}$ and $\mathscr{B}$ are subfactor planar algebras. Then $\mathscr{A}\otimes\mathscr{B}$ is a subfactor planar algebras.
\end{proposition}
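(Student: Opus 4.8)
The plan is to check, one by one, the four defining conditions of a subfactor planar algebra for $\mathscr{A}\otimes\mathscr{B}$, taking advantage of the fact that every piece of structure is defined factorwise. Since $(\mathscr{A}\otimes\mathscr{B})_{n,\pm}=\mathscr{A}_{n,\pm}\otimes\mathscr{B}_{n,\pm}$, finite-dimensionality is immediate from $\dim((\mathscr{A}\otimes\mathscr{B})_{n,\pm})=\dim(\mathscr{A}_{n,\pm})\dim(\mathscr{B}_{n,\pm})<\infty$, and the $0$-box condition is $\dim((\mathscr{A}\otimes\mathscr{B})_{0,\pm})=1\cdot 1=1$. That $\mathscr{A}\otimes\mathscr{B}$ is a planar $*$-algebra over $\mathbb{C}$ is essentially built into the construction recalled above: the operadic action $T(\otimes_i(x_i\otimes y_i))=T(\otimes_i x_i)\otimes T(\otimes_i y_i)$ together with the involution $(x\otimes y)^*=x^*\otimes y^*$. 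First I would confirm that these two are mutually compatible, i.e. that for the reflected tangle $\overline{T}$ one has $\overline{T}(z_1^*,\dots,z_k^*)=(T(z_1,\dots,z_k))^*$, by writing each $z_i$ as a sum of simple tensors, reducing the identity to the corresponding $*$-compatibility already known in $\mathscr{A}$ and in $\mathscr{B}$, and extending by sesquilinearity.

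Next I would treat sphericality and pin down the Markov trace. A single closed loop in $\mathscr{A}\otimes\mathscr{B}$ evaluates to $\delta_{\mathscr{A}}\delta_{\mathscr{B}}$, so the modulus of the tensor product is the product of the two moduli; and the two closure maps $(\mathscr{A}\otimes\mathscr{B})_{n,\pm}\to(\mathscr{A}\otimes\mathscr{B})_{0,\pm}$ obtained by capping off on the left and on the right agree on every simple tensor precisely because they agree on each factor. Hence $\mathscr{A}\otimes\mathscr{B}$ is spherical, and its (unnormalized) Markov trace factors as $tr_n^{\mathscr{A}\otimes\mathscr{B}}=tr_n^{\mathscr{A}}\otimes tr_n^{\mathscr{B}}$, again by checking the identity on simple tensors and extending linearly.

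With the trace identified, positive-definiteness of the induced inner product is the final point. On $(\mathscr{A}\otimes\mathscr{B})_n$ the inner product is $\langle x,y\rangle=tr_n^{\mathscr{A}\otimes\mathscr{B}}(y^*x)$, which on simple tensors factors as $\langle x_1\otimes x_2,\,y_1\otimes y_2\rangle=\langle x_1,y_1\rangle_{\mathscr{A}}\,\langle x_2,y_2\rangle_{\mathscr{B}}$; thus the form on $\mathscr{A}_n\otimes\mathscr{B}_n$ is exactly the tensor product of the two given positive-definite forms. I would then conclude by the standard fact that the tensor product of two positive-definite Hermitian forms on finite-dimensional spaces is positive-definite — concretely, if $\{e_i\}$ and $\{f_j\}$ are orthonormal bases of $\mathscr{A}_n$ and $\mathscr{B}_n$, then $\{e_i\otimes f_j\}$ is orthonormal and the form is diagonal and positive; equivalently, $\mathscr{A}_n\otimes\mathscr{B}_n$ is a finite-dimensional $C^*$-algebra and the product of two faithful positive traces is a faithful positive trace. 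I expect the genuinely delicate step to be not this linear-algebra fact but the bookkeeping just before it: correctly establishing that the Markov trace of $\mathscr{A}\otimes\mathscr{B}$ really is $tr_n^{\mathscr{A}}\otimes tr_n^{\mathscr{B}}$, which relies on sphericality and on the multiplicativity of the modulus. Once that identification is secured, positivity — and hence the subfactor-planar-algebra property — follows formally.
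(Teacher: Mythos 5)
The paper states this proposition without any proof at all---it is presented as a routine consequence of the factorwise definition of the tensor product, with the construction itself deferred to \cite{JonPA}. Your axiom-by-axiom verification is correct and is precisely the argument the paper implicitly relies on: since the tangle action and the $*$-operation are defined factorwise, finite-dimensionality, the $0$-box condition, sphericality, and the factorization $tr_n^{\mathscr{A}\otimes\mathscr{B}}=tr_n^{\mathscr{A}}\otimes tr_n^{\mathscr{B}}$ all follow by checking on simple tensors, and positive-definiteness then reduces to the standard fact that the tensor product of two positive-definite Hermitian forms is positive-definite (your orthonormal-basis argument is the right one). So there is nothing to compare against; your write-up simply supplies the details the paper omits, and it does so correctly.
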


\begin{definition}
Suppose $\mathscr{A}$ and $\mathscr{B}$ are planar algebras. Let us define  $\mathscr{A}*\mathscr{B}$ to be the free product of $\mathscr{A}$ with $\mathscr{B}$ \cite{BisJonFC}\cite{BisJonfree}.
\end{definition}
Each element in the free product $\mathscr{A}*\mathscr{B}$ is a linear sum of A,B-colour diagrams which consist of non-intersecting A,B-colour strings, labels of $\mathscr{A}$ which only connect with A-coloured strings, and labels of $\mathscr{B}$ which only connect with B-coloured strings. The colour of its boundary points are ordered by $ABBA ABBA\cdots ABBA$. For an action of an unlabeled tangle $T$, we substitute each string of $T$ by a pair of parallel A,B-colour strings, then gluing the boundaries.

There is an equivalent definition of the free product. We say an element $x\otimes y\in (\mathscr{A}*\mathscr{B})_n$ is separated by a Temperley-Lieb $n$-tangle $T_n$, if $x$ can be written as a diagram in unshaded regions of $T_n$ and $y$ can be written a diagram in shaded regions of $T_n$. Then $\mathscr{A}*\mathscr{B}$ is the planar subalgebra of $\mathscr{A}\otimes\mathscr{B}$ consisting of all separated elements.
For example, the diagram $\gra{bipro}$ is separated by the tangle $\gra{id}$, identified as $\delta_B id\otimes e$, where $\delta_B$ is the value of a circle of $\mathscr{B}$.
Consequently we have the following result.

\begin{proposition}
Suppose $\mathscr{A}$ and $\mathscr{B}$ are subfactor planar algebras. Then $\mathscr{A}*\mathscr{B}$ is a subfactor planar algebras.
\end{proposition}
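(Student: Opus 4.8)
The plan is to leverage the second, equivalent description of the free product given above, namely that $\mathscr{A}*\mathscr{B}$ is the planar $*$-subalgebra of $\mathscr{A}\otimes\mathscr{B}$ consisting of the separated elements, together with the fact already established that the tensor product of two subfactor planar algebras is again a subfactor planar algebra. Since a subfactor planar algebra is precisely a spherical planar $*$-algebra over $\mathbb{C}$ with one-dimensional $0$-boxes, finite-dimensional box spaces, and a positive definite Markov-trace inner product, the whole task reduces to checking that each of these features is inherited by the subalgebra of separated elements.

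First I would verify that the separated elements genuinely form a planar $*$-subalgebra of $\mathscr{A}\otimes\mathscr{B}$, i.e. that they are closed under the planar tangle action and under the adjoint. Closure under the adjoint is immediate from $(x\otimes y)^*=x^*\otimes y^*$, since reflecting a diagram whose $\mathscr{A}$-labels sit in the unshaded regions and $\mathscr{B}$-labels in the shaded regions of a Temperley--Lieb tangle $T_n$ produces a diagram separated by the reflection of $T_n$. Closure under the tangle action is exactly the content of the equivalence of the two definitions of the free product: substituting each string of an unlabeled tangle by a parallel $A,B$-coloured pair and gluing boundaries carries separated inputs to separated outputs. This also fixes the modulus: a closed loop contributes $\delta_{\mathscr{A}}\delta_{\mathscr{B}}$, matching the modulus of $\mathscr{A}\otimes\mathscr{B}$.

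Next I would pull back the remaining axioms. Finite-dimensionality of $(\mathscr{A}*\mathscr{B})_{n,\pm}$ is automatic, being a subspace of the finite-dimensional $(\mathscr{A}\otimes\mathscr{B})_{n,\pm}$; sphericity restricts from the ambient algebra; and $\dim(\mathscr{A}*\mathscr{B})_{0,\pm}=1$ because $(\mathscr{A}*\mathscr{B})_{0,\pm}$ contains the scalars and lies inside the one-dimensional $(\mathscr{A}\otimes\mathscr{B})_{0,\pm}$. The one point deserving care is that the Markov trace on $\mathscr{A}*\mathscr{B}$ coincides with the restriction of the Markov trace of $\mathscr{A}\otimes\mathscr{B}$: this holds because the trace tangle closes up a free-product diagram by the same doubled-string recipe that computes the tensor-product trace, so the two inner products agree on separated elements.

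Granting that identification, the crux---positive definiteness of the inner product---comes for free: the restriction of a positive definite Hermitian form to any linear subspace remains positive definite, since a nonzero separated element has strictly positive norm already inside $\mathscr{A}\otimes\mathscr{B}$. Hence $\mathscr{A}*\mathscr{B}$ satisfies every axiom and is a subfactor planar algebra. I expect the only real obstacle to be the bookkeeping in the previous paragraph, namely verifying that the free-product Markov trace is literally the restriction of the tensor-product trace (equivalently, that the two presentations of the free product carry the same trace); every other step is a formal inheritance argument.
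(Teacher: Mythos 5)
Your proposal is correct and follows exactly the route the paper intends: the paper gives no explicit proof but derives the proposition (``Consequently\ldots'') from realizing $\mathscr{A}*\mathscr{B}$ as the planar subalgebra of separated elements inside $\mathscr{A}\otimes\mathscr{B}$, which was already shown to be a subfactor planar algebra, so every axiom is inherited by restriction. Your additional care about the Markov trace on the free product agreeing with the restricted tensor-product trace is a worthwhile detail the paper leaves implicit, but it is the same argument.
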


\begin{notation}
The free product is associative but not commutative.
We say the free product of $\mathscr{A}$ with $\mathscr{B}$ for $\mathscr{A}*\mathscr{B}$;
We say the free product of $\mathscr{A}$ and $\mathscr{B}$ for either $\mathscr{A}*\mathscr{B}$ or $\mathscr{B}*\mathscr{A}$.
\end{notation}

\begin{proposition}
The dual of $\mathscr{A}*\mathscr{B}$ is the free product of the dual of $\mathscr{B}$ with the dual of $\mathscr{A}$.
\end{proposition}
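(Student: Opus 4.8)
The plan is to work with the \emph{separated elements} characterization of the free product rather than the diagrammatic generators-and-relations description. Recall that $\mathscr{A}*\mathscr{B}$ is the planar subalgebra of $\mathscr{A}\otimes\mathscr{B}$ consisting of those $x\otimes y$ that are separated by some Temperley-Lieb $n$-tangle $T_n$, with $x\in\mathscr{A}$ drawn in the unshaded regions of $T_n$ and $y\in\mathscr{B}$ drawn in the shaded regions. Since the dual is defined purely as the global shading switch, the whole statement reduces to tracking what the shading switch does to this separated picture, and verifying that it intertwines the planar-algebra structures.

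First I would record two elementary compatibilities. (i) The tensor product commutes with the shading switch: because $(\mathscr{A}\otimes\mathscr{B})_n=\mathscr{A}_n\otimes\mathscr{B}_n$ and the tangle action is defined boxwise with no reference to shading, one has $\overline{\mathscr{A}\otimes\mathscr{B}}=\bar{\mathscr{A}}\otimes\bar{\mathscr{B}}$, and $\bar{\mathscr{A}}\otimes\bar{\mathscr{B}}\cong\bar{\mathscr{B}}\otimes\bar{\mathscr{A}}$ via the obvious flip, the tensor product being commutative up to isomorphism. (ii) The shading switch of a Temperley-Lieb $n$-tangle $T_n$ is again a Temperley-Lieb $n$-tangle $\bar{T}_n$, with unshaded and shaded regions interchanged.

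Next I would push the separated-elements condition through the switch. Take $x\otimes y\in\mathscr{A}*\mathscr{B}$ separated by $T_n$, so $x$ lives in the unshaded regions of $T_n$ and $y$ in the shaded regions. Applying the shading switch globally produces a diagram in $\overline{\mathscr{A}\otimes\mathscr{B}}=\bar{\mathscr{B}}\otimes\bar{\mathscr{A}}$ separated by $\bar{T}_n$; now $y$, viewed as an element of $\bar{\mathscr{B}}$, sits in the unshaded regions of $\bar{T}_n$, and $x$, viewed as an element of $\bar{\mathscr{A}}$, sits in the shaded regions. This is exactly the separated-elements condition defining $\bar{\mathscr{B}}*\bar{\mathscr{A}}$ inside $\bar{\mathscr{B}}\otimes\bar{\mathscr{A}}$. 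As the switch is an involutive bijection, it carries $\mathscr{A}*\mathscr{B}$ onto $\bar{\mathscr{B}}*\bar{\mathscr{A}}$; that it intertwines the tangle actions is immediate, since the action is given by doubling each string of an unlabeled tangle into an A,B-pair and gluing, an operation manifestly equivariant under the shading switch.

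The hard part will be the bookkeeping of the distinguished interval and the boundary colouring, which is precisely what forces the order of the factors to reverse. In $\mathscr{A}*\mathscr{B}$ the $2n$ boundary points are coloured $ABBA\,ABBA\cdots$ with the dollar sign in an unshaded region, and the A-strings bound unshaded regions while the B-strings bound shaded ones. The shading switch interchanges which colour bounds unshaded regions and is accompanied by the one-click rotation of the marked interval (a shift past one A,B-pair) that implements the passage from $\mathscr{S}_{\bullet,+}$ to $\mathscr{S}_{\bullet,-}$; reading $ABBA\,ABBA\cdots$ from the shifted dollar sign yields $BAAB\,BAAB\cdots$, which is exactly the pattern of $\bar{\mathscr{B}}*\bar{\mathscr{A}}$ with $\bar{\mathscr{B}}$ first. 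I would verify this carefully on the $n=1$ and $n=2$ boundaries and confirm it propagates, since a misplaced marked interval would spuriously produce $\bar{\mathscr{A}}*\bar{\mathscr{B}}$ instead. Once the marked-interval convention is pinned down, the identification $\overline{\mathscr{A}*\mathscr{B}}=\bar{\mathscr{B}}*\bar{\mathscr{A}}$ follows.
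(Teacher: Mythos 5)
Your argument is correct, and it follows essentially the route the paper itself relies on: the paper states this proposition without proof, treating it as an immediate consequence of its A,B-colour / separated-elements description of the free product together with the definition of the dual as the shading switch. The verification you supply --- in particular the key bookkeeping that the one-click shift of the marked interval moves past one A,B-pair, turning the boundary pattern $ABBA\,ABBA\cdots$ into $BAAB\,BAAB\cdots$ and thereby forcing the order of the factors to reverse to $\bar{\mathscr{B}}*\bar{\mathscr{A}}$ rather than $\bar{\mathscr{A}}*\bar{\mathscr{B}}$ --- is exactly the content the paper leaves implicit, and you carry it out correctly.
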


A free product of Temperley-Lieb subfactor planar algebras is called $Fuss-Catalan$ \cite{BisJonFC}.

\begin{definition}
Suppose $Q$ is a projection in $\mathscr{S}_2$. If $1\boxdot Q$ is a multiple of a projection in $\mathscr{S}_{1,3}$, then we call $Q$ a biprojection.
In this case, $\frac{\delta}{tr(Q)}1\boxdot Q$ is the projection.
\end{definition}

There are two trivial biprojections, $e$ and $id$, in $\mathscr{S}_2$.

In a free product $\mathscr{A}*\mathscr{B}$, there is a non-trivial biprojection $\delta_B^{-1}\gra{bipro}$.

\begin{proposition}\label{ex of bipro}
Suppose $Q$ is a biprojection in a subfactor planar algebra. Then $Q$ satisfies
$$\gra{biproex1}=\gra{biproex2},$$
called the exchange relation of the biprojection $Q$.
\end{proposition}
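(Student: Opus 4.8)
The plan is to derive the pictorial identity directly from the two algebraic facts packaged into the definition of a biprojection, staying within elementary planar manipulations since this proposition precedes the Schur product theorem. By hypothesis $R:=\frac{\delta}{tr(Q)}(1\boxdot Q)$ is a projection in $\mathscr{S}_{1,3}$. Feeding $R^2=R$ through the identity $(1\boxdot a)(1\boxdot b)=1\boxdot(a*b)$ and using injectivity of $a\mapsto 1\boxdot a$ yields the coproduct relation
$$Q*Q=\frac{tr(Q)}{\delta}\,Q,$$
while self-adjointness of $R$, combined with the fact that $1\boxdot a'$ is the adjoint of $1\boxdot a^*$, gives $Q'=Q$ (recall $Q^*=Q$ since $Q$ is a projection). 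These two relations, namely idempotency of $Q$ and coproduct idempotency of $Q$ up to scalar, are the only inputs I intend to use.

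Next I would read both sides of the claimed identity as elements of $\mathscr{S}_3$ assembled from a copy of $Q$ embedded on the right, a copy of $1\boxdot Q$, and a single through-string. Expanding the left-hand diagram and pushing the through-string across by planar isotopy and sphericity collects the two copies of $Q$ into $1\boxdot(Q*Q)$ capped against the remaining strand; substituting $Q*Q=\frac{tr(Q)}{\delta}Q$ then collapses this to a single $Q$ with the string now on the opposite side, which is precisely the right-hand diagram. The relation $Q'=Q$ is what guarantees that the orientation of the surviving copy of $Q$ agrees with the one drawn on the right-hand side.

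The main obstacle is the interface between the two products on $\mathscr{S}_2$ that are visible in the picture: the vertical multiplication used to stack boxes in $\mathscr{S}_3$, and the horizontal coproduct $*$ that the biprojection relation controls. Pulling the string through $Q$ is exactly the step that converts one structure into the other, and the delicate part is the bookkeeping of the scalars $\delta$ and $tr(Q)$ produced by each capped-off circle together with the placement of the marked intervals; sphericity is needed so that the traces picked up on the two sides match. Once these normalizations are reconciled, both sides reduce to the same multiple of the canonical diagram and the identity follows.
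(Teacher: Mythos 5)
Your preparatory step is sound and isolates exactly the right data: from the hypothesis that $\frac{\delta}{tr(Q)}(1\boxdot Q)$ is a projection, the identities $(1\boxdot a)(1\boxdot b)=1\boxdot(a*b)$ and $(1\boxdot a^*)^*=1\boxdot a'$ do give $Q*Q=\frac{tr(Q)}{\delta}Q$ and $Q'=Q$, and together with $Q=Q^*=Q^2$ these are precisely the facts that the paper's own proof consumes. The gap is in your second step, and it is fatal: ``pushing the through-string across'' a $Q$-box is not a planar isotopy, it is the exchange relation itself, so the argument is circular.

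Concretely, each side of the exchange relation is a $3$-box in which the two $Q$-labels are joined to each other by exactly one internal string, all six remaining strings running to the boundary. The relation $Q*Q=\frac{tr(Q)}{\delta}Q$ can only be applied to a sub-diagram in which two $Q$-boxes are joined by two internal strings (equivalently, a sub-diagram of the form $(1\boxdot Q)(1\boxdot Q)$), and neither side contains such a configuration; no isotopy can create one, since isotopy preserves the connectivity of a labelled diagram. Moreover, if your reduction did go through, both sides would collapse to scalar multiples of a diagram carrying a single $Q$-label, which is already false for the biprojection $id\otimes e$ of a Fuss--Catalan planar algebra, where both sides equal a coloured diagram that is not proportional to any one-label diagram. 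Your phrase ``capped against the remaining strand'' actually points at the correct repair: capping \emph{does} produce the product and coproduct patterns, but capping is not injective, so one must cap the \emph{difference} of the two sides and then invoke faithfulness. That is exactly the paper's proof: set $x$ equal to the left side minus the right side; in the closed diagrams occurring in $tr_3(x^*x)$ the $Q$-boxes do meet in product and coproduct patterns, so the identities $Q^2=Q$, $Q*Q=\frac{tr(Q)}{\delta}Q$, $Q'=Q$ evaluate all four terms to the same scalar and $tr_3(x^*x)=0$; positive definiteness of the Markov trace then forces $x=0$. Note that positivity of the trace is an axiom of subfactor planar algebras, independent of the Schur product theorem, so your self-imposed restriction to ``elementary planar manipulations'' was unnecessary --- and without positivity the statement is not a formal consequence of the two relations you derived.
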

\begin{proof}
Let $x$ be $\gra{biproex1}-\gra{biproex2}$. Then it is easy to check that $tr_3(x^*x)=0$. By the positivity of the trace, we have $x=0$. That means $\gra{biproex1}=\gra{biproex2}$.
\end{proof}

A biprojection is discussed by Bisch while considering the projection onto an intermediate subfactor \cite{Bis94}.
Suppose $\mathscr{S}$ is the planar algebra of a subfactor $\mathcal{N}\subset \mathcal{M}$. Then each biprojection $Q$ in $\mathscr{S}_2$ corresponds to an intermediate subfactor $\mathcal{Q}$ of $\mathcal{N}\subset \mathcal{M}$, in the sense that $Q$ is the projection onto $L^2(\mathcal{Q})$ as a subspace of $L^2(\mathcal{M})$.

The planar algebra of $\mathcal{N}\subset \mathcal{Q}$ can be realised as a $Q$ cut down on shaded intervals of diagrams in $\mathscr{S}$, denoted by $\mathscr{S}_Q$,.
That means $(\mathscr{S}_Q)_n=\Psi_Q(\mathscr{S}_n)$, where $\Psi_Q$ is the annular action
$$\grb{qcut};$$
and the action of an unlabeled tangle $T$ on $\mathscr{S}_Q$ is defined to be
$$(\frac{\delta}{\sqrt{tr(Q)}})^{fudge(T)}\Psi_Q\circ T,$$
where $fudge(T)=n-m$, $n$ is number of shaded intervals of outside boundary of $T$ and $m$ is the number of closed circles after adding a cap at each shaded interval of (outside and inside) boundary of $T$.
Considering the duality, the planar algebra of $\mathcal{Q}\subset\mathcal{M}$ is realised as a $\frac{\delta}{tr(Q)}Q$ cut down on unshaded intervals of diagrams in $\mathscr{S}$, denoted by $\mathscr{S}^Q$.

The meaning of the fudge factor is explained in the following proposition \cite{BisJonfree}.
\begin{proposition}\label{iso}
Let $\mathscr{A}$ and $\mathscr{B}$ be subfactor planar algebras with circle parameters $\delta_A$ and $\delta_B$ respectively. Form the free product $\mathscr{A}*\mathscr{B}$. Let $Q$ be the biprojection $\delta_B^{-1}\gra{bipro}$
and $v_n\in\mathscr{B}$ be $\gra{vno}$ or $\gra{vne}$, when $n$ is odd or even respectively. Then the map
$$\alpha_Q:\mathscr{A}\rightarrow (\mathscr{A}*\mathscr{B})_Q, ~\alpha_Q(x)=x\otimes v_n, ~\forall ~x\in \mathscr{A}_n,$$
 is a planar algebra isomorphism.
\end{proposition}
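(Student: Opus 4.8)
The plan is to verify that $\alpha_Q$ is (i) well defined as a map into $(\mathscr{A}*\mathscr{B})_Q$, (ii) a linear bijection on each box space, and (iii) compatible with the action of every planar tangle; point (iii) is what promotes a mere vector-space bijection to a planar algebra isomorphism, and is where the real work lies. Throughout I would use the identification recalled above, namely that $\gra{bipro}=\delta_B\, id\otimes e$, so that the biprojection is $Q=id\otimes e$. Since $Q$ is the biprojection of the intermediate subfactor $\mathcal{N}\subset\mathcal{Q}$ with $[\mathcal{Q}:\mathcal{N}]=\delta_A^2$, its unnormalised trace is $tr(Q)=\delta_A^2$, and hence the renormalising constant in the definition of $(\mathscr{A}*\mathscr{B})_Q$ is $\frac{\delta}{\sqrt{tr(Q)}}=\delta_B$. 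Conceptually this is the heart of the matter: cutting by $Q$ is meant to discard the $\mathscr{B}$-coloured data and to convert the free-product circle value $\delta=\delta_A\delta_B$ into the circle value $\delta_A$ of $\mathscr{A}$, and the fudge factor, a power of $\delta_B$, is precisely the device that accomplishes this conversion.

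For (i) and (ii) simultaneously, I would compute the annular map $\Psi_Q$ directly on the separated-element description of $(\mathscr{A}*\mathscr{B})_n\subset(\mathscr{A}\otimes\mathscr{B})_n$. Capping the shaded intervals with $Q=id\otimes e$ leaves every $\mathscr{A}$-coloured label untouched while collapsing the $\mathscr{B}$-coloured label, by the specific shape of the standard diagram $v_n$, to a scalar multiple of $v_n$ (the scalar being a power of $\delta_B$ absorbed into the projection normalisation). Thus the range of $\Psi_Q$ is exactly $\{x\otimes v_n : x\in\mathscr{A}_n\}$, which shows at once that each $x\otimes v_n$ lies in $(\mathscr{A}*\mathscr{B})_Q$, that $\alpha_Q$ is onto, and that it is injective since $v_n\neq 0$ and $x\mapsto x\otimes v_n$ is injective. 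This is the clean way to avoid any circular appeal to a dimension formula for the cut-down.

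The substance is (iii), and I would reduce it to checking compatibility on a generating set of tangles: the multiplication tangle, the inclusion (adding one string), the partial-trace and conditional-expectation tangles, the rotation, and the tangle creating the Jones projection. For each generator $T$ I would expand both $\alpha_Q\!\big(T(\cdots)\big)$ and $T_Q\big(\alpha_Q(\cdots)\big)$ inside $\mathscr{A}\otimes\mathscr{B}$, where every tangle acts componentwise. The $\mathscr{A}$-coloured component then matches automatically, because the label $x$ is transported through $T$ exactly as it would be under the corresponding operation of $\mathscr{A}$. The crux is the $\mathscr{B}$-coloured component: one must verify that the image of the $v_{n_i}$ under the $\mathscr{B}$-colour part of $T$ (for instance the composite $v_n v_n$ for multiplication) always collapses to a scalar multiple of the target diagram $v_m$, with the scalar a power of $\delta_B$ coming from the closed $\mathscr{B}$-circles that $T$ creates.

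The hard part, and the only genuinely non-formal step, will be showing that this accumulated power of $\delta_B$ is exactly cancelled by the fudge factor $(\frac{\delta}{\sqrt{tr(Q)}})^{fudge(T)}=\delta_B^{fudge(T)}$. Here $fudge(T)$ counts shaded outer intervals minus the number of circles produced after capping the shaded boundary intervals, which is precisely the bookkeeping of how the full free-product circle value $\delta_A\delta_B$ differs from $\mathscr{A}$'s value $\delta_A$; the computation to be carried out carefully is the matching of these two counts for each generator. Once the scalars are confirmed to agree on the generating tangles, compatibility for an arbitrary tangle follows because every tangle is a composite of generators and both $\alpha_Q$ and the cut-down action respect composition of tangles, which completes the proof that $\alpha_Q$ is a planar algebra isomorphism.
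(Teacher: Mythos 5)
The paper itself offers no proof of Proposition \ref{iso}: it is quoted from Bisch and Jones \cite{BisJonfree} as a known result, so there is no internal argument to measure your attempt against; I can only judge it on its own terms, and on those terms it is a correct outline. The identification $Q=id\otimes e$, the computation of the renormalizing constant $\frac{\delta}{\sqrt{tr(Q)}}=\delta_B$, the observation that $\Psi_Q$ sends a separated element $x\otimes y$ to $x\otimes(\lambda_y v_n)$ for a scalar $\lambda_y$ (so that the range of $\Psi_Q$ is exactly $\{x\otimes v_n : x\in\mathscr{A}_n\}$, giving well-definedness, surjectivity and injectivity with no dimension count), and the reduction of tangle-compatibility to a cancellation between closed $\mathscr{B}$-circles and the fudge factor are all accurate, and this is essentially the expected argument. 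Three remarks. First, you derive $tr(Q)=\delta_A^2$ from the index $[\mathcal{Q}:\mathcal{N}]=\delta_A^2$ of the intermediate subfactor; since ``the $Q$-cut-down has circle parameter $\delta_A$'' is morally what the proposition asserts, this has a circular flavor, and it is cleaner to compute directly inside the tensor product: $tr(id\otimes e)=tr_{\mathscr{A}}(id)\,tr_{\mathscr{B}}(e)=\delta_A^2\cdot 1$. Second, your reduction to a generating set of tangles tacitly uses that $T\mapsto(\frac{\delta}{\sqrt{tr(Q)}})^{fudge(T)}\,\Psi_Q\circ T$ is compatible with composition of tangles, i.e.\ that $\mathscr{S}_Q$ is a planar algebra at all; the paper assumes this when defining $\mathscr{S}_Q$ (it is the planar algebra of $\mathcal{N}\subset\mathcal{Q}$), so you may invoke it, but it should be said explicitly. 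Third, you describe but do not execute the scalar bookkeeping for each generating tangle; this is the actual content of the proposition, and it does close, because $fudge(T)$ counts precisely the discrepancy between shaded boundary intervals of $T$ and the circles created by capping them, which coincides with the number of closed $\mathscr{B}$-loops produced when $T$ acts on the diagrams $v_{n_1},\dots,v_{n_k}$ and the result is cut down again. So the proposal is a sound plan whose remaining steps are mechanical, rather than a complete proof.
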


The following result was first known by Bisch and Jones \cite{BisJonfree}, see also \cite{BhaLan}.
\begin{theorem}
Let $\mathscr{S}$ be a subfactor planar algebra containing is a biprojection $Q$.
Then the planar subalgebra $\mathscr{S}_Q\vee\mathscr{S}^Q$ of $\mathscr{S}$ generated by the vector spaces $\mathscr{S}_Q$ and $\mathscr{S}^Q$ is naturally the free product  $\mathscr{S}_Q*\mathscr{S}^Q$.
\end{theorem}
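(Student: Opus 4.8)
The plan is to realize the abstract free product $\mathscr{S}_Q*\mathscr{S}^Q$ concretely inside $\mathscr{S}$ by sending its canonical biprojection to $Q$. Write $\mathscr{A}=\mathscr{S}_Q$ and $\mathscr{B}=\mathscr{S}^Q$; both are subfactor planar algebras, being the standard invariants of $\mathcal{N}\subset\mathcal{Q}$ and $\mathcal{Q}\subset\mathcal{M}$, and both are already realized as concrete subspaces of $\mathscr{S}$ through the cut-down map $\Psi_Q$ and its unshaded dual described above. Form $\mathscr{A}*\mathscr{B}$ and let $Q_0$ denote its canonical biprojection, i.e. the one appearing in Proposition \ref{iso}. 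By Proposition \ref{iso} and its dual, the cut-downs $(\mathscr{A}*\mathscr{B})_{Q_0}$ and $(\mathscr{A}*\mathscr{B})^{Q_0}$ are canonically identified with $\mathscr{A}$ and $\mathscr{B}$. The goal is therefore to produce a planar algebra $*$-homomorphism $\Phi\colon\mathscr{A}*\mathscr{B}\to\mathscr{S}$ that carries $Q_0$ to $Q$ and restricts to the given embeddings of $\mathscr{A}=\mathscr{S}_Q$ and $\mathscr{B}=\mathscr{S}^Q$ into $\mathscr{S}$.

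First I would define $\Phi$ diagrammatically. An element of $(\mathscr{A}*\mathscr{B})_n$ is a linear combination of diagrams separated by a Temperley--Lieb $n$-tangle $T$, with $\mathscr{A}$-labels inserted in the unshaded regions of $T$ and $\mathscr{B}$-labels in the shaded regions. I send such a diagram into $\mathscr{S}$ by thickening each string of $T$ into a cable decorated with the biprojection $Q$, placing the $\mathscr{A}$- and $\mathscr{B}$-labels (which already live in $\mathscr{S}$ via the cut-down realizations) into the corresponding regions, and inserting the normalizing powers of $\delta$ and $tr(Q)$ dictated by the fudge factor. Under this assignment $Q_0\mapsto Q$, and the two canonical copies of $\mathscr{A}$ and $\mathscr{B}$ land exactly on $\mathscr{S}_Q$ and $\mathscr{S}^Q$.

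The main work is to verify that $\Phi$ intertwines the action of every planar tangle, equivalently that it respects the relations defining the free product. The non-intersection of $\mathscr{A}$- and $\mathscr{B}$-colored strings and the Temperley--Lieb relations within each color transfer to $\mathscr{S}$ because each color is carried by a cable decorated with $Q$; the one genuinely new move is the crossing of a string between a shaded and an unshaded region, i.e. through the biprojection. This is exactly governed by the exchange relation of $Q$ from Proposition \ref{ex of bipro}, which says that a through-string may be slid from one side of $Q$ to the other, and this is precisely the diagrammatic content of string-passing in the free product. I expect this local check --- that every capping, rotation, and multiplication of $Q$-decorated cabled strings in $\mathscr{S}$ reproduces the corresponding free-product move --- to be the principal obstacle, together with the routine but delicate bookkeeping of the $\delta$ and $tr(Q)$ normalizations needed to make $\Phi$ preserve the Markov trace.

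Once $\Phi$ is known to be a trace-preserving planar algebra $*$-homomorphism, the conclusion follows quickly. Injectivity is automatic, since the Markov trace induces a positive definite inner product on both sides and a trace-preserving homomorphism can have no nonzero kernel. For surjectivity onto the generated subalgebra, the image of $\Phi$ is a planar subalgebra of $\mathscr{S}$ containing both $\mathscr{S}_Q$ and $\mathscr{S}^Q$, hence it contains $\mathscr{S}_Q\vee\mathscr{S}^Q$; conversely every separated diagram is built from $\mathscr{A}$- and $\mathscr{B}$-labels on a Temperley--Lieb skeleton and so maps into $\mathscr{S}_Q\vee\mathscr{S}^Q$, giving the reverse inclusion. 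Thus $\Phi$ is a planar algebra isomorphism of $\mathscr{S}_Q*\mathscr{S}^Q$ onto $\mathscr{S}_Q\vee\mathscr{S}^Q$, which is the assertion.
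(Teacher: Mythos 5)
Your proposal cannot be checked against an internal argument, because the paper does not actually prove this theorem: it is stated as a known result, with the proof deferred to \cite{BisJonfree} and \cite{BhaLan}. Judged on its own terms, your plan is essentially the standard argument, and it closely parallels the proof the paper does supply for the tensor-product analogue, Theorem \ref{tensorembed}: define a map into $\mathscr{S}$ diagrammatically (there by cabling with two commuting biprojections, here by $Q$-decorated cables), verify compatibility with the planar operations, and lean on positivity of the Markov trace. Your identification of the exchange relation of Proposition \ref{ex of bipro} as the one genuinely new local move --- the passage of a string through the $Q$-cable --- is exactly the right key point, and your injectivity and surjectivity arguments are fine (injectivity is even automatic for planar algebra homomorphisms of subfactor planar algebras, by the remark following Theorem \ref{p2ex}).

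One step does need reorganizing before the argument is airtight. You define $\Phi$ on separated diagrams and then say the main work is to check that it ``respects the relations defining the free product.'' But $\mathscr{A}*\mathscr{B}$ is not presented by generators and relations: in the paper it is defined as the subspace of separated elements of $\mathscr{A}\otimes\mathscr{B}$, and knowing that every linear dependence among colored diagrams is generated by local ones (relations of $\mathscr{A}$ inside unshaded regions and of $\mathscr{B}$ inside shaded regions) is itself part of the Bisch--Jones freeness theorem, i.e.\ part of what is being reproved; so ``checking the relations'' is either ill-posed or circular as stated. The repair is already in your toolkit: promote the trace bookkeeping from an afterthought to the engine of the proof. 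First show, by a direct computation with $Q$-cables using the exchange relation, that $\Phi$ preserves the Markov inner product on diagrams; since the inner product on $\mathscr{A}*\mathscr{B}\subset\mathscr{A}\otimes\mathscr{B}$ is positive definite, this makes $\Phi$ well defined and injective in one stroke --- exactly how well-definedness is handled in the paper's proof of Theorem \ref{tensorembed} --- and only then does the check that $\Phi$ intertwines the tangle actions on diagrams become meaningful and complete the argument.
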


\begin{corollary}\label{free222}
Suppose $\mathscr{A}$ and $\mathscr{B}$ are subfactor planar algebras generated by 2-boxes. Then $\mathscr{A}*\mathscr{B}$ is generated by 2-boxes.
\end{corollary}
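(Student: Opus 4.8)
The plan is to exhibit $\mathscr{A}*\mathscr{B}$ as a planar algebra generated by its own $2$-boxes, by combining the canonical biprojection with the free product decomposition stated just above. Write $\mathscr{S}=\mathscr{A}*\mathscr{B}$ and let $Q$ be the canonical biprojection of the free product exhibited above; it is a $2$-box. Let $\mathscr{T}\subseteq\mathscr{S}$ denote the planar subalgebra generated by the space $\mathscr{S}_2$ of all $2$-boxes. Since $Q\in\mathscr{S}_2\subseteq\mathscr{T}$, the subalgebra $\mathscr{T}$ contains $Q$; I will show $\mathscr{S}_Q\subseteq\mathscr{T}$ and $\mathscr{S}^Q\subseteq\mathscr{T}$, whence $\mathscr{T}\supseteq\mathscr{S}_Q\vee\mathscr{S}^Q=\mathscr{S}_Q*\mathscr{S}^Q=\mathscr{S}$ by the free product decomposition theorem above, forcing $\mathscr{T}=\mathscr{S}$.

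To prove $\mathscr{S}_Q\subseteq\mathscr{T}$, I would use the isomorphism $\alpha_Q\colon\mathscr{A}\to\mathscr{S}_Q$ of Proposition \ref{iso}. Its image on $2$-boxes, $\alpha_Q(\mathscr{A}_2)=\{x\otimes v_2 : x\in\mathscr{A}_2\}=(\mathscr{S}_Q)_2$, lies in $\mathscr{S}_2$ and hence in $\mathscr{T}$. Because $\mathscr{A}$ is generated by its $2$-boxes and $\alpha_Q$ is a planar algebra isomorphism, every element of $\mathscr{S}_Q$ is obtained from $\alpha_Q(\mathscr{A}_2)$ by the fudged planar operations that define the planar algebra structure on $\mathscr{S}_Q$. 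The crucial observation is that each such operation, $(\delta/\sqrt{tr(Q)})^{fudge(T)}\,\Psi_Q\circ T$, is, up to the scalar, an honest planar tangle $T$ in $\mathscr{S}$ followed by the annular map $\Psi_Q$, and $\Psi_Q$ merely inserts copies of $Q$. Since $Q\in\mathscr{T}$ and $\mathscr{T}$ is closed under all planar tangles, both $T$ and $\Psi_Q$ preserve $\mathscr{T}$, so the fudged operations preserve $\mathscr{T}$. Building $\mathscr{S}_Q$ up from $\alpha_Q(\mathscr{A}_2)\subseteq\mathscr{T}$, we never leave $\mathscr{T}$, giving $\mathscr{S}_Q\subseteq\mathscr{T}$.

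The inclusion $\mathscr{S}^Q\subseteq\mathscr{T}$ follows by the dual argument: the dual of $\mathscr{A}*\mathscr{B}$ is the free product of the duals by the duality proposition above, $\mathscr{S}^Q$ is isomorphic to $\mathscr{B}$, which is again generated by its $2$-boxes, and the $2$-box space of $\mathscr{S}^Q$ likewise sits inside $\mathscr{S}_2\subseteq\mathscr{T}$; the same fudged-operation argument then yields $\mathscr{S}^Q\subseteq\mathscr{T}$, and the free product decomposition closes the proof.

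The one point that needs genuine care, and where I expect the main obstacle, is the step identifying ``generated as the abstract planar algebra $\mathscr{A}$ via $\alpha_Q$'' with ``contained in the subalgebra of $\mathscr{S}$ generated by $2$-boxes.'' This hinges on the fudge factors being mere scalars and on $\Psi_Q$ being literally an insertion of copies of the single extra ingredient $Q$, so that no operation intrinsic to $\mathscr{S}_Q$ escapes the ambient subalgebra $\mathscr{T}$. Everything else is bookkeeping: once one verifies that $\Psi_Q$ is an insertion of $Q$'s and that the $2$-boxes of $\mathscr{S}_Q$ and $\mathscr{S}^Q$ are genuine $2$-boxes of $\mathscr{S}$, the decomposition theorem finishes the argument.
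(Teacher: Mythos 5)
Your proposal is correct and takes essentially the same route the paper intends: Corollary \ref{free222} is stated without proof precisely because it is meant to follow from the preceding Bisch--Jones theorem that $\mathscr{S}_Q\vee\mathscr{S}^Q=\mathscr{S}_Q*\mathscr{S}^Q$, combined with the identifications $\mathscr{S}_Q\cong\mathscr{A}$ (Proposition \ref{iso}) and $\mathscr{S}^Q\cong\mathscr{B}$ (by duality), which is exactly what you carry out. Your ``crucial observation'' --- that the fudged operations on $\mathscr{S}_Q$ are honest tangles of $\mathscr{S}$ composed with insertions of the $2$-box $Q$, so they never leave the subalgebra generated by $\mathscr{S}_2$ --- is the right point to isolate, and the remaining gloss (that $\mathscr{S}_Q*\mathscr{S}^Q$ exhausts $\mathscr{S}$, by the cited isomorphisms and a finite-dimension count) is standard and in the paper's own style.
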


\begin{theorem}\label{freedecom}
Suppose $\mathscr{S}$ is a subfactor planar algebra. If $Q$ is a biprojection in $\mathscr{S}_2$, and $\mathscr{S}$ is generated by $\{x \in \mathscr{S}_2| QxQ=x \quad or \quad Q*x*Q=(\frac{tr(Q)}{\delta})^2x\}$ as a planar algebra. Then $\mathscr{S}=\mathscr{S}_Q*\mathscr{S}^Q$, and both $\mathscr{S}_Q$ and $\mathscr{S}^Q$ are generated by 2-boxes. In this case, $\mathscr{S}$ is said to be separated by the biprojection $Q$ as a free product.
\end{theorem}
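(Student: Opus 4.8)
The plan is to prove that the two displayed algebraic conditions cut out exactly the $2$-box spaces of the two cut-down planar algebras, and then let the generation hypothesis do the rest. Write $G_1=\{x\in\mathscr{S}_2 : QxQ=x\}$ and $G_2=\{x\in\mathscr{S}_2: Q*x*Q=(\tfrac{tr(Q)}{\delta})^2 x\}$, so that the hypothesis reads $\mathscr{S}=\langle G_1\cup G_2\rangle$. I will show $G_1=(\mathscr{S}_Q)_2$ and $G_2=(\mathscr{S}^Q)_2$ as subspaces of $\mathscr{S}_2$. Granting this, $G_1\cup G_2\subseteq\mathscr{S}_Q\vee\mathscr{S}^Q$; since the latter is a planar subalgebra of $\mathscr{S}$, the generation hypothesis gives $\mathscr{S}=\langle G_1\cup G_2\rangle\subseteq\mathscr{S}_Q\vee\mathscr{S}^Q\subseteq\mathscr{S}$, hence $\mathscr{S}=\mathscr{S}_Q\vee\mathscr{S}^Q$, which by the free product theorem of Bisch and Jones equals $\mathscr{S}_Q*\mathscr{S}^Q$.

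For the first identification, since $Q$ is a projection one has $QxQ=x$ iff $x\in Q\mathscr{S}_2 Q$, so $G_1$ is precisely the multiplicative corner $Q\mathscr{S}_2 Q$. On the other hand, in the realization of $\mathscr{S}_Q$ as the $Q$-cut-down on shaded intervals, the $2$-box space is exactly this corner; this is Bisch's description of the $2$-boxes of an intermediate subfactor planar algebra. Hence $G_1=(\mathscr{S}_Q)_2$.

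For the second identification I will pass to the Fourier picture via $1\boxdot(-)$. Using $(1\boxdot a)(1\boxdot b)=1\boxdot(a*b)$ and the fact that $\frac{\delta}{tr(Q)}(1\boxdot Q)$ is a projection $P_Q$, a short computation gives $Q*Q=\frac{tr(Q)}{\delta}Q$, so that $E(x):=(\frac{\delta}{tr(Q)})^2\,Q*x*Q$ is idempotent with range $G_2=Q*\mathscr{S}_2*Q$; equivalently, applying $1\boxdot(-)$ turns the condition defining $G_2$ into $P_Q(1\boxdot x)P_Q=1\boxdot x$, i.e.\ $1\boxdot x$ lies in the corner of the dual biprojection $P_Q$. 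Since $\mathscr{S}^Q$ is realized as the $\frac{\delta}{tr(Q)}Q$-cut-down on unshaded intervals, its $2$-boxes are exactly this coproduct corner (the first identification applied in the dual planar algebra), so $G_2=(\mathscr{S}^Q)_2$. This yields $\mathscr{S}=\mathscr{S}_Q*\mathscr{S}^Q$.

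Finally, to see that each factor is generated by its $2$-boxes, let $\mathscr{T}\subseteq\mathscr{S}_Q$ and $\mathscr{U}\subseteq\mathscr{S}^Q$ be the planar subalgebras generated by $(\mathscr{S}_Q)_2=G_1$ and $(\mathscr{S}^Q)_2=G_2$. Then $\mathscr{T}*\mathscr{U}$ is a planar subalgebra of $\mathscr{S}_Q*\mathscr{S}^Q=\mathscr{S}$ containing $G_1\cup G_2$, so $\mathscr{T}*\mathscr{U}=\mathscr{S}=\mathscr{S}_Q*\mathscr{S}^Q$. Both decompositions share the free product biprojection $Q$, so their $Q$-cut-downs coincide as subspaces of $\mathscr{S}$; by Proposition \ref{iso} these cut-downs are isomorphic to $\mathscr{T}$ and to $\mathscr{S}_Q$ respectively, whence $\dim\mathscr{T}_n=\dim(\mathscr{S}_Q)_n$ for all $n$. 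Together with $\mathscr{T}\subseteq\mathscr{S}_Q$ this forces $\mathscr{T}=\mathscr{S}_Q$, and the dual cut-down gives $\mathscr{U}=\mathscr{S}^Q$, so both factors are generated by $2$-boxes. I expect the main obstacle to be the second identification: one must pin down the normalizations so that the coproduct condition $Q*x*Q=(\frac{tr(Q)}{\delta})^2 x$ matches the dual cut-down exactly, and the Fourier transform $1\boxdot(-)$ is what makes this transparent by converting the coproduct corner into an ordinary multiplicative corner for $P_Q$.
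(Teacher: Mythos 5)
Your proposal is correct, and its skeleton coincides with the paper's: identify the two defining conditions with the $2$-box spaces of $\mathscr{S}_Q$ and $\mathscr{S}^Q$, invoke the Bisch--Jones theorem that the planar subalgebra $\mathscr{S}_Q\vee\mathscr{S}^Q$ is naturally the free product $\mathscr{S}_Q*\mathscr{S}^Q$, and then sandwich $\mathscr{S}$ between the free product of the $2$-box-generated subalgebras and $\mathscr{S}_Q*\mathscr{S}^Q$. Your Fourier computation, showing that $Q*x*Q=(\frac{tr(Q)}{\delta})^2x$ is equivalent to $P_Q(1\boxdot x)P_Q=1\boxdot x$ with $P_Q=\frac{\delta}{tr(Q)}(1\boxdot Q)$, is a more explicit rendering of what the paper compresses into the assertions $x=\alpha_Q^{-1}(x)\otimes\delta_B e$ and $y=id\otimes\beta_Q^{-1}(y)$. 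The one point where you genuinely diverge is the final step, deducing $\mathscr{T}=\mathscr{S}_Q$ and $\mathscr{U}=\mathscr{S}^Q$ from the equality $\mathscr{T}*\mathscr{U}=\mathscr{S}_Q*\mathscr{S}^Q$: the paper cites Voiculescu's formula for the dimension generating function of a free product, so that equality of the two free products, together with $\mathscr{T}\subseteq\mathscr{S}_Q$ and $\mathscr{U}\subseteq\mathscr{S}^Q$, forces level-by-level equality of dimensions; you instead cut both free products down by the common biprojection $Q$ and apply Proposition \ref{iso} twice, getting $\mathscr{T}\cong(\mathscr{T}*\mathscr{U})_Q=(\mathscr{S}_Q*\mathscr{S}^Q)_Q\cong\mathscr{S}_Q$, which combined with the inclusion $\mathscr{T}\subseteq\mathscr{S}_Q$ and finite dimensionality of each graded piece gives equality. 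Your variant stays entirely inside machinery already established in the paper (Proposition \ref{iso}, plus the observation that the cut-down is a planar-algebraic construction and hence agrees whether computed in $\mathscr{T}*\mathscr{U}$ or in $\mathscr{S}$), whereas the paper's dimension count is shorter but leans on the unstated monotonicity of the free-product dimension formula in the dimensions of its factors. Both arguments are sound, so this is a legitimate, slightly more self-contained, route to the same conclusion.
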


\begin{proof}
Suppose $\mathscr{A}$ is the planar subalgebra of $\mathscr{S}_Q$ generated by 2-boxes and
$\mathscr{B}$ is the planar subalgebra of $\mathscr{S}^Q$ generated by 2-boxes.
Then $\mathscr{A}*\mathscr{B}\subset \mathscr{S}_Q*\mathscr{S}^Q \subset \mathscr{S}$.
On the other hand,
if $x\in \mathscr{S}_2$ satisfies $QxQ=Q$, then  $x=\alpha_Q^{-1}(x)\otimes \delta_Be$;
if $y\in \mathscr{S}_2$ satisfies $Q*y*Q=(\frac{tr(Q)}{\delta})^2y$, then $y=id \otimes \beta_Q^{-1}(y)$.
If $\mathscr{S}$ is generated by these 2-boxes, then $\mathscr{S}\subset \mathscr{A}*\mathscr{B}$.
So $\mathscr{A}*\mathscr{B}=\mathscr{S}_Q*\mathscr{S}^Q=\mathscr{S}$.
Counting the dimensions by Volculescu's free product of dimension generating functions, we obtain $\mathscr{A}=\mathscr{S}_Q$ and $\mathscr{B}=\mathscr{S}^Q$.
\end{proof}

\begin{corollary}\label{freedecom222}
Suppose $\mathscr{A}$ and $\mathscr{B}$ are subfactor planar algebras.
If $\mathscr{A}*\mathscr{B}$ is generated by 2-boxes, then both $\mathscr{A}$ and $\mathscr{B}$ are generated by 2-boxes.
\end{corollary}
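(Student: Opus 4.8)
The plan is to manufacture the canonical biprojection inside the free product and feed it to Theorem \ref{freedecom}, whose conclusion is \emph{exactly} the statement we want. Write $\mathscr{S}=\mathscr{A}*\mathscr{B}$ and let $Q$ be the canonical biprojection, namely $\delta_B^{-1}$ times the diagram which the text identifies as $\delta_B\, id\otimes e$; thus $Q=id\otimes e$, where $e$ denotes the Jones projection of $\mathscr{B}$. By Proposition \ref{iso} the map $\alpha_Q$ is a planar algebra isomorphism $\mathscr{A}\cong\mathscr{S}_Q$, and dually $\mathscr{B}\cong\mathscr{S}^Q$. Consequently, if Theorem \ref{freedecom} applies to the pair $(\mathscr{S},Q)$, then its conclusion that $\mathscr{S}_Q$ and $\mathscr{S}^Q$ are generated by $2$-boxes translates, via these isomorphisms, into ``$\mathscr{A}$ and $\mathscr{B}$ are generated by $2$-boxes'', which is what we must prove. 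So the whole problem collapses to verifying the single hypothesis of Theorem \ref{freedecom}: that $\mathscr{S}$ is generated, as a planar algebra, by the special set $F=\{x\in\mathscr{S}_2\mid QxQ=x\}\cup\{x\in\mathscr{S}_2\mid Q*x*Q=(\tfrac{tr(Q)}{\delta})^2 x\}$.

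To verify this I would first identify the two families comprising $F$ explicitly. For a separated $2$-box $a\otimes b$ one computes $Q(a\otimes b)Q=a\otimes(ebe)$; since $e$ is a minimal projection in the irreducible planar algebra $\mathscr{B}$ we have $e\mathscr{B}_2 e=\mathbb{C}e$, so the condition $QxQ=x$ cuts out precisely $\{a\otimes\delta_B e\mid a\in\mathscr{A}_2\}$, a faithful copy of $\mathscr{A}_2$ (this is the identification already used in the proof of Theorem \ref{freedecom}). Dually, the coproduct condition $Q*x*Q=(\tfrac{tr(Q)}{\delta})^2 x$ cuts out $\{id\otimes b\mid b\in\mathscr{B}_2\}$, a faithful copy of $\mathscr{B}_2$. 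These two subspaces of $\mathscr{S}_2$ meet exactly in the line $\mathbb{C}(id\otimes e)$, so the linear span of $F$ has dimension $\dim\mathscr{A}_2+\dim\mathscr{B}_2-1$.

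Next I would compare this with $\dim\mathscr{S}_2=\dim(\mathscr{A}*\mathscr{B})_2$. By the free product of dimension generating functions (the same tool invoked in the proof of Theorem \ref{freedecom}), the level-$2$ dimension of a free product is $\dim\mathscr{A}_2+\dim\mathscr{B}_2-1$. Hence $\mathrm{span}(F)=\mathscr{S}_2$. Since $\mathscr{S}$ is generated as a planar algebra by all of $\mathscr{S}_2$ by hypothesis, and $\mathscr{S}_2=\mathrm{span}(F)$ is contained in the planar algebra generated by $F$, that planar algebra is all of $\mathscr{S}$. This is exactly the hypothesis of Theorem \ref{freedecom}, which then delivers the conclusion.

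The step I expect to be the \textbf{main obstacle} is the dimension identity $\dim(\mathscr{A}*\mathscr{B})_2=\dim\mathscr{A}_2+\dim\mathscr{B}_2-1$, equivalently the diagrammatic assertion that every separated $2$-box of $\mathscr{A}*\mathscr{B}$ already lies in the span of the pure-$\mathscr{A}$ boxes $a\otimes\delta_B e$ and the pure-$\mathscr{B}$ boxes $id\otimes b$, so that the free product produces no genuinely mixed $2$-box. If one wishes to avoid the generating-function computation, this can be checked directly by enumerating the two Temperley--Lieb $2$-tangles used in the separation and observing that each accommodates a nontrivial label from at most one of $\mathscr{A}$ or $\mathscr{B}$; this is routine but is where the real content lies.
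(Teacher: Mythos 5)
Your proposal is correct and takes essentially the same route as the paper: the paper's proof likewise sets $Q=id\otimes e$, asserts that $(\mathscr{A}*\mathscr{B})_2$ is exhausted by the two families $\{x: QxQ=x\}$ and $\{x: Q*x*Q=(\tfrac{tr(Q)}{\delta})^2x\}$, and then invokes Theorem \ref{freedecom}. The only difference is that the paper states this structure of the $2$-box space without justification, whereas you verify it via the dimension identity $\dim(\mathscr{A}*\mathscr{B})_2=\dim\mathscr{A}_2+\dim\mathscr{B}_2-1$; that is a routine but legitimate filling-in of the step the paper leaves implicit.
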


\begin{proof}
Suppose $Q$ is the central biprojection $id\otimes e$, then $(\mathscr{A}*\mathscr{B})_2=\{x \in \mathscr{S}_2| QxQ=Q \quad or \quad Q*x*Q=(\frac{tr(Q)}{\delta})^2x\}$.
The statement follows from Theorem \ref{freedecom}.
\end{proof}

Based on the Schur Product Theorem, see Theorem \ref{P*P>0}, we have the following result.

\begin{theorem}\label{inter free}
Suppose $\mathscr{A}$ and $\mathscr{B}$ are subfactor planar algebras. Then for any biprojection $Q$ in $(\mathscr{A}*\mathscr{B})_2$, either $Q\geq id\otimes e$ or $Q\leq id\otimes e$.
\end{theorem}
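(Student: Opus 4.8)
The plan is to realize the biprojection $Q$ inside the ambient tensor product and compare it with $Q_0:=id\otimes e$ (here $id\in\mathscr{A}_2$ is the identity and $e\in\mathscr{B}_2$ the Jones projection) by exploiting the two flat parts of $(\mathscr{A}*\mathscr{B})_2$ determined by $Q_0$. Recall from the separated-element description that $(\mathscr{A}*\mathscr{B})_2$ is a unital $*$-subalgebra of $\mathscr{A}_2\otimes\mathscr{B}_2$, and from the proof of Corollary \ref{freedecom222} that the two flat parts with respect to $Q_0$ are $\{x:Q_0xQ_0=x\}=\mathscr{A}_2\otimes e$ and $\{x:Q_0*x*Q_0=(\tfrac{tr(Q_0)}{\delta})^2x\}=id\otimes\mathscr{B}_2$, which together span $(\mathscr{A}*\mathscr{B})_2$ (Proposition \ref{iso}, Theorem \ref{freedecom}). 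Hence I may write $Q=a\otimes e+id\otimes b$ with $a=a^*\in\mathscr{A}_2$ and $b=b^*\in\mathscr{B}_2$. The two flat parts are detected by two compressions: the ordinary one $x\mapsto Q_0xQ_0$ maps onto $\mathscr{A}_2\otimes e$, while the coproduct one $x\mapsto Q_0*x*Q_0$ maps onto $id\otimes\mathscr{B}_2$; here I use that $e$ is, up to the scalar $\tfrac{tr(e)}{\delta}$, the unit for the coproduct of $\mathscr{B}$, so that $e*_{\mathscr{B}}y=\tfrac{tr(e)}{\delta}y$.

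Next I would impose the two defining relations of a biprojection supplied by Theorem \ref{P*P=P}, namely $Q^2=Q$ and $Q*Q=\tfrac{tr(Q)}{\delta}Q$, and expand both in $\mathscr{A}_2\otimes\mathscr{B}_2$ using the factorization $(x_1\otimes y_1)*(x_2\otimes y_2)=(x_1*_{\mathscr{A}}x_2)\otimes(y_1*_{\mathscr{B}}y_2)$, which holds on separated elements since the free product is a planar subalgebra. Because $e$ is the coproduct unit of $\mathscr{B}$ and $e^2=e$, every term whose $\mathscr{B}$-slot is $e$ collapses, and the cross terms of $Q*Q$ collect into a single element of $id\otimes\mathscr{B}_2$. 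Splitting the identity $Q*Q=\tfrac{tr(Q)}{\delta}Q$ into its component in $(\mathscr{A}_2\ominus\mathbb{C}id)\otimes\mathscr{B}_2$ and its component in $id\otimes\mathscr{B}_2$ yields, on one side, that the trace-free part of $a$ generates a biprojection of $\mathscr{A}$, and on the other a scalar relation forcing $b*_{\mathscr{B}}b\in\mathrm{span}\{b,e\}$. The first sub-goal is then to upgrade this to a commutation statement: using the exchange relation of $Q_0$ (Proposition \ref{ex of bipro}) together with $Q^2=Q$, I would show $[b,e]=0$, i.e. $Q$ commutes with $Q_0$. In that case $Q=QQ_0+Q(id_{\mathscr{A}*\mathscr{B}}-Q_0)$ splits orthogonally as $q_A\otimes e$ plus $id\otimes r_0$, where $q_A\le id$ is a projection of $\mathscr{A}_2$ and $r_0\le id-e$ is a projection of $\mathscr{B}_2$ orthogonal to $e$.

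The decisive step — and the one I expect to be the main obstacle — is to rule out a genuinely mixed solution, that is, to show that $q_A\otimes e$ and $id\otimes r_0$ cannot both be nonzero and proper. This is exactly where the Schur product theorem (Theorem \ref{P*P>0}) is indispensable: feeding the orthogonal pieces $q_A\otimes e$ and $id\otimes r_0$ back into the positive element $Q*Q$ and comparing the components supported on $e$ and on $id-e$ in the $\mathscr{B}$-slot, the positivity of the relevant coproducts forces either $r_0=0$ or $q_A=id$. Indeed, the purely algebraic relations $Q^2=Q$ and $Q*Q\propto Q$ alone admit spurious mixed solutions, and it is Schur positivity that eliminates them; this positivity bookkeeping is the technical heart of the argument.

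The endgame is then immediate in each case. If $r_0=0$, then $Q=q_A\otimes e$ with $q_A\le id$, so $Q\le id\otimes e=Q_0$. If $q_A=id$, then $Q=id\otimes p$ with $p:=e+r_0$ a projection, and $Q*Q=\tfrac{tr(Q)}{\delta}Q$ reduces to $p*_{\mathscr{B}}p\propto p$, so $p$ is a biprojection of $\mathscr{B}$; since the Jones projection $e$ is the minimal biprojection of any subfactor planar algebra, $p\ge e$, whence $Q=id\otimes p\ge id\otimes e=Q_0$. In either case $Q$ is comparable with $Q_0$, which is the assertion.
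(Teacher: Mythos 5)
Your proposal follows essentially the same route as the paper's proof: write $Q=a\otimes e+id\otimes b$ inside the tensor product, show $a,b$ are projections with $b$ orthogonal to $e$, expand $Q*Q=\frac{tr(Q)}{\delta}Q$ slotwise, and use the Schur product theorem on the $id\otimes e$-component to force the dichotomy $b=0$ (hence $Q\leq id\otimes e$) or $a=id$ (hence $Q\geq id\otimes e$). The one detail your sketch glosses over is that the paper also uses $Q=Q'$ to get $b=b'$, which is what makes the coefficient of $id\otimes e$ contributed by $b*b$ (namely $tr(bb')/\delta_2=tr(b)/\delta_2$) strictly positive when $b\neq 0$; your detours (deriving $[b,e]=0$ from the exchange relation, which is automatic since irreducibility makes $e$ central in the 2-box algebra, and invoking minimality of the Jones biprojection at the end, where $e+r_0\geq e$ is immediate) are harmless but unnecessary.
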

\begin{proof}
Note that $Q=a\otimes e+id\otimes b$ for some $a\in\mathscr{A}_2, b\in\mathscr{B}_2$. Furthermore we may assume $be=0$, otherwise $a,b$ are replaced by $\lambda id+a, b-\lambda e$, when $be=\lambda e$.
Then this decomposition is unique since $a\otimes e=Q (id\otimes e)$.
By assumption $Q$ is a projection, thus $a\otimes e$ and $id\otimes b$ are projections, then $a$ and $b$ are projections.
Moreover $Q=Q'$, thus both $a\otimes e$ and $id\otimes b$ are self-contragredient, then $a=a'$ and $b=b'$.
Furthermore $\frac{tr(Q)}{\delta}Q=Q*Q$, then
$$a\otimes e+id\otimes b$$
$$=a*a\otimes e*e +a*id\otimes e*b+id*a\otimes b*e+id*id\otimes b*b$$
$$=\frac{1}{\delta_1}a*a\otimes e+\frac{2tr(a)}{\delta_1\delta_2} id\otimes b+\delta_1 id\otimes b*b$$
$$=\frac{1}{\delta_1}a*a\otimes e+\delta_1 \frac{tr(b)}{\delta_2}id\otimes e+\frac{2tr(a)}{\delta_1\delta_2} id\otimes b+\delta_1 id\otimes (b*b-\frac{tr(b)}{\delta_2}e).$$
Both $b$ and $b*b-\frac{tr(b)}{\delta_2}e$ are orthogonal to $e$, so
$$\frac{tr(Q)}{\delta} a\otimes e=\frac{1}{\delta_1}a*a\otimes e+\delta_1 \frac{tr(b)}{\delta_2}id\otimes e.$$
If $tr(b)=0$, then $b=0$, because $b$ is a projection. Thus $P=a\otimes e \leq id\otimes e$.
Otherwise $tr(b)>0$. By Theorem \ref{P*P>0}, we have $a*a\otimes e>0$. So $id\otimes e\preceq a\otimes e$. While $a$ is a projection, so $a=id$. Then $P\geq id\otimes e$.
\end{proof}

\subsection{Skein theory}\label{sub ex}
Comparing to group theory,
a subfactor planar algebra could be constructed by generators and relations \cite{JonPA}.
While trying to construct a subfactor planar algebra $\mathscr{S}=\{\mathscr{S}_{n,\pm}\}_{n\in \mathbb{N}_0}$, we will encounter four problems:

(1) Is $\mathscr{S}$ finite dimensional, i.e., is $\mathscr{S}_{n,\pm}$ finite dimensional for each $n$?

(2) Is $\mathscr{S}$ evaluable, i.e., is $\mathscr{S}_{0,\pm}$ 1-dimensional?

(3) Is $\mathscr{S}$ the zero planar algebra?

(4) Is the Markov trace positive definite?

If $\mathscr{S}$ is the planar algebra of an irreducible subfactor, then $\dim(\mathscr{S}_{0,\pm})=\dim(\mathscr{S}_{1,\pm})=1$. We shall consider a planar algebra generated by a finite subset of 2-boxes, then each generator can be viewed as a crossing $\gra{ex1}$ with a label at the intersection, and each element in $\mathscr{S}_{n,\pm}$ can be viewed as a linear combination of diagrams with $2n$ boundary points which consist of finitely many crossings and finitely many strings. For example, $\gra{ex2}$, as an element in $\mathscr{S}_{3,+}$, is a diagram with $6$ boundary points and 2 crossings.
What kind of relations should be endowed?
One type of relation, termed an exchange relation, is discussed by Landau \cite{Lan02}. It is motivated by the exchange relation of a biprojection discovered by Bisch \cite{Bis94}, see Proposition \ref{ex of bipro}. The planar algebra $\mathscr{S}$ has an exchange relation means that the diagram $\gra{ex2}$ can be replaced by a finite sum of the diagrams $\gra{ex3}$ and $\gra{ex4}$, and the diagram $\gra{ex5}$ can be replaced by a multiple of a string $\gra{ex7}$. Note that a closed string contributes a scaler $\delta$.
By these three operations, a face of a diagram can be removed without increasing the number of crossings.
Given the number of boundary points, up to isotopy, there are only finitely many diagrams without faces and closed strings. Thus problem (1) is solved. Furthermore if such a diagram has no boundary points, then it has to be the empty diagram. Thus problem (2) is solved.
Given generators and an exchange relation, to solve problem (3) is equivalent to check a finite system of equations. But it is hard to solve theses equations directly. What's worse, it is much harder to solve problem (4).
In this paper, we focus on classifying exchange relation planar algebras. The ones appeared could be constructed by other methods. So we will not deal with problem (3) and (4) directly.

\begin{definition}
Suppose $\mathscr{S}$ is an irreducible subfactor planar algebra. If $\mathscr{S}$ is generated by $\mathscr{S}_2$ with the following relations: for any $a,b\in \mathscr{S}_2$,
$$(1\boxdot a)b=\Sigma_i c_i(1\boxdot d_i)+f_i(1\boxdot id)g_i,$$
for finitely many $c_i,d_i,f_i,g_i\in \mathscr{S}_2$,
then $\mathscr{S}$ is called an exchange relation planar algebra.
\end{definition}

It is easy to check that this definition is equivalent to Landau's definition in \cite{Lan02}.
By definition, Temperley-Lieb subfactor planar algebras and depth 2 subfactor planar algebras are exchange relation planar algebras.

\begin{proposition}\label{dimpn}
Suppose $\mathscr{S}$ is an exchange relation planar algebra.
Then
$$\dim(\mathscr{S}_{n+1})\leq\dim(\mathscr{S}_n)^2+(\dim(\mathscr{S}_2)-1)^n.$$
And $\mathscr{P_3}$ is generated by $\mathscr{S}_2$ and $\mathscr{S}_{1,3}$ as an algebra.
\end{proposition}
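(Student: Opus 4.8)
The plan is to split $\mathscr{S}_{n+1}$ along its Jones ideal and bound the two summands separately, using a generation statement to control the quotient. First I would record the orthogonal decomposition $\mathscr{S}_{n+1}=\mathscr{I}_{n+1}\oplus(\mathscr{S}_{n+1}/\mathscr{I}_{n+1})$. For the ideal, since $\mathscr{I}_{n+1}=\mathscr{S}_n e_n\mathscr{S}_n$, the linear map $\mathscr{S}_n\otimes\mathscr{S}_n\to\mathscr{S}_{n+1}$ given by $x\otimes y\mapsto xe_ny$ is onto $\mathscr{I}_{n+1}$, so $\dim\mathscr{I}_{n+1}\le\dim(\mathscr{S}_n)^2$ immediately. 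The whole content is therefore the estimate $\dim(\mathscr{S}_{n+1}/\mathscr{I}_{n+1})\le(\dim(\mathscr{S}_2)-1)^n$, which I would prove by induction on $n$; the base case $n=1$ is clear since $\mathscr{I}_2=\mathbb{C}e$ forces $\dim(\mathscr{S}_2/\mathscr{I}_2)=\dim(\mathscr{S}_2)-1$.

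For the generation assertion (and the higher analogue feeding the induction) I would argue diagrammatically. Because $\mathscr{S}$ is generated by $\mathscr{S}_2$, every element of $\mathscr{S}_{n+1}$ is a linear combination of tangles labelled by $2$-boxes; choosing a generic height function, each such tangle factors in the algebra $\mathscr{S}_{n+1}$ into layers, each layer being a single $2$-box, or a Temperley--Lieb cap--cup, sitting on a pair of adjacent strands. Matching the shading forces a box on the leftmost pair into $\mathscr{S}_n$ (the right-string embedding) and a box on the last pair $(n,n+1)$ into the correctly shaded copy $\mathscr{B}_n\cong\mathscr{S}_2$; for $n=2$ this last pair is precisely $\mathscr{S}_{1,3}$, and the Temperley--Lieb layers are absorbed since $e_1\in\mathscr{S}_2$ and $e_2=\tfrac1\delta(1\boxdot id)\in\mathscr{S}_{1,3}$. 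This already yields the second assertion, $\mathscr{S}_3=\langle\mathscr{S}_2,\mathscr{S}_{1,3}\rangle$, and in general $\mathscr{S}_{n+1}=\langle\mathscr{S}_n,\mathscr{B}_n\rangle$ as an algebra.

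Next I would propagate the exchange relation to the last pair in order to sort words. An element of $\mathscr{B}_n$ commutes with every $2$-box of $\mathscr{S}_n$ on a pair disjoint from $(n,n+1)$, and on the single overlapping pair $(n-1,n)$ the exchange relation rewrites $(1\boxdot a)\,b$ as $\sum_i c_i(1\boxdot d_i)$ plus a term containing $1\boxdot id$, i.e.\ a multiple of $e_n$ lying in $\mathscr{I}_{n+1}$. Reading the first sum as (box on $(n-1,n)$)(box on $(n,n+1)$), this says
$$\mathscr{B}_n\,\mathscr{S}_n\subseteq\mathscr{S}_n\,\mathscr{B}_n+\mathscr{I}_{n+1},$$
and since $\mathscr{B}_n$ is a subalgebra the straightening terminates: modulo $\mathscr{I}_{n+1}$ every word collapses to one of the form $a\,b$ with $a\in\mathscr{S}_n$, $b\in\mathscr{B}_n$. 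Hence $\mathscr{S}_{n+1}/\mathscr{I}_{n+1}$ is spanned by the images of $\{ab\}$, so its dimension is at most $\dim\overline{\mathscr{S}_n}\cdot\dim\overline{\mathscr{B}_n}$, where bars denote images in the quotient. Now $e_{n-1}=\delta^2e_{n-1}e_ne_{n-1}\in\mathscr{S}_ne_n\mathscr{S}_n$ shows $\mathscr{I}_n\subseteq\mathscr{I}_{n+1}$, so the image of $\mathscr{S}_n$ factors through $\mathscr{S}_n/\mathscr{I}_n$ and $\dim\overline{\mathscr{S}_n}\le(\dim(\mathscr{S}_2)-1)^{n-1}$ by the inductive hypothesis; and since $e_n\in\mathscr{B}_n\cap\mathscr{I}_{n+1}$, the image of $\mathscr{B}_n\cong\mathscr{S}_2$ loses the Jones-projection direction, giving $\dim\overline{\mathscr{B}_n}\le\dim(\mathscr{S}_2)-1$. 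Multiplying produces $(\dim(\mathscr{S}_2)-1)^n$ and closes the induction.

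The main obstacle is the sorting step. I must verify that propagating the level-$3$ exchange relation to the pair $(n,n+1)$ inside $\mathscr{S}_{n+1}$ (with the strands $1,\dots,n-2$ as left spectators) produces \emph{only} reordered words plus ideal terms, with no growth in length, so that the straightening halts; this is exactly where the precise shape of the exchange relation is used, namely that the non-ideal terms stay inside $\mathscr{S}_n\cdot\mathscr{B}_n$ while the remaining term is a genuine multiple of $e_n\in\mathscr{I}_{n+1}$. The supporting shading bookkeeping, identifying the last-pair copy with $\mathscr{S}_{1,3}$ (respectively $\mathscr{B}_n$) and checking that $e_1,e_2$ cover the Temperley--Lieb layers, is routine but should be carried out carefully to justify the layer decomposition.
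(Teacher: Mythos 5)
Your overall architecture is a legitimate algebraic reformulation of the paper's diagrammatic count: split off the Jones ideal and bound it by $\dim(\mathscr{S}_n)^2$ via $x\otimes y\mapsto xe_ny$, then bound the quotient by sorting words into the form $\mathscr{S}_n\cdot\mathscr{B}_n$ modulo $\mathscr{I}_{n+1}$. Your sorting inclusion $\mathscr{B}_n\mathscr{S}_n\subseteq\mathscr{S}_n\mathscr{B}_n+\mathscr{I}_{n+1}$ is exactly where the exchange relation enters in the paper too (the terms $f_i(1\boxdot id)g_i=\delta f_ie_ng_i$ fall into the ideal, the terms $c_i(1\boxdot d_i)$ are already sorted), and the final bookkeeping ($\mathscr{I}_n\subseteq\mathscr{I}_{n+1}$, losing the Jones direction of $\mathscr{B}_n$, multiplying the two image dimensions) is correct.

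The genuine gap is the layer decomposition, and it is not ``routine bookkeeping.'' The claim that a labelled tangle ``factors in the algebra $\mathscr{S}_{n+1}$ into layers'' does not follow from choosing a generic height function: Morse slices of a planar tangle are morphisms between varying numbers of strands, since a cup raises the strand count above $n+1$, and a single $2$-box can sit with three or even four of its legs on the same side of a slice (this is exactly what happens for the rotated, ``annular'' placements of a generator). So the elementary pieces a height function produces are not elements of $\mathscr{S}_{n+1}$ at all, let alone shifted $2$-boxes or Jones projections; converting an arbitrary labelled diagram, with faces and rotated boxes, into such a product is precisely the hard content of the proposition. Moreover, being generated by $2$-boxes \emph{as a planar algebra} does not formally imply that each $\mathscr{S}_{n}$ is generated by shifted $2$-boxes \emph{as an algebra}, and your sorting step presupposes the latter at every level (you move a $\mathscr{B}_n$-letter past a word of shifted $2$-boxes, so elements of $\mathscr{S}_n$ must already be such words); thus the unproved generation claim infects the whole induction, not just the second assertion. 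The paper resolves this in the opposite order: it uses the exchange relation \emph{first}, to delete faces of a diagram without increasing the number of crossings, and then classifies face-free diagrams --- such a diagram either lies in $\mathscr{I}_{n+1}$ or has $n+1$ through strings and collapses, again by exchange moves, to a single staircase diagram whose $n$ crossing labels, taken modulo the Jones projection, give the count $(\dim(\mathscr{S}_2)-1)^n$ and, at $n=2$, the generation statement. A secondary, fixable point: shifting the level-$3$ exchange relation to the strands $(n-1,n,n+1)$ reverses the shading when the shift is odd, so you need the exchange relation for the dual planar algebra as well (which does hold, as the paper notes); your write-up should address this.
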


Specifically $\dim(\mathscr{S}_3)\leq\dim(\mathscr{S}_2)^2+(\dim(\mathscr{S}_2)-1)^2.$

\begin{proof}
We view 2-boxes as crossings, the labels at the intersection as points, the strings as edges. Then using exchange relations, we may replace $\grb{exface1}$ by $\grb{exface2}$ and $\grb{exface3}$. By this operation, the number of edges of one face will decrease without adding faces. Combining with the relation $\gra{ex5}=\gra{ex7}$, we only need to consider diagrams without faces.

If a diagram with $2n+2$ boundary points, $n+1$ on the top and $n+1$ on the bottom, has no faces, then either
it is in the ideal $\mathscr{I}_{n+1}$ generated by the Jones projection $e_n$, or it has $n+1$ through strings.
The dimension of the ideal of $\mathscr{I}_{n+1}$ is at most $\dim(\mathscr{S}_n)^2$.
In $\mathscr{S}_{n+1}/\mathscr{I}_{n+1}$, applying the exchange relation, we only need to consider one diagram $\gra{exface4}$.
If the label at an intersection is the Jones projection $e$, then this diagram is in the ideal $\mathscr{I}_{n+1}$.
Thus the dimension of $\mathscr{S}_{n+1}/\mathscr{I}_{n+1}$ is at most $(\dim(\mathscr{S}_2)-1)^n$.
Then $\dim(\mathscr{S}_{n+1})\leq\dim(\mathscr{S}_n)^2+(\dim(\mathscr{S}_2)-1)^n$.

If a diagram with $6$ boundary points has no faces and it has two crossings or more, then it has to be one of the following three diagram $\gra{ex2}$, $\gra{ex3}$, $\gra{ex4}$.
Thus $\mathscr{P_3}$ is generated by $\mathscr{S}_2$ and $\mathscr{S}_{1,3}$ as an algebra.
\end{proof}

If a closed diagram has no faces, then it is the empty diagram. So each closed diagram is evaluable based on the exchange relation.
Furthermore the exchange relation is determined by the algebraic structure of 2-boxes in the following sense.

\begin{definition}
The structure of 2-boxes of a subfactor planar algebra consists of the data of adjoints, contragredients, products and coproducts of 2-boxes.
\end{definition}

The following data is also derived from the structure of 2-boxes,
the identity $id$ is identified as the unique unit of 2-boxes under the product; the value of a closed circle $\delta$ is determined by the coproduct of two identities; $\delta e$ is identified as the unique unit of 2-boxes under the coproduct; the trace of a 2-box is determined by its coproduct with the identity $id$. If the planar algebra is irreducible, then capping a 2-box is also determined.

\begin{theorem}\label{p2ex}
Suppose $\mathscr{S}$ is an exchange relation planar algebra and $\mathscr{A}$ is a subfactor planar algebra generated by 2-boxes. If a linear map $\phi: \mathscr{S}_2\rightarrow\mathscr{A}_2$ is surjective and it preserves the structure of 2-boxes, i.e., adjoints, contragredients, products and coproducts, then $\phi$ extends to a planar algebra isomorphism from $\mathscr{S}$ to $\mathscr{A}$.
\end{theorem}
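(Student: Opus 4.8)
The plan is to reduce the statement to a single identity in $\mathscr{A}_3$, namely that the exchange relation of $\mathscr{S}$ survives transport by $\phi$, and then to prove that identity by a positivity argument in the spirit of Proposition~\ref{ex of bipro}. First I would record that $\phi$ automatically preserves all the derived $2$-box data: since $id$ is the unit of the product and $\delta$ is read off from $id*id$, we get $\phi(id)=id$ and $\delta_{\mathscr{S}}=\delta_{\mathscr{A}}$; since $\delta e$ is the unit of the coproduct, $\phi(e)=e$; and since the trace of a $2$-box is determined by its coproduct with $id$, $\phi$ preserves the Markov trace, $tr(\phi(x))=tr(x)$. Hence $\phi$ is injective: if $\phi(x)=0$ then $tr(x^{*}x)=tr(\phi(x)^{*}\phi(x))=0$, so $x=0$ by positive definiteness. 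Thus $\phi$ is a $*$-isomorphism $\mathscr{S}_2\to\mathscr{A}_2$ carrying products, coproducts, $id$, $e$, $\delta$ and $tr$ to their counterparts.

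Next I would use that an exchange relation planar algebra is \emph{presented} by its $2$-boxes: $\mathscr{S}$ is the quotient of the universal planar $*$-algebra $\mathscr{U}$ on the box space $\mathscr{S}_2$ by the planar ideal generated by (i) the relations encoding the $2$-box structure (products, coproducts, adjoints, contragredients, and the loop value $\delta$) and (ii) the exchange relations $(1\boxdot a)b=\sum_i c_i(1\boxdot d_i)+\sum_i f_i(1\boxdot id)g_i$. Labelling the boxes of $\mathscr{U}$ by $\phi$ yields a planar $*$-homomorphism $\rho\colon\mathscr{U}\to\mathscr{A}$, which is onto because $\phi$ is onto $\mathscr{A}_2$ and $\mathscr{A}$ is generated by $2$-boxes. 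As $\phi$ preserves the structure in (i), $\rho$ kills all relations of type (i), so everything comes down to showing $\rho$ also kills the exchange relations, i.e.\ that for all $a,b$ the element
$$\eta_{a,b}:=(1\boxdot\phi(a))\phi(b)-\sum_i\phi(c_i)(1\boxdot\phi(d_i))-\sum_i\phi(f_i)(1\boxdot id)\phi(g_i)\in\mathscr{A}_3$$
vanishes; equivalently, that the exchange relation continues to hold in $\mathscr{A}$.

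To prove $\eta_{a,b}=0$ I would show $tr_3(\eta_{a,b}^{*}\eta_{a,b})=0$ and invoke faithfulness of the trace on $\mathscr{A}$. Expanding, this is a sum of traces of products of the reduced pieces $1\boxdot x$ and $f(1\boxdot id)g$. Most of the resulting closed diagrams are computable purely from the $2$-box data that $\phi$ preserves: two factors of $\mathscr{S}_{1,3}$ can be brought together by cyclicity of the trace and merged by $(1\boxdot u)(1\boxdot v)=1\boxdot(u*v)$, while any factor $1\boxdot id=\delta e_2$ is absorbed using the Jones relation $e_2 y e_2\in\mathbb{C}e_2$ (scalar determined by $tr(y)$) together with the capping rules, which for an irreducible planar algebra are again $2$-box data. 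Each such term is therefore a fixed expression in products, coproducts, adjoints and $tr$, so it takes the same value in $\mathscr{A}$ as in $\mathscr{S}$, where the whole sum equals $tr_3(\xi^{*}\xi)=0$ because $\xi=(1\boxdot a)b-\sum\cdots$ is literally zero. The step I expect to be the \emph{main obstacle} is the remaining ``mixed'' terms $\langle(1\boxdot a)b,\,c(1\boxdot d)\rangle$, where an $\mathscr{S}_{1,3}$-factor sits on either side of a nontrivial $\mathscr{S}_2$-factor that cannot be removed by cyclicity. As a closed diagram this is a genuine two-crossing picture with no contractible bigon, so it is \emph{not} reducible to $2$-box data by the elementary moves above; its value is exactly what the exchange relation computes, which makes the direct trace estimate appear circular. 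The way around this is the consistency (confluence) of the exchange-relation rewriting of Landau \cite{Lan02}: the compatibility equations forced by reducing $(1\boxdot a)(1\boxdot a')b$ and $(1\boxdot a)bb'$ in two different orders express the exchange coefficients, hence the mixed moments, through the $2$-box data alone, and these equations are preserved by $\phi$; together with the positivity of $\mathscr{A}$ this forces $\eta_{a,b}=0$. Making this precise is the technical heart of the argument.

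Finally, granting $\eta_{a,b}=0$, the map $\rho$ factors through $\mathscr{S}$ to give a planar $*$-homomorphism $\Phi\colon\mathscr{S}\to\mathscr{A}$ with $\Phi|_{\mathscr{S}_2}=\phi$. It is surjective because $\Phi(\mathscr{S})$ is a planar subalgebra of $\mathscr{A}$ containing the generating set $\mathscr{A}_2=\phi(\mathscr{S}_2)$. It is injective because a planar homomorphism intertwines the Markov traces, so $tr_n(x^{*}x)=tr_n(\Phi(x)^{*}\Phi(x))$ for all $x\in\mathscr{S}_n$; thus $\Phi(x)=0$ forces $tr_n(x^{*}x)=0$ and $x=0$ by positive definiteness of the trace on $\mathscr{S}$. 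Hence $\Phi$ is the desired planar algebra isomorphism.
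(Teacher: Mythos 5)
Your proposal follows the same skeleton as the paper's proof: extend $\phi$ over the universal planar algebra on the 2-boxes, kill each exchange relation $\eta_{a,b}$ by showing $tr_3(\eta_{a,b}^*\eta_{a,b})=0$ on the grounds that this trace is a function of the preserved 2-box data, then get a homomorphism by positivity, surjectivity from generation by 2-boxes, and injectivity from faithfulness of the trace. The gap is exactly at the step you flag yourself: you claim that the mixed moments such as $tr_3\bigl((c(1\boxdot d))^{*}(1\boxdot a)b\bigr)$ are \emph{not} reducible to 2-box data by elementary moves, and you substitute for the missing computation an appeal to confluence of Landau's rewriting, which you never carry out (``making this precise is the technical heart of the argument''). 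As written this step is unproven, and the sketch is not convincing: the compatibility equations you invoke are themselves identities inside $\mathscr{S}_3$, and you give no mechanism by which their transport under $\phi$ forces $\eta_{a,b}=0$ in $\mathscr{A}_3$ rather than merely being consistent with it.

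Moreover, the obstacle you describe is illusory, which is why the paper can assert the reduction without extra machinery. Write $y$ also for the image of $y\in\mathscr{S}_2$ in $\mathscr{S}_3$ under adding a string on the right. Expanding $tr_3(\eta_{a,b}^*\eta_{a,b})$, taking adjoints via $(1\boxdot u)^*=1\boxdot (u^*)'$, merging adjacent factors of the same kind via $(1\boxdot u)(1\boxdot v)=1\boxdot(u*v)$ and $u\,v=uv$, and using cyclicity of the trace, every term becomes either $tr_3\bigl((1\boxdot x)\,y\bigr)$ or $tr_3\bigl((1\boxdot x)\,y\,(1\boxdot z)\,w\bigr)$ for some $x,y,z,w\in\mathscr{S}_2$ (with $y$ or $w$ possibly $id$). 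The first equals $tr(xe)\,tr(y)$. For the second, close the rightmost strand first: a planar isotopy, using no relations whatsoever, shows that the partial trace of $(1\boxdot x)\,y\,(1\boxdot z)$ over its right strand is (up to the rotation convention for $\boxdot$) the 2-box $y*(xz')$, whence
$$tr_3\bigl((1\boxdot x)\,y\,(1\boxdot z)\,w\bigr)=tr\bigl((y*(xz'))\,w\bigr).$$
(Consistency checks: $x=z=id$ gives $(1\boxdot id)\,y\,(1\boxdot id)=tr(y)\,e_2$, matching $y*id=\frac{tr(y)}{\delta}id$; $y=w=id$ gives $tr((x*z)e)=\frac{1}{\delta}tr(xz')$.) So every term, including your ``mixed'' one, is a fixed expression in products, coproducts, contragredients, adjoints and traces of 2-boxes; since $\phi$ preserves these, $tr_3(\eta_{a,b}^*\eta_{a,b})$ equals the corresponding trace computed in $\mathscr{S}$, which is $0$, and positivity gives $\eta_{a,b}=0$. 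There is no circularity, because the isotopy above never invokes the exchange relation. With this substitution in place of the confluence detour, the remainder of your argument is correct and coincides with the paper's proof.
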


\begin{proof}
We extend $\phi$ to the universal planar algebra generated by 2-boxes of $\mathscr{S}$.
If $y=(1\boxdot a)b-(\Sigma_i c_i(1\boxdot d_i)+f_i(1\boxdot id)g_i)$ is a relation, then $tr(y^*y)=0$.
The computation of $tr(\phi(y)^*\phi(y))$ only depends on the structure of 2-boxes, which are preserved by $\phi$, so $tr(\phi(y)^*\phi(y))=tr(y^*y)=0$. Then $\phi(y)=0$ by the positivity of the trace. So $\phi$ induces a planar algebra homomorphism from the quotient $\mathscr{S}$ to $\mathscr{A}$. By assumption $\phi$ is surjective on 2-boxes, and $\mathscr{A}$ is generated by 2-boxes, so $\phi$ is a planar algebra isomorphism.
\end{proof}

\begin{remark}
A planar algebra homomorphism of subfactor planar algebras induces a homomorphism on the 0-box space $\mathbb{C}$, so it is either zero or injective.
\end{remark}

The following classification is given by Bisch and Jones \cite{BisJon97,BisJon02}.
\begin{theorem}\label{single}
Suppose $\mathscr{S}$ is a subfactor planar algebra generated by a non-trivial 2-box with $\dim(P_3)\leq13$, then $\mathscr{S}$ is one of the follows,
(1)$\mathscr{S}^{\mathbb{Z}_3}$;
(2)$TL*TL$;
(3)$\mathscr{S}^{\mathbb{Z}_2\subset \mathbb{Z}_5\rtimes \mathbb{Z}_2}$.
\end{theorem}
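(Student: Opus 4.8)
The plan is to reduce the classification to a finite problem about the algebraic structure of $2$-boxes, and then to run a case analysis governed by the presence or absence of a biprojection. First I would fix the size of $\mathscr{S}_2$: since $\mathscr{S}$ is generated by a single non-trivial $2$-box, $\mathscr{S}_2$ is spanned by $id$, $e$ and the generator, so $\dim(\mathscr{S}_2)=3$; being a finite dimensional C*-algebra carrying a faithful positive trace, $\mathscr{S}_2\cong\mathbb{C}^3$ is abelian, with minimal projections $e,p_1,p_2$, where $e$ is the Jones projection. A singly generated planar algebra with $\dim(\mathscr{S}_3)\leq 13$ is an exchange relation planar algebra in the sense of Landau; note that $13=\dim(\mathscr{S}_2)^2+(\dim(\mathscr{S}_2)-1)^2$ is exactly the bound of Proposition \ref{dimpn}. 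By Theorem \ref{p2ex} such an $\mathscr{S}$ is determined up to isomorphism by the \emph{structure of its $2$-boxes}---the adjoints, contragredients, products and coproducts on $\mathscr{S}_2$---so it suffices to enumerate the admissible structures on $\mathbb{C}^3$ and to match each one, via a structure-preserving map, to one of the three listed planar algebras.

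Next I would set up the parameters. The contragredient is a trace-preserving $*$-automorphism fixing $e$, hence it either fixes $p_1,p_2$ or swaps them; write $t_i=tr(p_i)$, so $t_1+t_2=\delta^2-1$. The genuinely free data is the coproduct, a second commutative associative product with unit $\delta e$, whose structure constants expressing $p_i*p_j$ in the basis $\{e,p_1,p_2\}$ are the unknowns. These are heavily constrained by the planar compatibility of product and coproduct, by self-duality, and above all by positivity: both the positive-definiteness of the Markov trace and the Schur product theorem (Theorem \ref{P*P>0}) must hold. Since $\mathscr{I}_2=\mathbb{C}e$ we have $\dim(\mathscr{S}_2/\mathscr{I}_2)=2$, so the principal graph always has exactly two depth-$2$ vertices, while Lemma \ref{copro connect} bounds the number of depth-$3$ vertices through the ranks of the coproducts $p_i*p_j$.

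The main dichotomy I would run on whether $\mathscr{S}_2$ contains a nontrivial biprojection $Q$ with $e<Q<id$. If it does, then Theorem \ref{freedecom} separates $\mathscr{S}$ as a free product $\mathscr{S}_Q*\mathscr{S}^Q$; each factor has $2$-dimensional $2$-boxes, hence is Temperley-Lieb, which is case (2), $TL*TL$. If there is no nontrivial biprojection I would split on depth. When $\mathscr{S}_3=\mathscr{I}_3$ the planar algebra is depth $2$, hence the planar algebra of a $3$-dimensional Kac algebra; as $3$ is prime the only such object is $\mathbb{C}[\mathbb{Z}_3]$, giving case (1), $\mathscr{S}^{\mathbb{Z}_3}$. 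The remaining possibility is depth at least $3$ with no biprojection, where $\dim(\mathscr{S}_3/\mathscr{I}_3)\in\{1,2,3,4\}$.

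This last case is where I expect the real work. Using the depth-$3$ bound of Lemma \ref{copro connect} one narrows the principal graph to finitely many candidates, and for each candidate the coproduct structure constants satisfy a finite nonlinear system coming from associativity and the planar relations. The hard part will be solving this system, imposing the positivity constraints to discard the numerical solutions that are not realizable, and showing that the unique admissible solution (which forces $\delta^2=5$) reproduces the $2$-box structure of the subgroup subfactor $\mathscr{S}^{\mathbb{Z}_2\subset\mathbb{Z}_5\rtimes\mathbb{Z}_2}$; the identification is then completed by Theorem \ref{p2ex}. Existence of each of the three outputs is not in question, as all three are constructed independently as group, subgroup and free-product subfactors; the content of the classification is the elimination step, and the recurring delicate point is that positivity---through the Schur product theorem---is what rules out the combinatorially admissible but non-realizable graphs and structure constants.
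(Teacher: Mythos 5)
First, a point of orientation: the paper does not prove Theorem \ref{single} at all. It is quoted as the Bisch--Jones classification \cite{BisJon97,BisJon02}, and the paper's only original content around it is the equivalence of the hypothesis with ``exchange relation planar algebra with $\dim(\mathscr{S}_2)=3$'' (Theorem \ref{clas ex 1}). So the fair comparison is with the strategy the paper uses for its own analogue, Theorem \ref{main1} (the $\dim(\mathscr{S}_2)=4$ case), and your skeleton is indeed that strategy: a biprojection/depth dichotomy, control of the principal graph via Lemma \ref{copro connect}, and identification via Theorem \ref{p2ex}. Your two easy branches are essentially correct, with two repairable slips. Your justification of $\dim(\mathscr{S}_2)=3$ is circular: a planar algebra generated by one $2$-box can a priori contain further $2$-boxes (products, coproducts, contragredients of the generator); the correct argument is $\dim(\mathscr{S}_2)^2\leq\dim(\mathscr{S}_3)\leq 13$, as in the paper's introduction. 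And when you invoke Theorem \ref{freedecom} you must verify its hypothesis; here it does hold, since $\mathscr{S}_2=\mathrm{span}\{e,\,Q-e,\,id\}$, the first two elements satisfy $QxQ=x$, and $Q*id*Q=(tr(Q)/\delta)^2\,id$, so $\mathscr{S}$ is generated by the required set and $\mathscr{S}=\mathscr{S}_Q*\mathscr{S}^Q=TL*TL$.

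The genuine gap is your final case, depth $\geq 3$ with no non-trivial biprojection, which you explicitly leave as ``solve a finite nonlinear system and show the unique admissible solution is $\mathscr{S}^{\mathbb{Z}_2\subset\mathbb{Z}_5\rtimes\mathbb{Z}_2}$.'' That step is the entire mathematical content of the theorem (it is what occupies the two Bisch--Jones papers), so as written your proposal is a plan rather than a proof. It can be closed with the paper's own tools by imitating the proof of Theorem \ref{main1}: since $\dim(\mathscr{I}_3)=\dim(\mathscr{S}_2)^2=9$, we get $\dim(\mathscr{S}_3/\mathscr{I}_3)\leq 4$. If $\mathscr{S}_3/\mathscr{I}_3$ is abelian, then each depth-$3$ vertex meets only one depth-$2$ vertex, Lemma \ref{copro connect} gives the rank-one coproduct conditions, and either both non-trivial minimal projections have trace $1$ (forcing depth $2$, contradicting depth $\geq 3$) or one of them is a virtual normalizer, whence Theorem \ref{virtual normalizer} returns you to the free-product branch. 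Otherwise $\mathscr{S}_3/\mathscr{I}_3\cong M_{2\times 2}$, the single depth-$3$ vertex is adjacent to both depth-$2$ vertices, and one must actually compute the coproduct structure constants: write $P_1*P_1=\frac{c}{\delta}e+\cdots$ with $c=tr(P_1)$, exclude the patterns that would generate a proper biprojection using Theorem \ref{P*P=P}, and compare coefficients in the associativity relation $(P_1*P_1)*P_2=P_1*(P_1*P_2)$ to force $c=2$ and hence $\delta^2=5$; only then does Theorem \ref{p2ex} identify $\mathscr{S}$ with $\mathscr{S}^{\mathbb{Z}_2\subset\mathbb{Z}_5\rtimes\mathbb{Z}_2}$. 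Until this computation (or simply the citation to Bisch--Jones, which is all the paper itself offers) is supplied, the classification is not established.
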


Suppose $\mathscr{S}$ is a subfactor planar algebra generated by $\mathscr{S}_2$ with $\dim(\mathscr{S}_2)=3$. Assume $id$, $e$ and $r$ forms a basis of $\mathscr{S}_2$.
In $\mathscr{S}_3$, there are $5$ Temperley-Lieb diagrams, $6$ diagrams with one $r$, the orbits or $r, 1\boxdot r$ under the rotation, and $3$ diagrams with two $r$, $r(1\boxdot id)r$, $r(1\boxdot r)$ and $(1\boxdot r)r$. If $\mathscr{S}_3\leq 13$, then those $14$ elements are linear dependent. Without loss of generality, we may assume one diagram with two $r$ is a linear sum of the other $13$ diagrams. Up to a 2-click rotation, it can be chosen as $(1\boxdot r)r$. That implies $\mathscr{S}$ is an exchange relation planar algebra.
Conversely if $\mathscr{S}$ is an exchange relation planar algebra with $\dim(\mathscr{S}_2)=3$, then $\dim(\mathscr{S}_3)\leq 2^2+3^2=13$.
Thus Theorem \ref{single} is equivalent to
\begin{theorem}\label{clas ex 1}
Suppose $\mathscr{S}$ is an exchange relation planar algebra with $\dim(P_2)=3$. Then $\mathscr{S}$ is one of the follows,
(1)$\mathscr{S}^{\mathbb{Z}_3}$;
(2)$TL*TL$;
(3)$\mathscr{S}^{\mathbb{Z}_2\subset \mathbb{Z}_5\rtimes \mathbb{Z}_2}$.
\end{theorem}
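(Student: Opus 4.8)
The plan is to reduce the statement directly to the Bisch--Jones classification recorded in Theorem \ref{single}, which already carries out the genuine classification work. The entire content of Theorem \ref{clas ex 1} is thus inherited; what remains is only a translation matching the two sets of hypotheses, namely ``exchange relation planar algebra with $\dim(\mathscr{S}_2)=3$'' on one side and ``generated by a non-trivial $2$-box with $\dim(\mathscr{S}_3)\leq 13$'' on the other. So the bulk of my proposal is a bookkeeping argument rather than a new classification.

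First I would fix a basis of $\mathscr{S}_2$. Since $\mathscr{S}$ is irreducible, $id$ and $e$ always lie in $\mathscr{S}_2$ and are linearly independent, so with $\dim(\mathscr{S}_2)=3$ I may choose a third basis vector $r$; it is necessarily non-trivial, being no linear combination of $id$ and $e$. Because $\mathscr{S}$ is generated by $\mathscr{S}_2$ and $\mathscr{S}_2=\mathrm{span}\{id,e,r\}$ with $id,e$ present in every subfactor planar algebra, the single element $r$ generates $\mathscr{S}$; thus $\mathscr{S}$ is generated by a non-trivial $2$-box. Next I would invoke Proposition \ref{dimpn}: since $\mathscr{S}$ is an exchange relation planar algebra with $\dim(\mathscr{S}_2)=3$, it gives $\dim(\mathscr{S}_3)\leq \dim(\mathscr{S}_2)^2+(\dim(\mathscr{S}_2)-1)^2=9+4=13$. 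Hence both hypotheses of Theorem \ref{single} hold, and that theorem forces $\mathscr{S}$ to be one of $\mathscr{S}^{\mathbb{Z}_3}$, $TL*TL$, or $\mathscr{S}^{\mathbb{Z}_2\subset \mathbb{Z}_5\rtimes \mathbb{Z}_2}$, which is exactly the asserted list.

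For completeness I would also record the converse, so as to make the equivalence with Theorem \ref{single} explicit and confirm the list is sharp. Each of the three candidates is, by construction, generated by a non-trivial $2$-box with $\dim(\mathscr{S}_3)\leq 13$; for such a planar algebra the presence of $id$, $e$, and the generator gives $\dim(\mathscr{S}_2)\geq 3$, while $\dim(\mathscr{S}_2)^2\leq \dim(\mathscr{S}_3)\leq 13$ forces $\dim(\mathscr{S}_2)\leq 3$, so $\dim(\mathscr{S}_2)=3$. Moreover each is an exchange relation planar algebra by the face-counting argument preceding the statement: with basis $\{id,e,r\}$ the $14$ face-free $3$-box diagrams --- the $5$ Temperley--Lieb ones, the $6$ in the rotation orbit of $r$ and $1\boxdot r$, and the $3$ quadratic diagrams $r(1\boxdot id)r$, $r(1\boxdot r)$, $(1\boxdot r)r$ --- must be linearly dependent once $\dim(\mathscr{S}_3)\leq 13$, and up to a $2$-click rotation the dependence expresses $(1\boxdot r)r$ through the remaining $13$, which is precisely an exchange relation.

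My honest assessment is that there is no serious obstacle internal to this statement: the hard classification is entirely supplied by Theorem \ref{single}. The only point requiring care is the equivalence of the two hypothesis sets, and there the single nontrivial input is the dimension estimate, which Proposition \ref{dimpn} delivers cleanly; that a single generator suffices and that $\dim(\mathscr{S}_2)=3$ is forced are both immediate from the definitions and the inequality $\dim(\mathscr{S}_2)^2\leq\dim(\mathscr{S}_3)$. The real difficulty lives one level down, inside the Bisch--Jones argument, and is not reproved here.
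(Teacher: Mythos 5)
Your proposal is correct and takes essentially the same route as the paper: the paper also obtains $\dim(\mathscr{S}_3)\leq 13$ from Proposition \ref{dimpn}, verifies the converse by the identical face-counting argument producing the exchange relation, and then declares Theorem \ref{clas ex 1} equivalent to the Bisch--Jones classification in Theorem \ref{single}. No gap to report.
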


\section{Tensor Products}\label{tensor product}
The author would like to thank Dave Penneys for the discussion about tensor products.

In this section, sometimes we draw a diagram with all the boundary points on the top.
For example, $v_n$ is the Temperley-Lieb $n$-tangle $\grs{vn}$.
Recall that $\mathscr{A}*\mathscr{B}\subset\mathscr{A}\otimes\mathscr{B}$, and it contains a biprojection $id\otimes e$.
By Proposition \ref{iso}, there is a planar algebra isomorphism
$\alpha_1:\mathscr{A}\rightarrow (\mathscr{A}*\mathscr{B})_{id\otimes e}, ~\alpha(a)=a\otimes v_n, ~\forall ~a\in\mathscr{A}.$
By the definition of the tensor product, we have $(\mathscr{A}\otimes\mathscr{B})_{id\otimes e}=\alpha_1(\mathscr{A})$.
So $(\mathscr{A}\otimes\mathscr{B})_{id\otimes e}=(\mathscr{A}*\mathscr{B})_{id\otimes e}$.
Then we have a planar algebra isomorphism
$$\alpha_1:\mathscr{A}\rightarrow (\mathscr{A}\otimes\mathscr{B})_{id\otimes e}, ~\alpha(a)=a\otimes v_n, ~\forall ~a\in\mathscr{A}.$$
Similarly we have a planar algebra isomorphism
$$\alpha_2:\mathscr{B}\rightarrow (\mathscr{A}\otimes\mathscr{B})_{e\otimes id}, ~\alpha(b)=v_n\otimes b, ~\forall ~b\in\mathscr{B}.$$

The tensor product of planar algebras is defined via simple tensors of vectors. We hope to interpret the simple tensor $a\otimes b$ as a diagram in terms of $\alpha_1(a)$ and $\alpha_2(b)$.

\begin{theorem}
$\mathscr{A}\otimes\mathscr{B}$ is generated by the two vector spaces $(\mathscr{A}\otimes\mathscr{B})_{id\otimes e}$ and $(\mathscr{A}\otimes\mathscr{B})_{e\otimes id}$ as a planar algebra.
\end{theorem}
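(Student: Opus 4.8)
The plan is to prove the stronger statement that the planar subalgebra $\mathscr{C}$ of $\mathscr{A}\otimes\mathscr{B}$ generated by the two vector spaces $(\mathscr{A}\otimes\mathscr{B})_{id\otimes e}$ and $(\mathscr{A}\otimes\mathscr{B})_{e\otimes id}$ already exhausts $\mathscr{A}\otimes\mathscr{B}$. Since $\mathscr{C}=\{\mathscr{C}_n\}$ is a graded subspace of $\mathscr{A}\otimes\mathscr{B}$ closed under the action of every planar tangle, and since each $(\mathscr{A}\otimes\mathscr{B})_n=\mathscr{A}_n\otimes\mathscr{B}_n$ is linearly spanned by the simple tensors $a\otimes b$ with $a\in\mathscr{A}_n$ and $b\in\mathscr{B}_n$, it suffices to show that every such $a\otimes b$ lies in $\mathscr{C}_n$. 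The two one-sided tensors are available from the start: by the isomorphisms $\alpha_1,\alpha_2$ we have $a\otimes v_n=\alpha_1(a)\in(\mathscr{A}\otimes\mathscr{B})_{id\otimes e}\subset\mathscr{C}_n$ and $v_n\otimes b=\alpha_2(b)\in(\mathscr{A}\otimes\mathscr{B})_{e\otimes id}\subset\mathscr{C}_n$.

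The key is to recover the full tensor $a\otimes b$ from $a\otimes v_n$ and $v_n\otimes b$ by one planar operation, using that the tangle action on $\mathscr{A}\otimes\mathscr{B}$ is diagonal. For any unlabelled two-input $n$-tangle $T$, the definition of the tensor product gives
$$T(a\otimes v_n,\, v_n\otimes b)=T(a,v_n)\otimes T(v_n,b),$$
the first factor being evaluated in $\mathscr{A}$ and the second in $\mathscr{B}$. Hence it is enough to exhibit a single tangle $T$ --- the inner braid --- for which $v_n$ is a two-sided unit up to a scalar, i.e.
$$T(x,v_n)=c_A\,x \ \ (\forall\,x\in\mathscr{A}_n), \qquad T(v_n,y)=c_B\,y \ \ (\forall\,y\in\mathscr{B}_n),$$
where $c_A,c_B$ depend only on $T$ and on the circle parameters $\delta_A,\delta_B$. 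Granting this, $T(a\otimes v_n,v_n\otimes b)=c_Ac_B\,(a\otimes b)$; as $c_A$ and $c_B$ are positive powers of $\delta_A$ and $\delta_B$, hence nonzero, we obtain $a\otimes b=(c_Ac_B)^{-1}T(a\otimes v_n,v_n\otimes b)\in\mathscr{C}_n$, which completes the argument.

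Constructing the inner braid $T$ and verifying the two unit identities is the heart of the proof and the step I expect to be the main obstacle. I would define $T$ by placing the two input discs and wiring their $2n$ boundary strings to the output so that, when either slot is filled by the Temperley--Lieb $n$-diagram $v_n$, the resulting picture isotopes to the $1$-input identity tangle on the other slot. The verification is then a purely Temperley--Lieb computation: inserting $v_n$ into the second slot lets its cups and caps absorb the auxiliary strings of $T$ and reproduce the through-strings of the first slot, leaving only closed loops whose number fixes the scalar $c_A$ as a power of $\delta_A$; the symmetric insertion into the first slot yields $c_B$ as a power of $\delta_B$. The two delicate points are to choose the wiring so that the \emph{same} $T$ works in both slots simultaneously, and to keep track of the loop count so as to be sure the normalising scalar is nonzero --- and it is here that the positivity of $\delta_A,\delta_B$, built into subfactor planar algebras, guarantees $c_Ac_B\neq 0$.
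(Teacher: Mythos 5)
Your proposal is correct and follows essentially the same route as the paper: the paper's proof also reduces to simple tensors $a\otimes b$, applies a two-input tangle (the ``inner braid'' figure) to $a\otimes v_n$ and $v_n\otimes b$, and uses the componentwise definition of the tangle action on $\mathscr{A}\otimes\mathscr{B}$ to conclude the result is $\lambda_n a\otimes\lambda_n b$ with $\lambda_n>0$, hence a nonzero multiple of $a\otimes b$. The tangle you describe, in which $v_n$ acts as a unit up to a power of the loop parameter in whichever slot it occupies, is exactly the paper's construction.
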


\begin{proof}
For any $a\in\mathscr{A}_n$ and $b\in\mathscr{B}_n$, let us construct an element in $(\mathscr{A}\otimes\mathscr{B})_n$ as
$$\gre{tensor}.$$
By the definition of the tensor product, it is $\lambda_n a\otimes\lambda_n b$ for some $\lambda_n>0$.
Note that $a\otimes v_n\in (\mathscr{A}\otimes\mathscr{B})_{id\otimes e}$, $v_n\otimes b \in(\mathscr{A}\otimes\mathscr{B})_{e\otimes id}$,
and $\mathscr{A}\otimes\mathscr{B}$ is generated by all $a\otimes b$'s.
Thus $\mathscr{A}\otimes\mathscr{B}$ is generated by $(\mathscr{A}\otimes\mathscr{B})_{id\otimes e}$ and $(\mathscr{A}\otimes\mathscr{B})_{e\otimes id}$ as a planar algebra.
\end{proof}

\begin{corollary}\label{tensor222}
If both $\mathscr{A}$ and $\mathscr{B}$ are generated by 2-boxes, then $\mathscr{A}\otimes\mathscr{B}$ is generated by 2-boxes.
\end{corollary}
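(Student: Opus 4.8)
The plan is to deduce this immediately from the theorem just proved, which asserts that $\mathscr{A}\otimes\mathscr{B}$ is generated as a planar algebra by the two vector spaces $(\mathscr{A}\otimes\mathscr{B})_{id\otimes e}$ and $(\mathscr{A}\otimes\mathscr{B})_{e\otimes id}$. First I would observe that each of these two spaces is itself generated by 2-boxes. Indeed, by Proposition \ref{iso} the map $\alpha_1$ is a planar algebra isomorphism from $\mathscr{A}$ onto the cut-down planar algebra $(\mathscr{A}\otimes\mathscr{B})_{id\otimes e}$; since $\mathscr{A}$ is generated by $\mathscr{A}_2$ and a planar algebra isomorphism carries a generating set to a generating set, $(\mathscr{A}\otimes\mathscr{B})_{id\otimes e}$ is generated, as a planar algebra, by $\alpha_1(\mathscr{A}_2)$. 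Crucially these generators are genuine 2-boxes of $\mathscr{A}\otimes\mathscr{B}$, since $\alpha_1(a)=a\otimes v_2$ lies in $(\mathscr{A}\otimes\mathscr{B})_2$ for every $a\in\mathscr{A}_2$. Symmetrically, $(\mathscr{A}\otimes\mathscr{B})_{e\otimes id}$ is generated by the 2-boxes $\alpha_2(\mathscr{B}_2)=\{v_2\otimes b : b\in\mathscr{B}_2\}$.

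The one point that needs care is that $\alpha_1$ is an isomorphism onto the cut-down structure, whose tangle action is $(\frac{\delta}{\sqrt{tr(Q)}})^{fudge(T)}\Psi_Q\circ T$ rather than the honest action of $\mathscr{A}\otimes\mathscr{B}$; so I must check that generation in the cut-down sense implies generation inside the ambient tensor product. I would argue that the cut-down action differs from the ambient one only by the nonzero scalar fudge factors together with the annular insertion $\Psi_Q$. The fudge factors are harmless, since each box space is a vector space and is closed under rescaling. The insertion $\Psi_Q$ is realized by ambient tangles carrying internal $Q$-labels, and $Q=id\otimes e$ itself lies in the span of the generators, being $\alpha_1$ of the identity 2-box of $\mathscr{A}$ (the cut-down unit). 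Hence every element produced by a cut-down tangle from $\alpha_1(\mathscr{A}_2)$ is produced, up to a nonzero scalar, by an honest tangle from the same 2-boxes, so the ambient planar subalgebra generated by $\alpha_1(\mathscr{A}_2)$ contains all of $(\mathscr{A}\otimes\mathscr{B})_{id\otimes e}$; likewise for $(\mathscr{A}\otimes\mathscr{B})_{e\otimes id}$.

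Combining, the ambient planar subalgebra generated by the 2-boxes $\{a\otimes v_2\}\cup\{v_2\otimes b\}$ contains both $(\mathscr{A}\otimes\mathscr{B})_{id\otimes e}$ and $(\mathscr{A}\otimes\mathscr{B})_{e\otimes id}$, hence, by the preceding theorem, all of $\mathscr{A}\otimes\mathscr{B}$. Therefore $\mathscr{A}\otimes\mathscr{B}$ is generated by 2-boxes, exactly paralleling the free-product analogue of Corollary \ref{free222}. I expect the main obstacle to be precisely the cut-down-to-ambient translation in the second paragraph; once the roles of the scalar fudge factors and of the biprojection $Q$ are pinned down, the rest is the formal chaining of generating sets.
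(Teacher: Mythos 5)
Your proof is correct and follows exactly the route the paper intends: the corollary is stated without proof as an immediate consequence of the preceding theorem (that $\mathscr{A}\otimes\mathscr{B}$ is generated by $(\mathscr{A}\otimes\mathscr{B})_{id\otimes e}$ and $(\mathscr{A}\otimes\mathscr{B})_{e\otimes id}$) together with Proposition \ref{iso}, which identifies each cut-down with $\mathscr{A}$ resp. $\mathscr{B}$ and hence shows each is generated by ambient 2-boxes. Your second paragraph merely fills in the cut-down-to-ambient translation (fudge scalars and $Q$-labels, with $Q$ a scalar multiple of $\alpha_1(id)$) that the paper leaves implicit, so the two arguments are essentially the same.
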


Suppose $\mathscr{S}$ is a subfactor planar algebra, and $A, B$ are two biprojections, such that $AB=e$ and $A*B$ is a multiple of $id$.
Then $AB=(AB)^*=B^*A^*=BA$, $A*B=(A*B)'=B'*A'=B*A$.
By computing the trace of $A*B$, we have $A*B=\frac{tr(A)tr(B)}{\delta^3}id$. Then $tr((A*B)e)=\frac{tr(A)tr(B)}{\delta^3}$.
On the other hand, we view $tr((A*B)e)$ as a diagram, then $tr((A*B)e)=\frac{1}{\delta}tr(AB)=\frac{1}{\delta}$. Thus $tr(A)tr(B)=\delta^2$, and $A*B=\frac{1}{\delta}id$.
Using the exchange relation of biprojections, we have
\begin{equation}\label{ppqq}
\grb{aabb1}=\grb{aabb2}=\grb{aabb3}=\frac{1}{\delta}\gra{aabb4}.
\end{equation}
Let $\delta_A,\delta_B$ be $\sqrt{tr(A)},\sqrt{tr(B)}$, then they are the value of a closed circle in $\mathscr{S}_{A},\mathscr{S}_B$ respectively, and $\delta=\delta_A\delta_B.$

To show $\mathscr{S}_A$ and $\mathscr{S}_B$ are ``independent",
we use two kinds of coloured strings, an A-colour string $- - - - -$ connecting with elements in $\mathscr{S}_A$ and a B-colour string $\cdot\cdot\cdot\cdot\cdot$ connecting with elements in $\mathscr{S}_B$. A crossing $\graa{crossing}$ means $\delta\graa{crossingab}$. Because $A$ and $B$ are biprojections, it does not matter where we put the $\$$'s. If a non-closed A-colour string does not intersect with a B-colour string, then the A-colour string is just a common string. By our assumption, an A-colour string $- - - - -$ only connects with elements in $\mathscr{S}_A$, thus $\gra{aa}$. Moreover we can view $\gra{colourtracea1}$ as $\gra{colourtracea2}$, i.e., $\gra{colourtracea3}$. Thus $\gra{colourtracea1}=tr(A)$. Similarly for $B$.

\begin{proposition}\label{pqpq}
$\graa{braidab1}=\graa{braidab2}$.
\end{proposition}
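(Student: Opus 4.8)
The plan is to imitate the positivity argument used to prove the exchange relation in Proposition \ref{ex of bipro}. Set $x=\graa{braidab1}-\graa{braidab2}$, viewed as an element of the appropriate box space, and show that $tr(x^{*}x)=0$; positive-definiteness of the Markov trace then forces $x=0$, which is exactly the asserted identity. This route is attractive because it converts a diagrammatic equality between two open tangles into the evaluation of a single closed diagram (a scalar), and all the tools needed to evaluate closed diagrams built from $A$ and $B$ have just been assembled.

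To carry out the computation I would first replace every coloured crossing in $x^{*}x$ by its definition $\graa{crossing}=\delta\graa{crossingab}$, so that $x^{*}x$ becomes an honest closed diagram assembled from the two biprojections together with caps and cups. Each crossing contributes an explicit factor of $\delta$, and every closed A-colour or B-colour loop produced during the reduction evaluates to $tr(A)=\delta_{A}^{2}$ or $tr(B)=\delta_{B}^{2}$ respectively, as recorded in the discussion preceding the proposition. The heart of the evaluation is to apply the biprojection exchange relation of Proposition \ref{ex of bipro} to $A$ and dually to $B$, pushing each strand across the biprojection it meets; this reduces the diagram to the two basic collapses $AB=e$ and $A*B=\tfrac{1}{\delta}id$ collected in \eqref{ppqq}. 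Because $AB=BA$ and $A*B=B*A$, these manipulations are symmetric in $A$ and $B$, so the two diagonal terms $tr(\graa{braidab1}^{*}\graa{braidab1})$ and $tr(\graa{braidab2}^{*}\graa{braidab2})$ evaluate to a common scalar, and the two cross terms $tr(\graa{braidab1}^{*}\graa{braidab2})$ and $tr(\graa{braidab2}^{*}\graa{braidab1})$ evaluate to that same scalar; the four contributions then cancel in $tr(x^{*}x)$.

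I expect the main obstacle to be purely combinatorial: the scalar bookkeeping, namely tracking the powers of $\delta$, $\delta_{A}$, and $\delta_{B}$ introduced by the crossing normalization $\delta=\delta_{A}\delta_{B}$ and by the closed coloured loops, and verifying at each step that the strand being pushed through the exchange relation is genuinely the one decorated by the matching biprojection. A direct reduction of the two open tangles to a common normal form (again via the exchange relations and \eqref{ppqq}, without passing through the trace) would also prove the statement, but the positivity route has the advantage that only closed diagrams need be evaluated, so no ambiguity about $\$$-placement arises since $A$ and $B$ are self-contragredient biprojections.
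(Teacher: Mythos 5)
Your strategy---set $x$ equal to the difference of the two tangles, show $tr(x^*x)=0$, and invoke positive definiteness of the Markov trace---is the technique the paper uses for Proposition \ref{ex of bipro}, and it is a legitimate device in principle here, since closed diagrams built out of the biprojections $A$ and $B$ are evaluable by their exchange relations. But it is not what the paper does for Proposition \ref{pqpq}, and the paper's route is far shorter: each crossing is, by definition, $\delta$ times its resolution into $A$ and $B$, so each side of the identity equals $\delta^2$ times an open diagram built from $A$ and $B$; the identity $\delta^2\grb{aaabbb1}=\delta^2\grb{aaabbb2}$, an instance of the exchange-relation computation recorded in Equation \eqref{ppqq}, carries one expanded diagram to the other, and recombining the crossings finishes the proof in one line, with no trace and no positivity needed.

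The genuine gap in your write-up is at the crucial cancellation. Writing $D_1$ and $D_2$ for the two sides, you need the cross terms $tr(D_1^*D_2)$ and $tr(D_2^*D_1)$ to equal the diagonal terms $tr(D_1^*D_1)$ and $tr(D_2^*D_2)$, and you justify this by saying the manipulations are symmetric in $A$ and $B$ because $AB=BA$ and $A*B=B*A$. Any such relabeling or reflection symmetry sends diagonal terms to diagonal terms and cross terms to cross terms (at most exchanging $D_1$ with $D_2$); it can never identify a cross term with a diagonal term. Indeed, granting that the diagonal terms agree with each other and the cross terms with each other, the equality of a cross term with a diagonal term is, by your own positivity argument, \emph{equivalent} to $D_1=D_2$---the statement being proved---so it cannot come from formal symmetry; it requires actually evaluating the four closed diagrams. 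The move that performs this evaluation is exactly the slide of $A$-strings past $B$-strings furnished by \eqref{ppqq}, i.e.\ the paper's direct proof of the open-tangle identity; once that move is available, applying it to the open diagrams proves the proposition immediately and the trace detour is superfluous. So either carry out the four closed-diagram evaluations explicitly (your plan then closes, at the cost of quadrupled bookkeeping), or argue directly as the paper does: expand the crossings and apply \eqref{ppqq}.
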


\begin{proof}
$\graa{braidab1}=\delta^2\grb{aaabbb1}=\delta^2\grb{aaabbb2}=\graa{braidab2}$.
\end{proof}

\begin{definition}
For $a\in\mathscr{S}_A$, we call $a$ B-flat, if
$$\graa{flatab1}=\graa{flatab2}.$$
\end{definition}

By definition, the empty diagram $\emptyset$ in $(\mathscr{S}_A)_0$ is B-flat.

\begin{definition}
Let us define a map $\wedge: \mathscr{S}_m \otimes \mathscr{S}_n \rightarrow \mathscr{S}_{m+n}$,
$$x\wedge y=\gra{xwedgey}, ~\forall~ x\in \mathscr{S}_m, ~y\in\mathscr{S}_n, m,n\geq0.$$
\end{definition}

Then its restriction on $\mathscr{S}_A$ is a map $\wedge: (\mathscr{S}_A)_m \otimes (\mathscr{S}_A)_n \rightarrow (\mathscr{S}_A)_{m+n}$.

\begin{proposition}
Suppose $x, y\in \mathscr{S}_A$ are B-flat, then $x\wedge y$ is B-flat.
\end{proposition}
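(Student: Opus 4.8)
The plan is to establish B-flatness of $x\wedge y$ by sliding a single B-colour string across the diagram defining $x\wedge y$, doing so one factor at a time. Recall that B-flatness of an element $a\in\mathscr{S}_A$ is exactly the statement that a B-colour string may be pushed from one side of $a$ to the other, and that the defining diagram of $x\wedge y$ presents $x$ and $y$ side by side, joined through the $A$-structure by A-colour strings. Thus a B-colour string confronting $x\wedge y$ meets, in order, the portion of the diagram carrying $x$, the A-colour strings of the $\wedge$-junction, and the portion carrying $y$. The whole argument is therefore a chain of local diagrammatic moves whose two endpoints are precisely the left- and right-hand sides of the B-flatness equation for $x\wedge y$.

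Concretely, I would first expand $x\wedge y$ using its definition, so that the B-flatness diagram for $x\wedge y$ becomes a diagram in which the B-string runs past $x$, then past the connecting A-colour strings, then past $y$. Next I would apply the B-flatness of $x$ to transport the B-string across the $x$-block, leaving it sitting between the two factors. At that stage the B-string must cross the A-colour strings of the $\wedge$-junction; here I would invoke Proposition \ref{pqpq} (equivalently relation \eqref{ppqq}), which is exactly the assertion that an A-colour crossing and a B-colour crossing may be interchanged, in order to push the B-string through the junction. Finally I would apply the B-flatness of $y$ to carry the B-string across the $y$-block, arriving at the diagram with the B-string on the far side of $x\wedge y$. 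Reading off the equality of the initial and final diagrams yields the B-flatness of $x\wedge y$.

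The main obstacle I expect is this middle step: controlling the passage of the B-string through the A-colour strings internal to the $\wedge$-junction, and checking that the scalar factors introduced by the crossing convention (each crossing standing for $\delta$ times the resolved picture) cancel correctly so that no spurious scalar survives. One must also fix the order of the moves so that the B-flatness of $x$ and of $y$ can each be applied without the string becoming trapped by the other factor; performing the $x$-move first, then the junction move via Proposition \ref{pqpq}, and only then the $y$-move keeps the three operations independent. Once the bookkeeping of crossings and $\delta$-factors is arranged, the remaining equalities are routine applications of the definitions of $\wedge$ and of B-flatness.
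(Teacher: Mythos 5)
Your proposal is correct and follows essentially the same route as the paper: the paper's proof is exactly a chain of diagrammatic equalities that transports the B-colour string across the $x$-block (by B-flatness of $x$), then across the $y$-block (by B-flatness of $y$), ending with the string on the far side of $x\wedge y$. The only discrepancy is that the middle step you flag as the main obstacle is in fact vacuous: the $\wedge$ operation merely juxtaposes $x$ and $y$ with no internal A-colour strings between the two blocks, so no appeal to Proposition \ref{pqpq} or any $\delta$-factor bookkeeping is required there --- plain isotopy carries the B-string from one block to the other.
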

\begin{proof}
$$\graa{flatxy1}$$
$$=\graa{flatxy2}$$
$$=\graa{flatxy3}$$
$$=\gra{flatxy4}$$
$$=\gra{flatxy5}.$$
Thus $x\wedge y$ is B-flat.
\end{proof}

\begin{definition}
Let us define the annular action $\Theta_B$ of B-colour strings on $\mathscr{S}_A$ as
$$\Theta_B(a)=\grb{annularb}, ~\forall ~a\in\mathscr{S}_A.$$
\end{definition}

\begin{proposition}\label{annularflat}
For any $a\in\mathscr{S}_A$, $a$ is B-flat if and only if $\Theta_B(a)=tr(B)a$.
\end{proposition}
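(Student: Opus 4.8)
The plan is to prove the two implications separately, treating the forward direction ($a$ B-flat $\Rightarrow \Theta_B(a)=tr(B)a$) by a direct diagrammatic computation, and the reverse direction by a positivity argument on the Markov trace.

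For the forward direction, suppose $a$ is B-flat. The element $\Theta_B(a)=\grb{annularb}$ is a single B-colour string wrapped around $a$. Using the defining B-flatness equation $\graa{flatab1}=\graa{flatab2}$, I can push this wrapping string past $a$, moving it from one side of the box to the other, until it no longer threads through $a$ and becomes a closed B-colour loop disjoint from $a$. Since a closed B-colour loop evaluates to $tr(B)$ (by the same computation that gave $\gra{colourtracea1}=tr(A)$ for the A-colour, using that $B$ is a biprojection), this yields $\Theta_B(a)=tr(B)a$.

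For the reverse direction, suppose $\Theta_B(a)=tr(B)a$. Set
$$y=\graa{flatab1}-\graa{flatab2},$$
the defect measuring the failure of B-flatness, and compute the inner product $tr(y^*y)$. Expanding into four terms and closing up the diagrams, the two diagonal terms each detach the B-colour string as a free loop and contribute $tr(B)\,tr(a^*a)$, while the two cross terms encircle $a$ by the B-colour string and so contribute $tr(a^*\Theta_B(a))$; the braiding identity of Proposition \ref{pqpq} together with the biprojection relations \eqref{ppqq} are precisely what permit this reorganization. This gives, up to a positive normalization,
$$tr(y^*y)=2\,tr(B)\,tr(a^*a)-2\,tr(a^*\Theta_B(a)).$$
Substituting the hypothesis $\Theta_B(a)=tr(B)a$ makes the right-hand side vanish, so positivity of the Markov trace forces $y=0$, i.e. $a$ is B-flat.

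The main obstacle is the bookkeeping in the reverse direction: correctly identifying which closures of $y^*y$ produce a detached B-loop (the $tr(B)\,tr(a^*a)$ terms) and which produce a B-string encircling $a$ (the $tr(a^*\Theta_B(a))$ term), since an incorrect crossing convention would mismatch the A-colour through-strings of $a$ with the encircling B-colour string. This is exactly where Proposition \ref{pqpq} must be invoked with care. Once the identity for $tr(y^*y)$ is established, both directions follow; in fact the same computation shows that $\frac{1}{tr(B)}\Theta_B$ is the orthogonal projection of $\mathscr{S}_A$ onto its B-flat elements.
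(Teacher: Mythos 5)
Your proof is correct and follows essentially the same route as the paper: the forward direction pulls the encircling B-string off $a$ by flatness and evaluates the closed B-loop as $tr(B)$ (the paper writes this as $\Theta_B(a)=\Theta_B(\emptyset)a=tr(B)a$), and the reverse direction expands $tr(y^*y)$ of the flatness defect into four terms and invokes positivity of the Markov trace, exactly as the paper does (the paper handles the second cross term via $\Theta_B(a^*)=(\Theta_B(a))^*$, which is the same bookkeeping you describe). One caveat: your closing aside, that the same computation shows $\frac{1}{tr(B)}\Theta_B$ is the orthogonal projection onto the B-flat elements, is not justified by this argument; the computation only shows that $\Theta_B$ is a self-adjoint operator (for the trace inner product) bounded above by $tr(B)$ times the identity and that the B-flat elements are exactly its $tr(B)$-eigenspace, not that the rest of its spectrum is $\{0\}$ -- but this extra claim is not needed for the proposition.
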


\begin{proof}
If $a$ is B-flat, then $\Theta_B(a)=\Theta_B(\emptyset)a=tr(B)a$.

On the other hand, we assume that $\Theta_B(a)=tr(B)a$, then $\Theta_B(a^*)=(\Theta_B(a))^*=tr(B)a^*$.
Let $x$ be
$$\graa{flatab1}-\graa{flatab2}.$$
Then
$tr(xx^*)=\Theta_B(\emptyset)tr(aa^*)+\Theta_B(tr(aa^*)\emptyset)-tr(\Theta_B(a)a^*)-tr(a\Theta_B(a^*))
=tr(B)tr(aa^*)+tr(B)tr(aa^*)-tr(B)tr(aa^*)-tr(B)tr(aa^*)=0.$
Thus $x=0$. That means $a$ is B-flat.
\end{proof}

\begin{corollary}
For any $a\in\mathscr{S}_A$, if $a$ is B-flat, then $a^*$ is B-flat.
\end{corollary}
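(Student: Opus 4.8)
The plan is to reduce the statement to the characterization of B-flatness in terms of the annular action $\Theta_B$ given in Proposition \ref{annularflat}, rather than manipulating diagrams directly. By that proposition, $a$ is B-flat if and only if $\Theta_B(a)=tr(B)a$, so it suffices to show that if $\Theta_B(a)=tr(B)a$ then $\Theta_B(a^*)=tr(B)a^*$, and then invoke Proposition \ref{annularflat} in the reverse direction.

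The key observation is that the annular action $\Theta_B$ is $*$-preserving, i.e.\ $\Theta_B(a^*)=\Theta_B(a)^*$ for all $a\in\mathscr{S}_A$. This is exactly the identity already recorded inside the proof of Proposition \ref{annularflat}. Diagrammatically, taking the adjoint of the defining diagram for $\Theta_B(a)$ reflects the annulus; since $B$ is a biprojection and hence self-adjoint, $B=B^*$, the B-colour string configuration wrapping around is carried to itself, so the reflected diagram is precisely $\Theta_B(a^*)$.

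Granting this, the argument is a one-line computation: if $a$ is B-flat then
$$\Theta_B(a^*)=\Theta_B(a)^*=(tr(B)a)^*=tr(B)a^*,$$
where we use that $tr(B)=\delta_B^2$ is a positive real scalar and hence fixed by complex conjugation. Applying Proposition \ref{annularflat} once more shows that $a^*$ is B-flat. There is essentially no obstacle; the only point requiring care is to confirm the $*$-equivariance of $\Theta_B$, and this is immediate from the self-adjointness of $B$ together with the fact that the adjoint on $\mathscr{S}$ is implemented by vertical reflection of diagrams. I would therefore present the corollary simply as an immediate consequence of Proposition \ref{annularflat}.
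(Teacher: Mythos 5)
Your proof is correct and is essentially the paper's own argument: the corollary is stated there without proof as an immediate consequence of Proposition \ref{annularflat}, whose proof already records the identity $\Theta_B(a^*)=(\Theta_B(a))^*$ that you isolate as the key point. Combining that $*$-equivariance with the reality of $tr(B)$ and applying the proposition in both directions is exactly the intended reasoning.
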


\begin{theorem}
All B-flat elements in $\mathscr{S}_A$ denoted by $Flat_B(\mathscr{S}_A)$, forms a non-zero planar subalgebra of $\mathscr{S}_A$.
Consequently all Temperley-Lieb elements of $\mathscr{S}_A$ are B-flat.
\end{theorem}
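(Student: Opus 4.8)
The plan is to verify that $Flat_B(\mathscr{S}_A)$ is a graded linear subspace which is closed under the adjoint, under $\wedge$, and under the remaining planar operations, and which already contains every Temperley--Lieb diagram; this last point yields both non-triviality and the asserted consequence at once. Three of the ingredients are already available. By Proposition~\ref{annularflat} an element $a\in\mathscr{S}_A$ is B-flat exactly when $\Theta_B(a)=tr(B)\,a$; since $\Theta_B$ is linear, each B-flat space of fixed degree is the $tr(B)$-eigenspace of $\Theta_B$ and hence a linear subspace. The corollary following Proposition~\ref{annularflat} gives closure under $*$, and the proposition establishing that $x\wedge y$ is B-flat whenever $x,y$ are gives closure under $\wedge$.

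To upgrade this to a planar subalgebra I would reduce to checking closure under a generating set of tangles for $\mathscr{S}_A$, which I take to be multiplication, the one-string inclusion, the capping (conditional-expectation) tangle, and the one-click rotation; together with $\wedge$ and $*$ these generate the whole planar operad, so it suffices to see that each sends B-flat inputs to a B-flat output. The uniform mechanism is to pull the encircling B-colour string once around the output diagram and apply the criterion $\Theta_B=tr(B)\cdot(\text{identity})$ of Proposition~\ref{annularflat}. Along the Temperley--Lieb skeleton the B-colour string crosses every A-colour through-string freely by Proposition~\ref{pqpq}, and across each inserted label it slides freely precisely because that label is B-flat. Having travelled once around, the string reproduces exactly $tr(B)$ times the original box, which is the flatness eigenvalue equation. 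For multiplication this can alternatively be seen by realizing $ab$ as a Temperley--Lieb capping of $a\wedge b$, so that the already-proven closure under $\wedge$ plus closure under capping does the job.

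I expect the main obstacle to be the capping and rotation steps, where the B-colour string must pass not a straight A-colour string but a \emph{bent} one, i.e.\ a cup or a cap of A-colour strings; this is not literally Proposition~\ref{pqpq}. I would dispose of it once and for all by deriving the bent version from the biprojection exchange relation of Proposition~\ref{ex of bipro} together with equation~(\ref{ppqq}), using that $A$ and $B$ are biprojections so that the placement of the $\$$-marks is immaterial. After this single lemma, rotation causes no further trouble (the $\$$-marks may be moved freely) and every generating tangle is handled by the same pull-through computation.

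Finally, any planar subalgebra contains the images of all tangles with no input disks, namely the Temperley--Lieb diagrams, so once $Flat_B(\mathscr{S}_A)$ is known to be a planar subalgebra it automatically contains every Temperley--Lieb element of $\mathscr{S}_A$. In particular it is non-zero, and this containment is exactly the stated consequence.
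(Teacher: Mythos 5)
Your overall strategy is the paper's: reduce, via Proposition~\ref{annularflat}, to showing that $\Theta_B$ commutes with a generating family of tangle operations, having already secured linearity (the eigenspace description), closure under the adjoint, and closure under $\wedge$. But there is one genuine logical gap as written: your non-triviality argument is circular. Every closure property you propose to check --- multiplication, one-string inclusion, capping, rotation, $\wedge$, adjoint --- involves tangles with at least one input disk, and the zero subspace passes all of these checks; so they cannot by themselves yield a planar subalgebra, let alone a non-zero one. The planar operad also contains tangles with \emph{no} input disks, whose images are exactly the Temperley--Lieb elements, and closure under those is part of what ``planar subalgebra'' means. Hence you may not first declare $Flat_B(\mathscr{S}_A)$ a planar subalgebra and then deduce that it contains the Temperley--Lieb elements: producing those elements is a prerequisite for the property you are invoking. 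The repair is one line, and it is precisely how the paper proceeds: the empty diagram $\emptyset\in(\mathscr{S}_A)_0$ is B-flat by definition (a closed B-colour circle contributes the scalar $tr(B)$, and there is nothing for it to cross), and every Temperley--Lieb element of $\mathscr{S}_A$ is an annular tangle applied to $\emptyset$; so once your closure checks are done, the ``consequently'' clause and non-triviality come out of the same argument, in the right logical order.

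Apart from this, the only substantive divergence from the paper is your handling of the hard cases. You plan a single bent-string pull-through lemma, derived from the exchange relation of Proposition~\ref{ex of bipro} and equation~(\ref{ppqq}), and then to verify every generating operation, cups and caps alike, by the same direct computation. This should work --- the paper's own verifications for the two cap-adding moves are exactly such exchange-relation computations --- but the paper disposes of the string-adding (cup) moves more cheaply: since the Markov trace is spherical, $\Theta_B$ is a self-adjoint operator on each Hilbert space $(\mathscr{S}_A)_n$, and adding a string is a multiple of the adjoint of adding a cap, so commutation with cups follows formally by taking adjoints of the cap case. Your route costs roughly twice the diagrammatic work; the self-adjointness trick is worth adopting, both here and whenever an annular operation comes paired with its adjoint.
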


\begin{proof}
We have known that the empty diagram $\emptyset\in(\mathscr{S}_A)_0$ is B-flat.
And $Flat_B(\mathscr{S}_A)$ is invariant under the adjoint action and the map $\wedge$.
Thus we only need to show that $Flat_B(\mathscr{S}_A)$ is invariant under the annular Temperley-Lieb action.
By Proposition \ref{annularflat}, it is sufficient to prove that the annular Temperley-Lieb action commute with $\Theta_B$.
Remember that, each Temperley-Lieb annular action is a composition of the following five operations,
(1) a two-string rotation;
(2) adding a cap on a shaded boundary;
(3) adding a cap on a unshaded boundary;
(4) adding a string in a shaded boundary;
(5) adding a string in a unshaded boundary. Thus we only need to prove $\Theta_B$ commutes with (1)-(5).

$\Theta_B$ commute with (1) follows from their definitions.

$\Theta_B$ commutes with (2) is equivalent to $\grb{phi11}=\grb{phi12}$. That is
$$\grb{phi111}=\grb{phi112}=\grb{phi113}.$$

$\Theta_B$ commutes with (3) is equivalent to $\grb{phi21}=\grb{phi22}$. That is
$$\grb{phi211}=\grb{phi212}=\grb{phi213}=\grb{phi214}.$$

Observe that $(\mathscr{S}_A)_n$ with the Markov trace forms a sequence of Hilbert spaces, and
$\Theta_B$ is a self-adjoint operator on it, because the Markov trace is spherical.
Moreover (4) (resp. (5)) is a multiple of the adjoint of (3) (resp. (2)), thus $\Theta_B$ commutes with  (4) (resp. (5)).
\end{proof}

Symmetrically let us define $Flat_A(\mathscr{S}_B)$ to be the planar subalgebra of $\mathscr{B}$ consisting of all A-flat elements.
\begin{theorem}\label{tensorembed}
The planar subalgebra $Flat_B(\mathscr{S}_A)\vee Flat_A(\mathscr{S}_B)$ of $\mathscr{S}$ generated by the two vector spaces $Flat_B(\mathscr{S}_A)$ and $Flat_A(\mathscr{S}_B)$ is naturally isomorphic to the tensor product
$Flat_B(\mathscr{S}_A)\otimes Flat_A(\mathscr{S}_B)$ .
\end{theorem}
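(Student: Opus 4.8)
The plan is to construct an explicit planar algebra isomorphism from the abstract tensor product $Flat_B(\mathscr{S}_A)\otimes Flat_A(\mathscr{S}_B)$ onto the generated subalgebra $Flat_B(\mathscr{S}_A)\vee Flat_A(\mathscr{S}_B)$, building inside $\mathscr{S}$ the analogue of the inner braid used to prove that $\mathscr{A}\otimes\mathscr{B}$ is generated by its two cut-down copies. For $x\in (Flat_B(\mathscr{S}_A))_n$ and $y\in (Flat_A(\mathscr{S}_B))_n$, I would set $\Phi(x\otimes y)$ to be the inner-braid element of $\mathscr{S}_n$ that carries $x$ on the $n$ A-colour strands and $y$ on the $n$ B-colour strands, interleaved by the A/B crossings, normalized by the constant $\lambda_n$ that appears in the earlier construction so that $\Phi$ is an isometry. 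The first point is that $\Phi$ is well defined, which follows from Proposition \ref{pqpq}: the braiding relation there makes the inner braid independent of the order in which the A- and B-colour strands are crossed.

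The heart of the argument is to verify that $\Phi$ intertwines the tangle action of $\mathscr{S}$ with the tensor-product action, i.e. that $\Phi$ is a planar algebra homomorphism. By the definition of the tensor product it suffices to prove $\Phi(T(\bigotimes_i x_i\otimes y_i))=T(\bigotimes_i \Phi(x_i\otimes y_i))$ for each unlabeled tangle $T$, and since every annular tangle factors through the five elementary Temperley--Lieb moves together with multiplication, it is enough to treat these generators. When $T$ acts, each of its strands is doubled into a parallel pair of A- and B-colour strands; the resulting B-colour strands must be pulled through the boxes labelled $x$ and the A-colour strands through the boxes labelled $y$. This is exactly where flatness enters: by Proposition \ref{annularflat} the relation $\Theta_B(x)=tr(B)x$ (and its symmetric counterpart for $y$) says precisely that opposite-colour strands pass freely through the labelled boxes, so each elementary move splits into an A-computation on the $x$-part and an independent B-computation on the $y$-part, reproducing the defining rule $T(x\otimes y)=T(x)\otimes T(y)$. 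I expect this intertwining for the mixed moves to be the main obstacle, because one must carefully track how the doubled strands of $T$ reorganize the inner braid and confirm that it is flatness, and not some weaker hypothesis, that closes every case.

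Next I would show $\Phi$ is isometric for the Markov-trace inner products, which gives both injectivity and control of the image. Computing $tr\bigl(\Phi(x\otimes y)^*\Phi(x'\otimes y')\bigr)$, I would again use flatness to separate the diagram into its A-colour and B-colour halves; the mixed crossings at top and bottom collapse to scalars via \eqref{ppqq} and the identity $A*B=\tfrac1\delta id$, and the two halves evaluate to $tr_A(x^*x')$ and $tr_B(y^*y')$. After accounting for the normalizations by $\delta_A,\delta_B$ this matches the tensor-product inner product, so $\Phi$ is a nonzero homomorphism of subfactor planar algebras and hence injective by the remark following Theorem \ref{p2ex}. Finally, taking $y$ (resp. $x$) to be a Temperley--Lieb identity, which is A-flat (resp. B-flat), the inner braid degenerates and recovers $x$ (resp. $y$) inside $\mathscr{S}$, so the image of $\Phi$ contains both $Flat_B(\mathscr{S}_A)$ and $Flat_A(\mathscr{S}_B)$; being a planar subalgebra it then contains all of $Flat_B(\mathscr{S}_A)\vee Flat_A(\mathscr{S}_B)$, while conversely each inner-braid element manifestly lies in that generated subalgebra. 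Hence $\Phi$ is an isomorphism onto $Flat_B(\mathscr{S}_A)\vee Flat_A(\mathscr{S}_B)$, exhibiting it as the tensor product $Flat_B(\mathscr{S}_A)\otimes Flat_A(\mathscr{S}_B)$.
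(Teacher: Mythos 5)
Your proposal follows essentially the same route as the paper's proof: you construct the same inner-braid map $\Phi(x\otimes y)$, establish that it preserves the Markov-trace inner products via Equation \ref{ppqq} and Proposition \ref{pqpq}, and verify that it is a planar algebra homomorphism by checking the elementary tangle moves, with flatness (Proposition \ref{annularflat}) doing exactly the work of letting opposite-colour strands pass through labelled boxes. The minor differences---your explicit surjectivity remark and your framing of well-definedness through the braiding relation rather than through isometry---are cosmetic and do not change the argument.
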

\begin{proof}
Let us define a map $\Phi : Flat_B(\mathscr{S}_A)\otimes Flat_A(\mathscr{S}_B) \rightarrow \mathscr{S}$, as a linear extension of
$$\Phi(a\otimes b)=\grc{phiab},$$
for any $a\otimes b\in (Flat_B(\mathscr{S}_A)\otimes (Flat_A\mathscr{S}_B))_n, n>0;$
and $\Phi(\emptyset\otimes\emptyset)=\emptyset$, where $\emptyset$ is the unshaded empty diagram.
To show it is well defined, we prove that $\Phi$ preserves the inner product.
By Equation \ref{ppqq} and Proposition \ref{pqpq}, for any $a_1\otimes b_1,~a_2\otimes b_2\in (Flat_B(\mathscr{S}_A)\otimes (Flat_A\mathscr{S}_B))_n$, we have $tr_n(\phi(a_1\otimes b_1)^*\phi(a_2\otimes b_2))=\delta^{-n}tr_n(a_1^*a_2)tr_n(b_1^*b_2)$.
While computing the inner product in $\mathscr{S}_A$ and $\mathscr{S}_B$, the fudge factor is involved. The inner product of $a_1$ and $a_2$ is $\delta_A^{-n}tr(a_1^*a_2)$; the inner product of $b_1$ and $b_2$ is $\delta_B^{-n}tr(b_1^*b_2)$. So the inner product of $a_1\otimes b_1$ and $a_2\otimes b_2$ is $\delta_A^{-n}tr(a_1^*a_2)\delta_B^{-n}tr(b_1^*b_2)=\delta^{-n}tr_n(a_1^*a_2)tr_n(b_1^*b_2)=tr_n(\phi(a_1\otimes b_1)^*\phi(a_2\otimes b_2))$.

To prove $\Phi$ is a planar algebra isomorphism, we need to check that $\phi$ commutes with the following seven operations,
(1) the 2-click rotation;
(2) adding a cap on a shaded boundary;
(3) adding a cap on a unshaded boundary;
(4) adding a string in a shaded boundary;
(5) adding a string in a unshaded boundary;
(6) the adjoint operation;
(7) the $\wedge$ operation.

(1) follows from the flatness of $a$ and $b$, and the fudge factor is $0$ under the 2-click rotation.

(7) follows from the flatness of $a$ and $b$, and the fudge factor is $0$ under the $\wedge$ operation.

(6) follows from $\graa{adjoint1}=\delta\grb{adjoint2}=\gra{adjoint3}$,
then the flatness of $a$ and $b$.

(2) follows from $\gra{phicap1}=\frac{1}{\delta}\gra{phicap2}$, then the flatness of Temperley-Lieb elements.
The factor $\delta^{-1}=\delta_A^{-1}\delta_B^{-1}$ contributes $-1$ to the fudge factor of adding a cap on a shaded boundary.

(3) follows from $\graa{adjoint1}=\gra{adjoint3}$, then the flatness of Temperley-Lieb elements. The fudge factor is $0$ while adding a cap on a unshaded boundary.

(4) follows from
$\gra{phistring4}=\delta^2\grb{phistring3}=\delta^2\grb{phistring2}=\delta\grb{phistring1}$, then the flatness of Temperley-Lieb elements.
The factor $\delta=\delta_A\delta_B$ contributes $+1$ to the fudge factor of adding a string in a shaded boundary.

(5) follows from $\grs{phistring6}=\gra{phistring5}$, then the flatness of Temperley-Lieb elements. The fudge factor is $0$ while adding a string in a unshade boundary.\\
\end{proof}

\begin{theorem}\label{tensordecom}
Suppose $\mathscr{S}$ is a subfactor planar algebra with two biprojections $A,B$, such that $AB=e$; $A*B$ is a multiple of $id$; $a*B=B*a, ~\forall ~a\preceq A$; $A*b=b*A, ~\forall ~b\preceq B$;
and $\mathscr{S}$ is generated by $\{x\in \mathscr{S}_2| x\preceq A \quad or\quad x\preceq B\}$ as a planar algebra,
then the planar subalgebra $\mathscr{S}_A\vee\mathscr{S}_B$ of $\mathscr{S}$ generated by the two vector spaces $\mathscr{S}_A$ and $\mathscr{S}_B$ is naturally isomorphic to the tensor product $\mathscr{S}_A\otimes\mathscr{S}_B$, $\mathscr{S}_A$ and $\mathscr{S}_B$ are generated by 2-boxes, and $\mathscr{S}=\mathscr{S}_A\vee\mathscr{S}_B$.
In this case, $\mathscr{S}$ is said to be separated by the biprojections $A$ and $B$ as a tensor product.
\end{theorem}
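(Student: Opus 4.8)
The plan is to follow the template of the free-product decomposition Theorem~\ref{freedecom}, with the free product replaced by the tensor-product embedding of Theorem~\ref{tensorembed}. First I would observe that the standing hypotheses $AB=e$ and $A*B\in\mathbb{C}\,id$ are exactly those assumed before Theorem~\ref{tensorembed}; the preliminary computation there yields $A*B=\tfrac{1}{\delta}id$, $\delta=\delta_A\delta_B$, Equation~\ref{ppqq}, and the two-colour string calculus. Under the desired conclusion $\mathscr{S}\cong\mathscr{S}_A\otimes\mathscr{S}_B$ the biprojection $A$ must correspond to $id\otimes e$ and $B$ to $e\otimes id$; as a sanity check one has $tr(id\otimes e)=tr_{\mathscr{S}_A}(id)\,tr_{\mathscr{S}_B}(e)=\delta_A^2$ and symmetrically $tr(e\otimes id)=\delta_B^2$, so that $tr(A)tr(B)=\delta^2$, matching the constraint extracted before Theorem~\ref{tensorembed}.

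The crucial step, which I expect to be the main obstacle, is to turn the algebraic commutation hypotheses into diagrammatic flatness: I would show that every $2$-box $x\preceq A$ is $B$-flat and every $2$-box $y\preceq B$ is $A$-flat. By Proposition~\ref{annularflat} it suffices to prove $\Theta_B(x)=tr(B)\,x$ for each $2$-box $x\preceq A$. The annular action $\Theta_B$ encircles $x$ by a $B$-colour string; using the hypothesis $x*B=B*x$ together with the exchange relation of the biprojection $B$ (Proposition~\ref{ex of bipro}) and the braiding identity of Proposition~\ref{pqpq}, the encircling $B$-string can be slid across $x$ and collapsed into a single closed $B$-circle, producing the scalar $tr(B)$. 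This is precisely the place where the one-string coproduct commutation must be reconciled with a $B$-string passing through $x$, and it is the delicate computation of the argument.

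Granting flatness, let $\mathscr{A}$ denote the planar subalgebra of $\mathscr{S}_A$ generated by its $2$-boxes and $\mathscr{B}$ the planar subalgebra of $\mathscr{S}_B$ generated by its $2$-boxes. The previous step gives $\mathscr{A}\subseteq Flat_B(\mathscr{S}_A)$ and $\mathscr{B}\subseteq Flat_A(\mathscr{S}_B)$, so Theorem~\ref{tensorembed} identifies $\mathscr{A}\vee\mathscr{B}$ with $\mathscr{A}\otimes\mathscr{B}$. Since each generating $2$-box $x\preceq A$ (resp.\ $y\preceq B$) of $\mathscr{S}$ is a $2$-box of $\mathscr{S}_A$ (resp.\ $\mathscr{S}_B$), hence lies in $\mathscr{A}$ (resp.\ $\mathscr{B}$), and since by assumption these $2$-boxes generate $\mathscr{S}$, we conclude $\mathscr{S}=\mathscr{A}\vee\mathscr{B}\cong\mathscr{A}\otimes\mathscr{B}$.

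It remains to upgrade the inclusions $\mathscr{A}\subseteq\mathscr{S}_A$ and $\mathscr{B}\subseteq\mathscr{S}_B$ to equalities, exactly as in Theorem~\ref{freedecom}. Under the isomorphism $\mathscr{S}\cong\mathscr{A}\otimes\mathscr{B}$ with $A\leftrightarrow id\otimes e$, the cut-down recovers the first tensor factor, i.e.\ $\mathscr{S}_A\cong(\mathscr{A}\otimes\mathscr{B})_{id\otimes e}\cong\mathscr{A}$ by the computation of $\alpha_1$ at the start of Section~\ref{tensor product}; comparing dimensions and using $\mathscr{A}\subseteq\mathscr{S}_A$ forces $\mathscr{A}=\mathscr{S}_A$, and symmetrically $\mathscr{B}=\mathscr{S}_B$. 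Hence $\mathscr{S}_A$ and $\mathscr{S}_B$ are generated by $2$-boxes, $\mathscr{S}_A\vee\mathscr{S}_B=\mathscr{A}\vee\mathscr{B}=\mathscr{S}$, and the natural isomorphism $\mathscr{S}_A\vee\mathscr{S}_B\cong\mathscr{S}_A\otimes\mathscr{S}_B$ is the one furnished by Theorem~\ref{tensorembed}.
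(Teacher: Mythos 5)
Your architecture is exactly the paper's: show that the $2$-boxes of $\mathscr{S}_A$ and $\mathscr{S}_B$ are flat, feed this into Proposition \ref{annularflat} and Theorem \ref{tensorembed}, then use the generating hypothesis and a dimension count to conclude $\mathscr{S}=\mathscr{S}_A\vee\mathscr{S}_B\cong\mathscr{S}_A\otimes\mathscr{S}_B$; your closing cut-down argument identifying $\mathscr{S}_A$ with the first tensor factor is sound, and even a bit more explicit than the paper's ``counting the dimension.'' The genuine gap is the step you yourself flag as the main obstacle and then leave unproved: that $x\preceq A$ implies $\Theta_B(x)=tr(B)\,x$. Moreover, the tools you name for it are not the ones that make it work. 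The exchange relation of $B$ and the braiding of Proposition \ref{pqpq} concern $A$-colour and $B$-colour strings; neither lets you move a $B$-string past a general $2$-box $x\preceq A$, and neither addresses the $A$-cut-downs that the annular action $\Theta_B$ carries on elements of $\mathscr{S}_A$ (an element of $(\mathscr{S}_A)_2$ has the form $AxA$, and $\Theta_B$ sandwiches everything between further copies of $A$). It is precisely those outer $A$'s that require the exchange relation of the \emph{other} biprojection $A$.

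What the paper actually does is a short algebraic computation using (i) $AxA=x$, (ii) your hypothesis $x*B=B*x$, (iii) the biprojection identity $B*B=\frac{tr(B)}{\delta}B$ (the definition of a biprojection, not its exchange relation), (iv) the exchange relation of $A$, and (v) $BA=AB=e$ together with $e*x=\frac{1}{\delta}x$:
$$\Theta_B(x)=\delta^2A(B*(AxA)*B)A=\delta^2A(B*x*B)A=\delta^2A(B*B*x)A=\delta\,tr(B)\,A(B*x)A,$$
and then, by the exchange relation of $A$,
$$A(B*x)A=A(B*(xA))A=A((BA)*x)A=A(e*x)A=\frac{1}{\delta}AxA=\frac{1}{\delta}x,$$
whence $\Theta_B(x)=tr(B)\,x$ for every positive $x\preceq A$; one then extends to all of $(\mathscr{S}_A)_2$ by writing an arbitrary element as a linear combination of four positive operators, exactly as the paper does. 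With this computation (and its symmetric counterpart for $y\preceq B$) supplied, the remainder of your argument goes through and coincides with the paper's proof.
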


\begin{proof}
For any $a\in(\mathscr{S}_A)_2$, $a>0$, we have $a\preceq A$, thus $a*B=B*a$ by assumption.
Then
$\Theta_B(a)=\delta^2A(B*(AaA)*B)A=\delta^2A(B*a*B)A=\delta^2A(B*B*a)A=\delta tr(B)A(B*a)A$. Using the exchange relation of $A$, we have
$A(B*a)A=A(B*(aA))A=A((BA)*a)A=A(e*a)A=\frac{1}{\delta}AaA=\frac{1}{\delta}a$. Thus $\Theta_B(a)=tr(B)a$, which implies $a\in Flat_B(\mathscr{S}_A)$ by Proposition \ref{annularflat}.
Any operator in $(\mathscr{S}_A)_2$ is a linear sum of four positive operators, so $(\mathscr{S}_A)_2\subset Flat_B(\mathscr{S}_A)$. Take $\mathscr{A}$ to be the planar subalgebra of $Flat_B(\mathscr{S}_A)$ generated by 2-boxes.
Symmetrically $(\mathscr{S}_B)_2\subset Flat_A(\mathscr{S}_B)$.
Take $\mathscr{B}$ to be the planar subalgebra of $Flat_A(\mathscr{S}_B)$ generated by 2-boxes.
Then $\mathscr{A}\vee\mathscr{B}\subset Flat_B(\mathscr{S}_A)\vee Flat_A(\mathscr{S}_B) \subset \mathscr{S}$.
On the other hand $x\preceq A$ implies $x\in (\mathscr{S}_A)_2$, and
$y\preceq B$ implies $y\in (\mathscr{S}_B)_2$.
By assumption, we have $\mathscr{S}\subset\mathscr{A}\vee\mathscr{B}$.
So  $\mathscr{A}\vee\mathscr{B}= Flat_B(\mathscr{S}_A)\vee Flat_A(\mathscr{S}_B) = \mathscr{S}$.
Then $Flat_B(\mathscr{S}_A)=\mathscr{A}$ and $Flat_A(\mathscr{S}_B)=\mathscr{B}$ by counting the dimension. So they generated by 2-boxes.
By Theorem \ref{tensorembed}, $\mathscr{S}=Flat_B(\mathscr{S}_A)\vee Flat_A(\mathscr{S}_B)$ is naturally isomorphic to $Flat_B(\mathscr{S}_A)\otimes Flat_A(\mathscr{S}_B)$.
\end{proof}

\begin{corollary}\label{tensordecom222}
Suppose $\mathscr{A}$, $\mathscr{B}$ are subfactor planar algebras. If $\mathscr{A}\otimes\mathscr{B}$ is generated by 2-boxes, then both $\mathscr{S}_A$ and $\mathscr{S}_B$ are generated by 2-boxes.
\end{corollary}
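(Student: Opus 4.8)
The plan is to mimic the proof of Corollary \ref{freedecom222}, replacing the appeal to Theorem \ref{freedecom} by an appeal to Theorem \ref{tensordecom}. I set $\mathscr{S}=\mathscr{A}\otimes\mathscr{B}$ and take the two biprojections $A=id\otimes e$ and $B=e\otimes id$, so that $\mathscr{S}_A=(\mathscr{A}\otimes\mathscr{B})_{id\otimes e}\cong\mathscr{A}$ and $\mathscr{S}_B=(\mathscr{A}\otimes\mathscr{B})_{e\otimes id}\cong\mathscr{B}$ via the isomorphisms $\alpha_1,\alpha_2$ recorded at the start of this section. It then suffices to verify that the pair $(A,B)$ satisfies the hypotheses of Theorem \ref{tensordecom}; its conclusion immediately gives that $\mathscr{S}_A$ and $\mathscr{S}_B$ are generated by 2-boxes.

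First I would check the four algebraic conditions, all of which are immediate from the fact that products and coproducts in $\mathscr{A}\otimes\mathscr{B}$ split as tensors. Indeed $AB=(id\cdot e)\otimes(e\cdot id)=e\otimes e=e$; using that $\delta_A e$ and $\delta_B e$ are the coproduct units of $\mathscr{A}$ and $\mathscr{B}$, one gets $A*B=(id*e)\otimes(e*id)=\tfrac{1}{\delta_A}id\otimes\tfrac{1}{\delta_B}id=\tfrac1\delta id$, a multiple of $id$. Moreover $x\preceq A$ forces $x=a_0\otimes e$ with $a_0\in\mathscr{A}_2$, since $e$ is a minimal projection in $\mathscr{B}_2$ and hence $exe$ is a multiple of $e$; then the two-sidedness of the coproduct unit yields $a*B=\tfrac{1}{\delta}a_0\otimes id=B*a$, and symmetrically $A*b=b*A$ for $b\preceq B$.

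The substantive point is the generation hypothesis: $\mathscr{S}$ must be generated, as a planar algebra, by $\{x\in\mathscr{S}_2 : x\preceq A \ \text{ or }\ x\preceq B\}$, which by the previous paragraph is spanned by $\{a_0\otimes e\}$ and $\{e\otimes b_0\}$. Here I would use the key identity
$$a_0\otimes b_0=\delta\,(a_0\otimes e)*(e\otimes b_0),$$
which follows because the coproduct splits as a tensor and $a_0*e=\tfrac{1}{\delta_A}a_0$, $e*b_0=\tfrac{1}{\delta_B}b_0$. Since the coproduct is a planar operation, every elementary 2-box $a_0\otimes b_0$ of $\mathscr{A}\otimes\mathscr{B}$ lies in the planar subalgebra generated by these cut-down elements; as $\mathscr{A}\otimes\mathscr{B}$ is generated by its 2-boxes by hypothesis, it is generated by $\{a_0\otimes e\}\cup\{e\otimes b_0\}$.

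With all the hypotheses of Theorem \ref{tensordecom} verified, that theorem yields $\mathscr{S}=\mathscr{S}_A\vee\mathscr{S}_B\cong\mathscr{S}_A\otimes\mathscr{S}_B$ with both $\mathscr{S}_A$ and $\mathscr{S}_B$ generated by 2-boxes, which is the assertion. I expect the only genuine content to be the coproduct identity displayed above; once it is in hand, the four algebraic conditions and the final appeal to Theorem \ref{tensordecom} are routine, exactly parallel to the free-product case treated in Corollary \ref{freedecom222}.
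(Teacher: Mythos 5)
Your proof is correct and takes essentially the same route as the paper, whose (one-line) proof likewise sets $A=id\otimes e$, $B=e\otimes id$, uses the identity expressing $a\otimes b$ as a scalar multiple of $(a\otimes e)*(e\otimes b)$, and invokes Theorem \ref{tensordecom}; you have simply made explicit the routine verifications of that theorem's hypotheses. Incidentally, your scalar is the right one: since $x*e=\frac{1}{\delta}x$ in each factor, one gets $a\otimes b=\delta\,(a\otimes e)*(e\otimes b)$ with $\delta=\delta_A\delta_B$, whereas the paper writes $\delta^2$ --- a harmless discrepancy, as any nonzero constant suffices for the generation argument.
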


\begin{proof}
Considering $A=id\otimes e$, $B=e\otimes id$, and $a\otimes b=\delta^2(a\otimes e)*(e\otimes b)$, the statement follows from Theorem \ref{tensordecom}.
\end{proof}

\section{Biprojections}\label{biprojections}
In this section, we assume that $\mathscr{S}$ is a subfactor planar algebra.

For crossed product group subfactor planar algebra, all 2-boxes minimal projections are indexed by group elements, and their coproduct behaves like the group multiplication. Motivated by this fact, we will discuss the coproduct of 2-box positive operators of $\mathscr{S}$, specifically the lattice of the supports of those positive operators, based on the Schur Product theorem.
For a positive operator $x$, and $e\preceq x$, the support of $x^{*k}$ will be increasing. The limit is a projection $P$, such that $P*P\sim P$. We shall expect $P$ to be a biprojection, viewed as the biprojection generated by $x$.

If we assume $P$ is a projection, $e\preceq p$, $P=P'$ and $P*P\sim P$, then by GJS construction $\cite{GJS}$, we obtain a factor $\mathcal{N}=Gr_0$ consisting of the linear span of diagrams with all boundary points on the left, and a factor $\mathcal{M}=Gr_1$ consisting of the linear span of diagrams with all boundary points on the left except one and the top and one on the bottom. Let us construct $\mathcal{P}$ as a cut down of $\mathcal{M}$ by an action of the projection $P$ on the right. By our assumptions, we can show that $\mathcal{P}$ is an intermediate subfactor of $\mathcal{N}\subset\mathcal{M}$, and $P$ is a biprojection.

Furthermore we can drop the assumptions $e\preceq p$, $P=P'$, because the support of $P^{*k}$ will contain $e$ and $P'$ when $k$ large enough. This phenomenon is similar to the fact that any group element $g$ of a finite group generates identity and $g^{-1}$. We will not expect this phenomenon without the finite-index condition. A well known result of representation theory of finite groups is also reduce to this phenomenon and Stone-Weierstrass theorem. See Theorem \ref{P*P=P}, \ref{maxcoeff}, Proposition \ref{maxcoeffremark} and the following remark.

Let $Q$ be a projection. Take the maximal projection $P$ subject to the condition $P*Q\preceq Q$, then $P*P\sim P$. By our new result, $P$ is a biprojection. If $Q$ behaves like a normaliser under coproduct, then the planar algebra $\mathscr{S}$ is a free product, see Theorem \ref{virtual normalizer}. Moreover we can find out such a normaliser $Q$ by looking at the principal graph, see Lemma \ref{copro connect}. The combination of these two results will be the key break of our main classification results.

\begin{theorem}[Schur Product Theorem]\label{P*P>0}
If $A$, $B$ are positive operators in $\mathscr{S}_2$,
then $A*B$ is a positive operator.
\end{theorem}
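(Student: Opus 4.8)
The plan is to reduce positivity of the element $A*B$ to the positivity of the Markov trace, which is the only source of positivity at our disposal. Since $\mathscr{S}_2$ is a finite-dimensional C*-algebra acting on the Hilbert space $L^2(\mathscr{S}_2,tr_2)$ by left multiplication, an element $T\in\mathscr{S}_2$ is positive if and only if $tr_2(\zeta^*T\zeta)\ge 0$ for every $\zeta\in\mathscr{S}_2$: over $\mathbb{C}$, an operator whose diagonal form $\zeta\mapsto\langle T\zeta,\zeta\rangle$ is real and nonnegative is automatically self-adjoint and positive, so this single condition simultaneously yields the self-adjointness of $A*B$. Hence it suffices to prove $tr_2(\zeta^*(A*B)\zeta)\ge 0$ for all $\zeta$.

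First I would use that $A,B\ge 0$ in the C*-algebra $\mathscr{S}_2$ to factor them as $A=C^*C$ and $B=D^*D$ with $C,D\in\mathscr{S}_2$. Substituting these into the defining diagram of the coproduct and closing it up with $\zeta$ and $\zeta^*$ turns $tr_2(\zeta^*(A*B)\zeta)$ into a single closed labelled diagram whose labels are $\zeta,\zeta^*,C,C^*,D,D^*$. The goal is then purely diagrammatic: to exhibit this closed diagram as $tr(W^*W)=\|W\|^2$ for a suitable box $W$ built from $\zeta,C,D$, after which positivity of the Markov trace finishes the argument.

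The heart of the proof, and the step I expect to be the main obstacle, is locating the reflection axis that realises the closed diagram as $W^*W$. The factorisations $A=C^*C$, $B=D^*D$ and the pair $\zeta^*,\zeta$ each contribute a ``seam'' at which the diagram is locally mirror-symmetric; the content is that these seams can be aligned along one curve so that reflection across it simultaneously interchanges $C\leftrightarrow C^*$, $D\leftrightarrow D^*$ and $\zeta\leftrightarrow\zeta^*$ while fixing the single string by which the coproduct joins $A$ to $B$. Sphericity of the Markov trace is what permits the free isotopy on the sphere needed to bring the three seams into position, and the asymmetry between the two arguments of $*$ is harmless precisely because $A$ and $B$ are self-adjoint, so the reflected half is the genuine adjoint of the other half.

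This mirrors the classical Schur product theorem, whose proof writes $A=\sum_i\lambda_i v_iv_i^*$, $B=\sum_j\mu_j w_jw_j^*$ and uses $(v_iv_i^*)\circ(w_jw_j^*)=(v_i\circ w_j)(v_i\circ w_j)^*$ to display the Hadamard product as a sum of rank-one positives; the planar factorisation $W^*W$ is the diagrammatic incarnation of that identity. As an alternative route one could first reduce, via the spectral decompositions of $A$ and $B$ and bilinearity of $*$, to the case where $A$ and $B$ are rank-one positive, and then verify the single factorisation $(\xi\xi^*)*(\eta\eta^*)=WW^*$ directly; I would keep the global sum-of-squares version as the main line, since it avoids bookkeeping over the rank-one pieces.
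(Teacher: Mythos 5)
Your overall strategy is the same as the paper's: factor $A=C^*C$, $B=D^*D$, and reduce positivity of $A*B$ to positivity of the Markov trace via a sum-of-squares realization. The paper packages your vector-testing step as positivity of the conditional expectation: it verifies the diagrammatic identity $A*B=\delta\,\Phi\bigl((C(1\boxdot D))^*\,C(1\boxdot D)\bigr)$, where $\Phi$ is the trace-preserving conditional expectation from $\mathscr{S}_3$ onto $\mathscr{S}_2$, and positivity of $\Phi$ (equivalently, of $tr_3$) finishes the proof.

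The step you yourself flag as the main obstacle is, however, a genuine gap as written, and the symmetry heuristic you offer cannot close it, because it proves too much. The closed diagram $tr(\zeta^*(C^*C)(D^*D)\zeta)$ for the \emph{ordinary} product is built from exactly the same three hermitian seams $C,C^*$, $D,D^*$, $\zeta,\zeta^*$, yet it need not be nonnegative or even real (the product of two positive operators is not positive), so no presentation as $tr(W^*W)$ can exist there; hence no general ``align the seams by isotopy'' principle is available, and the specific planar geometry of the coproduct must enter. The way to make your argument complete is to exhibit the box: take
$$W=C\,(1\boxdot D)\,\zeta\in\mathscr{S}_3,$$
where $\zeta\in\mathscr{S}_2$ is included into $\mathscr{S}_3$ by adding a string on the right. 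Then, using $(1\boxdot D)^*=1\boxdot (D^*)'$, the bimodule property $\Phi(\zeta^*y\zeta)=\zeta^*\Phi(y)\zeta$, the compatibility $tr_3=\delta\, tr\circ\Phi$, and the isotopy underlying the displayed identity above, one gets
$$tr(\zeta^*(A*B)\zeta)=\delta\,tr\bigl(\zeta^*\Phi\bigl((C(1\boxdot D))^*C(1\boxdot D)\bigr)\zeta\bigr)=tr_3(W^*W)\geq 0,$$
and your GNS reduction then yields $A*B\geq 0$; this $W$ is precisely the paper's element $a(1\boxdot b)$ multiplied by $\zeta$, so the completed argument coincides with the paper's two-line proof. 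If you also want the paper's conclusion that $A*B$ is a nonzero positive operator when $A,B\neq 0$, note in addition that $tr(A*B)=\frac{1}{\delta}tr(A)\,tr(B)>0$.
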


\begin{proof}
Set $A=a^*a, B=b^*b$.
Then $A*B=\delta\Phi((a(1\boxdot b))^*(a(1\boxdot b)))$, where $\Phi$ is the conditional expectation from $\mathscr{S}_3$ to $\mathscr{S}_2$.
Since $(a(1\boxdot b))^*(a(1\boxdot b))\geq0$ in $\mathscr{S}_3$, we have $A*B\geq0$ in $\mathscr{S}_2$. While $tr(A*B)>0$, so $A*B>0$.
\end{proof}

\begin{remark}
If we consider the subfactor planar algebra as a planar subalgebra of it graph planar algebra \cite{Jon00,JonPen}, then this Schur Product Theorem reduces to the canonical one.
\end{remark}

\begin{definition}
Suppose $X\in\mathscr{S}_2$ is a positive operator, and $X=\sum_{i=1}^kC_iP_i$ for some mutually orthogonal minimal projections $P_i\in\mathscr{S}_2$ and $C_i>0$, for $1\leq i\leq k.$
Let us define the rank of $X$ to be $k$, denoted by $r(X)$.
\end{definition}
It is easy to see that $r(X)$, the rank of $X$ in $\mathscr{S}_2$, is independent of the decomposition. And $r(X)=1$ means $X$ is a multiple of a minimal projection.

\begin{lemma}\label{copro connect}
Suppose $P$ and $Q$ are projections in $\mathscr{S}_2$. Then
$$r(P'*Q)\leq \dim(Q\mathscr{S}_3 P),$$
where $Q\mathscr{S}_3 P=\{QxP| x\in\mathscr{S}_3\}$.
\end{lemma}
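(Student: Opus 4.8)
The plan is to combine the Schur Product Theorem with a conditional-expectation computation that turns the rank of the coproduct into the dimension of an explicit image, and then to embed that image into the corner $Q\mathscr{S}_3 P$.

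First, since $P'$ and $Q$ are projections, hence positive, Theorem \ref{P*P>0} gives $P'*Q\geq 0$; write $X=P'*Q$ and let $S$ be its support, so that $r(X)=r(S)$ is the number of minimal projections of $\mathscr{S}_2$ under $S$. Running the proof of Theorem \ref{P*P>0} with $a=P'$ and $b=Q$ (legitimate because $P'{}^*P'=P'$ and $Q^*Q=Q$), I would record the identity $X=\delta\,\Phi(W^*W)$, where $W=P'(1\boxdot Q)\in\mathscr{S}_3$ and $\Phi:\mathscr{S}_3\to\mathscr{S}_2$ is the trace-preserving conditional expectation.

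The second step converts $r(X)$ into a dimension. For $\xi\in\mathscr{S}_2$, bimodularity of $\Phi$ and $tr\circ\Phi=tr$ give $\langle X\xi,\xi\rangle=\delta\,tr(\xi^*\Phi(W^*W)\xi)=\delta\,tr((W\xi)^*(W\xi))=\delta\,\|W\xi\|_2^2$, so that, $X$ being positive, $X\xi=0$ if and only if $W\xi=0$. Hence left multiplication by $X$ has kernel $\{\xi:W\xi=0\}$, and $\xi\mapsto W\xi$ carries its range isomorphically onto $W\mathscr{S}_2$. To read off exactly the rank $r(X)=\sum_k\rho_k$ (and not the larger operator rank $\sum_k n_k\rho_k$, where $\mathscr{S}_2=\bigoplus_k M_{n_k}$ and $\rho_k$ is the matrix rank of the $k$-th block of $X$), I would restrict this action to the left ideal $\mathscr{S}_2 f$, where $f$ is a projection that is minimal in each simple summand of $\mathscr{S}_2$; then $\mathscr{S}_2 f\cong\bigoplus_k\mathbb{C}^{n_k}$ and left multiplication by $X$ has rank exactly $\sum_k\rho_k=r(X)$, whence $r(X)=\dim(W\mathscr{S}_2 f)$.

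The final, and hardest, step is to produce an injective linear map $W\mathscr{S}_2 f\hookrightarrow Q\mathscr{S}_3 P$. Here $W\mathscr{S}_2 f=P'(1\boxdot Q)\mathscr{S}_2 f$ already sits inside $\mathscr{S}_3$, and the remaining difficulty is purely diagrammatic: one must use a planar rotation together with the defining pictures of the contragredient $P'$ and of the Fourier box $1\boxdot Q$ to move the factor $Q$ onto the left boundary and the factor $P$ onto the right boundary, thereby realising each $W\xi$ canonically inside $Q\mathscr{S}_3 P$, and to check that distinct cosets $\xi+\ker$ retain independent images. Granting this identification, $r(X)=\dim(W\mathscr{S}_2 f)\leq\dim(Q\mathscr{S}_3 P)$, which is the claim. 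I expect the corner-matching to be the main obstacle, since it requires reconciling the sides on which $P'$, $Q$, and the Fourier transform appear, where the placement of the $\$$'s and the spherical symmetry of the Markov trace must be used with care. As a consistency check, when $P$ and $Q$ are minimal the corner $Q\mathscr{S}_3 P$ has dimension equal to the number of length-$2$ paths between the corresponding depth-$2$ vertices of the principal graph, matching the statement advertised before the lemma.
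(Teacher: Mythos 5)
Your first two steps are correct: with $W=P'(1\boxdot Q)$ one indeed has $P'*Q=\delta\Phi(W^*W)$, the kernel of left multiplication by $X=P'*Q$ on $\mathscr{S}_2$ coincides with the kernel of $\xi\mapsto W\xi$, and restricting to the left ideal $\mathscr{S}_2f$ gives $r(X)=\dim(W\mathscr{S}_2f)$. (This is, however, a long detour: the spectral decomposition $X=\sum_{i=1}^kC_iR_i$, which the definition of $r$ hands you for free, already exhibits the rank.) The problem is your third step. You never construct the injection $W\mathscr{S}_2f\hookrightarrow Q\mathscr{S}_3P$; you write ``granting this identification'' and move on. But for finite-dimensional spaces an injective linear map exists if and only if the dimensions compare, so the step you defer is not a technical verification of a map you have already defined --- it is exactly the inequality $r(P'*Q)\leq\dim(Q\mathscr{S}_3P)$ that was to be proved. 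Steps 1--2 compute the left-hand side but never make contact with the corner $Q\mathscr{S}_3P$; the entire content of the lemma sits inside the granted step.

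Moreover, the mechanism you propose for that step, a planar rotation, is doubtful as stated. A rotation is one fixed linear automorphism of $\mathscr{S}_3$, and it exchanges the two kinds of cutting patterns: corners cut by $\mathscr{S}_2$ (left/right multiplication by $P'$ and $\xi$) are carried to corners cut by $\mathscr{S}_{1,3}$, i.e., to expressions sandwiched by elements of the form $1\boxdot(\,\cdot\,)$ --- and $1\boxdot P$, $1\boxdot Q$ are not projections unless $P,Q$ are biprojections, so the rotated copy of $W\mathscr{S}_2f$ has no reason to lie in $Q\mathscr{S}_3P$. What actually closes the argument --- and is the paper's proof --- is a direct construction in which the roles are permuted by hand rather than by a rotation of $W$: diagonalize $P'*Q=\sum_{i=1}^kC_iR_i$ with $R_i$ mutually orthogonal minimal projections and $C_i>0$, and map each spectral projection into the corner via $v_i=Q(1\boxdot R_i')P$, i.e., put the $R_i$ (Fourier-transformed) in the \emph{middle} slot and move $P$ and $Q$ to the outside. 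The isotopy identities $tr_3(v_i^*v_i)=tr((P'*Q)R_i)=C_i\,tr(R_i)>0$ and $tr_3(v_i^*v_j)=0$ for $i\neq j$ then show that the $v_i$ are $k$ nonzero, pairwise orthogonal vectors of $Q\mathscr{S}_3P$, giving $r(P'*Q)=k\leq\dim(Q\mathscr{S}_3P)$. This Gram-matrix computation, which links the sandwich $Q(1\boxdot R_i')P$ to the coproduct $P'*Q$, is precisely the diagrammatic content your proposal flags as ``the main obstacle'' but does not supply.
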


\begin{proof}
Suppose $P'*Q=\Sigma_{i=1}^kC_iR_i$, for some mutually orthogonal minimal projections $R_i, ~1\leq i\leq k,$ in $\mathscr{S}_2$, and $C_i>0, ~1\leq i\leq k$.
Let $v_i\in Q\mathscr{S}_3P$ be $Q(1\boxdot R_i')P$, for $1\leq i \leq k$.
It is easy to check that $tr_3(v_i^*v_i)=tr((P'*Q)R_i)>0$ and $tr_3((v_i^*v_j)=0$, for $1\leq i,j \leq k$, $i\neq j$.
So $r(P'*Q)=k\leq \dim(Q(1\boxdot R_i')P)$.
\end{proof}

If $P,Q$ are two minimal projections in $\mathscr{S}_2$, then $P,Q$ correspond to two points in the principal graph, and $\dim(Q\mathscr{S}_3P)$ is the number of length 2 paths between the two points. If $\dim(\mathscr{S}_3)$ is small, then for most depth 2 points in the principal, the number of length 2 paths between them is small. So the rank of their coproduct is small.

\begin{definition}
Suppose $\mathscr{S}$ is a subfactor planar algebra generated by 2-boxes. Let us define its rank to be the maximal number of length 2 paths between a pair depth 2 points in the principal graph, for all pairs of depth 2 points.
\end{definition}

\begin{proposition}
Suppose $\mathscr{S}$ is a subfactor planar algebra and its rank is $1$, then it is a group subfactor planar algebra $\mathscr{S}^G$, for some group $G$. Verse Visa.
\end{proposition}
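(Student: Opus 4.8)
The plan is to read the group off the coproduct structure of the minimal projections of $\mathscr{S}_2$ and then invoke the depth-2/Kac-algebra correspondence. Since $\mathscr{S}$ is irreducible, $\mathscr{I}_2=\mathbb{C}e$, so the minimal projections of $\mathscr{S}_2$ are $e$ together with the depth-2 ones (the minimal projections of $\mathscr{S}_2/\mathscr{I}_2$). First I would use Lemma \ref{copro connect} to turn the rank hypothesis into algebra: for any two depth-2 minimal projections $P,Q$ we have $r(P'*Q)\le\dim(Q\mathscr{S}_3P)\le 1$, and since $P'*Q$ is positive (Theorem \ref{P*P>0}) with $tr(P'*Q)>0$ it is nonzero, hence a scalar multiple of a single minimal projection; coproducts involving $e$ are rank $1$ automatically because $\delta e$ is the coproduct unit. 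The next step pins down $P'*P$: a Frobenius-reciprocity computation shows $tr((P'*P)e)$ is a positive multiple of $tr(P)$, so $e\preceq P'*P$, and combined with rank $1$ this forces $P'*P\sim e$, and likewise $P*P'\sim e$. Thus every minimal projection is \`\`grouplike''.

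Next I would organise these projections into a group. After normalising each minimal projection, let $G$ be the set of normalised minimal projections with multiplication given by the coproduct $*$. Associativity is automatic, since $(1\boxdot a)(1\boxdot b)=1\boxdot(a*b)$ identifies $(\mathscr{S}_{1,3},\cdot)$ with $(\mathscr{S}_2,*)$ and the former is an associative algebra; $e$ is the identity; $P'$ is a two-sided inverse of $P$ by the previous paragraph; and closure holds because $P*Q\sim R$ for some minimal $R$, with $R*R'\sim(P*Q)*(Q'*P')\sim e$ by associativity and the grouplike relations. Hence $(G,*)$ is a finite group. A Frobenius--Perron dimension argument then forces $\mathscr{S}_2$ abelian: fusion of $\mathcal{N}$-$\mathcal{N}$ bimodules makes the (positive, spherical) dimensions multiplicative, $d_{P*Q}=d_Pd_Q$, so $d\colon G\to\mathbb{R}_{>0}$ is a homomorphism from a finite group and is therefore trivial. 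All depth-2 points have dimension $1$, each block of $\mathscr{S}_2/\mathscr{I}_2$ is one-dimensional, and $\mathscr{S}_2\cong\mathbb{C}^{|G|}=\mathrm{Fun}(G)$ is a commutative (function-algebra) Kac algebra.

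It remains to show $\mathscr{S}$ is depth $2$, which I expect to be the main obstacle. The grouplike relations say coproducts of depth-2 minimal projections never leave $\{P_g\}_{g\in G}$, i.e. the fusion of these invertible objects closes up at depth $2$ and creates no new irreducible object at depth $3$; the work is to convert this into $\mathscr{S}_3=\mathscr{I}_3$ inside the planar algebra. Concretely I would show $\mathscr{S}_3$ is generated as an algebra by $\mathscr{S}_2$ and $\mathscr{S}_{1,3}$, then use the rank-$1$ coproduct relations to rewrite each product $(1\boxdot a)b$ and check that every resulting diagram already lies in the basic-construction ideal $\mathscr{I}_3$; a dimension count against $\delta^2=\sum_{g}d_g^2=|G|$ confirms there is no room for a depth-3 vertex. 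Granting depth $2$, the one-to-one correspondence between depth-2 subfactor planar algebras and finite-dimensional Kac algebras, together with $\mathscr{S}_2\cong\mathrm{Fun}(G)$, identifies $\mathscr{S}$ with $\mathscr{S}^G$.

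Finally, for the converse (\`\`Verse Visa''): in $\mathscr{S}^G$ the minimal projections of $\mathscr{S}_2$ are indexed by $G$ with coproduct equal to the group multiplication, so each coproduct of two minimal projections is again minimal; running the construction in the proof of Lemma \ref{copro connect} in reverse gives $\dim(Q\mathscr{S}_3P)=1$ for every pair of depth-2 points, whence the rank of $\mathscr{S}^G$ equals $1$.
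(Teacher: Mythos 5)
The paper never actually proves this proposition; it is stated as an immediate observation, the intended argument being a direct reading of the graph-theoretic definition of rank, so your proposal has to be judged against that implicit proof. Your first two paragraphs are essentially sound: Lemma \ref{copro connect} together with Theorem \ref{P*P>0} shows that every coproduct of depth-$2$ minimal projections is a positive multiple of a single minimal projection, the observation $e\preceq P'*P$ then forces $P'*P\sim e\sim P*P'$, and the normalised minimal projections do form a finite group under $*$. (Note that both your argument and the truth of the statement require that ``pairs of depth $2$ points'' include a point paired with itself; otherwise Temperley--Lieb would be a vacuous counterexample.) You can, however, obtain $tr(P)=1$ far more cheaply than by importing bimodule fusion and a Frobenius--Perron homomorphism: write $P'*P=\lambda e$; Lemma \ref{CDE=}, together with $P*id=\frac{tr(P)}{\delta}id$ and $P*e=\frac{1}{\delta}P$, gives $tr(P'*P)=\frac{1}{\delta}tr(P)^2$ and $tr((P'*P)e)=\frac{1}{\delta}tr(P)$, while $P'*P=\lambda e$ makes both quantities equal to $\lambda$; hence $tr(P)^2=tr(P)$, so $tr(P)=1$. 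This stays inside the paper's $\mathscr{S}_2$-calculus and also sidesteps the tacit assumption in your argument that $d_{P*Q}=d_Pd_Q$, which presupposes the rank-one fusion is multiplicity free.

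Your third paragraph is where the proof genuinely breaks, and it is exactly the step you flag as the crux. First, the claim that $\mathscr{S}_3$ is generated as an algebra by $\mathscr{S}_2$ and $\mathscr{S}_{1,3}$ is precisely what an exchange relation buys (Proposition \ref{dimpn}); for a general subfactor planar algebra generated by $2$-boxes it is not available, and here it is essentially equivalent to the depth-$2$ statement you are trying to prove, since depth $2$ means $\mathscr{S}_3=\mathscr{I}_3=\mathscr{S}_2e_2\mathscr{S}_2$. Second, the dimension count $\delta^2=\sum_g d_g^2=|G|$ cannot rule out a depth-$3$ vertex: identities of this kind are blind to everything past depth $2$ (Temperley--Lieb at index $>4$ satisfies $\delta^2=1\cdot 1+1\cdot(\delta^2-1)$ and still has depth-$3$ points). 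The missing step is in fact easy, in either of two ways. Directly from the definition of rank: every edge at a depth-$2$ vertex $v$ contributes a length-$2$ path from $v$ back to itself, so rank $1$ forces $v$ to have a unique neighbour joined by a single edge, necessarily the depth-$1$ point; hence there are no depth-$3$ vertices and no multiplicities, the principal graph is a star, $\mathscr{S}$ is depth $2$ with $\mathscr{S}_2$ abelian, and the Kac algebra correspondence (a commutative finite dimensional Kac algebra is the function algebra of a finite group, so its dual is a group algebra) yields $\mathscr{S}\cong\mathscr{S}^G$. Alternatively, within your fusion picture: a trace-$1$ minimal projection corresponds to a unitary normalising $\mathcal{N}$, products of normalising unitaries stay in $\mathcal{M}$, so once every depth-$2$ projection has trace $1$ the fusion closes inside $L^2(\mathcal{M})$ and the subfactor is depth $2$ --- precisely the fact the paper invokes without comment in case (a) of the proof of Theorem \ref{main1}. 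Finally, your converse is also misargued: Lemma \ref{copro connect} is an inequality and cannot be ``run in reverse''; no such argument is needed, since $\mathscr{S}^G$ is depth $2$ with abelian $2$-box algebra, so its principal graph is the star with single edges and any pair of depth-$2$ points is joined by exactly one length-$2$ path.
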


The following three lemmas are basic facts about positive operators and planar algebras. They will be used several times without being mentioned.

\begin{lemma}
Suppose $A,B,C,D$ are positive operators in $\mathscr{S}_2$. If $A\preceq C, B\preceq D$. Then $A*B\preceq C*D$.
\end{lemma}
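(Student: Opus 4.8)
The plan is to reduce the support-domination relation $\preceq$ between positive operators to an honest operator inequality, and then exploit the monotonicity of the coproduct that is furnished by the Schur Product Theorem (Theorem \ref{P*P>0}).

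First I would record a standard finite-dimensional fact: for positive operators $X,Y\in\mathscr{S}_2$, one has $X\preceq Y$ if and only if there is a scalar $\lambda>0$ with $X\leq\lambda Y$. The reverse implication is immediate, since $X\leq\lambda Y$ forces $\ker Y\subseteq\ker X$ and hence $\mathrm{supp}(X)\leq\mathrm{supp}(Y)$. For the forward implication, if the support of $X$ lies under the support projection $q$ of $Y$, then in the finite-dimensional algebra $\mathscr{S}_2$ the operator $Y$ is bounded below, say $Y\geq\epsilon q$ on its range for some $\epsilon>0$, while $X\leq\|X\|q$; taking $\lambda=\|X\|/\epsilon$ gives $X\leq\lambda Y$.

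Next I would establish that the coproduct is monotone in each argument. Because $*$ is bilinear, for positive operators with $X_1\leq X_2$ and any positive $Y$ we have $X_2*Y-X_1*Y=(X_2-X_1)*Y$, and the right-hand side is positive by the Schur Product Theorem, since $X_2-X_1\geq0$ and $Y\geq0$. Hence $X_1*Y\leq X_2*Y$, and by symmetry the same holds in the second slot.

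With these two ingredients the lemma follows at once. Applying the first fact to $A\preceq C$ and $B\preceq D$ yields scalars $\lambda,\mu>0$ with $A\leq\lambda C$ and $B\leq\mu D$. Monotonicity of the coproduct in the first and then the second argument gives $A*B\leq\lambda(C*B)\leq\lambda\mu(C*D)$, where positivity of $C$ is used for the second step. Both $A*B$ and $C*D$ are positive by the Schur Product Theorem, so the inequality $A*B\leq\lambda\mu\,C*D$ together with the first fact yields $A*B\preceq C*D$. I do not anticipate a genuine obstacle here; the only point requiring care is the translation of $\preceq$ into a scalar operator bound, which relies on finite-dimensionality of $\mathscr{S}_2$, after which the positivity of the coproduct does all the work.
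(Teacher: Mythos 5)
Your proof is correct and follows essentially the same route as the paper's: both convert the support relation $\preceq$ into a scalar operator bound using finite-dimensionality of $\mathscr{S}_2$, and then invoke the Schur Product Theorem (via bilinearity of $*$) to transfer the inequality through the coproduct. Your write-up merely makes explicit the two ingredients---the equivalence $X\preceq Y \iff X\leq\lambda Y$ and the monotonicity of $*$ in each slot---that the paper's terser argument uses implicitly.
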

\begin{proof}
Because $\mathscr{S}_2$ is finite dimensional, the spectrum of a positive operator is finite.
If $A\preceq C, B\preceq D$, then $A<\lambda C$ and $B<\lambda D$, for some $\lambda$ large enough.
Thus $A*B<\lambda^2 C*D$, by Theorem \ref{P*P>0}. Then $A*B\preceq C*D$.
\end{proof}

\begin{lemma}\label{CDE0}
Suppose $C$  and $D$ are two positive operators. If $tr(CD)=0$, then $CD=0$. Furthermore if $E$ is a positive operator and $E\preceq C$, then $ED=0$
\end{lemma}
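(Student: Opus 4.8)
The plan is to exploit two structural properties of $\mathscr{S}_2$: it is a finite-dimensional C*-algebra, and the Markov trace $tr$ is faithful (equivalently, the induced inner product is positive definite). Both assertions then reduce to the elementary principle that two positive operators with orthogonal supports annihilate one another, combined with the observation that $tr(CD)=0$ already forces that orthogonality.

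For the first claim, I would rewrite the trace using its tracial property so that the expression becomes manifestly positive. Since $C\geq 0$, its positive square root $C^{1/2}$ exists, and
$$tr(CD)=tr(C^{1/2}C^{1/2}D)=tr(C^{1/2}DC^{1/2}).$$
The operator $C^{1/2}DC^{1/2}=(D^{1/2}C^{1/2})^*(D^{1/2}C^{1/2})$ is positive and has trace zero, so by faithfulness of $tr$ it must vanish; hence $D^{1/2}C^{1/2}=0$. Taking adjoints gives $C^{1/2}D^{1/2}=0$, and therefore
$$CD=C^{1/2}(C^{1/2}D^{1/2})D^{1/2}=0.$$
Equivalently, writing $p$ and $q$ for the support projections of $C$ and $D$, the identity $D^{1/2}C^{1/2}=0$ says that the range of $C^{1/2}$, which is the support of $C$, lies in the kernel of $D^{1/2}$, which is the kernel of $D$; thus $pq=qp=0$.

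For the second claim, I would simply invoke the support orthogonality just established. Recall from the remark following the definition of $\preceq$ that $E\preceq C$ is equivalent to $E=pEp$, since the support of $E$ is a subprojection of $p$; likewise $D=qDq$. Consequently
$$ED=(pEp)(qDq)=pE(pq)Dq=0,$$
using $pq=0$. There is no genuine obstacle in this argument: the only steps demanding attention are the use of the tracial property to recast $tr(CD)$ as the trace of a positive operator and the appeal to faithfulness of the Markov trace, both of which are furnished directly by the subfactor planar algebra axioms.
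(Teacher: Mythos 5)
Your proof is correct. The first claim is handled exactly as in the paper: both arguments rewrite $tr(CD)=tr(C^{1/2}DC^{1/2})$, recognize $C^{1/2}DC^{1/2}=(D^{1/2}C^{1/2})^*(D^{1/2}C^{1/2})$ as a positive operator of trace zero, and invoke faithfulness of the Markov trace to get $C^{1/2}D^{1/2}=0$ and hence $CD=0$. For the second claim, however, you take a genuinely different route. The paper stays inside the trace calculus: from $E\preceq C$ and finite-dimensionality it extracts a scalar domination $E\leq\lambda C$, deduces $0\leq tr(ED)\leq\lambda\, tr(CD)=0$, and then re-applies the first claim to the pair $(E,D)$. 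You instead convert $D^{1/2}C^{1/2}=0$ into orthogonality of the support projections $p$ and $q$, and then kill $ED=(pEp)(qDq)=pE(pq)Dq$ purely by projection algebra. Your version is slightly more economical: it needs neither the finite-spectrum fact that $E\preceq C$ forces $E\leq\lambda C$ (which the paper proves separately in the preceding lemma) nor a second appeal to faithfulness of the trace. The paper's version has the virtue of uniformity, running every step through trace positivity, which is the organizing principle of that section; but both arguments are complete and correct, and yours makes the underlying geometric content (orthogonality of supports) explicit.
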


\begin{proof}
If $tr(CD)=0$, then $tr(C^{\frac{1}{2}}DC^{\frac{1}{2}})=0$. Thus $C^{\frac{1}{2}}D^{\frac{1}{2}}=0$. We have $CD=0$. Furthermore if $E\preceq C$, then $E<\lambda C$ for some $\lambda>0$. Thus
 $0\leq tr(ED)\leq \lambda tr(CD)=0$. Thus $ED=0$.
\end{proof}

\begin{lemma}\label{CDE=}
If $C,D,E\in\mathscr{S}_2$, then
$$tr((C*D)E')=tr((D*E)C')=tr((E*C)D')$$
$$=tr(E'(C*D))=tr(C'(D*E))=tr(D'(E*C))$$
$$=tr(C(E'*D'))=tr(D(C'*E'))=tr(E(D'*C'))$$
$$=tr((E'*D')C)=tr((C'*E')D)=tr((D'*C')E)¡£$$
\end{lemma}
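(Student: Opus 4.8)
The plan is to recognize that all twelve scalars are the value of one and the same closed diagram, read off from different starting points and orientations; sphericality (part of the definition of a subfactor planar algebra) then forces them to coincide.

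First I would unfold $tr((C*D)E')$ into a closed planar diagram. Writing out the coproduct $C*D$ and the contragredient $E'$ from their definitions, multiplying in $\mathscr{S}_2$ and closing up with the Markov trace produces a closed diagram $Z$ carrying the three labels $C$, $D$, $E$, joined by strings in a cyclically symmetric pattern (each box contributes its two through-strings, which are glued to its two neighbours). The key structural observation is that, regarded on the sphere, $Z$ is invariant under the rotation cycling $(C,D,E)\mapsto(D,E,C)$. The first row of equalities, $tr((C*D)E')=tr((D*E)C')=tr((E*C)D')$, is then exactly this $\mathbb{Z}_3$ rotational symmetry of $Z$ together with sphericality (two closed diagrams isotopic on the sphere have equal values). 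The second row is immediate from the trace property $tr(xy)=tr(yx)$: taking $x=C*D$, $y=E'$ gives $tr((C*D)E')=tr(E'(C*D))$, and similarly for the other two entries, so the second row reproduces the first verbatim.

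For the third and fourth rows I would use the anti-multiplicativity of the contragredient on coproducts, $(x*y)'=y'*x'$ (the identity already invoked in $A*B=B'*A'$ for biprojections, and which follows from $1\boxdot a' =(1\boxdot a^*)^*$ together with $(1\boxdot a)(1\boxdot b)=1\boxdot(a*b)$). Thus $E'*D'=(D*E)'$, $C'*E'=(E*C)'$ and $D'*C'=(C*D)'$, so that, for instance, $tr(C(E'*D'))=tr(C(D*E)')$. Combining the sphericality identity $tr(Y)=tr(Y')$ with the anti-multiplicativity of the contragredient on the ordinary product, $(XY)'=Y'X'$ (the $\pi$-rotation reverses the vertical order of a stacked product), one gets $tr(C(D*E)')=tr((C(D*E)')')=tr((D*E)C')$, which is the middle entry of the first row; the other two entries of the third row match the remaining first-row entries by the same rewriting, and the fourth row follows from the third by $tr(xy)=tr(yx)$. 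Equivalently, rows three and four are the images of rows one and two under the orientation-reversing symmetry of $Z$.

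The main obstacle is the orientation bookkeeping: I must confirm that passing to the contragredient versions really corresponds to a genuine reflection symmetry of the closed diagram $Z$ on the sphere, and that the string pattern of $Z$ is invariant under this reflection; this is where one must track carefully the placement of the distinguished intervals and the precise shape of the coproduct tangle. Once $Z$ is drawn explicitly and its full symmetry group (containing both the $\mathbb{Z}_3$ rotation and the reflection) is identified, every one of the twelve expressions is manifestly the same closed diagram, and all the equalities follow from sphericality and the trace property.
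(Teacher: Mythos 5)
Your proposal is correct and is in essence the paper's own proof: the paper disposes of this lemma with the single line that it ``follows directly from the isotopy and the spherical property,'' and your argument --- unfolding $tr((C*D)E')$ into the cyclically symmetric three-box necklace diagram, then reducing the remaining rows via $tr(xy)=tr(yx)$, $(x*y)'=y'*x'$, $(XY)'=Y'X'$ and $tr(Y)=tr(Y')$ --- is exactly a detailed implementation of that isotopy argument, since each of these algebraic identities is itself an instance of (planar or spherical) isotopy, with sphericality entering precisely at $tr(Y)=tr(Y')$, as you say. One correction to your final paragraph: the global symmetry carrying rows one and two to rows three and four is not a reflection but the orientation-preserving $\pi$-rotation of the closed diagram (the contragredient is a rotation; a genuine reflection would produce adjoints, not contragredients), so the ``orientation-reversing'' bookkeeping you flag as the main obstacle is a red herring --- your algebraic derivation via $tr(Y)=tr(Y')$ and $(XY)'=Y'X'$ already settles those rows without it.
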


Recall that $a'$ means the contragredient of $a$.

\begin{proof}
It follows directly from the isotopy and the spherical property of a planar algebra.
\end{proof}

Suppose $x$ is a positive operator in $\mathscr{S}_2$ and $e\preceq x$.
Then the support of $x^{*k}$ is an increasing sequence bounded by $Q$, when $k$ approaches to infinity. We will prove that their union is a biprojection.

Before that, let us prove a lemma. For convenience, we mark the boundary points of a $3$-box by $1,2,3,4,5,6$ from the dollar sign clockwise as
$\graa{3box}$.
\begin{lemma}\label{PQR=0}
Suppose $P,Q,R$ are projections in $\mathscr{S}_2$. Then the following are equivalent,

(1)$P(1\boxdot Q)R=0$;

(2)$(P*Q)R=0$;

(2')$P(R*Q')=0$;

(3)$tr((P*Q)R)=0$;
\end{lemma}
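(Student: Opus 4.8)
```latex
\textbf{Proof proposal.}
The plan is to prove the equivalences by a cyclic chain of implications, exploiting the rotational and spherical symmetry of the trace together with the positivity results already established. The central idea is that each of the four conditions is really a statement about the vanishing of a single trace, and that all the relevant traces coincide by Lemma \ref{CDE=} applied to the projections $P,Q,R$. So the backbone of the argument is: reduce everything to condition (3), a scalar equation, and then use Lemma \ref{CDE0} to pass from the vanishing of a trace back to the vanishing of the operator itself.

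First I would establish the equivalence of (3) with the symmetric statement $tr((R*Q')P)=0$. By Lemma \ref{CDE=} applied to $C=P$, $D=Q$, $E=R'$ (reading off the appropriate entries of the twelve-term cycle), the quantity $tr((P*Q)R)$ equals $tr(P(R'*Q'))$ up to contragredient bookkeeping; since $R$ is a projection, $R=R'$ is not assumed, but the spherical isotopy lets me rotate the diagram $P(1\boxdot Q)R$ freely, so the single closed diagram underlying (1) produces all four traces in (3), $tr((P*Q)R)$, $tr(P(R*Q'))$, and so on. Thus (1)$\Rightarrow$(3) is immediate by capping off, and conversely the four ``product-form'' conditions (1), (2), (2') are each a priori stronger than the trace condition (3) only up to the positivity input.

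The key step is the passage (3)$\Rightarrow$(2), (2'), (1), where I promote the scalar vanishing to operator vanishing. Here I would write $P*Q$ as a positive operator: by the Schur Product Theorem (Theorem \ref{P*P>0}), since $P$ and $Q$ are projections, $P*Q\geq 0$, and likewise $P'*Q$, $R*Q'$ are positive. Now $tr((P*Q)R)=0$ with $P*Q\geq 0$ and $R\geq 0$ forces $(P*Q)R=0$ by Lemma \ref{CDE0}, giving (2); the analogous computation with $R*Q'\geq 0$ and $P\geq 0$ gives (2'). Finally, to recover (1) I would compute $tr_3\big((P(1\boxdot Q)R)^*(P(1\boxdot Q)R)\big)$ and identify it, via the conditional expectation and the definition of the coproduct, with a positive multiple of $tr((P*Q)R)$ (this is the same maneuver as in the proof of Theorem \ref{P*P>0}); its vanishing then forces $P(1\boxdot Q)R=0$ by positivity of the $3$-box inner product.

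The main obstacle I anticipate is the careful tracking of contragredients and dollar-sign placement when rotating the diagram $P(1\boxdot Q)R$, since $P*Q$ and $R*Q'$ arise from the \emph{same} tangle viewed with the $\$$ in different positions, and one must verify that the twelve traces in Lemma \ref{CDE=} genuinely line up with the four conditions without an unwanted adjoint creeping in. Once that bookkeeping is pinned down, every remaining implication is a one-line application of positivity (Theorem \ref{P*P>0}) and the trace-vanishing lemma (Lemma \ref{CDE0}), so the real content is the diagrammatic identification of the traces rather than any new estimate.
```
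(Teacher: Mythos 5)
Your proposal is correct, and no step of it fails; but it is organized differently from the paper's proof, so it is worth comparing the two. The paper proves a cycle: (1)$\Rightarrow$(2) and (1)$\Rightarrow$(2') by simply capping the vanishing $3$-box $P(1\boxdot Q)R$ at boundary points $4,5$ (resp. $2,3$); then (2),(2')$\Rightarrow$(3) trivially; and the single positivity input occurs in (3)$\Rightarrow$(1), via the identity $tr_3(x^*x)=tr_3(x)=tr((P*Q)R)$ for $x=P(1\boxdot Q)R$ (which uses that $P,Q,R$ are projections), so that (3) kills $x$ by positive-definiteness of the trace. You instead use (3) as a hub: the implications (1),(2),(2')$\Rightarrow$(3) are cheap (trace of zero, plus Lemma \ref{CDE=} with $C=P$, $D=Q$, $E=R'$, which indeed gives exactly $tr((P*Q)R)=tr(P(R*Q'))$ --- your contragredient bookkeeping is right, and since the contragredient of a projection is again a projection, $R*Q'$ and $P*Q$ are positive), and then you promote the scalar condition (3) to each operator condition by positivity: Theorem \ref{P*P>0} plus Lemma \ref{CDE0} yields $(P*Q)R=0$ and $P(R*Q')=0$ directly, while for (1) you reduce $tr_3(x^*x)$ to a positive multiple of $tr((P*Q)R)$ by the conditional-expectation maneuver of Theorem \ref{P*P>0} (valid: $x^*x=R(1\boxdot Q)^*P(1\boxdot Q)R$, and cyclicity with $R^2=R$ identifies its trace with $\delta\,tr(\Phi((1\boxdot Q)^*P(1\boxdot Q))R)$ up to a positive constant) and invoke positive-definiteness of the $3$-box inner product. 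The trade-off: the paper's cycle is leaner, needing neither the Schur product theorem nor Lemma \ref{CDE0}, since (2) and (2') fall out of (1) by capping a zero diagram; your hub arrangement makes positivity the uniform engine, lets each implication (3)$\Rightarrow$(2),(2') stand on its own without routing through (1), and in fact those two implications work verbatim for positive operators rather than projections. There is also no circularity in your appeals, since Theorem \ref{P*P>0} and Lemmas \ref{CDE0}, \ref{CDE=} all precede this lemma and are independent of it.
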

\begin{proof}
(1)$\Rightarrow$ (2)(resp. (2')) follows from adding a cap which connects boundary points $4,5$ (resp. $2,3$) of the 3-box $P(1\boxdot Q)R$.

(2),(2')$\Rightarrow$ (3) is obvious.

(3)$\Rightarrow$ (1) follows from $tr((P(1\boxdot Q)R)^*(P(1\boxdot Q)R))=tr(P(1\boxdot Q)R)=0$, and the positivity of the trace.
\end{proof}

\begin{theorem}\label{P*P=P}
Suppose $P$ is a projection in $\mathscr{S}_2$.
If $P*P\preceq P$, equivalently $(P*P)P=P*P$, then $P$ is a biprojection.
\end{theorem}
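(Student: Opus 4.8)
The plan is to reduce to the convenient situation $e\preceq P$, $P=P'$ and $P*P\sim P$, and then to exhibit an intermediate subfactor whose Jones projection is $P$; by Bisch's correspondence between biprojections and intermediate subfactors this is exactly what must be shown.

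First I would set up the reductions. The hypothesis $(P*P)P=P*P$ says precisely $P*P\preceq P$. Using the monotonicity of the coproduct under $\preceq$ together with the associativity of $*$, an easy induction gives $P^{*k}\preceq P$ for every $k\ge 1$, so each support $s_k:=\mathrm{supp}(P^{*k})$ satisfies $s_k\preceq P=s_1$. These projections form a finite cyclic semigroup under $(s,t)\mapsto\mathrm{supp}(s*t)$, which is well defined by Schur positivity (Theorem \ref{P*P>0}). A finite semigroup contains an idempotent power, and positivity forces that idempotent to dominate the coproduct unit $e$ and to be stable under the contragredient; arguing as one does for a finite group, where the powers of an element eventually produce the identity and the inverse, one obtains $e\preceq s_k$ and $P'\preceq s_k$ for $k$ large. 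Since $s_k\preceq P$ and $tr(P')=tr(P)$, this yields $e\preceq P$ and $P=P'$. With $e\preceq P$ in hand, $P-e\succeq 0$, so Schur positivity gives $P*P\succeq e*P=\frac1\delta P$, whence $\mathrm{supp}(P*P)=P$, i.e. $P*P\sim P$.

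Next I would translate the support condition into an operator identity and build the subfactor. Applying Lemma \ref{PQR=0} to the triple $(P,P,1-P)$, the relation $(P*P)(1-P)=0$ is equivalent to $P(1\boxdot P)(1-P)=0$, that is $P(1\boxdot P)=P(1\boxdot P)P$ in $\mathscr{S}_3$; taking adjoints and using $P=P'$ (so that $1\boxdot P$ is self-adjoint) shows that $P$ and $1\boxdot P$ commute. I would then run the GJS construction \cite{GJS}: realise $\mathscr{S}$ as the standard invariant of $\mathcal N=Gr_0\subset\mathcal M=Gr_1$ and define $\mathcal P$ as the cut-down of $\mathcal M$ by the right action of $P$. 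Closure of $\mathcal P$ under the adjoint is exactly $P=P'$; closure under multiplication is where $P*P\sim P$ and the commutation $P(1\boxdot P)=(1\boxdot P)P$ enter; and $e\preceq P$ guarantees $\mathcal N\subseteq\mathcal P$. Thus $\mathcal P$ is an intermediate subfactor $\mathcal N\subseteq\mathcal P\subseteq\mathcal M$, $P$ is a scalar multiple of its Jones projection, and reading off the idempotence gives $P*P=\frac{tr(P)}{\delta}P$, the defining relation of a biprojection.

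The hard part will be twofold. First, the semigroup argument that the supports $s_k$ eventually dominate both $e$ and $P'$ — the finite-order phenomenon — is where finite-dimensionality and Schur positivity are indispensable, and it has no analogue without the finite-index hypothesis. Second, and most delicate, is verifying that the cut-down $\mathcal P$ is closed under multiplication: the equality of supports $P*P\sim P$ is only an algebraic shadow of the multiplicativity one actually needs, and it is the positivity built into a subfactor planar algebra (again Theorem \ref{P*P>0}) that upgrades $P*P\sim P$ to the genuine proportionality $P*P\propto P$ which makes $\mathcal P$ an algebra. Everything else — the monotonicity lemma, the trace identities of Lemma \ref{CDE=}, and the translation supplied by Lemma \ref{PQR=0} — I expect to be routine bookkeeping.
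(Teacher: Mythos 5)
Your proposal inverts the logical order of the paper and, at both of its critical junctures, quietly assumes the very statement to be proved. The first gap is the reduction step: you assert that ``positivity forces'' the stabilized support $s_k$ of $P^{*k}$ to dominate $e$ and to absorb $P'$, by analogy with finite groups. No argument is offered, and the tools you invoke do not supply one. Schur positivity (Theorem \ref{P*P>0}) and monotonicity of $*$ under $\preceq$ only show that the supports $s_k$ decrease (since $P^{*(k+1)}=P^{*(k-1)}*(P*P)\preceq P^{*k}$) and hence stabilize at some $s_\infty$ with $s_\infty * s_\infty\sim s_\infty$; getting from there to $e\preceq s_\infty$ and $s_\infty'=s_\infty$ is exactly the ``finite-order phenomenon,'' and the only route to it in this paper is Theorem \ref{P*P=P} itself applied to $s_\infty$ --- which is circular. (The trace identity of Lemma \ref{CDE=} gives $tr((s_\infty*s_\infty)e)=\frac{1}{\delta}tr(s_\infty s_\infty')$, which cannot even be started without prior control of the contragredient.) Indeed, in the paper the statements you want to use as inputs --- that supports of powers eventually contain $e$ and the contragredient, Lemma \ref{PAk}, Definition \ref{biprogen} --- are all \emph{corollaries} of Theorem \ref{P*P=P}; the paragraph of the paper you are echoing (``we can drop the assumptions $e\preceq p$, $P=P'$\dots'') is motivation, not proof.

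The second gap is in the GJS phase, and you half-acknowledge it yourself: you say positivity must ``upgrade'' $P*P\sim P$ to the genuine proportionality $P*P\propto P$. But that upgrade \emph{is} the theorem: once $P*P=\frac{tr(P)}{\delta}P$ (together with $P=P'$) is known, $P$ is a biprojection directly from the definition, and the GJS intermediate subfactor and Bisch's correspondence do no work at all. The paper's actual proof is precisely this upgrade, carried out by a short diagrammatic trace computation with no auxiliary subfactor and no reduction to $e\preceq P$, $P=P'$: from $(P*P)(id-P)=0$, Lemma \ref{PQR=0} gives $P(1\boxdot P)=P(1\boxdot P)P$; capping boundary points $2,3$ yields $P(P*P')=\frac{tr(P)}{\delta}P$; then $tr(((id-P)*P)P)=tr(P*P')-\frac{tr(P)^2}{\delta}=0$, so $((id-P)*P)P=0$ by Lemma \ref{PQR=0} again, whence $P*P=(P*P)P=(id*P)P=\frac{tr(P)}{\delta}P$. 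Nothing in your outline reproduces this computation or provides a substitute for it, so the proposal as written does not prove the theorem.
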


\begin{proof}
Set $Q=id-P$.
The condition $(P*P)P=P*P$ implies $(P*P)Q=0$. By Lemma($\ref{PQR=0}$), we have $P(1\boxdot P)Q=0$. Then $P(1\boxdot P)=P(1\boxdot P)P$.
Adding a cap which connects boundary points 2,3, we obtain $\frac{tr(P)}{\delta}P=P(P*P')$. Then $tr((Q*P)P)=tr(Q(P*P'))=tr(P*P')-tr(P(P*P'))=tr(P*P')-\frac{tr(P)}{\delta}tr(P)=0$. Using Lemma \ref{PQR=0} again, we have $(Q*P)P=0$. Thus $P*P=(P*P)P=((id-Q)*P)P=(id*P)P=\frac{tr(P)}{\delta}P$. Then $P$ is a biprojection.
\end{proof}

For an element $y\in\mathscr{S}_2$, take the positive operator $x=e+y^*y+yy^*$. Let $P$ be the union of the support of $x^{*k}$, for $k=1,2,\cdots$. Then $P*P\preceq P$. So $P$ is a biprojection. Moreover $PyP=y$.
If $Q$ is a biprojection, such that $QyQ=y$, then $QxQ=Q$. So $x^*k\preceq Q$.  Then $P\leq Q$. That means $P$ is the smallest biprojection ``containing" $y$.
\begin{definition}\label{biprogen}
Suppose $y$ is an element in $\mathscr{S}_2$ and $P$ is the smallest biprojection satisfying $PyP=y$. Then we call $P$ the biprojection generated by $y$.
\end{definition}

Let $\iota:\mathscr{S}_2\rightarrow \mathscr{S}_3$ be the inclusion by adding one string to the right, and $\mathscr{I}_3$ be the two sided ideal of $\mathscr{S}_3$ generated by $e_2$.
Then $\mathscr{S}_2 \cap \mathscr{I}_3=\{x\in\mathscr{S}_2|\iota(x)\in\mathscr{I}_3\}$ is a two sided ideal of $\mathscr{S}_2$.
Thus the support of $\mathscr{S}_2 \cap \mathscr{I}_3$ is a central projection.
A minimal projection of $\mathscr{S}_2$ belongs to $\mathscr{S}_2 \cap \mathscr{I}_3$ if and only if the corresponding point in the principal graph is not adjacent to a depth $3$ point.

\begin{theorem}\label{P bipro}
Let $P$ be the support of $\mathscr{S}_2 \cap \mathscr{I}_3$, then $P$ is a central biprojection, and $\mathscr{S}_P$ is depth 2.
\end{theorem}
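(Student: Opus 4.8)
The plan is to verify the hypothesis of Theorem~\ref{P*P=P} and then read the depth off the principal graph. Centrality of $P$ is already recorded just before the statement, so it remains to prove that $P$ is a biprojection and that $\mathscr{S}_P$ is depth $2$. Two facts worth noting at the outset and used below are $e\preceq P$ (since $e_1=\delta^2 e_1e_2e_1\in\mathscr{S}_2 e_2\mathscr{S}_2=\mathscr{I}_3$, so $e\in\mathscr{S}_2\cap\mathscr{I}_3$) and the defining property that a minimal central projection $P_v\preceq P$ exactly when $v$ is not adjacent to any depth-$3$ point, i.e. $\iota(P_v)\in\mathscr{I}_3$.

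First I would reduce the biprojection claim to a single coproduct inequality. By Theorem~\ref{P*P=P} it suffices to show $P*P\preceq P$. The cleanest route is to prove that the \emph{shallow part} $J:=\mathscr{S}_2\cap\mathscr{I}_3$ is closed under the coproduct, i.e. $z_1*z_2\in\mathscr{I}_3$ whenever $\iota(z_1),\iota(z_2)\in\mathscr{I}_3$. Granting this, $P\in J$ forces $P*P\in J$; since $P$ is the support (maximal projection) of the two-sided ideal $J$ and $P*P\geq 0$ by the Schur Product Theorem~\ref{P*P>0}, the support of $P*P$ is a projection lying in $J$, hence $\mathrm{supp}(P*P)\preceq P$, that is $P*P\preceq P$. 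Then Theorem~\ref{P*P=P} gives that $P$ is a biprojection. Equivalently, by Lemma~\ref{PQR=0}, the desired inequality is the vanishing $P(1\boxdot P)(id-P)=0$, which is the form I would actually manipulate diagrammatically.

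The heart of the argument — and the step I expect to be the main obstacle — is the coproduct-closure of $J$. Conceptually this is a fusion statement: a depth-$2$ point $v$ lies in $J$ exactly when the associated $\mathcal{N}$-$\mathcal{N}$ bimodule $X_v$ fuses with the generating $\mathcal{N}$-$\mathcal{M}$ bimodule $H$ into depth-$1$ summands only; since the subfactor is irreducible ($\dim\mathscr{S}_1=1$) there is a unique depth-$1$ bimodule $H$, so membership in $J$ means $X_v\otimes_{\mathcal{N}} H\subseteq H^{\oplus k}$. For $v,w\in J$ one then computes $(X_v\otimes X_w)\otimes H=X_v\otimes(X_w\otimes H)\subseteq X_v\otimes H^{\oplus k}=(X_v\otimes H)^{\oplus k}\subseteq H^{\oplus kl}$, so $X_v\otimes X_w$ is again shallow; this is precisely $\mathrm{supp}(P_v*P_w)\preceq P$. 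The real work is to render this purely diagrammatically: writing $P*P=\delta\,\Phi\big((1\boxdot P)\,\iota(P)\,(1\boxdot P)\big)$ as in the proof of Theorem~\ref{P*P>0} (with $\Phi\colon\mathscr{S}_3\to\mathscr{S}_2$ the conditional expectation), substituting $\iota(P)\in\mathscr{S}_2 e_2\mathscr{S}_2=\mathscr{I}_3$ to expose a Jones projection, and using Frobenius reciprocity in the symmetric form of Lemma~\ref{CDE=} to transport the constraint $\iota(P)\in\mathscr{I}_3$ onto the support of $P*P$. I expect the bookkeeping here, rather than the idea, to be the only genuinely delicate point; the identities in Lemmas~\ref{PQR=0} and~\ref{CDE=} are exactly tailored for it.

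Finally I would prove $\mathscr{S}_P$ is depth $2$. Since $P$ is now a central biprojection, the depth-$\le 2$ vertices of the principal graph of $\mathscr{S}_P$ are indexed by the nontrivial minimal central projections $v\preceq P$ (together with the marked point), and $\mathscr{S}_P$ is depth $2$ iff $(\mathscr{S}_P)_3$ coincides with its own basic-construction ideal. Suppose not: then some $v\preceq P$ is adjacent, in the principal graph of $\mathscr{S}_P$, to a depth-$3$ point of $\mathscr{S}_P$. Pulling this back along the $P$-cutdown inclusion $(\mathscr{S}_P)_3\subseteq\mathscr{S}_3$, with the fudge-factor normalization recalled in the preliminaries, produces a depth-$3$ point of $\mathscr{S}$ adjacent to $v$, contradicting $\iota(P_v)\in\mathscr{I}_3$, i.e. the defining property that $v$ is not adjacent to any depth-$3$ point of $\mathscr{S}$. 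Hence $\mathscr{S}_P$ has no depth-$3$ point and is depth $2$. The one point to check with care is the compatibility of the two basic-construction ideals under the cutdown, which follows from the description of $\mathscr{S}_P=\mathscr{S}_Q$ and the fudge factor given before Proposition~\ref{iso}.
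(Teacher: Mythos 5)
Your first half follows the paper's own architecture exactly: show $J=\mathscr{S}_2\cap\mathscr{I}_3$ is closed under coproduct, deduce $P*P\preceq P$, and invoke Theorem \ref{P*P=P}. Your bimodule--fusion justification of the closure is sound in substance (it rests on the standard identification of the coproduct on $\mathscr{S}_2$ with fusion of $\mathcal{N}$-$\mathcal{N}$ bimodules, which the paper does not prove but which is classical), so that half can be regarded as correct by a different, non-diagrammatic route. However, the diagrammatic rendering you sketch does not work as written: if you set $P*P=\delta\,\Phi\big((1\boxdot P')\,\iota(P)\,(1\boxdot P)\big)$ (note the left factor must be $1\boxdot P'$, not $1\boxdot P$) and substitute $\iota(P)=\sum_i c_i(1\boxdot id)d_i$, you have only exposed a Jones projection \emph{inside the argument of the conditional expectation}, and $\Phi$ does not map $\mathscr{I}_3$ into $\mathscr{S}_2\cap\mathscr{I}_3$: for instance $\Phi(e_2)$ is a positive multiple of $id$, and even with the sandwich, $\Phi\big((1\boxdot P')\,e_2\,(1\boxdot P)\big)$ is again a multiple of $id$, which lies in $J$ only in the depth-$2$ case. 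So term-by-term exposure of $e_2$ under $\Phi$ proves nothing. The paper avoids this entirely by never applying $\Phi$: it computes $\iota(P*P)$ as a diagram in $\mathscr{S}_3$, substitutes the decomposition of $\iota(P)$ into that diagram (for both copies of $P$), and the resulting picture is manifestly of the form $\sum x\,(1\boxdot id)\,y$ with $x,y\in\mathscr{S}_3$, hence in $\mathscr{I}_3$.

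The genuine gap is in your proof that $\mathscr{S}_P$ is depth $2$. The pullback claim "a depth-$3$ point of $\mathscr{S}_P$ adjacent to $v$ produces a depth-$3$ point of $\mathscr{S}$ adjacent to $v$" is, in contrapositive, exactly the assertion to be proven, namely that $\iota(P_v)\in\mathscr{I}_3$ forces $\iota_P(P_v)$ into the basic-construction ideal of $(\mathscr{S}_P)_3$; it is not a bookkeeping consequence of the cutdown description and the fudge factor. Worse, the pullback can never yield a contradiction: since $\iota(P_v)\in\mathscr{I}_3$ and $\mathscr{I}_3$ is a two-sided ideal of a finite-dimensional C*-algebra (so of the form $z\mathscr{S}_3$ for a central projection $z$), \emph{every} subprojection of the support of $\iota(P_v)$ --- in particular any minimal projection $q\leq\iota_P(P_v)$ witnessing your putative depth-$3$ point of $\mathscr{S}_P$ --- automatically lies in $\mathscr{I}_3$, hence never corresponds to a depth-$3$ point of $\mathscr{S}$. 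The issue is precisely that the two ideals need not match under cutdown, and making them match is the real content. The paper supplies the missing ingredient using both centrality of $P$ and the exchange relation of the biprojection $P$: for $x\in(\mathscr{S}_P)_2$ write $\iota(x)=\sum_i f_i(1\boxdot id)g_i$, use centrality to get $\iota_P(x)=\sum_i f_iP(1\boxdot id)Pg_i$, and then use the exchange relation to replace $P(1\boxdot id)P$ by $P(1\boxdot P)P$, which is a multiple of the Jones projection of $(\mathscr{S}_P)_3$; this exhibits $\iota_P(x)$ in the basic-construction ideal of $(\mathscr{S}_P)_3$ and hence shows every depth-$2$ vertex of $\mathscr{S}_P$ has no depth-$3$ neighbor. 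Without this step your argument does not close.
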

\begin{proof}
By definition $\iota(P)\in \mathscr{S}_3$, we have $\iota(P)=\sum_i c_i(1\boxdot id) d_i$, for finitely many $c_i,d_i\in \mathscr{S}_2$. Then
$$\iota(P*P)=\sum_i\grb{hopf1}=\sum_{i,i'}\grb{hopf2}.$$
Thus $P*P\in \mathscr{S}_2 \cap \mathscr{I}_3.$ Then $(P*P)P=P*P$. By Theorem \ref{P*P=P}, we have $P$ is a biprojection.
So $P=P'$.

Let $\iota_P: (\mathscr{S}_P)_2\rightarrow (\mathscr{S}_P)_3$ be the inclusion by adding one string to the right.
Then for any $x\in (\mathscr{S}_P)_2$, we have $\iota_P(x)$ is a multiple of $P\iota(x)P$. While $\iota(x)\in \mathscr{I}_3$, so $\iota_P(x)=\sum_i Pf_i(1\boxdot id) g_iP$ for finitely many $f_i,g_i\in \mathscr{S}_2$. Note that $P$ is central, so $\iota_P(x)=\sum_i f_iP(1\boxdot id) Pg_i=\sum_i f_iP(1\boxdot P) Pg_i$. The last equation follows from the exchange relation of the biprojection $P$. Observe that $f_iP,Pg_i\in (\mathscr{S}_P)_2$, $P(1\boxdot P)P$ is a multiple of the Jones projection in $(\mathscr{S}_P)_3$. Thus $\iota_P(x)$ is in the two sided ideal of $(\mathscr{S}_P)_3$ generated the Jones projection.
Then $\mathscr{S}_P$ is depth 2.
\end{proof}

\begin{corollary}
If $A\in \mathscr{S}_2\cap \mathscr{I}_3$, then $A'\in \mathscr{S}_2\cap \mathscr{I}_3$.
\end{corollary}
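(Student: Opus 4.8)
The plan is to deduce the corollary directly from the structure of $P$ established in Theorem \ref{P bipro}, together with the compatibility of the contragredient with the algebra structure of $\mathscr{S}_2$. Recall that $P$, the support of $\mathscr{S}_2\cap\mathscr{I}_3$, is a \emph{central} projection and, being a biprojection, satisfies $P=P'$. Since $\mathscr{S}_2\cap\mathscr{I}_3$ is a two sided ideal of the finite dimensional C*-algebra $\mathscr{S}_2$ whose support is the central projection $P$, it coincides with $P\mathscr{S}_2P$; equivalently, $x\in\mathscr{S}_2\cap\mathscr{I}_3$ if and only if $x=PxP$.

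First I would record this reformulation: $A\in\mathscr{S}_2\cap\mathscr{I}_3$ means exactly $A=PAP$. Next I would apply the contragredient. From its diagrammatic definition as the $180^\circ$ rotation of a $2$-box, the contragredient is antimultiplicative, $(xy)'=y'x'$, in analogy with the coproduct identity $(x*y)'=y'*x'$ used earlier in the paper. Because the word $PAP$ carries $P$ at both ends, the order reversal is immaterial, and one obtains $A'=(PAP)'=P'A'P'$.

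Then I would invoke $P=P'$ to rewrite this as $A'=PA'P$. By the reformulation above, the equality $A'=PA'P$ is precisely the statement that $A'$ lies in $P\mathscr{S}_2P=\mathscr{S}_2\cap\mathscr{I}_3$, which is what we want.

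The argument has essentially no obstacle; the two points requiring a word of care are that the ideal $\mathscr{S}_2\cap\mathscr{I}_3$ is exactly $P\mathscr{S}_2P$ (immediate from elementary finite dimensional C*-algebra theory once $P$ is known to be its central support) and that the contragredient distributes over the triple product with $P$ on both sides. The genuine content is already supplied by Theorem \ref{P bipro}, namely that the support $P$ is self-contragredient: without $P=P'$, an anti-automorphism could in principle carry this ideal to a different one, so $P=P'$ is exactly the hypothesis that closes the argument.
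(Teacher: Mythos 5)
Your proof is correct and is essentially the paper's own argument: the paper also writes $A$ as a product with the support projection $P$ (it uses $A=PA$, so $A'=A'P'=A'P$), and closes the argument with the self-contragredience $P=P'$ supplied by Theorem \ref{P bipro}. The only cosmetic difference is that you work with $A=PAP$ and state the anti-multiplicativity of the contragredient explicitly, which the paper leaves implicit.
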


\begin{proof}
If $A\in \mathscr{S}_2\cap \mathscr{I}_3$, then $A=PA$. So $A'=A'P$. Then  $A'\in \mathscr{S}_2\cap \mathscr{I}_3$.
\end{proof}

If $\mathscr{S}$ is the planar algebra of an irreducible subfactor $\mathcal{N}\subset \mathcal{M}$, then the support $P$ of $\mathscr{S}_2 \cap \mathscr{I}_3$ as a central biprojection corresponds to an intermediate subfactor $\mathcal{P}$ of $\mathcal{N}\subset \mathcal{M}$. Note that the planar algebra of $\mathcal{N}\subset\mathcal{P}$ is $\mathscr{S}_P$. So $\mathcal{P}=\mathcal{N}\rtimes K$, where $K$ is the dual of $(\mathscr{S}_P)_2$ as a Kac algebra.

For a positive operator $A$ in $\mathscr{S}_2$, let $P$ be the biprojection generated by $A$. We will show that $\frac{\delta}{tr(P)}1\boxdot P$ is the spectral projection $E_{\{\|1\boxdot (A+A')\|\}}(1\boxdot (A+A'))$,
where $\|\cdot\|$ is the operator norm.
To prove this, we need some lemmas.

\begin{lemma}\label{PAk}
Suppose $A\in\mathscr{S}_2$ is a positive operator, and $P$ is the biprojection generated by $A$, then
$P\sim \sum_{i=1}^k A^{*i}$ for $k$ large enough.
\end{lemma}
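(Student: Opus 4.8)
The plan is to identify the eventual (stabilized) support of the partial sums $S_k := \sum_{i=1}^k A^{*i}$ with the biprojection $P$ generated by $A$. First I would record that each $A^{*i}$ is a positive operator by the Schur Product Theorem (Theorem \ref{P*P>0}), so $S_{k+1} = S_k + A^{*(k+1)} \ge S_k \ge 0$; hence the supports $\mathrm{supp}(S_k)$ form an increasing chain of projections in the finite-dimensional algebra $\mathscr{S}_2$. This chain stabilizes, and I denote its limit by $R$, so that $R = \bigvee_{i\ge 1}\mathrm{supp}(A^{*i})$ and $R \sim S_k$ for all large $k$. The goal then reduces to proving $R = P$, which immediately yields $P \sim \sum_{i=1}^k A^{*i}$ for $k$ large.

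For the inclusion $R \preceq P$, I would use that $P$ is the biprojection generated by $A$, so $PAP = A$, i.e. $A \preceq P$. Combining the domination property of the coproduct (if $A\preceq C$ and $B\preceq D$ then $A*B\preceq C*D$) with the identity $P*P = \frac{tr(P)}{\delta}P \sim P$ for the biprojection $P$, an induction on $i$ gives $A^{*i} \preceq P^{*i} \sim P$ for every $i$. Hence each $\mathrm{supp}(A^{*i}) \le P$, and taking the join gives $R \le P$.

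The heart of the argument is to show that $R$ itself is a biprojection, via the criterion $R*R \preceq R$ of Theorem \ref{P*P=P}. Since the support chain has stabilized, $\mathrm{supp}(A^{*m}) \le R$ holds for \emph{all} $m \ge 1$ (each $A^{*m} \le S_{k'}$ for $k' \ge m$, and $\mathrm{supp}(S_{k'}) = R$ for $k'$ large). Expanding the coproduct, $S_k*S_k = \sum_{m=2}^{2k} n_m\, A^{*m}$ is a nonnegative combination of the $A^{*m}$, so $S_k*S_k \preceq R$. On the other hand, for $k$ past the stabilization index we have $S_k \sim R$, and applying the domination lemma in both directions yields $S_k*S_k \sim R*R$. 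Together these give $R*R \preceq R$, so $R$ is a biprojection. Moreover $A \le S_k \preceq R$ gives $A \preceq R$, equivalently $RAR = A$. Thus $R$ is a biprojection with $RAR = A$, and the minimality of $P$ in Definition \ref{biprogen} forces $P \le R$; with $R \le P$ this gives $R = P$, completing the proof.

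The main obstacle I anticipate is this third step, specifically the uniform control of $\mathrm{supp}(S_k*S_k)$ by $R$. The delicate point is that one must know $\mathrm{supp}(A^{*m}) \le R$ for all $m$, not merely for $m \le k$, which is precisely what the prior stabilization of the support chain supplies; the bookkeeping that $S_k*S_k$ is a nonnegative combination of the $A^{*m}$, and that passing to supports interacts correctly with the coproduct only through the domination lemma rather than literal equalities, is where the argument needs the most care.
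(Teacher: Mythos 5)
Your proposal is correct and follows essentially the same route as the paper: both take the stabilized support $R$ (the paper's $X_m$) of the increasing partial sums, verify $R*R\preceq R$ so that Theorem \ref{P*P=P} makes $R$ a biprojection, and then sandwich $R=P$ using $A^{*i}\preceq P^{*i}\sim P$ on one side and the minimality in Definition \ref{biprogen} on the other. The only cosmetic difference is that the paper derives $X_m*X_m\preceq X_m$ by iterating $X_m*A\preceq X_m$ from the single equality $X_m=X_{m+1}$, whereas you expand $S_k*S_k$ as a nonnegative combination of the $A^{*m}$ and invoke full stabilization of the chain; these are interchangeable pieces of bookkeeping.
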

\begin{proof}
Let $X_k$ be the support of $\Sigma_{i=1}^k A^{*i}$, for $k=1,2,\cdots$. Note that $A^{*i}>0$, so $X_k\leq X_{k+1}$. While $\mathscr{S}_2$ is finite dimensional, there is an $m$, such that $X_m=X_{m+1}$. Then $X_m*A\preceq X_m$. Thus $X_m*X_m\preceq X_m$. By Theorem \ref{P*P=P}, $X_m$ is a biprojection. Moreover $A\preceq P$ implies $X_m\preceq P$. Thus $X_m=P$. Then $P\sim \Sigma_{i=1}^k A^{*i}$, whenever $k\geq m$.
\end{proof}

\begin{lemma}\label{norm1a}
Suppose $A$ is a positive operator in $\mathscr{S}_2$, then $\|1\boxdot A\|=\frac{tr(A)}{\delta}$.
\end{lemma}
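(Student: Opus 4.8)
The plan is to work inside the finite-dimensional unital $C^*$-algebra $\mathscr{S}_{1,3}\subset\mathscr{S}_3$, in which $(1\boxdot a)(1\boxdot b)=1\boxdot(a*b)$, the unit is $1\boxdot(\delta e)$, and $(1\boxdot a)^*=1\boxdot (a^*)'$; here $\|1\boxdot A\|$ is exactly the $C^*$-norm of $1\boxdot A$ in this algebra. The first thing to notice is that $1\boxdot A$ is \emph{not} self-adjoint in general (only when $A=A'$), so one cannot simply read off its norm from an eigenvalue. First I would remove this difficulty by squaring: since $A\geq 0$ forces $A^*=A$, we have $(1\boxdot A)^*=1\boxdot A'$, hence $(1\boxdot A)^*(1\boxdot A)=1\boxdot(A'*A)$ is a positive element and $\|1\boxdot A\|^2=\|1\boxdot(A'*A)\|$. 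Writing $C=A'*A$, the Schur product theorem (Theorem \ref{P*P>0}) gives $C\geq 0$, and $C$ is self-contragredient, $C'=C$, so $1\boxdot C$ is self-adjoint and $\|1\boxdot C\|$ equals its spectral radius.

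The heart of the argument is to evaluate this spectral radius, and this is where the Schur product theorem does the real work. I would consider the linear map $\Psi_C:\mathscr{S}_2\to\mathscr{S}_2$, $\Psi_C(x)=C*x$, which under the identification $\mathscr{S}_2\cong\mathscr{S}_{1,3}$ is left multiplication by $1\boxdot C$, so $\sigma(\Psi_C)=\sigma(1\boxdot C)$. By the Schur product theorem $\Psi_C$ sends positive operators to positive operators, hence is order preserving on self-adjoints; and because $\delta e$ is the coproduct unit one has $C*id=\tfrac{tr(C)}{\delta}id$, so $\Psi_C$ scales the strictly positive operator $id$ by $\lambda:=\tfrac{tr(C)}{\delta}$. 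Iterating, $\Psi_C^{\,k}(id)=\lambda^k id$, and for any self-adjoint $x$ with $-id\leq x\leq id$ order preservation gives $-\lambda^k id\leq\Psi_C^{\,k}(x)\leq\lambda^k id$, so $\|\Psi_C^{\,k}(x)\|\leq\lambda^k$; decomposing a general element into self-adjoint parts yields $\|\Psi_C^{\,k}\|\leq 2\lambda^k$, whence the spectral radius $\lim_k\|\Psi_C^{\,k}\|^{1/k}\leq\lambda$. Since $\lambda$ is itself an eigenvalue (eigenvector $id$), the spectral radius of $\Psi_C$, and therefore of the self-adjoint element $1\boxdot C$, is exactly $\lambda$, so $\|1\boxdot C\|=\tfrac{tr(C)}{\delta}$. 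This is the planar-algebra form of the elementary fact that a positive map attains its norm at the unit, equivalently $\|\widehat f\|_\infty=\|f\|_1$ for $f\geq 0$, with the extremum living on the Jones projection $e_2$.

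It then remains to compute $tr(C)=tr(A'*A)$. This follows from the coproduct--trace identity $tr(a*b)=\tfrac{1}{\delta}tr(a)tr(b)$ (equivalently from Lemma \ref{CDE=} together with $A*id=\tfrac{tr(A)}{\delta}id$), giving $tr(A'*A)=\tfrac{1}{\delta}tr(A')tr(A)=\tfrac{tr(A)^2}{\delta}$ since $tr(A')=tr(A)$. Combining the pieces, $\|1\boxdot A\|^2=\|1\boxdot C\|=\tfrac{tr(C)}{\delta}=\tfrac{tr(A)^2}{\delta^2}$, so $\|1\boxdot A\|=\tfrac{tr(A)}{\delta}$. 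The only genuinely delicate point is the reduction in the first paragraph: the non-normality of $1\boxdot A$ means one must square first and then exploit Schur positivity, rather than hunting for an eigenvector of $1\boxdot A$ itself. The lower bound $\|1\boxdot A\|\geq\tfrac{tr(A)}{\delta}$ is easy and instructive, since $(1\boxdot A)e_2=\tfrac{tr(A)}{\delta}e_2$ already exhibits $\tfrac{tr(A)}{\delta}$ as an eigenvalue on the Jones projection; matching it from above is precisely what the Schur-positivity argument secures.
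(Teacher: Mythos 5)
Your proof is correct, and it takes a genuinely different route from the paper's. Both arguments reduce to the positive, self-adjoint element $1\boxdot(A*A')$ (you use $C=A'*A$, the paper uses $(1\boxdot A)(1\boxdot A')$) and both lean on the Schur product theorem (Theorem \ref{P*P>0}), but the mechanisms differ. The paper computes the norm of this positive element via the trace-power formula $\|x\|=\lim_{k\to\infty}tr(x^k)^{1/k}$, which is legitimate because the Markov trace is positive definite: an isotopy converts $tr\bigl(((1\boxdot A)(1\boxdot A'))^{k+1}\bigr)$ into $\delta\,tr\bigl((A*A')^{*k}(A*A')\bigr)$, and Schur positivity of $(A*A')^{*k}$ together with multiplicativity of $tr$ under the coproduct bounds this by $\delta^2\|A*A'\|\bigl(\tfrac{tr(A)}{\delta}\bigr)^{2k}$, giving the upper bound after taking roots. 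You instead transport the problem to the left coproduct-multiplication map $\Psi_C(x)=C*x$ on $\mathscr{S}_2$ and run a Perron--Frobenius-type argument: Schur positivity makes $\Psi_C$ order preserving, $id$ is a positive eigenvector with eigenvalue $\tfrac{tr(C)}{\delta}$, and sandwiching $-id\leq x\leq id$ forces the spectral radius of $\Psi_C$ to equal that eigenvalue; the identification $\sigma(\Psi_C)=\sigma(1\boxdot C)$ (spectrum of left multiplication in a finite-dimensional unital algebra, plus spectral permanence for the unital $C^*$-subalgebra $\mathscr{S}_{1,3}\subset\mathscr{S}_3$) then converts this into $\|1\boxdot C\|=\tfrac{tr(C)}{\delta}$. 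What the paper's route buys is that it stays entirely inside diagrammatic and trace identities, never needing to pass through multiplication operators or spectral permanence; what your route buys is that it avoids the trace-power norm formula (so faithfulness of the trace enters only through $\mathscr{S}_3$ being a $C^*$-algebra) and isolates the conceptual content---a positive map fixing a strictly positive element up to a scalar attains its spectral radius there, which is exactly your $\|\widehat f\|_\infty=\|f\|_1$ framing, with the extremum realised on $e_2$ in both proofs. Your verifications of the small algebraic facts needed along the way ($(1\boxdot A)^*=1\boxdot A'$, $C'=C$, $C*id=\tfrac{tr(C)}{\delta}id$, and $tr(A'*A)=\tfrac{tr(A)^2}{\delta}$) are all consistent with the paper's conventions, so the argument goes through.
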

\begin{proof}
Note that $(1\boxdot A) e_2=\frac{tr(A)}{\delta}e_2$. Thus $\|1\boxdot A\|\geq\frac{tr(A)}{\delta}$.
Recall that  $(1\boxdot A)^*=1\boxdot A'$. So $(1\boxdot A)(1\boxdot A')>0$.
Then $\|(1\boxdot A)(1\boxdot A')\|=\lim_{k\rightarrow \infty} tr(((1\boxdot A)(1\boxdot A'))^{k})^{\frac{1}{k}}$.
By an isotopy, we have
$$tr(((1\boxdot A)(1\boxdot A'))^{k+1})=\delta tr((A*A')^{*k}(A*A'))$$
$$\leq\delta \|A*A'\|tr((A*A')^{*k})=\delta^2\|A*A'\|(\frac{tr(A)}{\delta})^{2k}.$$
Thus
$\|(1\boxdot A)(1\boxdot A')\|\leq(\frac{tr(A)}{\delta})^2$. Then
$\|(1\boxdot A)\|\leq\frac{tr(A)}{\delta}$. Therefore $\|(1\boxdot A)\|=\frac{tr(A)}{\delta}$.
\end{proof}
\begin{lemma}\label{QAQ}
Suppose $A$ is a positive operator in $\mathscr{S}_2$ and $1\boxdot Q$ is a minimal projection in $\mathscr{S}_{1,3}$.
If $Q*A*Q=\frac{tr(A)}{\delta}Q$, then for any self-adjoint operator $B\in\mathscr{S}_2$, $B\preceq A$, we have $Q*B*Q=\frac{tr(B)}{\delta}Q$.
\end{lemma}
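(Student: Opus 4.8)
The plan is to exploit the minimality of $1\boxdot Q$ to reduce the statement to one about a single linear functional, and then to pin that functional's value down by a norm estimate combined with the rigidity of the triangle inequality. First I would observe that $\mathscr{S}_{1,3}$ is a finite dimensional C*-subalgebra of $\mathscr{S}_3$ (it is closed under the product, since $(1\boxdot a)(1\boxdot b)=1\boxdot(a*b)$, and under the adjoint), in which $1\boxdot Q$ is a minimal projection. Hence $(1\boxdot Q)\mathscr{S}_{1,3}(1\boxdot Q)=\mathbb{C}(1\boxdot Q)$, so for every $C\in\mathscr{S}_2$ the element $(1\boxdot Q)(1\boxdot C)(1\boxdot Q)=1\boxdot(Q*C*Q)$ is a scalar multiple of $1\boxdot Q$. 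This defines a linear functional $\omega$ on $\mathscr{S}_2$ by $Q*C*Q=\omega(C)Q$. In this language the hypothesis is exactly $\omega(A)=\frac{tr(A)}{\delta}$, and the goal is to prove $\omega(B)=\frac{tr(B)}{\delta}$ for every self-adjoint $B\preceq A$.

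Next I would make two reductions. Since $\omega$ and $\frac{tr(\cdot)}{\delta}$ are linear and a self-adjoint $B\preceq A$ splits as $B=B_+-B_-$ with $B_\pm\geq 0$ and both $B_\pm\preceq A$ (their supports are dominated by that of $B$, hence by that of $A$), it suffices to treat positive $B$. For positive $B\preceq A$, writing $R$ for the support of $A$ we have $B\leq\|B\|R\leq\lambda A$ for some $\lambda>0$, because $A\geq\mu R$ for the least nonzero eigenvalue $\mu$ of $A$; rescaling $B$ (the claim is homogeneous in $B$) I may therefore assume $0\leq B\leq A$, so that $A-B\geq 0$ as well.

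The heart of the argument is a norm estimate. For any positive $C$, Lemma \ref{norm1a} gives $\|1\boxdot C\|=\frac{tr(C)}{\delta}$, and since $1\boxdot Q$ is a projection, compressing yields $|\omega(C)|=\|(1\boxdot Q)(1\boxdot C)(1\boxdot Q)\|\leq\|1\boxdot C\|=\frac{tr(C)}{\delta}$. Applying this to the two positive operators $B$ and $A-B$, and using $\omega(B)+\omega(A-B)=\omega(A)=\frac{tr(A)}{\delta}$, I obtain
$$\frac{tr(A)}{\delta}=\left|\omega(B)+\omega(A-B)\right|\leq|\omega(B)|+|\omega(A-B)|\leq\frac{tr(B)}{\delta}+\frac{tr(A-B)}{\delta}=\frac{tr(A)}{\delta}.$$
Thus every inequality is an equality; the equality case of the triangle inequality forces $\omega(B)$ and $\omega(A-B)$ to be non-negative reals (their sum being the positive real $\frac{tr(A)}{\delta}$), and then $|\omega(B)|=\frac{tr(B)}{\delta}$ gives $\omega(B)=\frac{tr(B)}{\delta}$, as desired.

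The main obstacle I anticipate is that $1\boxdot C$ is in general not self-adjoint (its adjoint is $1\boxdot(C^*)'$), so $\omega(C)$ need not be real and the naive ``positive functional'' reasoning is unavailable. The point is that Lemma \ref{norm1a} supplies a clean bound on $|\omega(C)|$ that is saturated precisely when the triangle inequality is tight; it is this rigidity, rather than any positivity of $\omega$ itself, that transfers the extremal equality from $A$ to each piece $B\preceq A$.
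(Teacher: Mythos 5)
Your proposal is correct and takes essentially the same route as the paper's own proof: minimality of $1\boxdot Q$ produces the scalar, Lemma \ref{norm1a} supplies the bound $|\omega(C)|\leq \frac{tr(C)}{\delta}$ for positive $C$, and the equality case of the triangle inequality applied to the splitting $A=B+(A-B)$ forces $\omega(B)=\frac{tr(B)}{\delta}$, with the self-adjoint case handled by linear decomposition into positive pieces. The only difference is expository: you make the functional $\omega$ explicit and spell out the rescaling of a positive $B\preceq A$ to $0\leq B\leq A$ via the least nonzero eigenvalue of $A$, a detail the paper compresses into the phrase that $B$ ``is a linear sum of $C_i$'s such that $0<C_i<A$.''
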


\begin{proof}
For a positive operator $C<A$, by Lemma \ref{norm1a}, we have $\|1\boxdot C\|=\frac{tr(C)}{\delta}$ and $\|1\boxdot (A-C)\|=\frac{tr(A-C)}{\delta}$.
By assumption $1\boxdot Q$ is minimal, thus $(1\boxdot Q)(1\boxdot C)(1\boxdot Q)=\lambda 1\boxdot Q$, for some $|\lambda|\leq\frac{tr(C)}{\delta}$,
and $(1\boxdot Q)(1\boxdot (A-C))(1\boxdot Q)=\mu 1\boxdot Q$, for some $|\mu|\leq\frac{tr(A-C)}{\delta}$.
If $Q*A*Q=\frac{tr(A)}{\delta}Q$, then $(1\boxdot Q)(1\boxdot A)(1\boxdot Q)=\frac{tr(A)}{\delta} 1\boxdot Q$.
Therefore $\frac{tr(A)}{\delta}=\lambda+\mu$. Then $\lambda=\frac{tr(C)}{\delta}$ and $\mu=\frac{tr(A-C)}{\delta}$.
Thus $Q*C*Q=\frac{tr(C)}{\delta}Q$.
In general, if $B$ is a self-adjoint operator and $B\preceq A$, then $B$ is a linear sum of $C_i$'s, such that $0<C_i<A$.
So $Q*B*Q=\frac{tr(B)}{\delta}Q$.
\end{proof}

\begin{theorem}\label{maxcoeff}
Suppose $A$ is a positive operator in $\mathscr{S}_2$, $P$ is the biprojection generated by $A$ and $1\boxdot Q$ is a minimal projection in $\mathscr{S}_{1,3}$. Then the following are equivalent,

(1)$Q*P*Q=\frac{tr(P)}{\delta}Q$;

(2)$Q*A*Q=\frac{tr(A)}{\delta}Q$;

(3)$Q*A=A*Q=\frac{tr(A)}{\delta}Q$;

(4)$Q*(A+A')*Q=\frac{tr(A+A')}{\delta}Q$.

Consequently $\frac{\delta}{tr(P)}1\boxdot P$ is the spectral projection $E_{\{\|1\boxdot (A+A')\|\}}(1\boxdot (A+A'))$.
\end{theorem}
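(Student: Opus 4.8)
The plan is to transport everything into the subalgebra $\mathscr{S}_{1,3}\cong\mathscr{S}_{2,-}$, where the coproduct $*$ becomes the ordinary product of the elements $1\boxdot z$ and $1\boxdot Q$ is an honest minimal projection, prove the four equivalences by short manipulations there, and then read off the spectral statement by matching eigenprojections. Throughout I set $c=\frac{tr(A)}{\delta}$ and use two facts: $\|1\boxdot A\|=c$ by Lemma \ref{norm1a}, and $Q*Q=Q$ because $1\boxdot Q$ is a projection (so $(1\boxdot Q)^2=1\boxdot(Q*Q)=1\boxdot Q$).

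First I would handle (2)$\Leftrightarrow$(3). The implication (3)$\Rightarrow$(2) is immediate from associativity of $*$ and $Q*Q=Q$, since $Q*A*Q=(Q*A)*Q=c\,Q*Q=cQ$. For (2)$\Rightarrow$(3) I use the elementary operator fact that if $q$ is a projection and $T$ an operator with $\|T\|=c$ and $qTq=cq$, then $Tq=qT=cq$: for a unit vector $\xi$ in the range of $q$ one writes $Tq\xi=c\xi+w$ with $qw=0$, and the bound $\|Tq\xi\|\le c$ forces $w=0$; the left identity follows by applying the right identity to $T^*$. Applying this to $T=1\boxdot A$ and $q=1\boxdot Q$, with $\|1\boxdot A\|=c$ from Lemma \ref{norm1a}, yields $(1\boxdot A)(1\boxdot Q)=(1\boxdot Q)(1\boxdot A)=c(1\boxdot Q)$, i.e. $A*Q=Q*A=cQ$.

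Next, (2)$\Leftrightarrow$(4): taking adjoints of $(1\boxdot Q)(1\boxdot A)(1\boxdot Q)=c(1\boxdot Q)$ and using $(1\boxdot A)^*=1\boxdot A'$ together with $tr(A)=tr(A')$ gives the same identity for $A'$, and summing produces (4). Conversely $A\preceq A+A'$ (both positive) and $A$ is self-adjoint, so Lemma \ref{QAQ} applied to the positive operator $A+A'$ recovers (2). The same lemma gives (1)$\Rightarrow$(2), since $A\preceq P$ and $A$ is self-adjoint. For (2)$\Rightarrow$(1) I first pass to (3): from $A*Q=cQ$ and associativity I get $A^{*i}*Q=c^iQ$, so for $S=\sum_{i=1}^{k}A^{*i}$ I have $S*Q$ a positive multiple of $Q$; hence $Q*S*Q=\frac{tr(S)}{\delta}Q$, the coefficient being forced by multiplicativity of $tr$ under $*$ (Lemma \ref{CDE=}). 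Since $P\sim S$ for large $k$ by Lemma \ref{PAk}, with $P$ self-adjoint and $P\preceq S$, Lemma \ref{QAQ} delivers $Q*P*Q=\frac{tr(P)}{\delta}Q$, which is (1). This closes the cycle, so (1), (2), (3), (4) are equivalent.

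Finally, for the spectral statement, set $\tilde P=\frac{\delta}{tr(P)}(1\boxdot P)$, which is a projection since $P$ is a biprojection, and let $E$ be the spectral projection of the self-adjoint positive operator $1\boxdot(A+A')$ at its top eigenvalue $\|1\boxdot(A+A')\|=\frac{tr(A+A')}{\delta}$ (Lemma \ref{norm1a}). Both $E$ and $\tilde P$ lie in the finite-dimensional C*-algebra $\mathscr{S}_{1,3}$, so each is determined by the minimal projections $1\boxdot Q$ beneath it. A minimal $1\boxdot Q$ satisfies $1\boxdot Q\le E$ precisely when $(1\boxdot(A+A'))(1\boxdot Q)$ equals the top eigenvalue times $1\boxdot Q$, which by the compression/norm argument above is equivalent to condition (4); and $1\boxdot Q\le\tilde P$ is equivalent to $(1\boxdot Q)\tilde P(1\boxdot Q)=1\boxdot Q$, i.e. to condition (1). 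Since (1)$\Leftrightarrow$(4), the two projections have identical minimal subprojections, whence $E=\tilde P$. The only genuinely delicate ingredient is the compression/norm fact and its repeated use for $A$, for $A+A'$, and for $S$; once that is in hand every step reduces to a short computation inside $\mathscr{S}_{1,3}$.
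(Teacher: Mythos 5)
Your proposal is correct and follows essentially the same route as the paper's proof: it relies on the same three ingredients (Lemma \ref{norm1a} for $\|1\boxdot A\|=\frac{tr(A)}{\delta}$, Lemma \ref{QAQ} to pass from a dominating positive operator to operators beneath it, and Lemma \ref{PAk} to replace $P$ by $\sum_{i=1}^k A^{*i}$), and it establishes the spectral identity by matching minimal subprojections of $\frac{\delta}{tr(P)}1\boxdot P$ and $E_{\{\|1\boxdot(A+A')\|\}}(1\boxdot(A+A'))$ exactly as the paper does. The only differences are cosmetic: you reorganize the cycle of implications (e.g.\ deriving (4) from (2) by taking adjoints rather than from (1) via Lemma \ref{QAQ}) and you spell out the compression-norm fact that the paper uses implicitly in its step (2)$\Rightarrow$(3).
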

\begin{proof}
(1)$\Rightarrow$ (4) $\Rightarrow$ (2) follows from Lemma \ref{QAQ} and the fact $A\preceq A+A'\preceq P$.

(2)$\Rightarrow (3)$: If $Q*A*Q=\frac{tr(A)}{\delta}Q$, then $(1\boxdot Q)*(1\boxdot A)*(1\boxdot Q)=\frac{tr(A)}{\delta}(1\boxdot Q)$. By Lemma \ref{norm1a}, we have $\|1\boxdot A\|=\frac{tr(A)}{\delta}$. Thus $(1\boxdot Q)*(1\boxdot A)=(1\boxdot A)*(1\boxdot Q)=\frac{tr(A)}{\delta}(1\boxdot Q)$. Then $Q*A=A*Q=\frac{tr(A)}{\delta}Q$.

(3)$\Rightarrow$ (1): By assumption $1\boxdot Q$ is a projection, thus $Q*Q=Q$. If $Q*A=A*Q=\frac{tr(A)}{\delta}Q$, then $Q*(\sum_{i=1}^kA^{*i})*Q=\delta^{-1}tr(\sum_{i=1}^kA^{*i})Q$. By Lemma \ref{PAk}, we have $P\preceq \sum_{i=1}^k A^{*i}$, for $k$ large enough. So $Q*P*Q=\frac{tr(P)}{\delta} Q$, by Lemma \ref{QAQ}.

Note that
 $1\boxdot Q$ is a subprojection of $\frac{\delta}{tr(P)}1\boxdot P \iff \frac{\delta}{tr(P)}(1\boxdot Q)(1\boxdot P))(1\boxdot Q)=1\boxdot Q \iff Q*P*Q=\frac{tr(P)}{\delta} Q$.
And $1\boxdot Q$ is a subprojection of $E_{\{\|1\boxdot (A+A')\|\}}(1\boxdot (A+A')) \iff (1\boxdot Q)(1\boxdot (A+A'))(1\boxdot Q)=\frac{tr(A+A')}{\delta} 1\boxdot Q \iff Q*(A+A')*Q=\frac{tr(A+A')}{\delta} Q$.
Thus (1) $\iff$ (4) implies $\frac{\delta}{tr(P)}1\boxdot P=E_{\{\|1\boxdot (A+A')\|\}}(1\boxdot (A+A'))$.
\end{proof}

\begin{proposition}\label{maxcoeffremark}
For a finite group $G$, take $\mathscr{S}=\mathscr{S}^G$ to be a group subfactor planar algebra, and $A$ to be the (minimal) central projection corresponding to an (irreducible) representation $V$ of $G$.
Then the following are equivalent

(1) the representation $V$ is faithful;

(2) $E_{\|1\boxdot A\|}(1\boxdot A)=e_2$;

(2') $E_{\{\|1\boxdot (A+A')\|\}}(1\boxdot (A+A'))=e_2$;

(3) the biprojection generated by $A$ is $id$;

(4) every irreducible representation of $G$ is contained in some tensor power of $V$.
\end{proposition}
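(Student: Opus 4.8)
The plan is to reduce the whole statement to the representation theory of $G$ by making the algebraic structure of $\mathscr{S}_2=(\mathscr{S}^G)_2$ completely explicit, and then to quote a single external fact (Burnside--Brauer). First I would use the standard identification of $\mathscr{S}_2$ with the group algebra $\mathbb{C}[G]=\{\sum_g c_g u_g\}$, under which the algebra product is convolution, the identity $id$ is the unit of convolution, and the minimal central projections are the normalized characters $A_V=\frac{\dim V}{|G|}\sum_{g\in G}\overline{\chi_V(g)}\,u_g$, one for each irreducible representation $V$; the given $A$ is $A_V$. Under the embedding $a\mapsto 1\boxdot a$, which carries the coproduct $*$ to the product of $\mathscr{S}_{1,3}$, the operation $*$ becomes the pointwise (Fourier--dual) product of class functions. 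A one-line computation then shows that $A_V*A_W$ is a positive multiple of $\overline{\chi_{V\otimes W}}$, and decomposing $\chi_{V\otimes W}$ into irreducible characters gives that the support of $A_V*A_W$, as a projection in $\mathscr{S}_2$, is the central projection onto the irreducibles occurring in $V\otimes W$. Iterating, the support of $A^{*k}$ is the central projection onto the irreducibles occurring in $V^{\otimes k}$.

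Granting this dictionary, the three analytic conditions collapse via Theorem~\ref{maxcoeff}. Let $P$ be the biprojection generated by $A$. The pointwise equivalences in that theorem show that the two spectral projections $E_{\|1\boxdot A\|}(1\boxdot A)$ and $E_{\{\|1\boxdot(A+A')\|\}}(1\boxdot(A+A'))$ coincide, and its concluding assertion identifies this common projection with $\frac{\delta}{tr(P)}(1\boxdot P)$. Since $tr(id)=\delta^2$ gives $e_2=\frac{1}{\delta}(1\boxdot id)=\frac{\delta}{tr(id)}(1\boxdot id)$, injectivity of $a\mapsto 1\boxdot a$ shows that this common projection equals $e_2$ exactly when $P$ is a scalar multiple of $id$, hence (as $P$ is a nonzero projection) exactly when $P=id$. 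This delivers (2) $\Leftrightarrow$ (2') $\Leftrightarrow$ (3) with no representation theory at all.

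Next I would prove (3) $\Leftrightarrow$ (4) straight from the support computation. By Lemma~\ref{PAk}, $P$ has the same support as $\sum_{i=1}^{k}A^{*i}$ for $k$ large, which by the dictionary above is precisely the central projection onto the set of irreducibles that occur in some tensor power $V^{\otimes i}$ with $i\ge 1$. Thus $P=id$, the support of all irreducibles, if and only if every irreducible representation of $G$ occurs in some tensor power of $V$, which is exactly (4). Finally (1) $\Leftrightarrow$ (4) is the Burnside--Brauer theorem: if $V$ is faithful then every irreducible is a constituent of some $V^{\otimes i}$, while conversely if $g\in\ker V$ then $g$ acts as the identity on every $V^{\otimes i}$, hence on every irreducible once (4) holds, forcing $g=e$ because the irreducibles separate the points of $G$.

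The routine parts are the normalization bookkeeping---the powers of $\delta=\sqrt{|G|}$ entering $A_V*A_W$, the value $tr(A_V)=(\dim V)^2$, and the consistency $\|1\boxdot A\|=\frac{tr(A)}{\delta}$ from Lemma~\ref{norm1a}. The one step I expect to be the crux, and which is not purely formal, is verifying from the planar definition of $*$ that the coproduct of central projections realizes the fusion of representations, namely that $A_V*A_W$ is supported on the irreducibles of $V\otimes W$; once this identification of $*$ with the tensor product is secured, the proposition becomes a translation of the planar-algebraic statements of Theorem~\ref{maxcoeff} into the language of characters, together with the single classical input of Burnside--Brauer.
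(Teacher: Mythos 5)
Your proof is correct, and its analytic core is the same as the paper's: both routes run (2) $\iff$ (2$'$) $\iff$ (3) through Theorem~\ref{maxcoeff} (plus Lemma~\ref{norm1a} and the injectivity of $a\mapsto 1\boxdot a$), and both obtain (3) $\iff$ (4) by translating coproducts of central projections of $\mathbb{C}[G]$ into fusion of representations, which together with Lemma~\ref{PAk} identifies the biprojection generated by $A$ with the central support of the irreducibles occurring in tensor powers of $V$; the paper compresses this dictionary into the phrase ``analogy between subfactors and representation theory,'' and you are right that it is the one nonformal step (note also that both arguments tacitly use that $\mathscr{S}_{1,3}\cong\ell^\infty(G)$ is abelian here, so $1\boxdot A$ is normal and $E_{\|1\boxdot A\|}(1\boxdot A)$ is even defined, whereas in the general setting of Theorem~\ref{maxcoeff} only $1\boxdot(A+A')$ is self-adjoint). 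The genuine divergence is how (1) is attached to the chain. The paper joins (1) directly to (2) by the same dictionary: $1\boxdot A$ is a positive multiple of the class function $\overline{\chi_V}$, so its eigenspace for the eigenvalue $\|1\boxdot A\|=\frac{tr(A)}{\delta}$ is the indicator of $\{g\in G:\chi_V(g)=\dim V\}=\ker V$, while $e_2$ is the indicator of $\{e\}$; hence (2) says exactly that $V$ is faithful, an elementary computation. Consequently the paper's chain (1) $\iff$ (2) $\iff$ (2$'$) $\iff$ (3) $\iff$ (4) invokes no representation-theoretic theorem, and Burnside--Brauer, i.e. (1) $\iff$ (4), drops out as a corollary --- which is precisely the payoff advertised in the remark following the proposition and in the introduction. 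You instead close the loop by quoting Burnside--Brauer for (1) $\iff$ (4). That is logically valid and does prove all five equivalences, but it buys less: with Burnside--Brauer as an input, the proposition can no longer be read as reproving (or generalising) that theorem, since the hard implication ``faithful $\Rightarrow$ every irreducible occurs in some $V^{\otimes k}$'' now rests on the classical result rather than on Theorem~\ref{maxcoeff}. If you replace that citation by the one-line spectral computation showing $E_{\|1\boxdot A\|}(1\boxdot A)$ is the indicator of $\ker V$, you recover the paper's self-contained argument.
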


\begin{proof}
(1)$\iff$ (2) and (3)$\iff$ (4) are analogy between subfactors and representation theory. (2)$\iff$ (2') follows from Lemma \ref{norm1a}. (2')$\iff$ (3) follows from Theorem \ref{maxcoeff}.
\end{proof}

\begin{remark}
Note that (1) $\iff$ (4) is a well known result in Representation theory.
If the condition of (4) is replaced by the tensor power of $V$ and its contragredient, then (1) $\iff$ (4) reduces to Stone-Weierstrass theorem.
\end{remark}

It is well known that a trace-$1$ minimal projection in $\mathscr{S}_2$ induces a normalizer.
Consequently we have the following proposition.
\begin{proposition}
Suppose $P, Q\in\mathscr{S}_2$ are minimal projections and $tr(P)=1$, then $\delta P*Q$ is a minimal projection.
\end{proposition}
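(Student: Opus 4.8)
The plan is to realise the map $x\mapsto \delta\,P\ast x$ as a unital order isomorphism of the finite-dimensional C$^\ast$-algebra $\mathscr{S}_2$, which forces it to preserve minimal projections. Throughout I will use the elementary coproduct identities $a\ast e=\tfrac1\delta a$, $a\ast id=\tfrac{tr(a)}{\delta}id$, $(a\ast b)^\ast=a^\ast\ast b^\ast$, $(a\ast b)'=b'\ast a'$ and $tr(a\ast b)=\tfrac{tr(a)tr(b)}{\delta}$ (the last obtained from Lemma \ref{CDE=} by taking one argument to be $id$), together with the normalisations $tr(e)=1$ and, in the present situation, $tr(P)=tr(P')=1$.

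The crux is the identity $\delta\,P\ast P'=\delta\,P'\ast P=e$, which is the precise sense in which a trace-$1$ minimal projection is ``group-like''. To prove it, set $R=\delta\,P\ast P'$. By the Schur product theorem (Theorem \ref{P*P>0}) we have $R\geq 0$, and $tr(R)=tr(P)tr(P')=1$. Applying Lemma \ref{CDE=} with $C=P,\,D=P',\,E=e$ (and $e'=e$) gives $tr(Re)=\delta\,tr((P'\ast e)P')=\delta\cdot\tfrac1\delta\,tr(P')=1$. Hence $tr\big(R(id-e)\big)=tr(R)-tr(Re)=0$ with $R\geq 0$ and $id-e\geq 0$, so Lemma \ref{CDE0} yields $R(id-e)=0$, i.e. $R=Re=eRe\preceq e$. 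Since $e$ is minimal and $tr(R)=tr(e)=1$, we conclude $R=e$; the second equality is symmetric.

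Now define $\Phi_P(x)=\delta\,P\ast x$ on $\mathscr{S}_2$. It is positive by the Schur product theorem, unital since $\Phi_P(id)=tr(P)\,id=id$, trace-preserving since $tr(\Phi_P(x))=tr(P)tr(x)=tr(x)$, and $\ast$-preserving because $P^\ast=P$ forces $(P\ast x)^\ast=P\ast x^\ast$. The crux identity and associativity of the coproduct give $\Phi_{P'}(\Phi_P(x))=\delta^2(P'\ast P)\ast x=\delta\,e\ast x=x$ and, symmetrically, $\Phi_P\Phi_{P'}=\mathrm{id}$; thus $\Phi_P$ is a linear bijection whose inverse $\Phi_{P'}$ is again positive (Schur, $P'\geq 0$).

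Finally, a unital positive linear bijection of a finite-dimensional C$^\ast$-algebra whose inverse is positive is an order isomorphism, hence by Kadison's theorem a Jordan $\ast$-isomorphism; such a map carries projections to projections and preserves orthogonality and the order, so it sends the minimal projection $Q$ to the minimal projection $\delta\,P\ast Q$ (nonzero since $tr(\delta\,P\ast Q)=tr(Q)>0$). Conceptually, $\Phi_P$ is exactly the automorphism $\mathrm{Ad}(u)$ of the normaliser $u$ induced by $P$, which is the structural reason it preserves minimality. The only genuine work is the crux identity $\delta\,P\ast P'=e$; once $\Phi_P$ is recognised as an invertible positive unital map the conclusion is automatic. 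The step to be careful about is verifying that $\Phi_P$ really preserves projections — either via the order-isomorphism/Jordan argument above, or, if one prefers a self-contained diagrammatic route, by establishing multiplicativity $\Phi_P(xy)=\Phi_P(x)\Phi_P(y)$ directly from the normaliser picture.
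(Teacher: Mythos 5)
Your proof is correct, and it takes a genuinely different route from the paper's. The paper gives no computation at all: the proposition is stated as a consequence of the ``well known'' fact that a trace-one minimal projection in $\mathscr{S}_2$ comes from a normalizing unitary of the subfactor (an invertible $\mathcal{N}$-$\mathcal{N}$ bimodule), so that coproduct with $P$ is fusion with an automorphism and hence preserves irreducibility; that argument lives in the von Neumann algebra/bimodule dictionary. You instead stay entirely inside the planar algebra: your crux identity $\delta\,P\ast P'=\delta\,P'\ast P=e$ is correctly derived from the Schur product theorem (Theorem \ref{P*P>0}) together with Lemmas \ref{CDE0} and \ref{CDE=} and the minimality of $e$ (irreducibility), and it is precisely the planar-algebraic shadow of ``$P$ is a normalizer''; it makes $\Phi_P(x)=\delta\,P\ast x$ a unital, trace-preserving, positive linear bijection with positive inverse $\Phi_{P'}$, after which Kadison's theorem on unital order isomorphisms (they are Jordan $\ast$-isomorphisms) sends the minimal projection $Q$ to the minimal projection $\delta\,P\ast Q$. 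What the paper's route buys is brevity and the conceptual bimodule picture; what yours buys is a self-contained argument using only the paper's own positivity toolkit plus one classical functional-analytic input, very much in the spirit of the rest of Section \ref{biprojections}. Two refinements you may like: the appeal to Kadison's full theorem can be softened to the Kadison--Schwarz inequality --- $\Phi_P(Q)^2\le\Phi_P(Q^2)=\Phi_P(Q)$, and applying Schwarz once more to $\Phi_{P'}$ together with injectivity forces $\Phi_P(Q)^2=\Phi_P(Q)$ --- after which minimality follows because an order isomorphism preserves the order-interval characterization $\{x:0\le x\le Q\}=[0,1]\,Q$ of minimal projections; and note that your argument never actually uses minimality of $P$, only that $P$ is a projection with $tr(P)=1$ (minimality of $Q$, of course, is essential).
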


\begin{definition}
Suppose $P$ is a central minimal projection in $\mathscr{S}_2$, such that $tr(P)>1$ and $r(P*Q)=1$ (resp. $r(Q*P)=1$), for any minimal projection $Q$ in $\mathscr{S}_2$, $Q\neq P'$. Then we call $P$ a left (resp. right) virtual normalizer. If $P$ is a left and right virtual normalizer, then we call it a virtual normalizer.
\end{definition}

Obviously $P$ is a left virtual normaliser if and only if $P'$ is a right virtual normaliser.

\begin{theorem}\label{virtual normalizer}
Suppose $\mathscr{S}$ is a subfactor planar algebra generated by $\mathscr{S}_2$.
If $\mathscr{S}_2$ contains a left (or right) virtual normalizer, then either $\mathscr{S}$ is Temperley-Lieb or $\mathscr{S}$ is separated by a non-trivial biprojection as a free product.
\end{theorem}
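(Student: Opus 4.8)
The plan is to manufacture a biprojection directly from the normalizer by the maximal-projection construction indicated in the introduction, and then feed it into Theorem~\ref{freedecom}. Passing to the dual planar algebra interchanges left and right virtual normalizers, sends free products to free products and Temperley--Lieb to Temperley--Lieb, so I may assume the given central minimal projection $N$ (with $tr(N)>1$) is a \emph{left} virtual normalizer. I would then set
\[
B=\bigvee\{R:\ R\text{ a projection in }\mathscr{S}_2,\ R*N\preceq N\ \text{and}\ N*R\preceq N\}.
\]
The supremum is attained because each of the two defining conditions is closed under joins: if $R_1,R_2$ qualify then $R_1\vee R_2\preceq R_1+R_2$ gives $(R_1\vee R_2)*N\preceq(R_1+R_2)*N\preceq N$ (and symmetrically) by the support monotonicity of the coproduct, which rests on Theorem~\ref{P*P>0}. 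Since $\delta e$ is the coproduct unit, $e*N=N*e=\tfrac1\delta N\preceq N$, so $e\preceq B$; and associativity gives $(B*B)*N=B*(B*N)\preceq B*N\preceq N$ together with $N*(B*B)\preceq N$, whence $B*B\preceq B$ and $B$ is a biprojection by Theorem~\ref{P*P=P}. Finally $B\neq id$, because matching traces against every $c'$ through Lemma~\ref{CDE=} yields $id*N=\tfrac{tr(N)}{\delta}id$, whose support is $id\not\preceq N$ as $tr(N)>1$ forces $N\neq id$. Thus $e\preceq B\lneq id$.

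Next I would record that $N$ is $B$-flat. As $B*N\preceq N$ with $N$ minimal, a trace count gives $B*N=\tfrac{tr(B)}{\delta}N$, and likewise $N*B=\tfrac{tr(B)}{\delta}N$, so $B*N*B=\big(\tfrac{tr(B)}{\delta}\big)^2N$; that is, $N$ lies in the flat generating set $\{x:B*x*B=(\tfrac{tr(B)}{\delta})^2x\}$ appearing in Theorem~\ref{freedecom}. The argument then splits according to whether $B$ is trivial.

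Suppose $B\neq e$, so $B$ is a non-trivial biprojection. The aim is to verify the hypothesis of Theorem~\ref{freedecom}: since $\mathscr{S}$ is generated by $\mathscr{S}_2$, it suffices that $\mathscr{S}_2$ lie in the algebra generated by $\{x:BxB=x\}$ and $\{x:B*x*B=(\tfrac{tr(B)}{\delta})^2x\}$. For this I would prove the separation claim that every minimal projection $Q$ of $\mathscr{S}_2$ satisfies either $Q\preceq B$ or $B*Q*B=(\tfrac{tr(B)}{\delta})^2Q$. Here the virtual-normalizer hypothesis is decisive: for $Q\neq N'$ the coproduct $N*Q$ is a multiple of a single minimal projection, so $N$ acts on the minimal projections lying outside $B$ by a partial permutation, while Lemma~\ref{copro connect} bounds the coproduct ranks in play by counting length-$2$ paths in the principal graph. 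Combining these one shows a minimal projection failing $Q\preceq B$ is carried by this $N$-action into the flat part. Granting the claim, $\mathscr{S}_2$ is spanned by $B\mathscr{S}_2B$ and the flat minimal projections, Theorem~\ref{freedecom} applies, and $\mathscr{S}=\mathscr{S}_B*\mathscr{S}^B$ is separated by the non-trivial biprojection $B$ as a free product.

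It remains to treat $B=e$, which I expect to be the main obstacle: flatness over $e$ is vacuous, since $e*x*e=\tfrac1{\delta^2}x$ for every $x$, so Theorem~\ref{freedecom} gives nothing here. The plan is to show that $B=e$ forces $\dim(\mathscr{S}_2)=2$, i.e.\ $\mathscr{S}_2=\mathbb{C}e\oplus\mathbb{C}N$ with $N=id-e$, which identifies $\mathscr{S}$ with Temperley--Lieb because $\mathscr{S}$ is generated by $\mathscr{S}_2$. Concretely, $B=e$ says that no projection beyond $e$ is stabilized by $N$ on both sides under coproduct; I would argue that the presence of any further minimal projection, together with the partial $N$-action and the rank bound of Lemma~\ref{copro connect}, produces a coproduct-stable projection strictly above $e$, contradicting $B=e$. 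The delicate points throughout are the control of the exceptional value $Q=N'$, where $r(N*N')$ need not equal $1$ and where one checks $e\preceq N*N'$, and the coherence of the partial $N$-action needed to force the clean dichotomy of minimal projections; this book-keeping, rather than any single diagrammatic identity, is where the real difficulty lies.
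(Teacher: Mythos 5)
Your construction of $B$ is correct as far as it goes: the stabilizer condition is closed under joins, $e\preceq B$, associativity plus minimality of $N$ gives $B*B\preceq B$, so $B$ is a biprojection by Theorem~\ref{P*P=P}, it is proper since $id*N=\frac{tr(N)}{\delta}id$, and $B*N=N*B=\frac{tr(B)}{\delta}N$. This matches the construction sketched in the paper's introduction. The proof nevertheless fails, and the failure is concentrated exactly where you predicted the difficulty but proposed the wrong fix: the case $B=e$. Your claim that $B=e$ forces $\dim(\mathscr{S}_2)=2$ is false. Take $\mathscr{S}=TL*\mathscr{B}$ for any subfactor planar algebra $\mathscr{B}$ generated by 2-boxes, with Temperley--Lieb circle parameter $\delta_A$ satisfying $\delta_A^2>2$; let $Q$ be the separating biprojection and $N=Q-e$. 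Then $N$ is a central minimal projection with $tr(N)=\delta_A^2-1>1$, and it is a virtual normalizer: $N*e=\frac{1}{\delta}N$, and every minimal projection $R$ orthogonal to $Q$ satisfies $N*R=R*N=\frac{tr(N)}{\delta}R$. But $N*N=\frac{tr(N)}{\delta}e+\frac{tr(N)-1}{\delta}N$ has support $Q\not\preceq N$, and $R*N\sim R\not\preceq N$ for $R$ orthogonal to $Q$, so the only projection stabilizing $N$ under coproduct is $e$: your $B$ equals $e$, while $\dim(\mathscr{S}_2)=1+\dim(\mathscr{B}_2)\geq 3$ and $\mathscr{S}$ is not Temperley--Lieb (already for $\mathscr{B}=TL$, i.e.\ Fuss--Catalan). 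In this regime the separating biprojection is $e+N=Q$: this is the paper's Case 1, where $P'*P\preceq e+P'$ forces $P=P'$, the support of $(e+P)*(e+P)$ is $e+P$, and Theorem~\ref{P*P=P} applied to $e+P$ itself produces the biprojection. That biprojection is \emph{generated by} $N$, not a stabilizer of $N$, so no repair of your stabilizer $B$ can see it; the case split on whether $P'*P\preceq e+P'$ has to be made before (or instead of) your dichotomy on $B$.

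Even in the branch $B\neq e$ the proposal defers the actual content of the theorem. The separation claim --- every minimal projection $Q$ satisfies $Q\preceq B$ or $B*Q*B=(\frac{tr(B)}{\delta})^2Q$ --- is the entire difficulty, and ``combining these one shows'' is not an argument. In the paper this step, carried out for its candidate $S=\mathrm{supp}((id-P')(P'*P))$, occupies most of the proof: a perturbation argument on minimal projections to show $S$ is central; then, for $R_1$ orthogonal to $S+P'$ and $R_2\preceq S*R_1$, the reduction to $P*R_1=P*R_2$ and a case analysis of the sequence $P^{*n}*R_1$, including ruling out $P'\preceq P^{*n}$ by a trace computation and an induction showing each $(P')^{*l}$ is central, to conclude $R_1=R_2$. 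Lemma~\ref{copro connect} and the ``partial permutation'' picture only give $r(N*Q)=1$; they do not locate $N*Q$ relative to $B$, which is what the dichotomy requires. There is also a smaller unverified point: the paper only establishes one-sided stabilization ($P*S\sim P$ and $S*P'\sim P'$), so your two-sided $B$ may be strictly smaller than the paper's $S$, and the alignment of your $B\neq e$ branch with the paper's Case 2 is itself not automatic. In sum, the proposal proves the easy half (existence of a biprojection $e\preceq B\lneq id$), leaves the separation claim unproven, and rests the remaining case on a statement that is provably false.
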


\begin{proof}
When $\dim(\mathscr{S}_2)=2$, we have $\mathscr{S}$ is Temperley-Lieb.

When $\dim(\mathscr{S}_2)\geq3$, we assume that $P$ is a virtual normalizser.

Case 1: If $P'*P\preceq e+P'$, then $P'*P\sim e+P'$, otherwise $P'*P\sim e$ implies $tr(P)=1$ which contradicts to the assumption $tr(P)>1$. Thus $P=P'$. Then $e+P$ is a biprojection by Theorem \ref{P*P=P}. So $P'*P=\frac{tr(P)}{\delta}e+\frac{tr(P)-1}{\delta}P'$.
For a minimal projection $Q$ orthogonal to $(e+P)$, we have $r(P*Q)=1$. Moreover $P*Q\neq P$, because $tr((P*Q)P)=tr(Q(P*P))=0$. Then $r(P*(P*Q))=1$. Note that $P*(P*Q)=(P*P)*Q=\frac{tr(P)}{\delta}e*Q+\frac{tr(P)-1}{\delta}P*Q$, thus $P*Q\sim Q$. Then $(e+P)*Q\sim Q$.
Therefore, by Theorem \ref{freedecom}, $\mathscr{S}$ is separated by $e+P$ as a free product.

Case 2: Otherwise $(id-e-P')(P'*P)>0$. Let $S$ be the support of $(id-P')(P'*P)$. Then $e<S<id$.
We will show that $S$ is a central biprojection which separates $B_2$ as a free product.

For a minimal projection $Q$, $Q\neq P'$, we have
$tr((P'*P)Q)>0 \iff tr((P*Q)P)>0 \iff
P*Q=\frac{tr(Q)}{\delta}P \iff
tr((P*Q)P)=\frac{tr(P)tr(Q)}{\delta} \iff tr((P'*P)Q)=\frac{tr(P)tr(Q)}{\delta}$.
If $Q_1,Q_2$ are two minimal projections such that $\|Q_1-Q_2\|<\frac{tr(P)tr(Q)}{\delta\|P'*P\|}$ and $Q_i\neq P'$, for $i=1,2$, then $tr((P'*P)Q_1)>0 \Rightarrow tr((P'*P)Q_1)=\frac{tr(P)tr(Q_1)}{\delta} \Rightarrow tr((P'*P)Q_2)>0$.
By Theorem \ref{P*P>0}, $tr((P'*P)Q_i)\geq0$, for $i=1,2$. Thus $tr((P'*P)Q_2)=0 \Rightarrow tr((P'*P)Q_1)=0$.
Combining with Lemma \ref{CDE0}, we have $SQ_2=0 \Rightarrow (S+P')Q_2=0 \Rightarrow (P'*P)Q_2=0 \Rightarrow tr((P'*P)Q_2)=0 \Rightarrow tr((P'*P)Q_1)=0 \Rightarrow SQ_1=0$.
Thus $S$ is central.

For a minimal projection $R$, such that $R\leq S$, we have $R\neq P'$ and $R\preceq P'*P$.
Then $tr((P*R)P)=tr(R(P'*P))>0$. Thus $P*R\sim P$. Then $P*S\sim P$.
Therefore $P*(S*S)=(P*S)*S\sim P$. Then for a minimal projection $U$, $U\preceq S*S$, we have $P*U\sim P$. Thus $U\neq P'$ and $tr(U(P'*P))=tr((P*U)P)>0$. Then $tr(US)>0$. We have proved that $S$ is central, so $U\leq S$. Then $S*S\preceq S$. By Theorem \ref{P*P=P}, $S$ is biprojection.

Recall that $P*S\sim P$, so $S*P'\sim P'$.
Suppose $R_1$ is a minimal projection orthogonal to $S+P'$.
Then $tr((S*R_1)(S+P'))=tr(R_1(S*(S+P'))=0$. Thus $S*R_1$ is orthogonal to $S+P'$.
Moreover $P*(S*R_1)=(P*S)*R_1\sim P*R_1$. We claim that $S*R_1\sim R_1$.
Then by Theorem \ref{freedecom}, we have $\mathscr{S}_2$ is separated by $S$ as a free product.

Now we prove that $S*R_1\sim R_1$. Suppose $R_2$ is a minimal projection, such that $R_2\preceq S*R_1$. Then
$R_2$ is orthogonal $S+P'$, and $P*R_2 \preceq P*(S*R_1)\sim P*R_1$.
So $P*R_1=P*R_2$. It is enough to show that $R_2=R_1$.

For $n=1,2,\cdots$, we have $P^{*n}*R_1=P^{*n}*R_2$.
If $P^{*n}*R_1\nsim P'$ and $r(P^{*n}*R_1)=1$, then $r(P^{*(n+1)}*R_1)=1$.

(1) If $P^{*n}*R_1\nsim P'$, $\forall ~n>0$, then $r(P^{*n}*R_1)=1$, $\forall ~n>0$. By Lemma \ref{PAk}, $e\preceq P^{*m}$ for some $m>0$, thus $R_1\sim P^{*m}*R_1=P^{*m}*R_2\sim R_2$. Then $R_1=R_2$.

(2) If $P^{*n}*R_1\sim P'$, for some $n>0$, assuming this $n$ is the minimal one, then $r(P^{*j}*R_1)=1$, $\forall ~1\leq j\leq n$.

(a) If $P' \preceq P^{*k}$, for some $1\leq k\leq n-1$, then $e\preceq P*P'\preceq P^{*(k+1)}$.
Thus $R_1\sim P^{*(k+1)}*R_1 \sim P^{*(k+1)}*R_2\sim R_2$. Then $R_1=R_2$.

(b) If $P'\preceq P^{*n}$, then $P'*R_1\preceq P^{*n}*R_1\sim P'$. So $tr(R_1(P'*P))=tr(R_1(P*P'))=tr((P'*R_1)P')>0$. On the other hand, we have $R_1(S+P')=0$ and $P*P'\preceq S+P'$, so $R_1(P'*P)=0$. It is a contradiction.

(c) Otherwise $P'\nsim P^{*j}$ and $r(P^{*(j+1)})=1$, $\forall ~1\leq j \leq n$. We will show that $(P')^{*l}$ is central by induction, for $1\leq l \leq n+1$.
The virtually normalizer $P$ is central, so $P'$ is central. For $1\leq l \leq n$, suppose $(P')^{*l}$ is central.
Take a minimal projection $V$, such that $tr(((P')^{*(l+1)})V)>0$. If $V=P'$, then $(P')^{*(l+1)}=P'$, and it is central.
If $V\neq P'$, then $r(P*V)=1$. Note that $tr((P')^{*l}(P*V))=tr(((P')^{*(l+1)})V)>0$, thus $P*V\sim (P')^{*l}$. Then $P*V$ only depends on $tr(V)$.
So $tr(((P')^{*(l+1)})V)$ only depends on $tr(V)$. When the minimal projection $V$ moves continuously in its central support, the assumption $tr(((P')^{*(l+1)})V)>0$ always holds. So $(P')^{*(l+1)}$ is central.
Then we have $(P')^{*(n+1)}$ is a central minimal projection.
Recall that $P^{*n}*R_1\sim P'$, thus $tr(R_1((P')^{(n+1)}))=tr((P^{*n}*R_1)P')>0$.
Then $R_1\sim(P')^{*(n+1)}$. Similarly $R_2\sim(P')^{*(n+1)}$. So $R_1\sim R_2$. Then $R_1=R_2$.

\end{proof}

\begin{remark}
In $\mathscr{S}^S$, either $tr(P)=1$, or $P$ is a virtual normalizer. In the latter case, $e+P$ is a biprojection and the planar algebra $\mathscr{S}^S$ is separated by $e+P$ a free product. Furthermore $(\mathscr{S}^S)_{e+P}$ is Temperley-Lieb, and $P$ is the second Jones-Wenzl projection.
\end{remark}

\section{Constructions and Decompositions}\label{code}
\subsection{Exchange Relation Planar Algebras}
In general, it is not easy to show a subfactor planar algebra is an exchange relation planar algebra. In this section we will give two general constructions of exchange relation planar algebras by the free product and the tensor product. Moreover we will show that the subgroup subfactor planar algebra $\mathscr{S}^{\mathbb{Z}_2\subset\mathbb{Z}_p\rtimes\mathbb{Z}_2}$ is an exchange relation planar algebra, for an odd prime number $p$.
For the classification, we will show how an exchange relation planar algebra decomposed as a free product or a tensor product.

\begin{proposition}
Suppose $\mathscr{S}$ is an exchange relation planar algebra, then its dual is an exchange relation planar algebra.
\end{proposition}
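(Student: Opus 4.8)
The plan is to verify, for the dual $\bar{\mathscr{S}}$ (with $\bar{\mathscr{S}}_n=\mathscr{S}_{n,-}$), the two defining features of an exchange relation planar algebra: that it is generated by its $2$-boxes, and that every product of the form $(1\boxdot a)b$, with $a,b$ in the dual's $2$-box space, reduces to the required normal form $\sum_i c_i(1\boxdot d_i)+\sum_i f_i(1\boxdot id)g_i$. Both will follow from transporting the skein-theoretic content of the exchange relation across the shading switch. First I would recall the reformulation from Subsection \ref{sub ex} and Proposition \ref{dimpn}: a subfactor planar algebra generated by $2$-boxes is an exchange relation planar algebra precisely when every two-crossing diagram on six boundary points rewrites as a linear combination of one- and zero-crossing diagrams, together with the reduction of a capped crossing to a multiple of a through-string. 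Here a ``crossing'' is simply a $2$-box placed in rotated position, so the three expressions $(1\boxdot a)b$, $c(1\boxdot d)$, $f(1\boxdot id)g$ are just the algebraic incarnations of the three geometric types of two-crossing $3$-box, and the reductions are bilinear in the labels.

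The central observation is that switching the shading is an isotopy-respecting bijection on labelled diagrams: it carries an $\mathscr{S}$-crossing to an $\bar{\mathscr{S}}$-crossing, is compatible with gluing and stacking, and permutes the three geometric types of two-crossing $3$-box among themselves. Concretely, and consistently with the identification $\mathscr{S}_{1,3}\cong\mathscr{S}_{2,-}$ used throughout, under the one-click rotation (Fourier transform) the inclusion $b\in\mathscr{S}_2\hookrightarrow\mathscr{S}_3$ by a string on the right and the elements $1\boxdot a\in\mathscr{S}_{1,3}$ interchange their roles. Thus the dual's object $(1\boxdot a)b$ is the shading-switch of a genuine two-crossing $3$-box of $\mathscr{S}$. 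Applying the exchange reduction of $\mathscr{S}$ to that diagram and switching the shading back expresses $(1\boxdot a)b$ in $\bar{\mathscr{S}}_3$ as a finite sum of terms $c_i(1\boxdot d_i)$ and $f_i(1\boxdot id)g_i$ with all labels in $\bar{\mathscr{S}}_2$; linearity in $a,b$ is automatic, since the reduction holds for arbitrary labels.

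Since the same argument shows every two-crossing dual diagram reduces and that closed dual diagrams evaluate (a capped $2$-box being a multiple of a string by irreducibility of $\bar{\mathscr{S}}$), the counting argument of Proposition \ref{dimpn} applies verbatim to $\bar{\mathscr{S}}$. Hence $\bar{\mathscr{S}}$ is spanned by low-crossing diagrams built from its $2$-boxes, and because each $1\boxdot a$ is itself obtained from a $2$-box by a planar tangle, $\bar{\mathscr{S}}$ is in particular generated by $\bar{\mathscr{S}}_2$; so generation by $2$-boxes comes along with the reductions rather than needing separate verification.

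The step requiring the most care, and the main obstacle, is matching the two families of normal-form terms across the shading switch, in particular confirming that the ideal terms $f(1\boxdot id)g$ of $\mathscr{S}$ are sent to terms lying in the basic construction ideal $\bar{\mathscr{I}}_3$ of $\bar{\mathscr{S}}$, and that the exchanged terms $c(1\boxdot d)$ are not mixed up with them. This reduces to the fact that, under the standard identification $e_2=\frac{1}{\delta}(1\boxdot id)$, the dual Jones projection is the shading-switch of $e_2$; since $\bar{\mathscr{I}}_3$ is the two-sided ideal generated by that projection, the image of any $\mathscr{I}_3$-term remains in $\bar{\mathscr{I}}_3$, while the through-string terms map to through-string terms. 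Once this correspondence of the Jones ideals and of the two families is in hand, the transported identity is exactly a defining exchange relation for $\bar{\mathscr{S}}$, which completes the argument.
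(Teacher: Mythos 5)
Correct, and essentially the paper's own argument: the paper disposes of this in two lines by observing that the dual is still generated by $2$-boxes and that its exchange relation is the $180^{\circ}$ rotation of the adjoint of the original relation, which is exactly the shading-switch transport (crossings to crossings, normal-form terms to normal-form terms, basic construction ideal to basic construction ideal) that you spell out. One small inaccuracy to note: the duality carries $e_2=\frac{1}{\delta}(1\boxdot id)$ not to the dual's $e_2$ but to the dual's $e_1=e$ with a string added on the right; since $e_1$ and $e_2$ generate the same two-sided ideal of $3$-boxes, your conclusion that ideal terms land in the dual's basic construction ideal still holds.
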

\begin{proof}
Recall that the dual of a subfactor planar algebra is given by switching its shading.
Thus the dual of an exchange relation planar algebra is still generated by 2-boxes,
and its exchange relation is given by the $180^o$ rotation of the adjoint of the original exchange relation.
\end{proof}

\begin{proposition}\label{exfreedecom}
Suppose $\mathscr{A}*\mathscr{B}$ is an exchange relation planar algebra, then both $\mathscr{A}$ and $\mathscr{B}$ are exchange relation planar algebras.
\end{proposition}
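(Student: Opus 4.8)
The plan is to exploit the central biprojection $Q=id\otimes e$ of the free product, which by Proposition \ref{iso} gives a planar algebra isomorphism $\alpha_Q:\mathscr{A}\to(\mathscr{A}*\mathscr{B})_Q$. Since being an exchange relation planar algebra is a property invariant under planar algebra isomorphism (it is phrased entirely through the planar operations on $2$-boxes, namely products, coproducts, the operation $1\boxdot(\cdot)$, the identity and the Jones projection; cf. Theorem \ref{p2ex}), and since $\alpha_Q$ intertwines all of these, it suffices to transport the exchange relation of $\mathscr{A}*\mathscr{B}$ down to the corner $(\mathscr{A}*\mathscr{B})_Q$. I first record that, being exchange relation, $\mathscr{A}*\mathscr{B}$ is generated by its $2$-boxes, so by Corollary \ref{freedecom222} both $\mathscr{A}$ and $\mathscr{B}$ are generated by $2$-boxes; this is the generation hypothesis needed before their exchange relations can even be discussed.

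Next I would use the explicit description $(\mathscr{A}*\mathscr{B})_2=\mathscr{A}_2\otimes e+id\otimes\mathscr{B}_2$ from the proof of Theorem \ref{inter free}, together with the compression formula: for $C=a\otimes e+id\otimes b$ one has $QCQ=(a+\lambda\, id)\otimes e$, where $ebe=\lambda e$ by irreducibility of $\mathscr{B}$ (so $e\mathscr{B}_2e=\mathbb{C}e$). Thus compression by $Q$ sends \emph{every} $2$-box of $\mathscr{A}*\mathscr{B}$ into the corner $\mathscr{A}_2\otimes e\cong\mathscr{A}_2$. For $a,a'\in\mathscr{A}_2$ I would then write the exchange relation of $\mathscr{A}*\mathscr{B}$ for the corner $2$-boxes $a\otimes e,\ a'\otimes e$, namely $(1\boxdot(a\otimes e))(a'\otimes e)=\sum_i C_i(1\boxdot D_i)+\sum_i F_i(1\boxdot id)G_i$, and compress this identity to the $Q$-corner. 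On the left this produces a nonzero multiple of $(1\boxdot_{\mathscr{A}}a)a'$; on the right every coefficient $2$-box lands in $\mathscr{A}_2$ by the computation above, the factor $1\boxdot id$ of $\mathscr{A}*\mathscr{B}$ (a multiple of $e_2$) compresses to the corresponding multiple of the Jones projection of $\mathscr{A}$, and $1\boxdot(\cdot)$ commutes with compression up to a scalar because $\alpha_Q$ is a planar isomorphism. Reading the resulting identity through $\alpha_Q^{-1}$ yields precisely an exchange relation for $\mathscr{A}$.

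The main obstacle is the last step: compression $X\mapsto QXQ$ is not multiplicative, so one cannot naively distribute it across the products $C_i(1\boxdot D_i)$ and $F_i(1\boxdot id)G_i$. The device to overcome this is the exchange relation of the biprojection $Q$, Proposition \ref{ex of bipro}: since $Q$ is central here, one can slide copies of $Q$ freely through the $2$-box labels (exactly as in the proof of Theorem \ref{P bipro}, where $\iota_P(x)$ is rewritten as $\sum_i f_iP(1\boxdot P)Pg_i$), inserting the missing middle $Q$ and converting $\Psi_Q(C_i(1\boxdot D_i))$ into a genuine corner product $\Psi_Q(C_i)(1\boxdot\Psi_Q(D_i))$ up to terms in the Jones ideal of the corner; the analogous rewriting handles the $F_i(1\boxdot id)G_i$ terms. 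Tracking the fudge factors of the cut-down construction throughout is routine but must be carried out to land the coefficients in the correct normalization. Finally, the statement for $\mathscr{B}$ follows by duality rather than repetition: by the preceding proposition the dual of $\mathscr{A}*\mathscr{B}$ is the free product of the duals in the opposite order, and the dual of an exchange relation planar algebra is again exchange relation; applying the first-factor result just proved to this dual free product shows that the dual of $\mathscr{B}$ is exchange relation, hence so is $\mathscr{B}$.
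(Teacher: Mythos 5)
Your proposal is correct and follows essentially the same route as the paper's proof: after noting (via Corollary \ref{freedecom222}) that both factors are generated by $2$-boxes, one compresses the ambient exchange relation for corner $2$-boxes $x,y\preceq Q=id\otimes e$ down to $(\mathscr{A}*\mathscr{B})_Q\cong\mathscr{A}$, using centrality of $Q$ and the biprojection exchange relation to place the $Q$'s correctly (the paper's identity $Q(1\boxdot Qx)yQ=\sum_i Qc_i(1\boxdot Qd_i)Q+Qf_i(1\boxdot Q)g_iQ$), and then obtains $\mathscr{B}$ by duality. Your explicit treatment of where compression fails to be multiplicative and of the formula $QCQ=(a+\lambda\,id)\otimes e$ only spells out details the paper leaves implicit.
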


\begin{proof}
By Corollary \ref{freedecom}, both $\mathscr{A}$ and $\mathscr{B}$ are generated by 2-boxes.
Suppose $Q=id\otimes e$, the central biprojection separating $\mathscr{A}*\mathscr{B}$ as a free product.
Then $\mathscr{A}$ is isomorphic to $(\mathscr{A}*\mathscr{B})_Q$.
For any $x,y \preceq Q$, we have $(1\boxdot x)y=\sum_i c_i(1\boxdot d_i)+f_i(1\boxdot id)g_i$, for finitely many two boxes $c_i,d_i,f_i,g_i$.
Then $Q(1\boxdot Qx)yQ=\sum_i Qc_i(1\boxdot Qd_i)Q+Qf_i(1\boxdot Q)g_iQ$.
Note that $Qc_iQ, d_iQ, Qf_i, g_iQ \preceq Q$, so that is the exchange relation of $(\mathscr{A}*\mathscr{B})_Q$.
Thus $\mathscr{A}$ is an exchange relation planar algebra.
Considering the duality of exchange relation planar algebras, we have $\mathscr{B}$ is an exchange relation planar algebra.
\end{proof}

\begin{proposition}\label{exfreeprod}
Suppose $\mathscr{A}$ and $\mathscr{B}$ are exchange relation planar algebras, then $\mathscr{A}*\mathscr{B}$ is an exchange relation planar algebra.
\end{proposition}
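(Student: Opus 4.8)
The plan is to verify the exchange relation directly on a spanning set of $(\mathscr{A}*\mathscr{B})_2$. First, by Corollary \ref{free222}, the free product $\mathscr{A}*\mathscr{B}$ is generated by $2$-boxes, so it remains to produce, for all $a,b\in(\mathscr{A}*\mathscr{B})_2$, an expression $(1\boxdot a)b=\sum_i c_i(1\boxdot d_i)+f_i(1\boxdot id)g_i$ with free-product $2$-boxes as coefficients. Since the relation is bilinear in $a$ and $b$, it suffices to check it when each of $a,b$ runs over a spanning set. Taking $Q=id\otimes e$ as in Theorem \ref{freedecom}, the space $(\mathscr{A}*\mathscr{B})_2$ is spanned by the $\mathscr{A}$-type boxes $a\otimes e$ (those $\preceq Q$) and the $\mathscr{B}$-type boxes $id\otimes b$. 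This splits the verification into four cases according to the colours of $a$ and $b$.

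The two same-colour cases reduce to the hypotheses. If $a,b$ are both $\mathscr{A}$-type, the computation takes place inside $(\mathscr{A}*\mathscr{B})_Q\cong\mathscr{A}$, and the exchange relation of $\mathscr{A}$, transported through the planar isomorphism $\alpha_Q$ and cut down by $Q$, yields an exchange expression with $\mathscr{A}$-type coefficients; this is exactly the computation in the proof of Proposition \ref{exfreedecom} read in reverse, and the $\mathscr{B}$-type case is dual. For the mixed case with $a$ of $\mathscr{A}$-type and $b$ of $\mathscr{B}$-type, I would use the factorwise identities $1\boxdot_B e_B=\frac{1}{\delta_B}\,id$ and $1\boxdot_A id_A=\delta_A e_2$ together with the tensor description $1\boxdot(u\otimes v)=(1\boxdot_A u)\otimes(1\boxdot_B v)$ and componentwise multiplication. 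A short computation then gives $(1\boxdot a)b=b(1\boxdot a)$: the two different-colour boxes commute, and the product is already a single term $c(1\boxdot d)$ with $c=b$ and $d=a$ both free-product $2$-boxes.

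The remaining mixed case, $a$ of $\mathscr{B}$-type and $b$ of $\mathscr{A}$-type, is where the real work lies and is the step I expect to be the main obstacle. Here $1\boxdot a=\delta_A e_2\otimes(1\boxdot_B a_B)$, so in the $\mathscr{A}$-channel the product lands in the Jones ideal $\mathscr{I}_3$ of $\mathscr{A}$, while the $\mathscr{B}$-channel becomes $(1\boxdot_B a_B)\iota(e_B)$, which the exchange relation of $\mathscr{B}$ rewrites as $\sum_k c_k(1\boxdot d_k)+\sum_l f_l(1\boxdot id)g_l$. The difficulty is that a naive term-by-term tensoring produces coefficients such as $b_A\otimes g_l$ that are not separated, hence not genuine free-product $2$-boxes; one must instead recombine the A- and B-channel reductions so that every coefficient is $\mathscr{A}$-type or $\mathscr{B}$-type. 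I would carry this out in the coloured-string calculus of the free product, where A-colour and B-colour strings are non-crossing: reducing the A-colour crossings by the exchange relation of $\mathscr{A}$ and the B-colour crossings by that of $\mathscr{B}$ are independent operations that do not create faces of the other colour, so each face can be removed colour by colour, and a face-free free-product $3$-box diagram is automatically of the exchange form $c(1\boxdot d)$ or $f(1\boxdot id)g$ with separated coefficients. Checking that the two colour reductions are genuinely independent, and tracking the Temperley-Lieb normalisations ($\delta=\delta_A\delta_B$ and the fudge factors), is the technical heart of the argument.
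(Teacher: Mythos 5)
Your reduction is the same as the paper's --- invoke Corollary \ref{free222}, span $(\mathscr{A}*\mathscr{B})_2$ by the boxes $x\otimes e$ and $id\otimes y$, and split the exchange relation into four colour cases --- and your treatment of three of the four cases (both boxes of $\mathscr{A}$-type, both of $\mathscr{B}$-type, and the commuting mixed case $(1\boxdot(x\otimes e))(id\otimes y)=(id\otimes y)(1\boxdot(x\otimes e))$) agrees with cases (1)--(3) of the paper's proof. The problem is the fourth case, which you correctly single out as the crux and then do not prove. Your plan --- apply the exchange relation of $\mathscr{B}$ in the B-channel of $(1\boxdot(id\otimes y))(x\otimes e)$, concede that term-by-term tensoring produces non-separated coefficients, and repair this by a coloured-string skein argument --- is left unexecuted: the assertions that the A-colour and B-colour reductions are ``genuinely independent'' and that every face-free free-product diagram is automatically in exchange form are precisely the statements that would require proof, and you defer them as ``the technical heart.'' As written, the proposal does not establish the proposition.

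What you are missing is that the fourth case needs no exchange relation at all; it closes with a single isotopy. Since the $\mathscr{A}$-component of a $\mathscr{B}$-type box is the identity, and the one-click rotation of the identity is $\delta$ times a Jones-projection pattern, the factor $(1\boxdot id)$ is already present inside $1\boxdot(id\otimes y)$. The paper's case (4) is the one-line identity
$$(1\boxdot (id\otimes y))(x\otimes e)=(id \otimes y')(1\boxdot id)(x\otimes e),$$
whose right-hand side is literally of the exchange form $f(1\boxdot id)g$ with separated coefficients $f=id\otimes y'$ (of $\mathscr{B}$-type) and $g=x\otimes e$ (of $\mathscr{A}$-type). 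You can verify it componentwise in $\mathscr{A}\otimes\mathscr{B}\supseteq\mathscr{A}*\mathscr{B}$ using your own formula $1\boxdot(u\otimes v)=(1\boxdot u)\otimes(1\boxdot v)$: the $\mathscr{A}$-components of both sides equal $(1\boxdot id)\iota(x)=\delta_A e_2\,\iota(x)$, and the equality of the $\mathscr{B}$-components, $(1\boxdot y)\iota(e)=\iota(y')(1\boxdot id)\iota(e)$, is a planar isotopy: closing two legs of the rotated $y$ against the cap coming from the $e$ in $x\otimes e$ turns it into the contragredient $y'$ together with a cup-cap pattern. Thus the hypotheses on $\mathscr{A}$ and $\mathscr{B}$ enter only in the two same-colour cases, while both mixed cases are universal diagrammatic identities valid in any free product; the phantom difficulty with non-separated coefficients never arises.
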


\begin{proof}
By Corollary \ref{free222}, we have $\mathscr{A}*\mathscr{B}$ is generated by 2-boxes.
Suppose $Q=id\otimes e$ is the central biprojection which separates $\mathscr{A}*\mathscr{B}$ as a free product.
Then any 2-box in $\mathscr{A}*\mathscr{B}$ is of the form $x\otimes e+id\otimes y$, for some $x\in\mathscr{A}_2$, $y\in\mathscr{B}_2$.
We need to check the exchange relation for four cases.
For any $x_1,x_2\in\mathscr{A}_2$, $y_1,y_2\in\mathscr{B}_2$,

(1)the exchange relation of $(1\boxdot (x_1\otimes e)) (x_2\otimes e)$ follows from the exchange relation of $\mathscr{A}$;

(2)the exchange relation of $(1\boxdot (id\otimes y_1)) (id\otimes y_2)$ follows from the exchange relation of $\mathscr{B}$;

(3)$(1\boxdot (x_1\otimes e)) (id\otimes y_1)=(id\otimes y_1)(1\boxdot (x_1\otimes e))$;

(4)$(1\boxdot (id\otimes y_1)) (x_1\otimes e)=(id \otimes y_1') (1\boxdot id) (x_1\otimes e)$.

Therefore $\mathscr{A}*\mathscr{B}$ is an exchange relation planar algebra.
\end{proof}

\begin{proposition}\label{extensordecom}
Suppose $\mathscr{A}\otimes\mathscr{B}$ is an exchange relation planar algebra, then both $\mathscr{S}_A$ and $\mathscr{S}_B$ are exchange relation planar algebras.
\end{proposition}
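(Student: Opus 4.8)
The plan is to mirror the proof of Proposition~\ref{exfreedecom} for the free product, with the free product replaced by the tensor product and the single separating biprojection replaced by the canonical pair $A=id\otimes e$ and $B=e\otimes id$. Since an exchange relation planar algebra is by definition generated by its $2$-boxes, $\mathscr{A}\otimes\mathscr{B}$ is generated by $2$-boxes, so Corollary~\ref{tensordecom222} gives that both $\mathscr{S}_A$ and $\mathscr{S}_B$ are generated by $2$-boxes. Because the tensor product is symmetric, an isomorphism $\mathscr{A}\otimes\mathscr{B}\cong\mathscr{B}\otimes\mathscr{A}$ interchanges the roles of $A$ and $B$; hence it suffices to establish the exchange relation for $\mathscr{S}_A$, and the identical argument then handles $\mathscr{S}_B$.

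Next I would pull back the exchange relation of $\mathscr{A}\otimes\mathscr{B}$ through the cut-down by $A$. By Theorem~\ref{tensordecom}, $\mathscr{S}_A$ is obtained by cutting down diagrams of $\mathscr{S}=\mathscr{A}\otimes\mathscr{B}$ by the biprojection $A$, and its $2$-boxes are the operators $x\preceq A$. Given two such $2$-boxes $x,y\preceq A$, the exchange relation of $\mathscr{A}\otimes\mathscr{B}$ yields
$$(1\boxdot x)y=\sum_i c_i(1\boxdot d_i)+f_i(1\boxdot id)g_i$$
for finitely many $2$-boxes $c_i,d_i,f_i,g_i$. Sandwiching by $A$ so as to realise the Fourier transform and product of the cut-down $\mathscr{S}_A$, one obtains
$$A(1\boxdot Ax)yA=\sum_i Ac_i(1\boxdot Ad_i)A+Af_i(1\boxdot A)g_iA,$$
exactly as in the free product computation where $Qc_iQ,\ d_iQ,\ Qf_i,\ g_iQ\preceq Q$. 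The coefficients $Ac_iA,\ Ad_iA,\ Af_iA,\ Ag_iA\preceq A$ are genuine $2$-boxes of $\mathscr{S}_A$, while $1\boxdot A$ is a multiple of the Jones projection of $\mathscr{S}_A$; hence the second family of terms lies in the basic construction ideal of $(\mathscr{S}_A)_3$. This is precisely the exchange relation of $\mathscr{S}_A$, so $\mathscr{S}_A$ is an exchange relation planar algebra, and by the symmetry noted above so is $\mathscr{S}_B$.

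The step needing the most care is the passage from the displayed identity in $\mathscr{S}$ to the intrinsic exchange relation of $\mathscr{S}_A$: one must check that cutting down by $A$ intertwines the operations $1\boxdot$ and multiplication of $\mathscr{S}$ with those of $\mathscr{S}_A$ up to the fudge factors, so that the sandwiched equation is the relation computed inside $\mathscr{S}_A$. The crucial point is that $1\boxdot id$ cut down by $A$ becomes $1\boxdot A$, a multiple of the Jones projection of $\mathscr{S}_A$ by the defining property of a biprojection, which keeps the Temperley--Lieb part of the relation inside the basic construction ideal rather than letting it leak into the through-string part; this in turn rests on the exchange relation of the biprojection $A$ from Proposition~\ref{ex of bipro}. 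Once this compatibility is verified, the remaining checks --- that the sandwiched coefficients are $2$-boxes of $\mathscr{S}_A$ and that only finitely many terms occur --- are immediate.
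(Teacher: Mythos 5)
Your proposal is correct and follows essentially the same route as the paper: the paper's proof likewise invokes Corollary~\ref{tensordecom222} to get generation by $2$-boxes and then declares ``the rest is the same as the proof of Proposition~\ref{exfreedecom},'' which is exactly the sandwiching-by-$A$ computation you spell out, with $1\boxdot A$ a multiple of the Jones projection of $\mathscr{S}_A$. Your only deviation is handling $\mathscr{S}_B$ via the symmetry $\mathscr{A}\otimes\mathscr{B}\cong\mathscr{B}\otimes\mathscr{A}$ rather than the duality used in the free-product case, which is an equally valid (and in this setting more natural) finish.
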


\begin{proof}
By Corollary \ref{tensordecom222}, both $\mathscr{S}_A$ and $\mathscr{S}_B$ are generated by 2-boxes.
The rest is the same as the proof of Proposition \ref{exfreedecom}.
\end{proof}

Its converse is not true. The tensor product of two Temperley-Lieb subfactor planar algebras may not be an exchange relation planar algebra. It is a corollary of Theorem \ref{main1}.
But a weak version is true.

\begin{proposition}\label{extensorprod}
Suppose $\mathscr{A}$ is a Depth 2 subfactor planar algebra and $\mathscr{B}$ is an exchange relation planar algebra, then $\mathscr{S}_A\otimes\mathscr{S}_B$ is an exchange relation planar algebra.
\end{proposition}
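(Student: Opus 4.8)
The plan is to check the two defining conditions of an exchange relation planar algebra for $\mathscr{A}\otimes\mathscr{B}$: that it is generated by its $2$-boxes and that the exchange relation holds among $2$-boxes. The first is immediate from Corollary \ref{tensor222}, since a depth $2$ planar algebra and an exchange relation planar algebra are each generated by their $2$-boxes. For the second, recall that every operation in a tensor product is applied componentwise, so $1\boxdot(x\otimes y)=(1\boxdot x)\otimes(1\boxdot y)$, multiplication is factorwise, and the inclusion of $2$-boxes into $3$-boxes is $\iota\otimes\iota$. Because $(1\boxdot a)b$ is bilinear in $(a,b)$ and the elements admitting an exchange-relation expansion form a linear subspace, it is enough to treat simple tensors $a=a_1\otimes a_2$, $b=b_1\otimes b_2$, for which
$$(1\boxdot(a_1\otimes a_2))(b_1\otimes b_2)=\big((1\boxdot a_1)b_1\big)\otimes\big((1\boxdot a_2)b_2\big).$$

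First I would expand the $\mathscr{B}$-component by the exchange relation of $\mathscr{B}$, writing $(1\boxdot a_2)b_2=W_1+W_2$ where $W_1=\sum_j c_j(1\boxdot d_j)$ is the rotation part and $W_2=\sum_k p_k(1\boxdot id)q_k$ is the Jones part. The key point is that the depth $2$ hypothesis lets me write the $\mathscr{A}$-component $U:=(1\boxdot a_1)b_1$, which lies in $\mathscr{A}_3=\mathscr{I}_3$, in \emph{two} ways at once: as a rotation sum $U=\sum_i c_i'(1\boxdot d_i')$ and as a Jones sum $U=\sum_m f_m(1\boxdot id)g_m$. Splitting $U\otimes(W_1+W_2)$ and feeding the matching form of $U$ into each piece, I get
$$U\otimes W_1=\sum_{i,j}(c_i'\otimes c_j)\big(1\boxdot(d_i'\otimes d_j)\big),$$
a sum of terms $C(1\boxdot D)$, and, using $1\boxdot(id\otimes id)=1\boxdot id$,
$$U\otimes W_2=\sum_{m,k}(f_m\otimes p_k)(1\boxdot id)(g_m\otimes q_k),$$
a sum of terms $P(1\boxdot id)Q$. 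Their sum displays $(1\boxdot(a_1\otimes a_2))(b_1\otimes b_2)$ in exchange-relation form, which finishes the verification once the two-form claim for $U$ is established.

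The crux, and the sole place the depth $2$ hypothesis enters, is the structural fact that for the depth $2$ factor $\mathscr{A}$ the basic construction ideal is already spanned by single-crossing products, namely $\mathscr{I}_3^{\mathscr{A}}=\mathrm{span}\{c(1\boxdot d):c,d\in\mathscr{A}_2\}$; this supplies the rotation form of $U$, whereas the Jones form is automatic from $\mathscr{I}_3=\mathscr{A}_2 e_2\mathscr{A}_2$. The strategy is a dimension count together with the standard structural description of a depth $2$ box space: for depth $2$ the inclusion $\mathscr{A}_2\subset\mathscr{A}_3$ is the basic construction of $\mathbb{C}=\mathscr{A}_1\subset\mathscr{A}_2$, so $\dim\mathscr{A}_3=(\dim\mathscr{A}_2)^2=\dim\mathscr{A}_2\cdot\dim\mathscr{A}_{1,3}$, while $\mathscr{A}_2$ and $\mathscr{A}_{1,3}$ generate $\mathscr{A}_3$ (Proposition \ref{dimpn}) and sit inside it in tensor position, so that the ordered-product map $c\otimes(1\boxdot d)\mapsto c(1\boxdot d)$ is a linear isomorphism onto $\mathscr{A}_3$; establishing this bijectivity rigorously is the main work. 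It is exactly the availability of \emph{both} expansions of $U$ that can fail when the first factor is only an exchange relation planar algebra, in which case the cross terms $(\text{rotation})\otimes(\text{Jones})$ get stuck; this is why depth $2$ is needed and why, as noted after this proposition, the tensor product of two Temperley-Lieb planar algebras need not satisfy an exchange relation.
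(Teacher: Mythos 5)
Your componentwise reduction is correct: $1\boxdot$ and multiplication are tangle operations, so they act factorwise on $\mathscr{A}\otimes\mathscr{B}$, and $(1\boxdot(a_1\otimes a_2))(b_1\otimes b_2)=((1\boxdot a_1)b_1)\otimes((1\boxdot a_2)b_2)$; your handling of $U\otimes W_2$ is also fine, since depth $2$ gives the Jones form $\mathscr{A}_3=\mathscr{I}_3=\mathrm{span}\{f(1\boxdot id)g\}$ directly. The gap is exactly where you flag it: the ``rotation form'' $\mathscr{A}_3=\mathrm{span}\{c(1\boxdot d):c,d\in\mathscr{A}_2\}$, which you need for the cross term $U\otimes W_1$, is asserted but never proved. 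The justification you offer does not yield it: equality of dimensions $\dim\mathscr{A}_3=(\dim\mathscr{A}_2)^2=\dim\mathscr{A}_2\cdot\dim\mathscr{A}_{1,3}$ says nothing about the ordered-product map $c\otimes(1\boxdot d)\mapsto c(1\boxdot d)$ being injective or surjective, and Proposition \ref{dimpn} only gives that $\mathscr{A}_2$ and $\mathscr{A}_{1,3}$ generate $\mathscr{A}_3$ as an algebra, i.e.\ spanning by words of arbitrary length, not by the length-two words $c(1\boxdot d)$. Writing that ``establishing this bijectivity rigorously is the main work'' concedes the point: as submitted, the central step of the proof is missing.

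The claim is true, and it can be obtained far more cheaply than by the Heisenberg-double style bijectivity argument you sketch. Since $\mathscr{A}$ is depth $2$, every element of $\mathscr{A}_3$ is a sum of terms $s(1\boxdot id)t$. The two-click rotation is a linear bijection of $\mathscr{A}_3$ onto itself, and an isotopy of the picture shows that the two-click rotation of $s(1\boxdot id)t$ (two $2$-boxes joined by a single internal string) is again two $2$-boxes joined by a single string, now in the position $(1\boxdot \hat{t})\hat{s}$ for suitable rotations $\hat{s},\hat{t}$ of $s,t$; hence $\mathscr{A}_3\subseteq\mathrm{span}\{(1\boxdot d)c\}$, and taking adjoints (using that the adjoint of $1\boxdot d$ is again of the form $1\boxdot \tilde{d}$) gives $\mathscr{A}_3=\mathrm{span}\{c(1\boxdot d)\}$. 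With this lemma inserted, your proof is complete. For comparison, the paper's own proof never isolates this statement: it presents $2$-boxes of the tensor product as coproducts $x*y$ with $x\preceq id\otimes e$, $y\preceq e\otimes id$, expands only the $\mathscr{B}$-side by its exchange relation, writes $x_1,x_2$ themselves (not the product $U$) in Jones form using depth $2$, and recombines diagrammatically using the exchange relation of the two biprojections --- the rotation of Jones-form diagrams into single-crossing position is exactly what its pictures implement. So you have identified the right mechanism and the right role of the depth $2$ hypothesis, but until the rotation-form lemma is actually proved, the argument is incomplete.
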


\begin{proof}
By Corollary \ref{tensor222}, we have $\mathscr{A}\otimes\mathscr{B}$ is generated by 2-boxes.
Suppose $A=id\otimes e$ and $B=e\otimes id$, the central biprojections separating $\mathscr{S}_A\otimes\mathscr{S}_B$ as a tensor product.
Then any 2-boxes in $\mathscr{S}_A\otimes\mathscr{S}_B$ is a finite sum of $x*y$'s for which $x\preceq A,y\preceq B$.
So we only need to check the exchange relation for $(1\otimes(x_1*y_1))(x_2*y_2)$, for any $x_1,x_2\preceq A, y_1, y_2\preceq B$.
Since $\mathscr{B}$ is an exchange relation planar algebra, and $(\mathscr{A}\otimes\mathscr{B})_B$ is isomorphic to $\mathscr{B}$, we have
$B(1\boxdot By_1)y_2B=\sum_i Bc_i(1\boxdot Bd_i)B+Bf_i(1\boxdot B)g_iB$, for finitely many $c_i,d_i,f_i,g_i\preceq B$.
Then $(1\boxdot y_1)y_2=\sum_i c_i(1\boxdot d_i)+f_i(1\boxdot )g_i$, by the exchange relation of the biprojection $B$.
Similarly $x_1=\sum_j s_j(1\boxdot id)t_j$, $x_2=\sum_k u_k(1\otimes id)v_k$, for finitely many $s_j,t_j,u_k,v_k\preceq A$, because $\mathscr{A}$ is a Depth 2 subfactor planar algebra.
Then
$$\grb{tensorex1}=\sum_i\grb{tensorex2}+\grb{tensorex3}=\sum_{i,j}\grb{tensorex4}+\sum_{j,k}\grb{tensorex5}.$$
Thus $\mathscr{S}_A\otimes\mathscr{S}_B$ is an exchange relation planar algebra.
\end{proof}

\begin{theorem}\label{ex2p2}
For an odd prime number $p$, the subgroup subfactor planar algebra $\mathscr{S}^{\mathbb{Z}_2\subset\mathbb{Z}_p\rtimes\mathbb{Z}_2}$ is an exchange relation planar algebra, where $\mathbb{Z}_p\rtimes\mathbb{Z}_2=\{a,t| a^p=1, t^2=1, tat=a^{-1} \}$.
\end{theorem}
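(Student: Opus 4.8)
The plan is to exploit the double-coset description of the $2$-boxes of the subgroup subfactor $\mathcal{R}\rtimes H\subset\mathcal{R}\rtimes G$, with $H=\mathbb{Z}_2=\{1,t\}$ and $G=\mathbb{Z}_p\rtimes\mathbb{Z}_2$, and then to read off the exchange relation as a ``straightening law'' in $\mathscr{S}_3$ modulo the basic construction ideal $\mathscr{I}_3$. First I would pin down the structure of $2$-boxes. Under the product $ab$, the algebra $\mathscr{S}_2$ is the Hecke algebra of $(G,H)$; since $tat=a^{-1}$ the double cosets are $D_0=H$ and $D_i=Ha^iH=\{a^{\pm i},a^{\pm i}t\}$ for $1\le i\le\frac{p-1}{2}$, so $\dim(\mathscr{S}_2)=\frac{p+1}{2}$, the pair $(G,H)$ is Gelfand, $\mathscr{S}_2$ is commutative, and its minimal projections $e=p_0,p_1,\dots,p_{(p-1)/2}$ are indexed by the constituents of $\mathrm{Ind}_H^G\mathbf{1}$. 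The prime $p$ enters already here: $\langle a\rangle=\mathbb{Z}_p$ has no proper nontrivial subgroup, so there is no intermediate subgroup between $H$ and $G$, hence $\mathscr{S}$ has no nontrivial biprojection and cannot be split by the free- or tensor-product decompositions of Theorems \ref{freedecom} and \ref{tensordecom}; the argument must be direct. Convolving $D_1$ with $D_i$ gives a three-term relation $D_1*D_i\sim D_{i-1}+D_{i+1}$, with a boundary folding governed by $a^{(p+1)/2}=a^{-(p-1)/2}$, so $D_1$ generates $\mathscr{S}_2$ under the coproduct; a single spherical element likewise generates $\mathscr{S}_2$ under the product. Together with $G=\langle a,t\rangle$ this also yields that $\mathscr{S}$ is generated by $\mathscr{S}_2$. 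At this stage I would record the adjoints, contragredients, trace, and the coproduct structure constants of the $D_i$.

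Next I would reformulate the exchange relation. Writing $\mathscr{S}_{1,3}=\{1\boxdot z\}$ and recalling $1\boxdot id=\delta e_2\in\mathscr{I}_3$, the defining relation $(1\boxdot a)b=\sum_i c_i(1\boxdot d_i)+\sum_i f_i(1\boxdot id)g_i$ is precisely the inclusion
\[ \mathscr{S}_{1,3}\cdot\mathscr{S}_2\subseteq\mathscr{S}_2\cdot\mathscr{S}_{1,3}+\mathscr{I}_3 \]
inside $\mathscr{S}_3$. By Proposition \ref{dimpn}, $\mathscr{S}_3$ is already generated as an algebra by $\mathscr{S}_2$ and $\mathscr{S}_{1,3}$, so passing to the finite-dimensional quotient $\mathscr{S}_3/\mathscr{I}_3$ (using that $\mathscr{I}_3$ is a two-sided ideal), it suffices to show that the images of $\mathscr{S}_{1,3}$ and $\mathscr{S}_2$ satisfy a reordering law. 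Since each subalgebra is singly generated, by $u=1\boxdot D_1$ and by a generator $s$ of $\mathscr{S}_2$, a finite-dimensional straightening (PBW) argument reduces the entire exchange relation to the single identity $\bar u\,\bar s=\sum_{a,b}\lambda_{ab}\,\bar s^{\,a}\bar u^{\,b}$ in $\mathscr{S}_3/\mathscr{I}_3$: once this one commutation is known, the straightening of every word in $u,s$ terminates because the degrees of $\bar s$ and $\bar u$ are capped by $\dim(\mathscr{S}_3/\mathscr{I}_3)$.

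Finally I would establish this single identity by computing in a concrete model of $\mathscr{S}_3$, either the group-theoretic relative commutant $(\mathcal{R}\rtimes H)'\cap(\mathcal{R}\rtimes G)_2$ or, diagrammatically, through the face-removal reduction of Proposition \ref{dimpn} applied to $(1\boxdot D_1)D_1$. The product is expanded using $a^ia^j=a^{i+j}$ together with $ta=a^{-1}t$; the terms in which the two double-coset strands contract supply the Jones-ideal part $\sum f_i(1\boxdot id)g_i$, while the surviving terms reassemble into $\sum_j s_j(1\boxdot r_j)$.

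I expect the closing-up of this last computation to be the main obstacle. Unlike the situation governed by the hypothesis that $\mathscr{S}_3/\mathscr{I}_3$ is abelian, here $\mathscr{S}_3/\mathscr{I}_3$ is \emph{not} abelian for $p\ge5$ (there are depth-$3$ vertices of multiplicity $\ge2$), so the reordering is genuinely nontrivial and cannot be obtained from commutativity alone; the bookkeeping of which products fall into $\mathscr{I}_3$ must be done carefully. It is precisely the primality of $p$ — which prevents iterated convolutions of the $D_i$ from producing any new intermediate double-coset system — that forces the finitely many reductions to close up within $\mathscr{S}_2\cdot\mathscr{S}_{1,3}+\mathscr{I}_3$.
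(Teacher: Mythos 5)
Your setup agrees with the paper's starting point (double cosets $D_i=Ha^iH$ giving $\dim(\mathscr{S}_2)=\frac{p+1}{2}$, the three-term coproduct rule --- the paper's $\delta g_m*g_n=g_{m+n}+g_{m-n}$ --- and the correct reformulation of the exchange relation as $\mathscr{S}_{1,3}\cdot\mathscr{S}_2\subseteq\mathscr{S}_2\cdot\mathscr{S}_{1,3}+\mathscr{I}_3$), but the scaffolding you build on it has two flaws and the decisive step is missing. First, your appeal to Proposition \ref{dimpn} is circular: that proposition \emph{assumes} $\mathscr{S}$ is an exchange relation planar algebra, which is exactly what is to be proved, so you cannot quote its conclusion that $\mathscr{S}_3$ is generated by $\mathscr{S}_2$ and $\mathscr{S}_{1,3}$. (This generation statement is essentially equivalent to what must be shown, and for non-exchange-relation planar algebras such as BMW it genuinely fails.) Second, the reduction to the single identity $\bar u\,\bar s=\sum_{a,b}\lambda_{ab}\bar s^{\,a}\bar u^{\,b}$ is not justified: expanding $\bar u^{\,j}\bar s^{\,i}$ using only this relation produces mixed words $\bar u^{\,j-1}\bar s^{\,a+c}\bar u^{\,d}$ whose exponents escape any induction, and finite-dimensionality of $\mathscr{S}_3/\mathscr{I}_3$ caps the number of linearly independent powers but does not force these words back into $V=\mathrm{span}\{\bar s^{\,a}\bar u^{\,b}\}$. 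What does suffice is the family of identities $\bar u\,x\in V$ for \emph{all} $x\in\mathscr{S}_2$: from $\bar u\,\overline{\mathscr{S}_2}\subseteq V$ one gets $\bar u^{\,j}\,\overline{\mathscr{S}_2}\subseteq\bar u^{\,j-1}V\subseteq V\cdot\overline{\mathscr{S}_{1,3}}=V$ by induction on $j$, since $\overline{\mathscr{S}_{1,3}}$ is an algebra.

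Even after these repairs, the core of the theorem --- which you yourself flag as ``the main obstacle'' --- is exactly what your proposal does not carry out: showing that the products of $2$-boxes with $\mathscr{S}_{1,3}$ really exhaust $\mathscr{S}_3$ modulo $\mathscr{I}_3$. The paper's proof does this not by bookkeeping which terms of $(1\boxdot D_1)D_i$ fall into $\mathscr{I}_3$, but by a rank argument that avoids all such case analysis: it writes down the minimal projections $\chi_k=\frac{\delta}{p}\bigl(1\boxdot 2e+\sum_{m}(\omega^{mk}+\omega^{-mk})(1\boxdot g_m)\bigr)$ of $\mathscr{S}_{1,3}$, uses Wenzl's formula $s_3P_m=P_m-\frac{\delta}{tr(P_m)}P_m(1\boxdot id)P_m$ for the support $s_3$ of $\mathscr{S}_3/\mathscr{I}_3$, and verifies by direct computation that the $(\frac{p-1}{2})^2$ vectors $s_3P_m\chi_k$ are nonzero and pairwise orthogonal. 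Combined with the bound $\dim(\mathscr{S}_3/\mathscr{I}_3)\leq(\frac{p-1}{2})^2$ (the $2$-box-generated subalgebra sits inside the subgroup subfactor planar algebra, whose $3$-box space has dimension $(\frac{p+1}{2})^2+(\frac{p-1}{2})^2$, while $\mathscr{I}_3=\mathscr{S}_2e_2\mathscr{S}_2$ already has dimension $(\frac{p+1}{2})^2$), these vectors form a basis of $\mathscr{S}_3/\mathscr{I}_3$; hence $\mathscr{S}_3=\mathscr{I}_3\oplus\mathrm{span}\{s_3P_m\chi_k\}\subseteq\mathscr{S}_2\cdot\mathscr{S}_{1,3}+\mathscr{I}_3$, which \emph{is} the exchange relation. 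Note also that your one-line claim that $G=\langle a,t\rangle$ makes $\mathscr{S}$ generated by its $2$-boxes needs an argument for a \emph{subgroup} subfactor: the paper obtains it only at the end, from $\mathscr{S}_3=(\mathscr{S}^{\mathbb{Z}_2\subset\mathbb{Z}_p\rtimes\mathbb{Z}_2})_3$ together with rigidity of the principal graph under the index constraint.
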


\begin{proof}
Note that the principal graph of $\mathscr{S}^{\mathbb{Z}_2\subset\mathbb{Z}_p\rtimes\mathbb{Z}_2}$ is $\gra{principalA}$, with $\frac{p-1}{2}$ depth 2 points.
So $\dim((\mathscr{S}^{\mathbb{Z}_2\subset\mathbb{Z}_p\rtimes\mathbb{Z}_2})_2)=\frac{p+1}{2}$ and $\dim((\mathscr{S}^{\mathbb{Z}_2\subset\mathbb{Z}_p\rtimes\mathbb{Z}_2})_3)=(\frac{p+1}{2})^2+(\frac{p-1}{2})^2$.
Considering $\mathscr{S}^{\mathbb{Z}_2\subset\mathbb{Z}_p\rtimes\mathbb{Z}_2}$ as a biprojection cutdown of $\mathscr{S}^{\mathbb{Z}_p\rtimes\mathbb{Z}_2}$, it is easy to check that the minimal projections $e,g_1,\cdots,g_{\frac{p-1}{2}}$ of $(\mathscr{S}^{\mathbb{Z}_2\subset\mathbb{Z}_p\rtimes\mathbb{Z}_2})_2$ satisfy the relation
$$\delta g_m*g_n=g_{m+n}+g_{m-n}, ~\forall ~0\leq m,n \leq \frac{p-1}{2},$$
where $g_0=2e$; $g_{m+n}=g_{p-m-n}$, when $m+n>\frac{p-1}{2}$; and $g_{m-n}=g_{n-m}$, when $m-n<0$. Take
$$\chi_k=\frac{\delta}{p}(1\boxdot 2e+ \sum_{m=1}^{\frac{p-1}{2}}(\omega^{mk}+\omega^{-mk})(1\boxdot g_m)),$$
 where $\omega=e^{\frac{2\pi i}{p}}$, for $k=0,1,\cdots,\frac{p-1}{2}$. Then $\chi_0=2e_2$ and $\{e_2\}\cup \{\chi_k\}_{k=1}^{\frac{p-1}{2}}$ is the set of minimal projections of $(\mathscr{S}^{\mathbb{Z}_2\subset\mathbb{Z}_p\rtimes\mathbb{Z}_2})_{1,3}$.

Suppose $\mathscr{S}$ is the planar subalgebra of $\mathscr{S}^{\mathbb{Z}_2\subset\mathbb{Z}_p\rtimes\mathbb{Z}_2}$ generated by 2-boxes, and $\mathscr{I}_3$ is the two sided ideal of $\mathscr{S}_3$ generated by the Jones Projection. Then $\mathscr{S}_3=\mathscr{I}_3\oplus \mathscr{S}_3/\mathscr{I}_3$. Let us define $s_3$ to be the support of $\mathscr{S}_3/\mathscr{I}_3$, then $s_3P_m=P_m-\frac{\delta}{tr(P_m)}P_m(1\boxdot id)P_m$, by Wenzl's formula. By a direct computation, we have $tr(s_3P_m\chi_k)\neq 0$, for $1\leq m,k\leq \frac{p-1}{2}$, and $\{s_3P_m\chi_k\}_{1\leq m,k\leq \frac{p-1}{2}}$ is a set of pairwise orthogonal vectors. By Proposition \ref{dimpn}, we have $\dim(\mathscr{S}_3/\mathscr{I}_3)\leq (\dim((\mathscr{S}_2)-1)^2=(\frac{p-1}{2})^2$. Thus $\{s_3P_m\chi_k\}_{1\leq m,k\leq \frac{p-1}{2}}$ forms a basis of $\mathscr{S}_3/\mathscr{I}_3$, and $\dim(\mathscr{S}_3/\mathscr{I}_3)=(\frac{p-1}{2})^2$. That means $\mathscr{S}$ is an exchange relation planar algebra. Moreover $\mathscr{S}_3=\mathscr{S}^{\mathbb{Z}_2\subset\mathbb{Z}_p\rtimes\mathbb{Z}_2}_3$. So they have the same principal graph up to depth 3. Then their principal graphs have to be the same by the restriction of the index. So $\mathscr{S}=\mathscr{S}^{\mathbb{Z}_2\subset\mathbb{Z}_p\rtimes\mathbb{Z}_2}$.
\end{proof}

\subsection{$\mathscr{S}_3/\mathscr{I}_3$ is abelian}
To classify subfactor planar algebras generated by 2-boxes subject to the condition that the quotient of 3-boxes by the basic construction ideal is abelian, let us prove two lemmas for the construction and the decomposition of such planar algebras via the free product. For convenience we use following notations.
\begin{notation}\label{comtype}
A subfactor planar algebra $\mathscr{S}$ is called a commute relation planar algebra, if it is generated 2-boxes and $\mathscr{S}_3/\mathscr{I}_3$ is abelian.
Moreover it is of type AN, if $\mathscr{S}_2$ is abelian;
of type NA, if $\mathscr{S}_{1,3}$ is abelian;
of type AA, if both $\mathscr{S}_2$ and $\mathscr{S}_{1,3}$ are abelian.
\end{notation}

A Temperley-Lieb subfactor planar algebra is a commute relation planar algebra of type AA.
A depth 2 subfactor planar algebra is a commute relation planar algebra.
Furthermore it is of type AN if and only if it is $\mathscr{S}^G$, for a group $G$;
It is of type NA if and only if it is the dual of $\mathscr{S}^{G}$, for a group $G$;
It is of type AA if and only if it is $\mathscr{S}^G$, for an abelian group $G$.

\begin{proposition}\label{comex}
Suppose $\mathscr{S}$ is a commute relation planar algebra, then it is an exchange relation planar algebra.
Consequently $\mathscr{S}_3$ is generated by $\mathscr{S}_2$ and $\mathscr{S}_{1,3}$  as an algebra.
\end{proposition}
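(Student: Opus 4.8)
The plan is to translate the hypothesis that $\mathscr{S}_3/\mathscr{I}_3$ is abelian directly into the exchange relation. First I would record the algebraic structure of the decomposition $\mathscr{S}_3 = \mathscr{I}_3 \oplus \mathscr{S}_3/\mathscr{I}_3$: since $\mathscr{I}_3$ is a two-sided $*$-ideal of the finite-dimensional C$^*$-algebra $\mathscr{S}_3$, its support is a central projection, so the orthogonal complement $\mathscr{S}_3/\mathscr{I}_3$ equals $s_3\mathscr{S}_3$ for the central projection $s_3$ complementary to that support, and the quotient map is $x \mapsto s_3 x = x s_3$.

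The key step is to exploit commutativity. For $a,b \in \mathscr{S}_2$, I would view $1\boxdot a \in \mathscr{S}_{1,3}$ and $b \in \mathscr{S}_2$ as elements of $\mathscr{S}_3$. Because $\mathscr{S}_3/\mathscr{I}_3 = s_3\mathscr{S}_3$ is abelian, the images $s_3(1\boxdot a)$ and $s_3 b$ commute; using centrality of $s_3$ this reads $s_3\bigl((1\boxdot a)b - b(1\boxdot a)\bigr)=0$, that is $(1\boxdot a)b - b(1\boxdot a) \in \mathscr{I}_3$. It then remains to massage this into the required shape: the term $b(1\boxdot a)$ is already of the type $c(1\boxdot d)$ with $c=b,\ d=a$, while for the commutator I would invoke $\mathscr{I}_3 = \mathscr{S}_2 e_2 \mathscr{S}_2$ together with $e_2 = \frac{1}{\delta}(1\boxdot id)$ to write it as a finite sum $\sum_i f_i(1\boxdot id)g_i$ with $f_i,g_i \in \mathscr{S}_2$ (finite because $\mathscr{S}_3$ is finite-dimensional). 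Combining gives $(1\boxdot a)b = b(1\boxdot a) + \sum_i f_i(1\boxdot id)g_i$, which is precisely the defining exchange relation (irreducibility is automatic under the paper's standing assumption). The ``consequently'' clause then follows immediately from Proposition \ref{dimpn}, which already shows that an exchange relation planar algebra has $\mathscr{S}_3$ generated by $\mathscr{S}_2$ and $\mathscr{S}_{1,3}$ as an algebra.

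I do not anticipate a serious obstacle; the one point requiring care is the reduction in the first two paragraphs, namely verifying that abelianness of the quotient \emph{algebra} genuinely yields the commutator membership $(1\boxdot a)b - b(1\boxdot a) \in \mathscr{I}_3$ rather than something weaker. This hinges on the support of the ideal $\mathscr{I}_3$ being a \emph{central} projection, which is exactly where the finite-dimensional C$^*$-structure of $\mathscr{S}_3$ enters, so I would state that fact explicitly before applying it.
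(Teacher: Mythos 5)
Your proof is correct and follows essentially the same route as the paper's (which simply notes that abelianness of $\mathscr{S}_3/\mathscr{I}_3$ puts the commutator $(1\boxdot a)b-b(1\boxdot a)$ in $\mathscr{I}_3$ and then writes it via $\mathscr{I}_3=\mathscr{S}_2 e_2\mathscr{S}_2$ as $\sum_i f_i(1\boxdot id)g_i$). The only difference is that you spell out the justification the paper leaves implicit --- centrality of the support of $\mathscr{I}_3$ and the identification of the orthogonal complement with a direct summand --- which is a worthwhile clarification but not a different argument.
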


\begin{proof}
If $\mathscr{S}_3/\mathscr{I}_3$ is abelian,
then for any $a,b\in \mathscr{S}_2$, we have
$(1\boxdot a)b-b(1\boxdot a)\in \mathscr{I}_3$. Thus
$(1\boxdot a)b=b(1\boxdot a)+f_i(1\boxdot id)g_i$, for finitely many $f_i,g_i\in \mathscr{S}_2$.
\end{proof}

%Commute relation planar algebras are constructed by the following lemma.

\begin{lemma}\label{comfreepro}
Suppose $\mathscr{A},\mathscr{B}$ are commute relation planar algebras of type NA and AN respectively,
then $\mathscr{S}=\mathscr{A}*\mathscr{B}$ is a commute relation planar algebra.
Furthermore if $\mathscr{A}$ is of type AA, then $\mathscr{S}$ is of type AN;
If $\mathscr{B}$ is of type AA, then $\mathscr{S}$ is of type NA.
\end{lemma}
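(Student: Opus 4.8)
The plan is to establish the defining condition directly. By Proposition \ref{comex} both $\mathscr{A}$ and $\mathscr{B}$ are exchange relation planar algebras, so Proposition \ref{exfreeprod} makes $\mathscr{S}=\mathscr{A}*\mathscr{B}$ an exchange relation planar algebra. In particular $\mathscr{S}_3$ is generated as an algebra by $\mathscr{S}_2$ and $\mathscr{S}_{1,3}$, and every 2-box of $\mathscr{S}$ has the form $x\otimes e+id\otimes y$ with $x\in\mathscr{A}_2$, $y\in\mathscr{B}_2$. Hence $\mathscr{S}_3/\mathscr{I}_3$ is abelian precisely when the three families of commutators $[\mathscr{S}_2,\mathscr{S}_2]$, $[\mathscr{S}_{1,3},\mathscr{S}_{1,3}]$ and $[\mathscr{S}_2,\mathscr{S}_{1,3}]$ all lie in $\mathscr{I}_3$. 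I would verify each by expanding bilinearly along the decomposition $x\otimes e+id\otimes y$, which splits every commutator into an $\mathscr{A}$-diagonal term, a $\mathscr{B}$-diagonal term and two mixed terms.

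The mixed terms collapse for structural reasons. In $[\mathscr{S}_2,\mathscr{S}_2]$ the mixed term is $x\otimes(ey-ye)$, which vanishes because $\mathscr{B}_2$ is abelian (type AN); in $[\mathscr{S}_{1,3},\mathscr{S}_{1,3}]$ I would use that coproduct with an identity is central, so that $(x\otimes e)*(id\otimes y)=\frac{tr(x)}{\delta^2}\,id\otimes y=(id\otimes y)*(x\otimes e)$ and those mixed terms are zero outright; and in $[\mathscr{S}_2,\mathscr{S}_{1,3}]$ the two mixed terms are exactly relations $(3)$ and $(4)$ from the proof of Proposition \ref{exfreeprod}, the first an honest commutation and the second containing the factor $1\boxdot id=\delta e_2\in\mathscr{I}_3$. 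The diagonal terms I push back to the factors through the biprojection cutdowns $\mathscr{A}\cong\mathscr{S}_Q$ and $\mathscr{B}\cong\mathscr{S}^Q$ with $Q=id\otimes e$ (Proposition \ref{iso}, Theorem \ref{freedecom}). The $\mathscr{A}$-diagonal of $[\mathscr{S}_2,\mathscr{S}_2]$ is $[x_1,x_1']\otimes e$ and the $\mathscr{B}$-diagonal of $[\mathscr{S}_{1,3},\mathscr{S}_{1,3}]$ is $1\boxdot$ of a coproduct commutator in $\mathscr{B}$; these are the images of the $\mathscr{S}_2$– and $\mathscr{S}_{1,3}$–commutators of $\mathscr{A}$ and $\mathscr{B}$, which lie in $\mathscr{I}_3^{\mathscr{A}}$ and $\mathscr{I}_3^{\mathscr{B}}$ since $\mathscr{A},\mathscr{B}$ are commute relation. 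The $\mathscr{A}$-diagonal of $[\mathscr{S}_{1,3},\mathscr{S}_{1,3}]$ is $1\boxdot$ of the coproduct commutator in $\mathscr{A}$, which is zero because $\mathscr{A}_{1,3}$ is abelian (type NA), and symmetrically the $\mathscr{B}$-diagonal of $[\mathscr{S}_2,\mathscr{S}_2]$ vanishes since $\mathscr{B}_2$ is abelian. This is exactly where the hypotheses NA on $\mathscr{A}$ and AN on $\mathscr{B}$ are consumed.

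For the ``Furthermore'' clauses I would only note where an ``in $\mathscr{I}_3$'' conclusion sharpens to ``equal to zero''. If $\mathscr{A}$ is of type AA then $\mathscr{A}_2$ is abelian, so the $\mathscr{A}$-diagonal of $[\mathscr{S}_2,\mathscr{S}_2]$ vanishes identically; together with the vanishing of the other three terms this gives $\mathscr{S}_2$ abelian, i.e. $\mathscr{S}$ of type AN. Dually, if $\mathscr{B}$ is of type AA then $\mathscr{B}_{1,3}$ is abelian, the coproduct commutator in the $\mathscr{B}$-diagonal of $[\mathscr{S}_{1,3},\mathscr{S}_{1,3}]$ vanishes, $\mathscr{S}_{1,3}$ becomes abelian, and $\mathscr{S}$ is of type NA.

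The step I expect to be the main obstacle is the transfer of ideals: I must check that the cutdown isomorphisms carry $\mathscr{I}_3^{\mathscr{A}}$ and $\mathscr{I}_3^{\mathscr{B}}$ into $\mathscr{I}_3$. The cleanest route is to observe that the Jones projection of each cutdown is built from $1\boxdot Q$ (for $\mathscr{S}_Q$ it is a multiple of $Q(1\boxdot Q)Q$, and for $\mathscr{S}^Q$ a rotation of the dual expression), and that $1\boxdot Q\le 1\boxdot id=\delta e_2$ forces $1\boxdot Q\preceq e_2$, hence $1\boxdot Q\in\mathscr{I}_3$; since $\mathscr{I}_3=\mathscr{S}_2 e_2\mathscr{S}_2$ is a rotation-invariant two-sided ideal, each factor ideal is absorbed. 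Keeping track of the fudge scalars in the products and coproducts of $x\otimes e$ and $id\otimes y$, and confirming these ideal containments, is the only genuinely technical point; everything else is bilinear bookkeeping over the four-term splitting.
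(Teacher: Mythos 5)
Your overall skeleton coincides with the paper's: invoke Propositions \ref{comex} and \ref{exfreeprod} to make $\mathscr{S}$ an exchange relation planar algebra, so that $\mathscr{S}_3$ is generated by $\mathscr{S}_2$ and $\mathscr{S}_{1,3}$; reduce abelianness of $\mathscr{S}_3/\mathscr{I}_3$ to the three commutator families; and split each family four ways along $x\otimes e+id\otimes y$. Your handling of the mixed terms and of the diagonal terms that vanish outright (the $\mathscr{B}$-diagonal of $[\mathscr{S}_2,\mathscr{S}_2]$ by type AN, the $\mathscr{A}$-diagonal of $[\mathscr{S}_{1,3},\mathscr{S}_{1,3}]$ by type NA) matches the paper's subcases (1.3), (1.4), (2.2)--(2.4) and case (3).

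The genuine gap is exactly the step you flag as the main obstacle: the transfer of $\mathscr{I}_3^{\mathscr{A}}$ and $\mathscr{I}_3^{\mathscr{B}}$ into $\mathscr{I}_3$. All three claims you base it on are false. First, the order is backwards: from $e_2(1\boxdot Q)=\frac{tr(Q)}{\delta}e_2$ one gets $e_2\preceq\frac{\delta}{tr(Q)}(1\boxdot Q)$, so the Fourier transform of a biprojection \emph{dominates} $e_2$; it is a subprojection of $e_2$ only when $Q=id$. Second, $1\boxdot Q\in\mathscr{I}_3$ fails in general: already $1\boxdot e$ is $\frac{1}{\delta}$ times the identity of $\mathscr{S}_3$, which lies in $\mathscr{I}_3$ only in depth $2$; and for the relevant biprojection $Q=id\otimes e$ in Fuss--Catalan $TL*TL$ one computes $1\boxdot Q=\frac{\delta_A}{\delta_B}\,e_2^{\mathscr{A}}\otimes id_{\mathscr{B}}$, which cannot lie in $\mathscr{I}_3\subset TL_3(\delta_A)\otimes\mathscr{I}_3^{TL(\delta_B)}$ because its $\mathscr{B}$-component is the identity. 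Third, $\mathscr{I}_3$ is not invariant under one- or two-click rotations, only under the $180^\circ$ rotation. The transfer statement you need is nevertheless true, but for a different reason: the Jones projection of the cutdown $\mathscr{S}_Q$ is a multiple of $Q(1\boxdot Q)Q$, not of $1\boxdot Q$, and by the exchange relation of the biprojection (Proposition \ref{ex of bipro}, used in exactly this way in the proof of Theorem \ref{P bipro}) one has $Q(1\boxdot Q)Q=Q(1\boxdot id)Q=\delta\,Qe_2Q\in\mathscr{I}_3$; with this corrected ingredient your bilinear bookkeeping goes through. The paper sidesteps the issue entirely by writing each factor relation as an identity of coloured diagrams, inflating it with the other colour's strings, and observing that the inflated right-hand sides visibly contain the pattern $1\boxdot id=\delta e_2$ and hence lie in $\mathscr{I}_3$. (A minor further slip, harmless to the argument: componentwise $(x\otimes e)*(id\otimes y)=\frac{tr_{\mathscr{A}}(x)}{\delta}\,id\otimes y$, not $\frac{tr(x)}{\delta^2}\,id\otimes y$.)
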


\begin{proof}
Suppose $\mathscr{A},\mathscr{B}$ are commute relation planar algebras of type NA and AN respectively,
then by Proposition(\ref{exfreeprod}),(\ref{comex}), we have  $\mathscr{S}=\mathscr{A}*\mathscr{B}$ is an exchange relation planar algebra.
Therefore $\mathscr{S}$ is generated by $\mathscr{S}_2$ and $\mathscr{S}_{1,3}$ as an algebra.
To show $\mathscr{S}_3/\mathscr{I}_3$ is abelian, it is enough to show that for any $x,y\in \mathscr{S}_2$, we have

(1) $(1\boxdot x)y-y(1\boxdot x)\in \mathscr{I}_3$;

(2) $xy-yx\in \mathscr{I}_3$;

(3) $(1\boxdot x)(1\boxdot y)-(1\boxdot y)(1\boxdot x)\in \mathscr{I}_3$;

Note that $x=a\otimes e + id\otimes b$, $y=c\otimes e +id \otimes b$, for some $a,c\in \mathscr{A}_2$ and $b,d\in\mathscr{B}_2$. Each case splits into four subcases. Now we use the A,B-colour diagrams to express elements in the free product. We omit the labels of the boundary of a diagram, which should be ordered as $ABBA~ ABBA\cdots ABBA$.

(1.1) By assumption $\mathscr{A}$ is a commute relation planar algebra, so $(1\boxdot a)c-c(1\boxdot a)=\sum_i f_i(1\boxdot id)g_i$, for finitely many $f_i,g_i\in \mathscr{A}_2$.
Then $\grb{com1}-\grb{com2}=\sum_i\grb{com3}$. Thus $(1\boxdot (a\otimes e))(c\otimes e)-(c\otimes e)(1\boxdot (a\otimes e))\in \mathscr{I}_3$;

(1.2) By assumption $\mathscr{B}$ is a commute relation planar algebra. Similarly we have $(1\boxdot (id\otimes b))(id\otimes d)-(id\otimes d)(1\boxdot (id\otimes b))\in \mathscr{I}_3$;

(1.3) $(1\boxdot (a\otimes e))(id\otimes d)-(id\otimes d)(1\boxdot (a\otimes e))=0$;

(1.4) Note that $\grb{com4},\grb{com5} \in \mathscr{I}_3$, thus
$(1\boxdot (id\otimes b))(c\otimes e)-(c\otimes e)(1\boxdot (id\otimes b))\in \mathscr{I}_3$;

(2.1) By assumption $\mathscr{A}$ is a commute relation planar algebra, so $ac-ca=\sum_j f_j(1\boxdot id)g_j$, for finitely many $f_j,g_j\in \mathscr{A}_2$.
Then $\grb{com6}-\grb{com7}=\sum_j\grb{com8}$. Thus $(a\otimes e)(c\otimes e)-(c\otimes e)(a\otimes e) \in \mathscr{I}_3$.

(2.2) By assumption $\mathscr{B}$ is of type AN, so $bd-db=0$. Thus $(id\otimes b)(id\otimes d)-(id\otimes b)(id\otimes d)=0$;

(2.3) $(a\otimes e)(id\otimes d)-(id\otimes d)(a\otimes e)=a\otimes ed- a\otimes de=0$;

(2.4) $(id\otimes b)(c\otimes e)-(c\otimes e)(id\otimes b)=c\otimes be- c\otimes eb=0$;

Considering the $180^\circ$ rotation, the proof of (3) is the same as that of (2).

Therefore $\mathscr{S}$ is a commute relation planar algebra.
\end{proof}

%For the classification, we need to show how a commute relation planar algebra decomposed as a free product.

\begin{lemma}\label{comfreedecom}
Suppose $\mathscr{A}, \mathscr{B}$ are subfactor planar algebras with circle parameters greater than 1.
If $\mathscr{S}=\mathscr{A}*\mathscr{B}$ is a commute relation planar algebra,
then $\mathscr{A},\mathscr{B}$ are commute relation planar algebras of type NA and AN respectively.
Furthermore if $\mathscr{S}$ is of type AN, then $\mathscr{A}$ is of type AA;
If $\mathscr{S}$ is of type NA, then $\mathscr{B}$ is of type AA.
\end{lemma}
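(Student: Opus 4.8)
The plan is to reverse the case analysis of Lemma \ref{comfreepro}, extracting the two ``crossed'' commutativity conditions (commutativity of the coproduct of $\mathscr{A}$ and of the product of $\mathscr{B}$) from the single hypothesis that $\mathscr{S}_3/\mathscr{I}_3$ is abelian. First I would record the decomposition and settle the ``commute'' half of the conclusion. A commute relation planar algebra is an exchange relation planar algebra by Proposition \ref{comex}, so $\mathscr{S}=\mathscr{A}*\mathscr{B}$ is exchange and Proposition \ref{exfreedecom} gives that $\mathscr{A}$ and $\mathscr{B}$ are exchange relation planar algebras generated by $2$-boxes, with $\mathscr{A}\cong\mathscr{S}_Q$ and $\mathscr{B}\cong\mathscr{S}^Q$ for the central biprojection $Q=id\otimes e$. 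The commute relation descends to these cut-downs exactly as the exchange relation does in the proof of Proposition \ref{exfreedecom}: if $(1\boxdot a)b-b(1\boxdot a)=\sum_i f_i(1\boxdot id)g_i$ with right-hand side in $\mathscr{I}_3$, then cutting by $Q$ on both sides sends it into the basic construction ideal of $\mathscr{S}_Q$. Hence $(\mathscr{S}_Q)_3/(\mathscr{I}_{\mathscr{S}_Q})_3$ and the corresponding quotient for $\mathscr{S}^Q$ are abelian, so $\mathscr{A}$ and $\mathscr{B}$ are already commute relation planar algebras; only the type statements remain.

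Next I would isolate the types by computing two commutators in $\mathscr{S}_3$ with the $2$-boxes written as $a\otimes e$ and $id\otimes b$. On the product side $(id\otimes b_1)(id\otimes b_2)-(id\otimes b_2)(id\otimes b_1)=id\otimes(b_1b_2-b_2b_1)$ lies in $\mathscr{I}_3$; on the coproduct side, using $(1\boxdot(a_1\otimes e))(1\boxdot(a_2\otimes e))=\delta_B^{-1}\,1\boxdot((a_1*a_2)\otimes e)$, abelianness forces $1\boxdot((a_1*a_2-a_2*a_1)\otimes e)\in\mathscr{I}_3$. Since $tr$ is symmetric in both the product and the coproduct, $b_1b_2-b_2b_1$ and $a_1*a_2-a_2*a_1$ are orthogonal to $id$. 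Thus the type statements reduce to the two intersection claims
\[
\{id\otimes d : d\in\mathscr{B}_2\}\cap\mathscr{I}_3\subseteq\mathbb{C}Q, \qquad \{1\boxdot(c\otimes e) : c\in\mathscr{A}_2\}\cap\mathscr{I}_3\subseteq\mathbb{C}(1\boxdot Q),
\]
because a trace-zero element of $\mathbb{C}e$ or of $\mathbb{C}\,id$ is zero; the first claim yields $b_1b_2=b_2b_1$ (so $\mathscr{B}$ is of type AN) and the second $a_1*a_2=a_2*a_1$ (so $\mathscr{A}$ is of type NA).

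I expect these two intersection claims to be the main obstacle. The first is equivalent to $P_0\le Q$, where $P_0$ is the support of $\mathscr{S}_2\cap\mathscr{I}_3$, a central biprojection with $\mathscr{S}_{P_0}$ depth $2$ by Theorem \ref{P bipro}; indeed $Q\mathscr{S}_2Q=\mathscr{A}_2\otimes e$ and $(\mathscr{A}_2\otimes e)\cap\{id\otimes d\}=\mathbb{C}Q$, so $P_0\le Q$ confines $\mathscr{S}_2\cap\mathscr{I}_3$ to $\mathscr{A}_2\otimes e$. Conceptually $P_0\le Q$ says that the largest depth-$2$ (group-like) ``bottom'' of the free product sits inside the intermediate subfactor realising $\mathscr{A}$, the top factor $\mathscr{B}$ being glued on freely and contributing no normaliser of $\mathcal{N}$; this is exactly where the hypothesis $\delta_A>1$ enters, to guarantee that every depth-$2$ point $id\otimes q$ with $q$ orthogonal to $e$ is adjacent to a depth-$3$ point. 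The second claim is the Fourier-dual statement for $\mathscr{S}_{1,3}\cap\mathscr{I}_3$, which I would obtain either by the same analysis after the $180^\circ$ rotation or by passing to the dual free product $\bar{\mathscr{B}}*\bar{\mathscr{A}}$, with $\delta_B>1$ playing the symmetric role. Pinning down the precise shape of $\mathscr{I}_3$ inside the free product is the real work of the proof.

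Finally, the ``furthermore'' clauses are short. If $\mathscr{S}$ is of type AN then $\mathscr{S}_2$ is abelian, and since $a\mapsto a\otimes e$ is an injective algebra homomorphism of $\mathscr{A}_2$ onto the subalgebra $Q\mathscr{S}_2Q$, $\mathscr{A}_2$ is abelian; together with type NA this makes $\mathscr{A}$ of type AA. Dually, if $\mathscr{S}$ is of type NA then $\mathscr{S}_{1,3}$ is abelian, and $(1\boxdot(id\otimes b_1))(1\boxdot(id\otimes b_2))=\delta_A\,1\boxdot(id\otimes(b_1*b_2))$ shows the coproduct of $\mathscr{B}$ is commutative, so $\mathscr{B}_{1,3}$ is abelian and $\mathscr{B}$ is of type AA.
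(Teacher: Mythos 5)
Your first paragraph (the commute relation descends to the $Q$-cut-downs via compression) and your last paragraph (the ``furthermore'' clauses) are correct and essentially identical to what the paper does. The problem is the middle: you reduce both type statements to the two intersection claims $\{id\otimes d\}\cap\mathscr{I}_3\subseteq\mathbb{C}Q$ and $\{1\boxdot(c\otimes e)\}\cap\mathscr{I}_3\subseteq\mathbb{C}(1\boxdot Q)$, and then explicitly leave them unproved (``the real work of the proof''). These claims \emph{are} the lemma: your remark that $\delta_A>1$ ``guarantees that every depth-$2$ point $id\otimes q$ with $q$ orthogonal to $e$ is adjacent to a depth-$3$ point'' is a restatement of the first claim in principal-graph language, not an argument for it. Theorem \ref{P bipro} gives you a central biprojection $P_0$ supporting $\mathscr{S}_2\cap\mathscr{I}_3$, and Theorem \ref{inter free} (which you do not invoke) would force $P_0\le Q$ or $P_0\ge Q$; but ruling out $P_0>Q$ still requires knowing that the cut-down $\mathscr{S}_{P_0}$, which is again a free product of two planar algebras of index greater than $1$, cannot be depth $2$ --- a fact of exactly the same diagrammatic nature as the claim you are deferring. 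So this is a genuine gap, not a routine verification left to the reader.

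The paper closes this gap by a short matrix-unit argument rather than by describing $\mathscr{I}_3$ inside the free product. If $\mathscr{A}_{1,3}$ were non-abelian it would contain a $2\times2$ system of matrix units $\{u_{11},u_{12},u_{21},u_{22}\}$; since $\delta_B>1$ there is a projection $p\in\mathscr{B}_2$ orthogonal to $e$, and the free-product diagrams combining $u_{ij}$ with $p$ form a $2\times2$ matrix system in $\mathscr{S}_3$ lying in the orthogonal complement of $\mathscr{I}_3=\mathscr{S}_2e_2\mathscr{S}_2$ (pairing against $xe_2y$ caps the $p$-label against the Jones projection, and $pe=0$). This contradicts abelianness of $\mathscr{S}_3/\mathscr{I}_3$, so $\mathscr{A}$ is of type NA; duality then gives type AN for $\mathscr{B}$, with $\delta_A>1$ entering symmetrically --- the same allocation of hypotheses as in your sketch. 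Either adopt this matrix-unit device or supply an actual proof of your two intersection claims; as written, your proposal establishes the easy outer layers of the lemma and leaves its core unproven.
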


\begin{proof}
Because $\mathscr{S}_2$ is a commute relation planar algebra,
for any $a,c$ in $\mathscr{A}_2$, we have
$$\graa{com1}-\graa{com2}=\sum_i \graa{com9},$$
for finitely many $x_i, y_i \in \mathscr{S}_2$. Moreover
$$(id \otimes e)(\graa{com1}-\graa{com2})(id\otimes e)=\graa{com1}-\graa{com2},$$
so we may assume that $x_i, y_i \preceq id\otimes e$. Then
$$\graa{com1}-\graa{com2}=\sum_i\graa{com3},$$
for finitely many $f_i,g_i\in \mathscr{A}_2$. Therefore
$(1\boxdot a)c-c(1\boxdot a)=\sum_i f_i(1\boxdot id)g_i.$

Similarly
$$\graa{com6}-\graa{com7}=\sum_j \graa{com10},$$
for finitely many $x_j, y_j \in \mathscr{S}_2$.
Moreover
$$(id \otimes e)(\graa{com6}-\graa{com7})(id\otimes e)=\graa{com6}-\graa{com7},$$
so we may assume that $x_j, y_j \preceq id\otimes e$. Then
$$\graa{com6}-\graa{com7}=\sum_j\graa{com8},$$
for finitely many $f_j,g_j\in \mathscr{A}_2$.
Therefore $ac-ca=\sum_j f_j(1\boxdot id)g_j$.

If $\mathscr{A}_{1,3}$ is not abelian, then there is a matrix system $\{u_{11},u_{12},u_{21},u_{22}\}$ in $\mathscr{A}_{1,3}$. By assumption the index of $\mathscr{B}$ is greater than 1, so there is a projection $p\in\mathscr{B}_2$ orthogonal to the Jones Projection. Then $\{\gra{puij}\}_{1\leq i,j\leq 2}$ forms a matrix system in $\mathscr{S}_3$, and they are in the orthogonal complement of $\mathscr{I}_3$. So $\mathscr{S}_3/\mathscr{I}_3$ is not abelian. It is a contradiction.

Therefore $\mathscr{A}$ is a commute relation planar algebra of type NA.
Furthermore if $\mathscr{S}$ is of type AN, then $\mathscr{S}_2$ is abelian. Note that $\mathscr{A}$ is isomorphic to $\mathscr{S}_{id\otimes e}$, so $\mathscr{A}_2$ is abelian. Then $\mathscr{A}$ is of type AA.

Considering the duality, we have  $\mathscr{B}$ is a commute relation planar algebra of type AN. Furthermore if $\mathscr{S}$ is of type NA, then $\mathscr{B}$ is of type AA.
\end{proof}
%\begin{question}
%Suppose $\mathscr{S}$ is an exchange relation planar algebra with $\dim(\mathscr{S}_3/\mathscr{I}_3)=(\dim(\mathscr{S}_2)-1)^2$, whether $\mathscr{S}$ is $\mathscr{S}^{\mathbb{Z}_2\subset\mathbb{Z}_p\rtimes\mathbb{Z}_2}$, for some prime number $p$?
%\end{question}
%\begin{conjecture}
%Suppose $\mathscr{S}$ is an exchange relation planar algebra with $\dim(\mathscr{S}_3/\mathscr{I}_3)=(\dim(\mathscr{S}_2)-1)^2$, such that $\mathscr{S}_2$ and $\mathscr{S}_{1,3}$ are abelian. Then $\mathscr{S}=\mathscr{S}^{\mathbb{Z}_2\subset\mathbb{Z}_p\rtimes\mathbb{Z}_2}$, for some prime number $p$.
%\end{conjecture}

\section{Classifications}\label{cla expa}
Recall that the classification of subfactor planar algebra generated by a non-trivial 2-box are given by $\mathscr{S}^{\mathbb{Z}_3}$, $TL*TL$, for at most 12 dimensional 3-boxes \cite{BisJon97}; $\mathscr{S}^{\mathbb{Z}_2\subset \mathbb{Z}_5\rtimes\mathbb{Z}_2}$, for 13 dimensional 3-boxes \cite{BisJon02}; BMW's, precisely one family from quantum $Sp(4,\mathbb{R})$ and one from quantum $O(3,\mathbb{R})$, for 14 dimensional 3-boxes \cite{BJL}.
Now let us prove the main classification results.

\begin{theorem}\label{main1}
Suppose $\mathscr{S}$ is an exchange relation planar algebra with $\dim(\mathscr{S}_2)=4$, then $\mathscr{S}$ is one of the follows,

(1)$\mathscr{S}^{\mathbb{Z}_4}$, or $\mathscr{S}^{\mathbb{Z}_2\oplus \mathbb{Z}_2}$;

(2)$\mathscr{A}*TL$ or $TL*\mathscr{A}$, for an exchange relation planar algebra $\mathscr{A}$ with $\dim(\mathscr{A}_2)=3$;

(3)$\mathscr{S}^{\mathbb{Z}_2}\otimes TL$;

(4)$\mathscr{S}^{\mathbb{Z}_2\subset \mathbb{Z}_7\rtimes\mathbb{Z}_2}$.
\end{theorem}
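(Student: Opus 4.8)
The plan is to reduce everything to the algebraic structure of the $2$-boxes, which by Theorem \ref{p2ex} determines the whole planar algebra, and then to run a case analysis driven by biprojections. First I would pin down the C*-algebra $\mathscr{S}_2$. Since $\dim(\mathscr{S}_2)=4$, as a multimatrix algebra it is either $\mathbb{C}^4$ or $M_2(\mathbb{C})$. The Jones projection $e$ is a minimal projection, and in $M_2(\mathbb{C})$ a minimal projection generates the whole algebra as a two-sided ideal, so $\mathscr{I}_2=\mathscr{S}_2$ and $\mathscr{S}_2/\mathscr{I}_2=0$; this forces depth $\le 1$ and hence $\dim(\mathscr{S}_2)=2$, a contradiction. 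Thus $\mathscr{S}_2\cong\mathbb{C}^4$, with minimal central projections $e=p_0,p_1,p_2,p_3$. Applying the same argument to the dual exchange relation planar algebra, whose $2$-box space is $\mathscr{S}_{1,3}\cong\mathscr{S}_{2,-}$ and again $4$-dimensional, shows $\mathscr{S}_{1,3}\cong\mathbb{C}^4$ as well; equivalently the coproduct on $\mathscr{S}_2$ is commutative. So both the product and the coproduct are abelian, and the problem becomes the classification of the admissible fusion data $p_i*p_j=\sum_k c_{ij}^k\,p_k$.

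Next I would record the constraints on this data: the coefficients $c_{ij}^k$ are nonnegative by the Schur product theorem (Theorem \ref{P*P>0}); the traces satisfy $tr(p_i*p_j)=\delta^{-1}tr(p_i)tr(p_j)$, with Frobenius reciprocity (Lemma \ref{CDE=}) controlling the coefficient of $e$; the coproduct is commutative and compatible with contragredients; it is associative; and, crucially, $r(p_i'*p_j)\le \dim(p_j\mathscr{S}_3 p_i)$ equals the number of length-$2$ paths between the corresponding depth-$2$ points (Lemma \ref{copro connect}), which is tightly bounded because $\dim(\mathscr{S}_3)\le \dim(\mathscr{S}_2)^2+(\dim(\mathscr{S}_2)-1)^2=25$ by Proposition \ref{dimpn}. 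Together these turn the classification into a finite combinatorial problem.

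I would organize the case analysis around biprojections. Let $P$ be the support of $\mathscr{S}_2\cap\mathscr{I}_3$, a central biprojection with $\mathscr{S}_P$ depth $2$ (Theorem \ref{P bipro}). If $P=id$ then $\mathscr{S}$ is depth $2$, so $\mathscr{S}_2$ is a Kac algebra that is both commutative and cocommutative, hence the group algebra of an abelian group of order $4$, giving case (1). Otherwise I would search $\mathscr{S}_2$ for a virtual normalizer, i.e.\ a central minimal $p_i$ with $tr(p_i)>1$ all of whose coproducts $p_i*Q$ have rank $1$: whenever one exists, Theorem \ref{virtual normalizer} separates $\mathscr{S}$ as a free product by a non-trivial biprojection, and since one factor must then be Temperley--Lieb (the other having three-dimensional $2$-boxes), Proposition \ref{exfreedecom} together with the classification of exchange relation planar algebras with three-dimensional $2$-boxes (Theorem \ref{clas ex 1}) yields case (2). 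If instead two complementary central biprojections $A,B$ with $AB=e$ and $A*B$ a multiple of $id$ appear, Theorem \ref{tensordecom} splits $\mathscr{S}$ as a tensor product, producing $\mathscr{S}^{\mathbb{Z}_2}\otimes TL$, case (3). In the remaining situation---no splitting biprojection and no virtual normalizer---I expect the constraints to force the fusion rules $\delta\, g_m*g_n=g_{m+n}+g_{m-n}$ of Theorem \ref{ex2p2} for $p=7$, after which Theorem \ref{p2ex} upgrades the match on $2$-boxes to an isomorphism with $\mathscr{S}^{\mathbb{Z}_2\subset\mathbb{Z}_7\rtimes\mathbb{Z}_2}$, giving case (4).

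\textbf{The main obstacle} is the last, purely combinatorial step: exhausting all fusion-coefficient systems on $\mathbb{C}^4$ consistent with positivity, associativity, the trace identities, and the length-$2$-path bounds, and showing that the only one admitting neither a virtual normalizer nor a pair of complementary biprojections is the $p=7$ system. The delicate points are the borderline distinctions---especially $tr(p_i)=1$, which forces genuine normalizers and steers into the group/depth-$2$ branch, versus $tr(p_i)>1$---and verifying that each surviving system matches an explicit example closely enough to invoke the rigidity of Theorem \ref{p2ex}. That is where the real work lies.
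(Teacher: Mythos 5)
Your setup is sound and parallels the paper's opening moves: $\mathscr{S}_2\cong\mathbb{C}^4$, $\mathscr{S}_{1,3}\cong\mathbb{C}^4$, and the closing tools you cite (Theorem \ref{virtual normalizer} for free-product splitting, Theorem \ref{tensordecom} for tensor splitting, Theorem \ref{ex2p2} and Theorem \ref{p2ex} for case (4)) are exactly the ones the paper uses. But there is a genuine gap, and you have flagged it yourself: your trichotomy ``virtual normalizer exists / complementary biprojections exist / otherwise the $p=7$ fusion rules hold'' is never shown to be exhaustive, and the last branch is only a statement of hope (``I expect the constraints to force\ldots''). That is not a peripheral verification; it is the body of the proof. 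In particular, nothing in your plan ever \emph{produces} the pair of complementary biprojections needed for case (3), so that case would simply never be reached.

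The organizing idea you are missing is the paper's case analysis on the multimatrix structure of $\mathscr{S}_3/\mathscr{I}_3$. Since $\mathscr{S}$ is exchange relation, $\mathscr{S}_3/\mathscr{I}_3$ is spanned by the nine elements $s_3P_i(1\boxdot Q_j)$, so each corner $s_3(1\boxdot Q_j)\mathscr{S}_3P_i$ is at most one-dimensional and $\dim(\mathscr{S}_3/\mathscr{I}_3)\le 9$; the possibilities are: abelian, one $M_2$ summand plus $\mathbb{C}$'s, two or more $M_2$ summands (excluded: two multiplicity-2 depth-3 points would give two length-2 paths between some $a_i$ and $b_j$ in the 4-partite graph, violating the corner bound), or $M_3$. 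The abelian case gives your (1)/(2) exactly as you describe. Your case (3), however, requires the paper's single-$M_2$ analysis: there one gets either a virtual normalizer or a trace-1 projection $P_1$ with $e+P_1$ a biprojection, and then one must \emph{pass to the dual planar algebra}, which satisfies the same hypotheses, and rerun the argument; the trace-2 biprojection found on the dual side corresponds to a biprojection of trace $\frac{\delta^2}{2}$ in $\mathscr{S}$, which (in the rank-2 subcase) forces $P_2=P_2'$, $P_1*P_2=\frac{1}{\delta}P_3$, hence $(e+P_1)*(e+P_2)=\frac{1}{\delta}id$, and only then does the tensor splitting apply, with $tr(e+P_1)=2$ identifying the factor $\mathscr{S}^{\mathbb{Z}_2}$. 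Finally, in the $M_3$ case the paper does not ``exhaust fusion-coefficient systems'': the corner condition forces $s_3P_i(1\boxdot Q_j)\neq 0$, i.e.\ $|\lambda_{i,j}|\neq tr(P_i)/\delta$ for all $i,j$, so Theorem \ref{maxcoeff} shows each $P_i$ generates $id$ as a biprojection; the principal graph then has a single depth-3 point meeting each $a_i$ once, giving $tr(P_1)=tr(P_2)=tr(P_3)=c$ and $r(P_i*P_j)\le 2$ by Lemma \ref{copro connect}; repeated use of Theorem \ref{P*P=P} to exclude proper biprojections pins down all coproducts, and comparing the coefficient of $P_3$ in $P_1*(P_1*P_2)=(P_1*P_1)*P_2$ yields $c=2$, after which Theorem \ref{p2ex} finishes. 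Your proposal would need to reconstruct all of this; as written, cases (3) and (4) are asserted rather than proved.
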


Recall that an exchange relation planar algebra $\mathscr{A}$ with $\dim(\mathscr{A}_2)=3$ is equivalent to say a subfactor planar algebra $\mathscr{A}$ generated by a nontrivial 2-box with $\dim(\mathscr{A}_3)\leq 13$, see the arguments at the end of section \ref{sub ex}.

\begin{proof}
Suppose $\mathscr{S}$ is an exchange relation planar algebra with $\dim(\mathscr{S}_2)=4$. Then $\mathscr{S}_2$ and $\mathscr{S}_{1,3}$ are abelian.
Suppose $e,P_1,P_2,P_3$ are mutually orthogonal minimal projections of $\mathscr{S}_2$, and $e_2, 1\boxdot Q_1, 1\boxdot Q_2, 1\boxdot Q_3$ are mutually orthogonal minimal projections of $\mathscr{S}_{1,3}$.
Then $P_i*Q_j=\lambda_{i,j} Q_j$, for some $\lambda_{i,j}\in\mathbb{C}$.
Let $\mathscr{I}_3$ be the two sided ideal of $\mathscr{S}_3$ generated by the Jones Projection, then $\mathscr{S}_3=\mathscr{I}_3\oplus \mathscr{S}_3/\mathscr{I}_3$. Let us define $s_3$ to be the support of $\mathscr{S}_3/\mathscr{I}_3$.
Then $\{s_3(1\boxdot Q_j)P_i\}_{1\leq i,j\leq 3}$ are pairwise orthogonal. While $\mathscr{S}$ is an exchange relation planar algebra, so $\mathscr{S}_3/\mathscr{I}_3$ is generated by $\{s_3P_i(1\boxdot Q_j)\}_{1\leq i,j\leq 3}$ as a linear space. Then $\dim(s_3(1\boxdot Q_j)\mathscr{S}_3P_i)\leq 1$, and $\dim(s_3(1\boxdot Q_j)\mathscr{S}_3P_i))= 1 \iff s_3P_i(1\boxdot Q_j)\neq 0$.
It is easy to check that $s_3P_i(1\boxdot Q_j)=0 \iff tr(s_3P_i(1\boxdot Q_j))=0 \iff tr((P_i-\frac{\delta}{tr(P_i)}P_i(1\boxdot id)P_i)(1\boxdot Q_j))=0 \iff |\lambda_{i,j}|=\frac{tr(P_i)}{\delta}$.
Note that $P_i$ corresponds to a depth 2 point in the principal graph, denoted by $a_i$; and $1\boxdot Q_j$ corresponds to a depth 2 point in the dual principal graph denoted by $b_j$, and $\dim(\hom(s_31\boxdot Q_j,P_i))$ is the number of length 2 paths from $a_i$ to $b_j$ passing through a depth 3 point of the principal graph in the 4-partite principal graph. Thus $\dim(\hom(s_31\boxdot Q_j,P_i))\leq 1$ implies the number of edges connecting a depth 3 point of the principle graph with $a_i$ (or $b_j$) is at most $1$.

(1) If $\mathscr{S}$ is depth 2, then it is $\mathscr{S}^{\mathbb{Z}_4}$, or $\mathscr{S}^{\mathbb{Z}_2\oplus \mathbb{Z}_2}$; Conversely a depth 2 subfactor planar algebra is an exchange relation planar algebra.
Therefore we obtain class (1), $\mathscr{S}^{\mathbb{Z}_4}$, and $\mathscr{S}^{\mathbb{Z}_2\oplus \mathbb{Z}_2}$.

(2) If $\mathscr{S}$ is separated by a non-trivial biprojection $Q$ as a free product, then by Proposition \ref{exfreedecom}, both $\mathscr{S}^Q$ and $\mathscr{S}_Q$ are exchange relation planar algebras. Note that $\dim{(\mathscr{S}^Q)_2}+\dim{(\mathscr{S}_Q)_2}=\dim(\mathscr{S}_2)+1=5$. Thus one of them is Temperley-Lieb, and the other is an exchange relation planar algebra $\mathscr{A}$ with $\dim(\mathscr{A}_2)=3$. Conversely by Proposition \ref{exfreeprod}, a free product of them is an exchange relation planar algebra. Thus we obtain class (2), the free product of an exchange relation planar algebra $\mathscr{A}$ with $\dim(\mathscr{A}_2)=3$ and $TL$.

(2') If $id-P_i$ is a biprojection for some $1\leq i\leq 3$, then $tr(((id-P_i)*P_i)(id-P_i))=tr(P_i((id-P_i)(id-P_i)))=0$. Thus $(id-P_i)*P_i\sim P_i$. By Theorem \ref{freedecom}, we have $\mathscr{S}$ is separated by $id-P_i$ as a free product.

(3) By Proposition \ref{extensorprod}, the tensor product of $\mathscr{S}^{\mathbb{Z}_2}$ and $TL$ is an exchange relation planar algebra $\mathscr{S}$ with $\dim(\mathscr{S}_2)=2\times 2=4$.

(4) By Theorem \ref{ex2p2}, The subgroup subfactor planar algebra $\mathscr{S}^{\mathbb{Z}_2\subset \mathbb{Z}_7\rtimes\mathbb{Z}_2}$ is an exchange relation planar algebra $\mathscr{S}$ with $\dim(\mathscr{S}_2)=\frac{7+1}{2}=4$.

By Proposition \ref{dimpn}, we have $\dim(\mathscr{S}_3/\mathscr{I}_3)\leq 9$.
We need to consider the following four cases:

(a) $\mathscr{S}_3/\mathscr{I}_3$ is abelian;

(b) $\mathscr{S}_3/\mathscr{I}_3$ is a direct sum of $M_{2\times2}$ and $\mathbb{C}$'s.

(c) $\mathscr{S}_3/\mathscr{I}_3$ contains $M_{2\times2}\otimes M_{2\times2}$;

(d) $\mathscr{S}_3/\mathscr{I}_3=M_{3\times3}$;

Case (a): If $\mathscr{S}_3/\mathscr{I}_3$ is abelian, then in the principal graph, each depth 3 point only connects with one depth 2 point. Thus $\dim(P_i\mathscr{S}_3P_j))=1$, whenever $1\leq i,j \leq 3$ and $i\neq j$. By Lemma \ref{copro connect}, we have $r(P_j'*P_i)=1$. Thus either $tr(P_i)=1$, or $P_i'$ is a virtual normalizer. If $tr(P_i)=1$, for $i=1,2,3$, then $\mathscr{S}_2$ is depth 2 which is in class (1). Otherwise by Theorem \ref{virtual normalizer}, $\mathscr{S}$ is separated by a non-trivial projection as a free product. Then it is in class (2).

Case (b): If $\mathscr{S}_3/\mathscr{I}_3$ is a direct sum of $M_{2\times2}$ and $\mathbb{C}$'s, then there is one depth 3 point $v$ with multiplicity 2 in the principal graph. Without loss of generality, we assume $v$ is connected with $a_2$, $a_3$. Then in the principal graph, there is only one length 2 path between $a_1$ and $a_l$, for $l=2,3$. By Lemma \ref{copro connect}, either $P_1'$ is a left virtual normalizer or $tr(P_1)=1$.
If $P_1'$ is a left virtual normalizer, then by Theorem \ref{virtual normalizer}, we have $\mathscr{S}$ in class (2). Otherwise $tr(P_1)=1$. Note that $tr(P_2)>1$ and $tr(P_3)>1$. Thus $P_1=P_1'$ and $e+P_1$ is a biprojection.
Note that the dual of $\mathscr{S}$ satisfies the same assumption, so it contains either a left virtual normalizer or a trace-2 biprojection. In the former case, by Theorem \ref{virtual normalizer}, we have $\mathscr{S}$ is in class (2). In the latter case, there is a biprojection in $\mathscr{S}$ with trace $\frac{\delta^2}{2}$. If its rank is $3$, then $\mathscr{S}$ is in class (2), by the discussion in (2'). If its rank is 2, then it is $e+P_m$, for some $1\leq m\leq 3$. If $m=1$, then $\delta^2=4$. Thus $\mathscr{S}$ is in class (1). Otherwise, without loss of generality, we assume that $m=2$. That means $e+P_2$ is a biprojection, and $tr(e+P_2)=\frac{\delta^2}{2}$. Then $P_2=P_2'$, and $tr((P_1*P_2)P_2)=tr(P_1(P_2*P_2))=0$. So $P_1*P_2=\frac{1}{\delta}P_3$. Then $(e+P_1)*(e+P_2)=\frac{1}{\delta}id$. By Theorem \ref{tensordecom222}, we have $\mathscr{S}$ is separated by $e+P_1$ and $e+P_2$ as a tensor product. Moreover $\mathscr{S}_{e+P_1}$ is $\mathscr{S}^{\mathbb{Z}_2}$, because $tr(e+P_1)=2$, and $\mathscr{S}_{e+P_2}$ is Temperley-Lieb. So $\mathscr{S}$ is in class (3).

Case (c): If $\mathscr{S}_3/\mathscr{I}_3$ contains $M_{2\times2}\otimes M_{2\times2}$, then there are two depth 3 points with multiplicity 2 in the principal graph. Thus there is a point $a_i$ connects with both of them. Moreover there is a point $b_j$ connects with both of them in the 4-partite principal graph. Then there are two length 2 paths from $a_i$ to $b_j$ passing through a depth 3 point of the principal graph. It is a contradiction.

Case (d): If $\mathscr{S}_3/\mathscr{I}_3=M_{3\times3}$, then $\dim(\mathscr{S}_3/\mathscr{I}_3)=9$. Thus $s_3P_i(1\boxdot Q_j)\neq0$, for any $1\leq i,j \leq 3$. Then $|\lambda_{i,j}|\neq \frac{tr(P_i)}{\delta}$.
By Theorem \ref{maxcoeff}, the biprojection generated by $P_i$ is $id$. In the principal graph, there is only one depth 3 point, and it connects with each depth 2 point with one edge. Thus $tr(P_1)=tr(P_2)=tr(P_3)>1$, and $\dim(P_i\mathscr{S}_3 P_j)\leq 2$, $\forall ~1\leq i,j\leq 3$. Take $c=tr(P_1)$. By Lemma \ref{copro connect}, we have $r(P_i*P_j)\leq 2$, $\forall ~1\leq i,j\leq 3$. Note that among the three projections, at least one is self contragredient, we assume that $P_1=P_1'$. Then $P_1*P_1=\frac{c}{\delta}e+\frac{c-1}{\delta}P_k$ for some $1\leq k \leq 3$. By Theorem \ref{P*P=P}, we have $k\neq 1$, otherwise the biprojection generated by $P_1$ is $e+P_1$. Without loss of generality, we assume that $k=2$. Then $P_2=P_2'$, and $tr((P_1*P_2)P_1)=tr(P_2(P_1*P_1))=\frac{c(c-1)}{\delta}$. Thus $P_1*P_2=\frac{c-1}{\delta}P_1+\frac{1}{\delta}P_l$, for some $2\leq l\leq 3$. By Theorem \ref{P*P=P}, we have $l\neq 2$, otherwise the biprojection generated by $P_1$ is $e+P_1+P_2$. So $P_1*P_2=\frac{c-1}{\delta}P_1+\frac{1}{\delta}P_3$. Then $P_1*P_3=\frac{1}{\delta}P_2+\frac{c-1}{\delta}P_3$. Moreover $P_2*P_2=\frac{c}{\delta}e+\frac{c-1}{\delta}P_3$, otherwise $e+P_1+P_2$ is a biprojection. Then $P_2*P_3=\frac{1}{\delta}P_1+\frac{c-1}{\delta}P_3$ and $P_3*P_3=\frac{c}{\delta}e+\frac{c-1}{\delta}P_1$.
Therefore $P_1*(P_1*P_2)=\frac{c(c-1)}{\delta^2}e+\frac{(c-1)^2}{\delta^2}P_2+\frac{1}{\delta^2}P_2+\frac{c-1}{\delta^2}P_3$, and
$(P_1*P_1)*P_2=\frac{c}{\delta^2}P_2+\frac{c(c-1)}{\delta^2}e+\frac{(c-1)^2}{\delta^2}P_3$. Comparing the coefficient of $P_3$, we obtain $c=2$.
Then by Theorem \ref{p2ex}, we have $\mathscr{S}$ is isomorphic to $\mathscr{S}^{\mathbb{Z}_2\subset \mathbb{Z}_7\rtimes\mathbb{Z}_2}$.
\end{proof}

\begin{theorem}
Suppose $\mathscr{S}$ is a subfactor planar algebra generated by 2-boxes with $\dim(\mathscr{S}_{2,\pm})=4$ and $\dim(\mathscr{S}_{3,\pm})\leq 23$, then
then $\mathscr{S}$ is one of the follows

(1) $\mathscr{S}^{\mathbb{Z}_4}$ or $\mathscr{S}^{\mathbb{Z}_2\oplus \mathbb{Z}_2}$;

(2a) $\mathscr{A}*TL$ or $TL*\mathscr{A}$, where $\mathscr{A}$ is generated by a non-trivial 2-box with $\dim(\mathscr{A}_{3,\pm})\leq 13$;

(2b) $\mathscr{B}*\mathscr{S}^{\mathbb{Z}_2}$ or $\mathscr{S}^{\mathbb{Z}_2}*\mathscr{B}$, where $\mathscr{B}$ is generated by a non-trivial 2-box with $\dim(\mathscr{A}_{3,\pm})\leq 14$;

(3) $\mathscr{S}^{\mathbb{Z}_2}\otimes TL$.
\end{theorem}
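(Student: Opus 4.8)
The plan is to run the proof of Theorem \ref{main1} almost verbatim, but with the dimension bound $\dim(\mathscr{S}_3)\le 23$ replacing the exchange relation hypothesis; the bound turns out to control the depth-$3$ structure even more tightly than the exchange relation does. First I would fix the $2$-box data. Since $\dim(\mathscr{S}_2)=4$ and the ideal $\mathscr{S}_2 e\mathscr{S}_2=\mathbb{C}e$ is one dimensional, the Jones projection $e$ generates a one-dimensional two-sided ideal of $\mathscr{S}_2$; this is impossible in $M_2(\mathbb{C})$, so $\mathscr{S}_2\cong\mathbb{C}^4$ is abelian, and dually $\mathscr{S}_{1,3}$ is abelian. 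Write $e,P_1,P_2,P_3$ for the minimal projections of $\mathscr{S}_2$ and $e_2,1\boxdot Q_1,1\boxdot Q_2,1\boxdot Q_3$ for those of $\mathscr{S}_{1,3}$, giving three depth-$2$ points in the principal graph and three in the dual graph. The ideal $\mathscr{I}_3=\mathscr{S}_2 e_2\mathscr{S}_2$ is the basic construction of $\mathscr{S}_1\subset\mathscr{S}_2$, i.e. of $\mathbb{C}\subset\mathbb{C}^4$, so $\dim(\mathscr{I}_3)=16$ and $\dim(\mathscr{S}_3/\mathscr{I}_3)\le 23-16=7$.

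This bound is the crux. Writing $\mathscr{S}_3/\mathscr{I}_3=\bigoplus_v M_{d_v}$ with $\sum_v d_v^2\le 7$, there can be at most one summand $M_2$ and no summand $M_3$. Hence only the two cases analogous to cases (a) and (b) of Theorem \ref{main1} survive ($\mathscr{S}_3/\mathscr{I}_3$ abelian, or $M_2\oplus\mathbb{C}^k$ with $k\le 3$); the cases producing $\mathscr{S}^{\mathbb{Z}_2\subset\mathbb{Z}_7\rtimes\mathbb{Z}_2}$ (which needs an $M_3$, so $\dim\ge 9$) and $TL\otimes TL$ (which needs two $M_2$'s, so $\dim\ge 8$) are excluded outright. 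In either surviving case the depth-$3$ multiplicities bound $\dim(P_i\mathscr{S}_3 P_j)$, so by Lemma \ref{copro connect} the coproduct ranks $r(P_j'*P_i)$ equal $1$ for the relevant pairs, and I would run exactly the dichotomy of Theorem \ref{main1}: each $P_i$ either has $tr(P_i)=1$ or $P_i'$ is a virtual normalizer. If all traces are $1$ then $\mathscr{S}$ is depth $2$ and is one of the two four-dimensional Kac algebras $\mathscr{S}^{\mathbb{Z}_4}$, $\mathscr{S}^{\mathbb{Z}_2\oplus\mathbb{Z}_2}$, giving class (1). Otherwise Theorem \ref{virtual normalizer}, together with the dual and trace-$2$-biprojection analysis of case (b), separates $\mathscr{S}$ either as a free product $\mathscr{S}_Q*\mathscr{S}^Q$ or, via the identity $(e+P_1)*(e+P_2)=\frac{1}{\delta}id$ and Theorem \ref{tensordecom}, as a tensor product. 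Crucially, every tool here (Lemma \ref{copro connect}, Theorems \ref{virtual normalizer}, \ref{freedecom}, \ref{tensordecom}) holds for arbitrary subfactor planar algebras, so the lack of an exchange relation is harmless; this matters because the products $\mathscr{B}*\mathscr{S}^{\mathbb{Z}_2}$ with $\mathscr{B}$ a $BMW$ planar algebra are not exchange relation planar algebras and so cannot be obtained by quoting Theorem \ref{main1} directly.

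The genuinely new step is the dimension bookkeeping that identifies the factors. In the free product case the two factors have $2$-box dimensions $3$ and $2$; the factor $\mathscr{D}$ with $\dim(\mathscr{D}_2)=2$ is a Temperley--Lieb algebra, equal to $\mathscr{S}^{\mathbb{Z}_2}$ (principal graph $A_3$, $\dim(\mathscr{D}_3)=4$) when it has index $2$ and to a generic $TL$ (with $\dim(\mathscr{D}_3)=5$) otherwise. Using Voiculescu's free-product dimension generating function (as in the proof of Theorem \ref{freedecom}) one computes, for a factor $\mathscr{C}$ with $\dim(\mathscr{C}_2)=3$, that $\dim((\mathscr{C}*TL)_3)=\dim(\mathscr{C}_3)+10$ and $\dim((\mathscr{C}*\mathscr{S}^{\mathbb{Z}_2})_3)=\dim(\mathscr{C}_3)+9$, the difference of $1$ reflecting $5-4=\dim(TL_3)-\dim((\mathscr{S}^{\mathbb{Z}_2})_3)$. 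Hence $\dim(\mathscr{S}_3)\le 23$ forces $\dim(\mathscr{C}_3)\le 13$ when the companion is $TL$ (class (2a)) and $\dim(\mathscr{C}_3)\le 14$ when the companion is $\mathscr{S}^{\mathbb{Z}_2}$ (class (2b)). In the tensor product case both factors have $2$-box dimension $2$, and the trace-$2$ biprojection forces one factor to be $\mathscr{S}^{\mathbb{Z}_2}$ and the other $TL$, giving $\mathscr{S}^{\mathbb{Z}_2}\otimes TL$ (with $\dim_3=4\cdot 5=20\le 23$) in class (3); the alternative $TL\otimes TL$ has $\dim_3=25>23$ and is ruled out.

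The main obstacle, as in Theorem \ref{main1}, is showing the virtual-normalizer/biprojection dichotomy is exhaustive in case (b): one must combine the principal-graph multiplicity data for both $\mathscr{S}$ and its dual to locate either a virtual normalizer or a trace-$2$ biprojection, and then verify the coproduct relations needed to apply the free and tensor decompositions. The supporting free-product third-box computation is routine once the generating-function identity is set up, and it is precisely what separates the bounds $13$ and $14$ and excludes the two planar algebras ($\mathscr{S}^{\mathbb{Z}_2\subset\mathbb{Z}_7\rtimes\mathbb{Z}_2}$ and $TL\otimes TL$) that appear only when the $3$-box dimension exceeds $23$.
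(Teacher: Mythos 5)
Your proposal is correct and takes essentially the same route as the paper: reduce to $\dim(\mathscr{S}_3/\mathscr{I}_3)\le 23-4^2=7$ so that only cases (a) and (b) of the proof of Theorem \ref{main1} survive, observe that those two cases never invoked the exchange relation (so the conclusion there---depth 2, $\mathscr{S}^{\mathbb{Z}_2}\otimes TL$, or separation by a non-trivial biprojection as a free product---still holds), and then identify the free-product factors by counting 2-box and 3-box dimensions. Your explicit formulas $\dim((\mathscr{C}*TL)_3)=\dim(\mathscr{C}_3)+10$ and $\dim((\mathscr{C}*\mathscr{S}^{\mathbb{Z}_2})_3)=\dim(\mathscr{C}_3)+9$ agree exactly with the paper's combined count $\dim((\mathscr{S}^Q)_3)+\dim((\mathscr{S}_Q)_3)\le 18$, and yield the same bounds $13$ and $14$ for classes (2a) and (2b).
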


\begin{proof}
Suppose $\mathscr{S}$ is a subfactor planar algebra generated by 2-boxes with $\dim(\mathscr{S}_{2,\pm})=4$ and $\dim(\mathscr{S}_{3,\pm})\leq 23$, and $\mathscr{I}_3$ is the two sided ideal of $\mathscr{S}_3$ generated by the Jones Projection, then $\dim(\mathscr{S}_3/\mathscr{I}_3)\leq 23-4^2=7$. Thus we only need to consider case (a) and (b) in the proof of Theorem \ref{main1}. Note that we never assumed that $\mathscr{S}$ is an exchange relation planar algebras in the arguments of these two cases, so either $\mathscr{S}$ is one of $\mathscr{S}^{\mathbb{Z}_4}$, $\mathscr{S}^{\mathbb{Z}_2\oplus \mathbb{Z}_2}$ and $\mathscr{S}^{\mathbb{Z}_2}\otimes TL$, corresponding to class (1) and (3) in the statement; or $\mathscr{S}$ is separated by a non-trivial biprojection $Q$ as a free product. In the latter case, both $\mathscr{S}^Q$ and $\mathscr{S}_Q$ are generated by 2-boxes.  Counting the dimensions of 2-boxes, we have $\dim{(\mathscr{S}^Q)_2}-1+\dim((\mathscr{S}_Q)_2)-1=\dim(\mathscr{S}_2)-1=3$. Thus one of them is $TL$, and the other is a subfactor planar algebra generated by 2-boxes with 3 dimensional 2-boxes. Furthermore counting the dimensions of 3-boxes, we have $\dim((\mathscr{S}^Q)_3)+\dim((\mathscr{S}_Q)_3)=3^2+2^2-(3-1)(2-1)+\dim(\mathscr{S}_2)-4^2\leq 18$. Thus either the Templey-Lieb one is $\mathscr{S}^{\mathbb{Z}_2}$ and the other has at most 14 dimensional 3-boxes, or the other one has at most 13 dimensional 3-boxes. They correspond to class (2a) and (2b) in the statement.
\end{proof}

\begin{theorem}\label{clas comAA}
Suppose $\mathscr{S}$ is a subfactor planar algebra generated by 2-boxes, such that $\mathscr{S}_3/\mathscr{I}_3$, $\mathscr{S}_2$ and $\mathscr{S}_{1,3}$ are abelian, then it is a free product of Temperley-Lieb subfactor planar algebras and group subfactor planar algebras for abelian groups. Verse Visa.
\end{theorem}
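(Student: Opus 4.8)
The plan is to induct on $\dim(\mathscr{S}_2)$, peeling off a free-product factor at each step and identifying the two terminal cases as Temperley-Lieb (the base $\dim(\mathscr{S}_2)\le 2$) and the depth-$2$ abelian group planar algebras. By Proposition \ref{comex}, $\mathscr{S}$ is an exchange relation planar algebra, and by hypothesis $\mathscr{S}_2$ and $\mathscr{S}_{1,3}$ are abelian, so every minimal projection of $\mathscr{S}_2$ is central and the coproduct is commutative. The engine of the induction is Theorem \ref{virtual normalizer}, so I first want to show that, unless every non-trivial minimal projection has trace $1$, the algebra $\mathscr{S}_2$ contains a virtual normalizer.

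The key step is a count of length-$2$ paths in the principal graph, using both abelianness hypotheses. Since $\mathscr{S}_2$ is abelian, $\mathscr{S}_2/\mathscr{I}_2$ is abelian, so every depth-$2$ point has multiplicity $1$; the recursion $m^{(2)}_v=\sum_u m^{(1)}_u\,e(u,v)$ over the unique depth-$1$ point then forces each depth-$2$ point to be joined to the depth-$1$ point by a single edge. Since $\mathscr{S}_3/\mathscr{I}_3$ is abelian, every depth-$3$ point has multiplicity $1$, and $m^{(3)}_w=\sum_v m^{(2)}_v\,e(v,w)=1$ forces each depth-$3$ point to have exactly one depth-$2$ neighbour, by a single edge. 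Hence for distinct depth-$2$ points $P\neq Q$ no length-$2$ path between them passes through depth $3$, while exactly one passes through the depth-$1$ point, so $\dim(Q\mathscr{S}_3P)=1$. Feeding this into Lemma \ref{copro connect}, for any non-trivial minimal projection $P$ with $tr(P)>1$ and any minimal $Q\neq P'$ we obtain $r(P*Q)\le\dim(Q\mathscr{S}_3P')=1$ and $r(Q*P)\le\dim(P\mathscr{S}_3Q')=1$ (the case $Q=e$ being immediate from $P*e=\delta^{-1}P$); since $P$ is central it is a virtual normalizer.

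Now the dichotomy. If every non-trivial minimal projection has trace $1$, then $\dim(\mathscr{S}_2)=tr(id)=\delta^2$, and using that $\delta P*Q$ is a minimal projection when $tr(P)=1$, the minimal projections close under $\delta(\,\cdot*\cdot\,)$ into a group $G$, with $e$ the unit and contragredients the inverses; commutativity of the coproduct makes $G$ abelian, and the spanning family of normalizers forces $\mathscr{S}$ to be depth $2$, whence $\mathscr{S}=\mathscr{S}^{G}$ by the type-AA characterisation of depth-$2$ planar algebras recorded after Notation \ref{comtype}. Otherwise some minimal projection has trace $>1$ and is a virtual normalizer, so Theorem \ref{virtual normalizer} gives that $\mathscr{S}$ is Temperley-Lieb or $\mathscr{S}=\mathscr{S}_Q*\mathscr{S}^Q$ for a non-trivial biprojection $Q$. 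In the latter case the circle parameters $\sqrt{tr(Q)}$ and $\delta/\sqrt{tr(Q)}$ both exceed $1$ (since $1<tr(Q)<\delta^2$), so Lemma \ref{comfreedecom} applies: because $\mathscr{S}$ is type AA, hence both AN and NA, both $\mathscr{S}_Q$ and $\mathscr{S}^Q$ are type-AA commute relation planar algebras, and each has strictly smaller $2$-box dimension since $\dim((\mathscr{S}_Q)_2)+\dim((\mathscr{S}^Q)_2)=\dim(\mathscr{S}_2)+1$ with both summands at least $2$. The induction hypothesis writes each factor as a free product of Temperley-Lieb and abelian-group planar algebras, and associativity of the free product reassembles $\mathscr{S}=\mathscr{S}_Q*\mathscr{S}^Q$ in the same form.

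For the converse, a Temperley-Lieb planar algebra and any $\mathscr{S}^{G}$ with $G$ abelian are type-AA commute relation planar algebras, and since such an algebra is simultaneously of type NA and AN, Lemma \ref{comfreepro} shows their free product is again type AA; iterating gives that any free product of these building blocks is a type-AA commute relation planar algebra. The main obstacle is the implication ``all minimal projections have trace $1$'' $\Rightarrow$ ``$\mathscr{S}$ is depth $2$''; the safest route is to promote the trace-$1$ minimal projections to normalizing unitaries generating $G$ and invoke the crossed-product description of subfactors generated by normalizers (alternatively, to show that the support of $\mathscr{S}_2\cap\mathscr{I}_3$ in Theorem \ref{P bipro} is $id$), after which the depth-$2$ type-AA classification yields $\mathscr{S}^{G}$. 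A secondary point needing care is that the path count gives exactly $\dim(Q\mathscr{S}_3P)=1$, not merely a weaker bound, which is precisely where both abelianness hypotheses are jointly used.
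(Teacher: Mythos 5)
Your proof is correct and follows essentially the same route as the paper's: the multiplicity-one path count in the principal graph forces $\dim(P_j\mathscr{S}_3P_i)=1$ for distinct depth-2 points, Lemma \ref{copro connect} then yields a virtual normalizer unless every minimal projection has trace 1 (the depth-2, abelian-group case), and Theorem \ref{virtual normalizer} combined with Lemma \ref{comfreedecom} drives the free-product decomposition, with Lemma \ref{comfreepro} giving the converse. Your extra bookkeeping (explicit induction on $\dim(\mathscr{S}_2)$, the circle-parameter check before invoking Lemma \ref{comfreedecom}, and the routes offered for the trace-1 $\Rightarrow$ depth-2 step that the paper simply asserts) only makes explicit what the paper leaves implicit.
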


\begin{proof}
By assumption $\mathscr{S}_3/\mathscr{I}_3$ is abelian, so the multiplicity of a depth 3 point in the principal graph is $1$.
Furthermore $\mathscr{S}_2$ is abelian, so the multiplicity of a depth 2 point in the principal graph is $1$.
Then for two distinct depth 2 points, there is only one length 2 path between them.
Thus $\dim(P_i\mathscr{S}_3P_j)=1$ in $\mathscr{S}_3$, for any two distinct minimal projections $P_i,P_j$ of $\mathscr{P_2}$. By Lemma \ref{copro connect}, we have $r(P_j'*P_i)=1$. Thus either $tr(P_i)=1$, or $P_i'$ is a virtual normalizer.
If $tr(P_i)=1$, for any minimal projection $P_i$ in $\mathscr{S}_2$, then $\mathscr{S}$ is depth 2. By assumption it is of type AA, so it is a group subfactor planar algebra for some abelian group.
Otherwise $\mathscr{S}$ contains a virtual normalizer. By Theorem \ref{virtual normalizer}, either $\mathscr{S}$ is Temperley-Lieb or $\mathscr{S}$ is separated by a non-trivial biprojection as a free product $\mathscr{A}*\mathscr{B}$. In the latter case, both $\mathscr{A}$ and $\mathscr{B}$ have smaller index. By Lemma \ref{comfreedecom}, both $\mathscr{A}$ and $\mathscr{B}$ are commute relation planar algebras of type AA. Then we may decompose them again until they are either depth 2 or Temperley-Lieb. Therefore $\mathscr{S}$ is a free product of Temperley-Lieb subfactor planar algebras and depth 2 subfactor planar algebras, and each of them is of type AA.

Conversely both Temperley-Lieb subfactor planar algebras and group subfactor planar algebras for abelian groups are commute relation planar algebras of type AA. By Lemma \ref{comfreepro}, their free product is a commute relation planar algebra of type AA.
\end{proof}

\begin{theorem}\label{clas com}
Suppose $\mathscr{S}$ is a subfactor planar algebra generated by 2-boxes, and $\mathscr{S}_3/\mathscr{I}_3$ is abelian,
then $\mathscr{S}$ is either depth 2 or the free product
$\mathscr{A}_1*\mathscr{A}_2*\cdots *\mathscr{A}_n$, such that
$\mathscr{A}_1$ is Temperley-Lieb or the dual of $\mathscr{S}^{G_1}$, for a group $G_1$;
$\mathscr{A}_n$ is Temperley-Lieb or $\mathscr{S}^{G_n}$, for a group $G_n$;
$\mathscr{A}_m$, for $1<m<n$, is Temperley-Lieb or $\mathscr{S}^{G_m}$, for an abelian group $G_m$.
Verse Visa.
\end{theorem}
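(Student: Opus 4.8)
The plan is to induct on $\dim(\mathscr{S}_2)$ (equivalently on the index), peeling off one free-product factor at a time and reading off its type from Lemma~\ref{comfreedecom}. The converse (``Verse Visa'') is immediate: a depth $2$ planar algebra has $\mathscr{S}_3=\mathscr{I}_3$, so $\mathscr{S}_3/\mathscr{I}_3=0$ is abelian, while iterating Lemma~\ref{comfreepro} shows that a free product whose leftmost factor is type NA, whose interior factors are type AA, and whose rightmost factor is type AN is again a commute relation planar algebra. So I treat the forward direction. By Proposition~\ref{comex}, $\mathscr{S}$ is an exchange relation planar algebra with $\mathscr{S}_3$ generated by $\mathscr{S}_2$ and $\mathscr{S}_{1,3}$; if $\mathscr{S}$ is depth $2$ or Temperley--Lieb we are at a base case, so assume it is neither.

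The induction rests on one structural observation. Since $\mathscr{S}_3/\mathscr{I}_3$ is abelian, every depth $3$ point has multiplicity $1$, so its multiplicity $\sum_v m(v)e(v,w)$ (summed over depth $2$ points $v$, with $m(v)$ the multiplicity and $e(v,w)$ the edge multiplicity to $w$) equals $1$; hence each depth $3$ point $w$ is joined by a single edge to a \emph{unique} depth $2$ point $v$, and that $v$ has multiplicity $1$. In particular every depth $2$ point of multiplicity $>1$ has no depth $3$ neighbour, so its central projection lies in $\mathscr{S}_2\cap\mathscr{I}_3$.

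Now split on whether $\mathscr{S}_2$ is abelian. If it is (so $\mathscr{S}$ is type AN), all depth $2$ points have multiplicity $1$ and, by the count above, distinct minimal projections $P_i,P_j$ are joined by exactly one length-$2$ path; thus $\dim(P_j\mathscr{S}_3P_i)=1$ and $r(P_i'*P_j)=1$ by Lemma~\ref{copro connect}. As $\mathscr{S}$ is not depth $2$, not all minimal projections can have trace $1$ (otherwise $\mathscr{S}$ is depth $2$, as in the proof of Theorem~\ref{clas comAA}), so some minimal projection of trace $>1$ is a virtual normalizer. Theorem~\ref{virtual normalizer} then writes $\mathscr{S}=\mathscr{A}*\mathscr{B}$ for a non-trivial biprojection; Lemma~\ref{comfreedecom} makes $\mathscr{A}$ type NA and $\mathscr{B}$ type AN, and its ``furthermore'' clause (applied to the type-AN algebra $\mathscr{S}$) upgrades $\mathscr{A}$ to type AA. Both factors have strictly smaller $2$-box dimension, so the induction proceeds, with Theorem~\ref{clas comAA} disposing of the type-AA factors; the interior factors come out type AA and the rightmost type AN.

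The remaining and hardest case is $\mathscr{S}_2$ non-abelian, where no virtual normalizer exists, since Lemma~\ref{copro connect} only gives $r(P*Q)\le m(w)$ and this exceeds $1$ at a multiplicity-$>1$ point. Here I would use the central biprojection $A$ equal to the support of $\mathscr{S}_2\cap\mathscr{I}_3$ of Theorem~\ref{P bipro}: by the structural observation $A$ dominates $e$ and every multiplicity-$>1$ point, so $e<A<id$ is non-trivial, and $\mathscr{S}_A$ is depth $2$. The crux --- and the step I expect to be the main obstacle --- is to show that $\mathscr{S}$ is separated by $A$ as a free product through Theorem~\ref{freedecom}. As $A$ is central, $\mathscr{S}_2=A\mathscr{S}_2\oplus(id-A)\mathscr{S}_2$ with the first summand equal to $\{x\,|\,x\preceq A\}$, so the whole content is to prove that every $2$-box $x\preceq id-A$ is $A$-flat, i.e. $A*x*A=(tr(A)/\delta)^2x$, so that it lies in $\mathscr{S}^A$ and the generation hypothesis of Theorem~\ref{freedecom} is met. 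This flatness is exactly where the commute relation $(1\boxdot a)b\equiv b(1\boxdot a)$ modulo $\mathscr{I}_3$ has to be combined with the fact that the support of $id-A$ consists of multiplicity-$1$ points each carrying a single edge to depth $3$. Once the separation is in hand, Lemma~\ref{comfreedecom} identifies the depth-$2$ algebra $\mathscr{S}_A$ as type NA, i.e. the dual of $\mathscr{S}^{G_1}$, and $\mathscr{S}^A$ as type AN; recursing on $\mathscr{S}^A$ (which now has abelian $2$-boxes and so falls under the previous case) yields the interior type-AA factors and the terminal type-AN factor, giving the asserted decomposition $\mathscr{A}_1*\cdots*\mathscr{A}_n$.
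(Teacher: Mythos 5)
Your converse and your treatment of the case where $\mathscr{S}_2$ is abelian are sound and essentially reproduce the paper's argument (Lemma~\ref{comfreepro}, Lemma~\ref{copro connect}, Theorem~\ref{virtual normalizer}, Lemma~\ref{comfreedecom}, and induction on the index). The problem is the non-abelian case, which you yourself flag as ``the main obstacle'': there the proposal contains no proof, only a statement of what would need to be proved. You propose to take $A$ to be the support of $\mathscr{S}_2\cap\mathscr{I}_3$ (Theorem~\ref{P bipro}) and to invoke Theorem~\ref{freedecom}, but Theorem~\ref{freedecom} requires that $\mathscr{S}$ be generated by $\{x\,|\,AxA=x \text{ or } A*x*A=(tr(A)/\delta)^2x\}$, and the assertion that every minimal projection under $id-A$ satisfies the second condition is exactly the hard content of the theorem. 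Saying that the commute relation ``has to be combined with'' the principal graph structure at this point is not an argument; this is a genuine gap, and it is precisely the part of the proof the paper spends Lemmas~\ref{111}, \ref{112}, \ref{Tijk} and \ref{113} establishing.

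Moreover, the premise on which you base your detour is false: virtual normalizers \emph{do} exist when $\mathscr{S}_2$ is non-abelian. You are right that Lemma~\ref{copro connect} alone cannot give $r(P*Q)=1$ when $Q$ sits at a multiplicity-$>1$ vertex, but the paper does not rely on that crude path count. Lemma~\ref{113} shows that every minimal projection $P_i$ in $\mathscr{S}_2/\mathscr{I}_3$ is a virtual normalizer, by splitting over the possible $Q$: if $Q\in\mathscr{S}_2/\mathscr{I}_3$ one uses the path count (Lemma~\ref{111}); if $tr(Q)=1$ the coproduct is automatically minimal; and in the remaining case $Q\in\mathscr{S}_2\cap\mathscr{I}_3$, $tr(Q)>1$, Lemmas~\ref{112} and \ref{Tijk} (the latter using Wenzl's formula and the centrality of $s_3P_i$ in $\mathscr{S}_3$, which is where the abelianness of $\mathscr{S}_3/\mathscr{I}_3$ really enters) force $P_i*Q\sim P_i$, hence rank $1$. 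With this, the paper needs no case split on whether $\mathscr{S}_2$ is abelian: Theorem~\ref{virtual normalizer} separates $\mathscr{S}$ as a free product in all cases, Lemma~\ref{comfreedecom} assigns the types, and the induction closes. To repair your proposal you would either have to prove your flatness claim directly (which I expect amounts to redoing the work of Lemmas~\ref{111}--\ref{113}) or abandon the detour and prove the existence of a virtual normalizer in the non-abelian case along the paper's lines.
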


In this general case, we still want to show the existence of a virtual normalizer in a commute relation planar algebra, whenever it is not depth 2. Then we may decompose a commute relation planar algebra as a free product of commute relation planar algebras, until they are either Temperley-Lieb or depth 2.

\begin{notation}
Recall that $\mathscr{S}_2\cap\mathscr{I}_3$ is a two sided ideal of $\mathscr{S}_2$. Let us define $\mathscr{S}_2/\mathscr{I}_3$ to be the orthogonal complement of $\mathscr{S}_2\cap\mathscr{I}_3$ in $\mathscr{S}_2$.
Then $\mathscr{S}_2=(\mathscr{S}_2\cap\mathscr{I}_3) \oplus \mathscr{S}_2/\mathscr{I}_3$.
\end{notation}

\begin{lemma}\label{113}
Suppose $\mathscr{S}$ is a commute relation planar algebra. If $\mathscr{S}$ is not depth 2,
then each minimal projection $P_i$ in $\mathscr{S}_2/\mathscr{I}_3$ is a virtual normalizer.
\end{lemma}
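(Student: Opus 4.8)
The plan is to read everything off the principal graph together with the hypothesis that $\mathscr{S}_3/\mathscr{I}_3$ is abelian, and then to feed the resulting combinatorics into Lemma~\ref{copro connect}. The starting observation is that $\mathscr{S}_3/\mathscr{I}_3$ abelian forces every depth-$3$ vertex to have multiplicity $1$: a depth-$3$ vertex $v$ has multiplicity $\sum_{a} m_a\, m_{av}$, where $a$ runs over the depth-$2$ vertices, $m_a$ is the multiplicity of $a$, and $m_{av}$ is the multiplicity of the edge from $a$ to $v$. Since these are nonnegative integers summing to $1$, exactly one term equals $1$; hence each depth-$3$ vertex $v$ is adjacent to a single depth-$2$ vertex $a(v)$, that vertex has multiplicity $1$, and the connecting edge is simple. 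In particular every minimal projection $P_i$ of $\mathscr{S}_2/\mathscr{I}_3$, i.e. every depth-$2$ vertex adjacent to a depth-$3$ vertex, sits in a $1\times 1$ block of $\mathscr{S}_2$, so $P_i$ is a central minimal projection.

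Next I would check $tr(P_i)>1$. Writing $d_w$ for the Perron--Frobenius weights of the principal graph normalised by $d_\ast=1$, one has $tr(P_i)=d_{a_i}$ and $d_\rho=\delta$ for the unique depth-$1$ vertex $\rho$. The eigenvalue equation at $a_i$, using $m_{a_i}=1$ and the simple edges to its private depth-$3$ neighbours $v$, reads $\delta\, d_{a_i}=\delta+\sum_{v\sim a_i} d_v$. Since $\mathscr{S}$ is not depth $2$, at least one such $v$ exists with $d_v>0$, so $d_{a_i}>1$, i.e. $tr(P_i)>1$. This settles the two easy requirements in the definition of a virtual normaliser.

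The core of the lemma is the rank condition $r(P_i*Q)=1=r(Q*P_i)$ for every minimal projection $Q\neq P_i'$. Here I would apply Lemma~\ref{copro connect}, $r(P_i*Q)\le \dim(Q\mathscr{S}_3 P_i')$, and decompose $Q\mathscr{S}_3 P_i'$ according to $\mathscr{S}_3=\mathscr{I}_3\oplus \mathscr{S}_3/\mathscr{I}_3$. The $\mathscr{S}_3/\mathscr{I}_3$--summand counts length-$2$ paths between the vertices of $Q$ and $P_i'$ through a depth-$3$ vertex; but every depth-$3$ neighbour of $P_i'$ is adjacent to no other depth-$2$ vertex, so for $Q\neq P_i'$ there are none, whence $Q\mathscr{S}_3 P_i'\subseteq \mathscr{I}_3$ and all of $P_i*Q$ arrives through the single depth-$1$ channel. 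The remaining task, and the step I expect to be the main obstacle, is to convert this ``only through depth $1$'' information into the statement that $P_i*Q$ is a multiple of a single minimal projection, rather than merely of rank at most $m_Q$ (the multiplicity of the block of $Q$, which can exceed $1$, for instance when $Q$ lies in a matrix block coming from a nonabelian group factor). The bound from Lemma~\ref{copro connect} alone is not enough, since the honest rank is $1$ only after a collapse the principal graph cannot see.

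To force that collapse I would argue at the level of triangles rather than dimensions. Using the cyclic symmetry of Lemma~\ref{CDE=}, $R\in \mathrm{supp}(P_i*Q)$ iff $P_i'$ lies in the support of $Q*R$, equivalently iff $R$ lies in the support of $Q'*P_i'$; so it suffices to show $P_i*Q$ is a multiple of one minimal projection. I would establish this by exploiting the commute relation itself, available since a commute relation planar algebra is an exchange relation planar algebra by Proposition~\ref{comex}, namely that $[\,1\boxdot x,\, y\,]\in \mathscr{I}_3$ for all $2$-boxes $x,y$, together with the multiplicity-$1$ privacy of the depth-$3$ neighbours of $P_i$: if the support of $P_i*Q$ were not a single minimal projection, adjoining the generating string on the depth-$3$ side to the two defining triangles would yield two independent occurrences of a private depth-$3$ vertex, contradicting its multiplicity $1$, equivalently the commutativity of $\mathscr{S}_3/\mathscr{I}_3$. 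The symmetric computation, or passing to the dual planar algebra, gives $r(Q*P_i)=1$, so that $P_i$ is a virtual normaliser. I expect the delicate point to be making the ``adjoining a string to the triangles'' step precise as an identity of $3$-boxes under the exchange relation; the rest is bookkeeping on the principal graph.
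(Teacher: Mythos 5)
Your opening moves are correct and coincide with the paper's: abelianness of $\mathscr{S}_3/\mathscr{I}_3$ forces every depth-$3$ vertex to have multiplicity $1$, hence every minimal projection $P_i$ of $\mathscr{S}_2/\mathscr{I}_3$ is central with $tr(P_i)>1$ (this is the content of Lemma \ref{111} and the surrounding discussion), and your diagnosis that Lemma \ref{copro connect} only yields $r(P_i*Q)\le \dim(Q\mathscr{S}_3P_i')=m_Q$, the multiplicity of $Q$'s block, is exactly right. Note that the problematic case is precisely $Q\in\mathscr{S}_2\cap\mathscr{I}_3$ with $tr(Q)>1$, where $m_Q\ge 2$ is automatic (a central vertex not adjacent to depth $3$ has Perron--Frobenius weight $1$). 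But the step you defer -- the ``collapse'' of $P_i*Q$ onto a single minimal projection -- is where the entire content of the lemma lies, and your proposed mechanism for it is not a proof. The appeal to Lemma \ref{CDE=} is circular (you reduce the statement ``$P_i*Q$ is a multiple of one minimal projection'' to itself), and the picture of ``adjoining the generating string to the two defining triangles to produce two independent occurrences of a private depth-$3$ vertex'' does not translate into any identity of $3$-boxes: in this case $Q\mathscr{S}_3P_i'$ lies entirely inside $\mathscr{I}_3$, so the private depth-$3$ neighbours of $P_i$ never interact with the data defining the support of $P_i*Q$, and no contradiction with their multiplicity can be extracted from the graph alone. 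You acknowledge this is the delicate point; it is in fact the whole point.

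The paper closes this gap with two inputs absent from your sketch. First (Lemma \ref{112}): any minimal $P_k\preceq P_i*Q$ must lie in $\mathscr{S}_2/\mathscr{I}_3$. This uses Theorem \ref{P bipro} -- the support of $\mathscr{S}_2\cap\mathscr{I}_3$ is a biprojection, so coproducts of elements of the ideal stay under it; if $P_k$ were in the ideal, then $P_k*Q'\in\mathscr{S}_2\cap\mathscr{I}_3$ would force $tr(P_i(P_k*Q'))=0$, contradicting $tr((P_i*Q)P_k)>0$. Second (Lemma \ref{Tijk}): if $P_k\in\mathscr{S}_2/\mathscr{I}_3$, $P_k\ne P_i$, and $tr(Q(P_i'*P_k))>0$, then this trace is forced to equal exactly $\frac{tr(P_i)tr(P_k)}{\delta}$. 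The proof is quantitative, not combinatorial: one writes $s_3P_i=P_i-\frac{\delta}{tr(P_i)}P_i(1\boxdot id)P_i$ by Wenzl's formula, uses that $s_3P_i$ is \emph{central in $\mathscr{S}_3$} (this is where abelianness of $\mathscr{S}_3/\mathscr{I}_3$ enters analytically, not just through vertex multiplicities), and computes $P_i(1\boxdot Q)P_k$ to pin down the coefficient; a perturbation argument ($\|P_l-P_j\|<1$ implies the same positivity, hence the same exact value) then shows $P_i'*P_k$ is central, forcing $Q$ to be central, hence $Q\in\mathscr{S}_2/\mathscr{I}_3$ or $tr(Q)=1$ -- a contradiction. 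Only then does $P_k=P_i$ follow, giving $P_i*Q\sim P_i$ and $r(P_i*Q)=1$. In short, the collapse is achieved by the biprojection structure of the ideal's support together with an exact coefficient computation via Wenzl's formula, neither of which can be replaced by the principal-graph bookkeeping that your proposal relies on.
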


Based on Lemma \ref{113}, the proof of Theorem \ref{clas com} is similar to the proof of Theorem \ref{clas comAA}.

\begin{proof}[Proof of Theorem \ref{clas com}]
Suppose $\mathscr{S}$ is a commute relation planar algebra. If it is not depth 2, then by Lemma \ref{113}, it contains a virtual normalizer. If $\mathscr{S}$ is not Temperley-Lieb, then by Lemma \ref{virtual normalizer} and Proposition \ref{comfreedecom}, we have $\mathscr{S}$ is a free product of two commute relation planar algebras with smaller index. Repeating this process, we have
$\mathscr{S}=\mathscr{A}_1*\mathscr{A}_2*\cdots *\mathscr{A}_n$, such that each $\mathscr{A}_i$ is either Temperley-Lieb or depth 2. By Lemma \ref{comfreedecom}, we have $\mathscr{A}_1$ is of type NA; $\mathscr{A}_n$ is of type AN; the others are of type AA.

Conversely by Lemma \ref{comfreepro}, their free product is a commute relation planar algebra.
\end{proof}

To prove Lemma \ref{113}, let us prove some basic results first.

\begin{lemma}\label{111}
Suppose $\mathscr{S}$ is a commute relations planar algebra. If $P_i,P_j$ are distinct minimal projections in $\mathscr{S}_2/\mathscr{I}_3$. Then $r(P_i'*P_j)=1$.
\end{lemma}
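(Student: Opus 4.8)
The plan is to bound $r(P_i' * P_j)$ by a path count in the principal graph via Lemma \ref{copro connect}, and then use the commute hypothesis to force that count to be exactly $1$. First I would apply Lemma \ref{copro connect} with $P = P_i$ and $Q = P_j$ to obtain $r(P_i' * P_j) \leq \dim(P_j \mathscr{S}_3 P_i)$, which by the remark following that lemma equals the number of length-$2$ paths in the principal graph between the depth-$2$ points $a_i$, $a_j$ corresponding to $P_i$, $P_j$. Since $P_i' * P_j$ is a nonzero positive operator (Theorem \ref{P*P>0}), we always have $r(P_i' * P_j) \geq 1$, so it suffices to prove that this number of length-$2$ paths is exactly $1$.

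The key step is to translate the algebraic hypothesis into graph data. Abelianness of $\mathscr{S}_3/\mathscr{I}_3$ means every depth-$3$ point has multiplicity $1$. Writing the multiplicity of a depth-$3$ point $v$ as $\sum_a m_a\, \mu(a,v)$, summed over its depth-$2$ neighbours $a$ with $m_a$ the multiplicity of $a$ and $\mu(a,v)$ the edge multiplicity, the relation $\sum_a m_a\,\mu(a,v) = 1$ among nonnegative integers forces a \emph{unique} depth-$2$ neighbour $a(v)$, with $m_{a(v)} = 1$ and a single edge $\mu(a(v),v)=1$. From this I extract two consequences: (i) any depth-$2$ point adjacent to some depth-$3$ point has multiplicity $1$; and (ii) two distinct depth-$2$ points have no common depth-$3$ neighbour.

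Then I would assemble the count. Since $P_i, P_j \in \mathscr{S}_2/\mathscr{I}_3$, the points $a_i, a_j$ are adjacent to depth-$3$ points, so by (i) both have multiplicity $1$; in particular their blocks in $\mathscr{S}_2$ are one-dimensional, whence the distinctness of $P_i$ and $P_j$ forces $a_i \neq a_j$. The principal graph is bipartite by depth, so a length-$2$ path between the two depth-$2$ points passes through either the unique depth-$1$ point (recall $\dim(\mathscr{S}_{1,\pm})=1$) or a depth-$3$ point. By (ii) there is no path through any depth-$3$ point, while the paths through the unique depth-$1$ point number $m_i m_j = 1$. Hence $\dim(P_j \mathscr{S}_3 P_i) = 1$, and combined with the first paragraph this gives $r(P_i' * P_j) = 1$.

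The hard part is the second paragraph: converting the purely ring-theoretic statement ``$\mathscr{S}_3/\mathscr{I}_3$ is abelian'' into the structural fact that each depth-$3$ vertex has a single depth-$2$ neighbour of multiplicity one. Once this is established, the remaining arguments are routine bookkeeping with the length-$2$ path formula and the dimension bound of Lemma \ref{copro connect}.
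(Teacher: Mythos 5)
Your proof is correct and follows essentially the same route as the paper's: bound $r(P_i'*P_j)$ by the number of length-$2$ paths via Lemma \ref{copro connect}, and use abelianness of $\mathscr{S}_3/\mathscr{I}_3$ to force every depth-$3$ point to have multiplicity $1$, hence $a_i,a_j$ have multiplicity $1$ (equivalently $P_i,P_j$ are central) and share no depth-$3$ neighbour, leaving the single path through the depth-$1$ point. Your second paragraph simply spells out explicitly the multiplicity bookkeeping that the paper's terse proof leaves implicit, so there is no gap.
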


\begin{proof}
Suppose $P_i,P_j$ are distinct minimal projections in $\mathscr{S}_2/\mathscr{I}_3$. And $v_i, v_j$ are the corresponding depth 2 points in the principal graph.
By assumption $\mathscr{S}_3/\mathscr{I}_3$ is abelian, so $P_i,P_j$ are central in $\mathscr{S}_2$. Then the multiplicity of $v_i,v_j$ are $1$. Note that the multiplicity of a depth 3 point is $1$, so there is only one length 2 path between $v_i$ and $v_j$. By Lemma \ref{copro connect}, we have $r(P_i'*P_j)=1$.
\end{proof}

We want to show that $r(P_i*P_j)=1$, whenever $P_i$ is a minimal projection in $\mathscr{S}_2/\mathscr{I}_3$ and $P_j$ is a minimal projection in $\mathscr{S}_2\cap\mathscr{I}_3$. If $tr(P_j)=1$, then $r(P_i*P_j)=1$ is a minimal. If $tr(P_j)>1$, we will see $P_i*P_j\sim P_i$.

\begin{lemma}\label{112}
Suppose $P_i,P_j,P_k$ are minimal projections of $\mathscr{S}_2$, $P_i\in \mathscr{S}_2/\mathscr{I}_3$, and $P_j\in \mathscr{S}_2\cap\mathscr{I}_3$.
If $P_k\preceq P_i*P_j$, then $P_k\in \mathscr{S}_2/\mathscr{I}_3$.
\end{lemma}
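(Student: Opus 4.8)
The plan is to show directly that $P_k$ is orthogonal to the support of $\mathscr{S}_2 \cap \mathscr{I}_3$. By Theorem \ref{P bipro} that support is a central biprojection $P$ with $P = P'$, and a minimal projection of $\mathscr{S}_2$ lies in $\mathscr{S}_2/\mathscr{I}_3$ exactly when it is orthogonal to $P$. So the hypotheses read $P_j \preceq P$ and $P_i \preceq id - P$ (equivalently $P_i P = 0$), while the conclusion is precisely $P_k P = 0$.

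First I would collapse the whole statement to a single trace computation. Since $P_i$ and $P_j$ are projections, the Schur product theorem (Theorem \ref{P*P>0}) makes $P_i * P_j$ a positive operator, and $P$ is a projection. If I can establish $tr((P_i * P_j)P) = 0$, then Lemma \ref{CDE0} gives $(P_i * P_j)P = 0$; and because $P_k \preceq P_i * P_j$, the ``furthermore'' clause of that same lemma yields $P_k P = 0$, which is exactly the desired conclusion. Thus everything reduces to proving $tr((P_i * P_j)P) = 0$.

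To evaluate this trace I would invoke the rotational symmetry of Lemma \ref{CDE=}: taking $C = P_i$, $D = P_j$, $E = P$ (so $E' = P$ since $P$ is self-contragredient) gives $tr((P_i * P_j)P) = tr((P_j * P)P_i')$. Now two facts combine. On one side, $P_j \preceq P$ together with the biprojection identity $P * P = \frac{tr(P)}{\delta}P$ and the monotonicity of the coproduct (the lemma just before Lemma \ref{CDE0}) give $P_j * P \preceq P * P \sim P$, so $P_j * P$ is supported under $P$, i.e. $P_j * P = (P_j * P)P$. On the other side, $id - P$ is self-contragredient (as $id' = id$ and $P' = P$), so $P_i \preceq id - P$ forces $P_i' \preceq id - P$, hence $P P_i' = 0$. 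Combining, the operator $(P_j * P)P_i' = (P_j * P)(P P_i')$ vanishes, so its trace is $0$, and we are done.

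The only place that demands care is the bookkeeping of contragredients and the choice of the correct instance of Lemma \ref{CDE=}: one must pick the rotation that carries $P$ (rather than $P_j$) past the coproduct, so that the biprojection property $P * P \sim P$ can actually be used to confine $P_j * P$ beneath $P$. I do not expect the commute-relation hypothesis to enter the argument at all; it rests only on $P$ being a biprojection, which holds in general by Theorem \ref{P bipro}.
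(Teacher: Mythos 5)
Your proof is correct and rests on essentially the same mechanism as the paper's: both reduce the statement to a single trace computation using Lemma \ref{CDE=}, the fact from Theorem \ref{P bipro} that the support $P$ of $\mathscr{S}_2\cap\mathscr{I}_3$ is a self-contragredient central biprojection (so a coproduct of two positive operators supported under $P$ is again supported under $P$, since $P*P\sim P$), and the orthogonality of $P_i$ (equivalently $P_i'$) to $P$. The only difference is cosmetic: the paper argues by contradiction, rotating $tr((P_i*P_j)P_k)$ to $tr(P_i(P_k*P_j'))$ and confining $P_k*P_j'$ under $P$, whereas you argue directly with $P$ in place of $P_k$ and conclude the marginally stronger fact $(P_i*P_j)P=0$ before invoking Lemma \ref{CDE0}.
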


\begin{proof}
If $P_k\in \mathscr{S}_2\cap\mathscr{I}_3$, and $P_k\preceq P_i*P_j$, then $tr(P_i(P_k*P_j'))=tr((P_i*P_j)P_k)>0$. But $P_j,P_k\in \mathscr{S}_2\cap\mathscr{I}_3$, by Theorem \ref{P bipro}, we have $P_k*P_j' \in \mathscr{S}_2\cap \mathscr{I}_3$. So $P_i(P_k*P_j')=0$. It is a contradiction.
\end{proof}

\begin{lemma}\label{Tijk}
Suppose $P_i,P_j,P_k$ are minimal projections of $\mathscr{S}_2$, $P_i,P_k\in \mathscr{S}_2/\mathscr{I}_3$, and $P_i\neq P_k$.
If $tr(P_j(P_i'*P_k))>0$, then $tr(P_j(P_i'*P_k))=\frac{tr(P_i)tr(P_k)}{\delta}$,
which implies $P_j\in \mathscr{S}_2/\mathscr{I}_3$ or $tr(P_j)=1$.
\end{lemma}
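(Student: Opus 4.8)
The plan is to collapse $P_i'*P_k$ to a single minimal projection and then show that projection is central with the right trace. Since $P_i$ and $P_k$ are distinct minimal projections of $\mathscr{S}_2/\mathscr{I}_3$, Lemma~\ref{111} gives $r(P_i'*P_k)=1$, and—exactly as in the proof of that lemma—the abelianness of $\mathscr{S}_3/\mathscr{I}_3$ makes the depth $2$ points of $P_i,P_k$ multiplicity one, so $P_i,P_k$ are central minimal projections of $\mathscr{S}_2$. By the Schur product theorem (Theorem~\ref{P*P>0}) the operator $P_i'*P_k$ is positive, hence being of rank one it may be written $P_i'*P_k=CR$ with $R$ a single minimal projection and $C>0$. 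Pairing with $id$ through Lemma~\ref{CDE=} and using $\xi*id=\frac{tr(\xi)}{\delta}id$, I would compute
$$tr(P_i'*P_k)=tr\big((P_i'*P_k)\,id\big)=tr\big((P_k*id)P_i\big)=\frac{tr(P_i)tr(P_k)}{\delta},$$
so that $C\,tr(R)=\frac{tr(P_i)tr(P_k)}{\delta}$.

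Next I would observe that both conclusions reduce to a single structural statement about $R$. Since $tr\big(P_j(P_i'*P_k)\big)=C\,tr(P_jR)$, the hypothesis says precisely $P_jR\neq 0$, i.e. $P_j$ is equivalent to $R$; and because $tr(P_jR)\le tr(R)$ with equality iff $R\preceq P_j$, the asserted value $\frac{tr(P_i)tr(P_k)}{\delta}=C\,tr(R)$ is attained if and only if $P_j=R$. Thus it suffices to prove that $R$ is a \emph{central} minimal projection: once its block is one–dimensional, $P_jR\neq 0$ forces $P_j=R$, which yields the trace identity, and the final dichotomy is just a record of where $R$ sits.

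To locate $R$, I would transform the relation $R\preceq P_i'*P_k$ through the symmetries of Lemma~\ref{CDE=} into $P_k\preceq P_i*R$ (here $P_k$ is central, so $\preceq$ is detected by a positive trace). If $R\in\mathscr{S}_2/\mathscr{I}_3$ the first alternative holds and $P_j=R\in\mathscr{S}_2/\mathscr{I}_3$. If instead $R\in\mathscr{S}_2\cap\mathscr{I}_3$, then by Theorem~\ref{P bipro} this ideal is the $2$–box space of the depth $2$ intermediate $\mathscr{S}_P$, a Kac algebra, in which a central minimal projection has trace one precisely when it is group–like; hence $tr(P_j)=tr(R)=1$, the second alternative. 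Lemma~\ref{112} fits consistently here, since it guarantees that the summands of $P_i*R$ remain in $\mathscr{S}_2/\mathscr{I}_3$.

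The hard part, and the only genuine use of the commute relation, is to prove that $R$ is central, i.e. that its depth $2$ point has multiplicity one. This is automatic when $R\in\mathscr{S}_2/\mathscr{I}_3$, but a priori $R$ might lie in a block of multiplicity at least two inside the Kac part $\mathscr{S}_2\cap\mathscr{I}_3$, in which case $P_jR\neq 0$ would \emph{not} upgrade to $P_j=R$ and the statement could fail. I would try to exclude this using the rigidity $\dim(P_k\mathscr{S}_3P_i)=1$ from Lemma~\ref{copro connect} together with Lemma~\ref{112}: every minimal projection equivalent to $R$ has the same coproducts with the central projections $P_i$ and $P_k$, and I expect a block of multiplicity at least two to be incompatible with the single length–two path forced by the abelianness of $\mathscr{S}_3/\mathscr{I}_3$. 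Pinning down this multiplicity bookkeeping is the crux, and it is the step on which I would spend the most care.
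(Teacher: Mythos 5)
Your reductions at the start are sound: Lemma \ref{111} gives $r(P_i'*P_k)=1$, the pairing with $id$ correctly gives $tr(P_i'*P_k)=\frac{tr(P_i)tr(P_k)}{\delta}$, and you are right that the lemma is exactly equivalent to the assertion that the support $R$ of $P_i'*P_k$ is a \emph{central} minimal projection. But that is the whole content of the lemma, and your proposal never proves it; you explicitly defer it ("I expect", "I would try"), so there is a genuine gap, and it is not one that your suggested tools can close. Lemma \ref{copro connect} only bounds the \emph{rank} of the coproduct (that is already how Lemma \ref{111} is obtained); it says the support is a single minimal projection but is blind to the size of the matrix block in which that projection sits. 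Likewise your claim that "every minimal projection equivalent to $R$ has the same coproducts with $P_i$ and $P_k$" cannot be taken for granted: the coproduct is not equivariant under unitary conjugation of one factor, and constancy of $tr(P_l(P_i'*P_k))$ over the equivalence class of $R$ is essentially a restatement of the centrality you are trying to establish, so the plan is circular at its crucial point. Lemma \ref{112} does not repair this either.

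The paper resolves this by running the argument in the opposite order: it proves the trace identity \emph{first}, for an arbitrary minimal $P_j$ with $tr(P_j(P_i'*P_k))>0$ and with no centrality hypothesis on $P_j$, and only afterwards extracts centrality. Concretely, since $P_k$ is central minimal one may write $(P_i*P_j)P_k=\lambda P_k$ with $\lambda>0$; by Wenzl's formula $s_3P_i=P_i-\frac{\delta}{tr(P_i)}P_i(1\boxdot id)P_i$, and since $s_3P_i$ is central in $\mathscr{S}_3$ (this is precisely where the abelianness of $\mathscr{S}_3/\mathscr{I}_3$ enters), one gets $(s_3P_i)(1\boxdot P_j)P_k=0$, hence $P_i(1\boxdot P_j)P_k=\frac{\delta\lambda}{tr(P_i)}\,P_ie_2(1\boxdot id)P_k\neq 0$; re-inserting the idempotent $P_j$ on the appropriate strings shows this nonzero element equals $\frac{\delta\lambda}{tr(P_i)}$ times itself, forcing $\lambda=\frac{tr(P_i)}{\delta}$ and the stated value of the trace. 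Centrality then falls out by a perturbation argument: because the pairing takes only the quantized values $0$ and $\frac{tr(P_i)tr(P_k)}{\delta}$, any minimal $P_l$ with $\|P_l-P_j\|<1$ has positive, hence identical, pairing, so the pairing is constant on the whole equivalence class of $P_j$; this makes $P_i'*P_k$ central, and with $r(P_i'*P_k)=1$ it pins $P_j$ down as a central minimal projection, giving the dichotomy. In short, the quantized trace value is the engine that \emph{produces} centrality, not a consequence of it; your proposal postpones exactly the step that the Wenzl-formula computation is designed to deliver, and path-counting alone will not supply it.
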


\begin{proof}
If $tr(P_j(P_i'*P_k))>0$, then $tr((P_i*P_j)P_k)>0$.
Thus $(P_i*P_j)P_k=\lambda P_k$, for some $\lambda>0$.
Note that $\mathscr{I}_3$ is the two sided ideal of $\mathscr{S}_3$, take $s_3$ to be the support of $\mathscr{S}_3$.
Then $s_3P_i$ is in the center of $\mathscr{S}_3$, since $\mathscr{S}_3/\mathscr{I}_3$ is abelian.
So $(s_3P_i)(1\boxdot P_j)P_{k}
=(1\boxdot P_j)(s_3P_i)P_{k}=0$.
By Wenzl's formula, we have $s_3P_i=P_i-\frac{\delta}{tr(P_i)}P_i(1\boxdot id)P_i.$
So $P_i(1\boxdot P_j)P_{k}=\frac{\delta}{tr(P_i)}P_i(1\boxdot id)P_i(1\boxdot P_j)P_{k}
=\frac{\delta}{tr(P_i)}P_i(1\boxdot id)(P_i*P_j)P_{k}=\frac{\delta}{tr(P_i)}\lambda P_ie_2(1\boxdot id)P_{k}$, i.e.,
$$\graa{T1ijk1}=\frac{\delta}{tr(P_i)}\lambda \graa{Eik}.$$
Then $P_i(1\boxdot P_j)P_{k}\neq0$.
Multiplying $P_j$ on the right side, (the 3,4 position,) we have
$P_i(1\boxdot P_j)P_{k}=\frac{\delta}{tr(P_i)}\lambda P_i(1\boxdot P_j)P_{k}$.
Thus $\lambda=\frac{tr(P_i)}{\delta}$, and $tr(P_j(P_i'*P_k))=tr((P_i*P_j)P_k)=tr(\lambda P_k)=\frac{tr(P_i)tr(P_k)}{\delta}$.
If $P_l$ is a minimal projection in $\mathscr{S}_2$, such that $\|P_l-P_j\|<1$, then $tr(P_l(P_i'*P_k))>0$. Thus $tr(P_l(P_i'*P_k))=\frac{tr(P_i)tr(P_k)}{\delta}$.
So $P_i'*P_k$ is central.
By assumption $P_i,P_k\in \mathscr{S}_2/\mathscr{I}_3$ and $P_i\neq P_k$, so $r(P_i'*P_k)=1$, by Lemma \ref{111}. Thus $P_j\sim P_i'*P_k$, and $P_j$ is central. Then $P_j\in \mathscr{S}_2/\mathscr{I}_3$, or $tr(P_j)=1$.
\end{proof}

\begin{proof}[Proof of Lemma \ref{113}]
Suppose $\mathscr{S}$ is not depth 2, $P_i$ is a minimal projection in $\mathscr{S}_2/\mathscr{I}_3$, $P_j$ is a minimal projection in $\mathscr{S}_2$, and $P_j\neq P_i'$.

If $P_j\in \mathscr{S}_2/\mathscr{I}_3$, then $r(P_i*P_j)=1$, by Lemma \ref{111}.

If $tr(P_j)=1$, then $r(P_i*P_j)=1$.

Otherwise $P_j\in \mathscr{S}_2\cap \mathscr{I}_3$ and $tr(P_j)>1$.
By Lemma \ref{112}, if $P_k\preceq P_i*P_j$, then $P_k\in\mathscr{S}_2/\mathscr{I}_3$, and $tr(P_j(P_i'*P_k))=tr((P_i*P_j)P_k)>0$.
Furthermore by Lemma \ref{Tijk}, if $P_k\neq P_i$, then $P_j\in \mathscr{S}_2/\mathscr{I}_3$ or $tr(P_j)=1$. It is a contradiction.
So $P_k=P_i$. Then $P_i*P_j\sim P_i$.

Therefore $P_i$ is a left virtual normalizer.
By Theorem \ref{P bipro}, $P_i$ is a minimal projection in $\mathscr{S}_2/\mathscr{I}_3$. So it is a left virtual normalizer.
So $P_i$ is a virtual normalizer.
\end{proof}

\begin{definition}
A subfactor (or a subfactor planar algebra) is said to be k-supertransitive, if its principal graph is the Dynkin diagram $A_{k+1}$ up to depth $k$.
\end{definition}

From a subfactor perspective, we have the following weak version of Theorem \ref{clas com}.

\begin{theorem}
Suppose $\mathcal{N}\subset \mathcal{M}$ is a finite index irreducible subfactor,
such that the quotient of $\mathcal{N}'\cap \mathcal{M}_2$ by the basic construction ideal $(\mathcal{N}'\cap \mathcal{M}_2) e_2 (\mathcal{N}'\cap \mathcal{M}_2)$ is abelian, where $\mathcal{N}\subset \mathcal{M}\subset \mathcal{M}_1\subset \mathcal{M}_2\subset\cdots$ is the Jones tower and $e_2$ is the Jones Projection onto $L^2(\mathcal{M})$. Then either $\mathcal{N}\subset \mathcal{M}$ is depth 2 or
there exists a sequence of intermediate subfactors $\mathcal{N}\subset \mathcal{R}_1\subset \cdots\subset \mathcal{R}_n\subset \mathcal{M}$,
such that

(1) either $\mathcal{N}\subset \mathcal{R}_1$ is 2-supertransitive or $\mathcal{R}_1=\mathcal{N}\rtimes G$ for an outer action of a group $G$;

(2) either $\mathcal{R}_n\subset \mathcal{M}$ is 2-supertransitive or $\mathcal{R}_n=\mathcal{M}^H$ for an outer action of a group $H$, where $\mathcal{M}^H$ is the fix point algebra under the action of $H$;

(3) either $\mathscr{R}_i\subset \mathscr{R}_{i+1}$ is 2-supertransitive or $\mathcal{R}_{i+1}=\mathcal{R}_i\rtimes A$ for an outer action of an abelian group $A$, for $1\leq i\leq n-1$.

Furthermore any intermediate subfactor of $\mathcal{N}\subset \mathcal{M}$ is either one of the sequence or an intermediate subfactor of some adjacent pair of the sequence.
\end{theorem}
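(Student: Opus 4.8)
The plan is to pass from $\mathcal{N}\subset\mathcal{M}$ to its standard invariant $\mathscr{S}$ and reduce to Theorem \ref{clas com}. Recall that $\mathcal{N}'\cap\mathcal{M}_2$ is $\mathscr{S}_3$, that the basic construction ideal $(\mathcal{N}'\cap\mathcal{M}_2)e_2(\mathcal{N}'\cap\mathcal{M}_2)$ is $\mathscr{I}_3$, and that the hypothesis says exactly that $\mathscr{S}_3/\mathscr{I}_3$ is abelian. Since $\mathscr{S}$ need not be generated by $2$-boxes, I would first replace it by the planar subalgebra $\mathscr{T}$ generated by $\mathscr{S}_2$. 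Because $\mathscr{T}_2=\mathscr{S}_2$ already contains $e_2$ and generates $\mathscr{I}_3$, the basic construction ideal of $\mathscr{T}_3$ is again $\mathscr{T}_3\cap\mathscr{I}_3$, so $\mathscr{T}_3/\mathscr{I}_3$ embeds isometrically in $\mathscr{S}_3/\mathscr{I}_3$ and is therefore abelian. Thus $\mathscr{T}$ is a commute relation planar algebra, and Theorem \ref{clas com} applies: either $\mathscr{T}$ is depth $2$, or $\mathscr{T}=\mathscr{A}_1*\cdots*\mathscr{A}_n$ with the stated type pattern, each $\mathscr{A}_i$ being Temperley-Lieb or depth-$2$ of the appropriate group type.

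Next I would convert this algebraic decomposition into a chain of intermediate subfactors. The free product $\mathscr{A}_1*\cdots*\mathscr{A}_n$ carries the standard increasing staircase of biprojections $e=Q_0<Q_1<\cdots<Q_{n-1}<Q_n=id$, where $Q_i$ separates $\mathscr{A}_1*\cdots*\mathscr{A}_i$ from $\mathscr{A}_{i+1}*\cdots*\mathscr{A}_n$. Each $Q_i$ is a biprojection of $\mathscr{T}_2=\mathscr{S}_2$, so by Bisch's correspondence between biprojections in $\mathscr{S}_2$ and intermediate subfactors of $\mathcal{N}\subset\mathcal{M}$ it yields an intermediate subfactor $\mathcal{R}_i$, producing $\mathcal{N}\subset\mathcal{R}_1\subset\cdots\subset\mathcal{R}_{n-1}\subset\mathcal{M}$. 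The factor $\mathscr{A}_i$ is precisely the $2$-box-generated part of the planar algebra of the slice $\mathcal{R}_{i-1}\subset\mathcal{R}_i$, and in particular the slice and $\mathscr{A}_i$ have the same $2$-box space. When $\mathscr{A}_i$ is Temperley-Lieb this space is $2$-dimensional, so the slice has a single depth-$2$ point of multiplicity one, i.e.\ its principal graph is $A_3$ up to depth $2$ and the slice is $2$-supertransitive. When $\mathscr{A}_i$ is a depth-$2$ planar algebra of group type, translating its type through the dictionary between depth-$2$ planar algebras and (dual) group subfactors (crossed products and fixed-point subfactors for the two ends, abelian crossed products for the interior) identifies the slice as the corresponding group crossed product or fixed-point subfactor, which gives conditions (1)--(3); the case $\mathscr{T}$ depth $2$ with no nontrivial biprojection degenerates to the single pair $\mathcal{N}\subset\mathcal{M}$, falling under the depth-$2$ alternative or a single $2$-supertransitive slice.

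For the final assertion I would argue that every intermediate subfactor $\mathcal{N}\subset\mathcal{P}\subset\mathcal{M}$ corresponds to a biprojection $Q_{\mathcal{P}}\in\mathscr{S}_2=\mathscr{T}_2$, and then invoke the structure of the biprojection lattice of a free product: it is the linearly ordered sum, along the staircase $Q_0<\cdots<Q_n$, of the biprojection lattices of the factors $\mathscr{A}_1,\dots,\mathscr{A}_n$ (this is the lattice-theoretic content behind Theorem \ref{freedecom} and Corollary \ref{freedecom222}). Consequently $Q_{\mathcal{P}}$ either equals some $Q_i$, so that $\mathcal{P}=\mathcal{R}_i$ belongs to the chain, or it lies strictly between two consecutive staircase biprojections and hence arises from a biprojection of a single factor $\mathscr{A}_i$, so that $\mathcal{P}$ is an intermediate subfactor of the adjacent pair $\mathcal{R}_{i-1}\subset\mathcal{R}_i$.

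The hard part will be the passage from $\mathscr{T}$ back to $\mathscr{S}$, i.e.\ controlling what happens beyond the $2$-box-generated part. Two points need care. First, one must check that \emph{all} biprojections of $\mathscr{S}_2$ are accounted for by the free-product staircase of $\mathscr{T}$ and that the slice types are read off correctly from the $\mathscr{A}_i$; this rests on the fact that every biprojection lives in $\mathscr{S}_2=\mathscr{T}_2$ together with the free-product lattice structure above. Second, and this is exactly why only a weak version is obtained, the higher-depth structure of the slices is not pinned down by the hypothesis: since $\mathscr{S}$ may strictly contain $\mathscr{T}$, a slice whose $2$-box-generated part is Temperley-Lieb need not itself be Temperley-Lieb, so one can only conclude $2$-supertransitivity. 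For the group slices the situation is better: the depth-$2$, multiplicity-one data forces the slice to be genuinely depth $2$, whence it is automatically generated by its $2$-boxes and coincides with $\mathscr{A}_i$, so the crossed-product/fixed-point conclusions can be stated in full strength. Establishing this last depth-$2$-ness of the group slices from the global multiplicity-one hypothesis, ruling out any extra depth-$3$ points in a slice, is the genuine technical obstacle.
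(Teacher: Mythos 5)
Your proposal follows essentially the same route as the paper's proof: pass to the planar subalgebra generated by 2-boxes, observe that the hypothesis makes it a commute relation planar algebra, apply Theorem \ref{clas com}, convert the free-product staircase of biprojections into a chain of intermediate subfactors via Bisch's correspondence, read off the slice types from the equality of 2-box spaces (Temperley-Lieb part giving 2-supertransitivity, depth-2 part transferring to genuine depth 2 of the slice since a depth-2 planar algebra has 2-box dimension equal to the index), and settle the final assertion by comparability of every biprojection with the staircase. The one correction: the comparability you invoke for the last step is precisely Theorem \ref{inter free} (proved via the Schur product theorem), not a consequence of Theorem \ref{freedecom} or Corollary \ref{freedecom222}; with that citation fixed, your argument matches the paper's.
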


\begin{proof}
Suppose $\mathscr{F}$ is the planar algebra of $\mathcal{N}\subset \mathcal{M}$, and $\mathscr{S}$ is the planar subalgebra of $\mathscr{F}_2$ generated by 2-boxes.
Since $\mathscr{S}_2=\mathscr{F}_2$, the two sided ideal $\mathscr{I}_3$ generated by the Jones projection of $\mathscr{F}_3$ is also the two sided ideal of $\mathscr{S}_3$.
Note that $(\mathcal{N}'\cap \mathcal{M}_2)/((\mathcal{N}'\cap \mathcal{M}_2) e_2 (\mathcal{N}'\cap \mathcal{M}_2))$ is abelian means $\mathscr{F}_3/\mathscr{I}_3$ is abelian. Then $\mathscr{S}_3/\mathscr{I}_3$ is abelian. So $\mathscr{S}$ is a commute relation planar algebra.
Recall that intermediate subfactors correspond to biprojections in $\mathscr{F}_2=\mathscr{S}_2$. By Theorem \ref{clas com}, we have $\mathscr{S}$ is either depth 2 or a free product of Temperley-Lieb subfactor planar algebras and depth 2 subfactor planar algebras. Thus either $\mathcal{N}\subset \mathcal{M}$ is depth 2 or there exists a sequence of intermediate subfactors $\mathcal{N}\subset \mathcal{R}_1\subset \cdots\subset \mathcal{R}_n\subset \mathcal{M}$ correspond to the sequence of biprojections $P_1,P_2,\cdots, P_n$ which separate $\mathscr{S}$ as a free product, such that $\mathscr{S}_{P_{i+1}}^{P_i}$ is either Temperley-Lieb or depth 2, for $1\leq i \leq n-1$.
Note that $(\mathscr{S}_{P_{i+1}}^{P_i})_2=(\mathscr{F}_{P_{i+1}}^{P_i})_2$, so $(\mathscr{F}_{P_{i+1}}^{P_i})$ is either 2-supertransitive or depth 2. Moreover if $\mathcal{N}\subset \mathcal{R}_1$ is depth 2, then its planar algebra is of type NA, thus $\mathcal{R}_1=\mathcal{N}\rtimes G$, for an outer action of a group $G$; If $\mathcal{R}_n\subset \mathcal{M}$ is depth 2, then its planar algebra is of type AN, thus $\mathcal{R}_n=\mathcal{M}^H$, for an outer action of a finite group $H$; If $\mathscr{R}_i\subset \mathscr{R}_{i+1}$ is depth 2, for some $1\leq i\leq n-1$, then its planar algebra is of type AA, thus $\mathcal{R}_{i+1}=\mathcal{R}_i\rtimes A$, for an outer action of an abelian group $A$.

Furthermore by Theorem \ref{inter free}, any intermediate subfactor of $\mathcal{N}\subset \mathcal{M}$ is either one of the sequence or an intermediate subfactor of some adjacent pair of the sequence.
\end{proof}

\bibliography{bibliography}
\bibliographystyle{amsalpha}

\end{document}